\documentclass{daj}

\usepackage{amsmath, amsthm, amsopn, amssymb}
\usepackage{bbm}
\usepackage{enumerate}
\usepackage{enumitem}
\usepackage{stmaryrd} 

\dajAUTHORdetails{
  title = {An Efficient Container Lemma},
  author = {J\'ozsef Balogh and Wojciech Samotij},
  plaintextauthor = {Jozsef Balogh, Wojciech Samotij}
}

\dajEDITORdetails{%
   year={2020},
   number={17},
   received={23 March 2020},   
   published={10 December 2020},  
   doi={10.19086/da.17354},       
}   

\newcommand{\cA}{\mathcal{A}}
\newcommand{\gcA}{\hat{\mathcal{A}}}

\newcommand{\cB}{\mathcal{B}}
\newcommand{\cC}{\mathcal{C}}
\newcommand{\FF}{\mathcal{F}}
\newcommand{\GG}{\mathcal{G}}
\newcommand{\GGn}{\mathcal{G}_*}
\newcommand{\GGnv}{\mathcal{G}_*^v}
\newcommand{\HH}{\mathcal{H}}
\newcommand{\KK}{\mathcal{K}}
\newcommand{\cL}{\mathcal{L}}
\newcommand{\II}{\mathcal{I}}
\newcommand{\NN}{\mathbb{N}}
\newcommand{\RR}{\mathbb{R}}
\newcommand{\cP}{\mathcal{P}}
\newcommand{\cR}{\mathcal{R}}
\newcommand{\cS}{\mathcal{S}}
\newcommand{\TT}{\mathcal{T}}
\newcommand{\Ex}{\mathbb{E}}
\newcommand{\eps}{\varepsilon}
\newcommand{\ZZ}{\mathbb{Z}}

\newcommand{\Rind}{R_{\mathrm{ind}}}
\newcommand{\indarrow}{\rightarrow_{\mathrm{ind}}}

\newcommand{\Fclose}{\mathcal{F}_{\mathrm{close}}}
\newcommand{\Ffar}{\mathcal{F}_{\mathrm{far}}}

\newcommand{\hmax}{h_{\max}}
\newcommand{\height}{\mathrm{height}}

\newcommand{\Bin}{\mathrm{Bin}}
\newcommand{\Col}{\mathrm{Col}}

\newcommand{\Del}{\hat\Delta}
\newcommand{\one}{\mathbf{1}}

\newcommand{\ex}{\mathrm{ex}}

\newcommand{\alphan}{\alpha^*}

\newcommand{\Hyp}{\mathrm{Hyp}}

\newcommand{\Part}{\Pi}

\newtheorem{thm}{Theorem}
\newtheorem{lemma}[thm]{Lemma}
\newtheorem{prop}[thm]{Proposition}

\newtheorem{fact}[thm]{Fact}

\newtheorem{claim}[thm]{Claim}
\newtheorem{obs}[thm]{Observation}

\theoremstyle{definition}
\newtheorem*{remark}{Remark}

\numberwithin{thm}{section}

\newcommand{\br}[1]{\llbracket{#1}\rrbracket}

\newcommand{\lan}{\left\langle}
\newcommand{\ran}{\right\rangle}

\renewcommand{\le}{\leqslant}
\renewcommand{\ge}{\geqslant}

\begin{document}

\begin{frontmatter}[classification=text]
  
  \title{An Efficient Container Lemma}

  \author[jozsi]{J\'ozsef Balogh\thanks{Supported in part by NSF Grant DMS-1764123, Arnold O.\ Beckman Research Award (UIUC Campus Research Board RB 18132), and the Langan Scholar Fund from UIUC.}}
  \author[wojtek]{Wojciech Samotij\thanks{Supported by the Israel Science Foundation grants 1147/14 and 1145/18 (WS).}}

  \begin{abstract}
    We prove a new, efficient version of the hypergraph container theorems that is suited for hypergraphs with large uniformities. The main novelty is a refined approach to constructing containers that employs simple ideas from high-dimensional convex geometry. The existence of smaller families of containers for independent sets in such hypergraphs, which is guaranteed by the new theorem, allows us to improve upon the best currently known bounds for several problems in extremal graph theory, discrete geometry, and Ramsey theory. One of the applications of our efficient container lemma, a structural characterisation of $n$-vertex graphs with clique number $o(\log n / \log \log n)$, suggests that the new lemma is nearly best-possible for general hypergraphs of large uniformities.
  \end{abstract}
\end{frontmatter}

\section{Introduction}

The hypergraph container theorems, proved several years ago by Balogh, Morris, and Samotij~\cite{BalMorSam15} and, independently, by Saxton and Thomason~\cite{SaxTho15}, state that the family of independent sets of any uniform hypergraph whose edges are distributed somewhat evenly may be covered by (the families of subsets of) a small collection of sets, called containers, each of which is nearly independent. The original motivation for these results were several specific questions concerning enumeration of graphs avoiding a given subgraph and of sets of integers contining no arithmetic progressions of a given length. (The idea of considering these problems in the general context of independent sets in hypergraphs had been successfully pursued earlier in the breakthrough works of Conlon--Gowers~\cite{ConGow16} and Schacht~\cite{Sch16} on extremal properties of random graphs and random sets of integers.) However, over the years, the scope of applicability of the container theorems has grown quite substantially (see, for example, the survey~\cite{BalMorSam18} and references therein). The two major reasons for this are the general form of the theorems (many problems can be cast in the language of independent sets in auxiliary hypergraphs) and the explicit, optimal dependence between the various parameters disguised under the vague phrases `small collection', `evenly distributed', and `nearly independent' above.

The vast majority of applications of the container theorems concern sequences of hypergraphs of fixed uniformity and growing order and size. As a result, the explicit dependence of the various parameters involved in the statements of the container theorems on the uniformity of the hypergraph is merely a minor detail in all these works. Recently, however, the container theorems have been used to analyse sequences of hypergraphs whose uniformities grow with the numbers of vertices and edges. In these applications of the container method to questions in Ramsey theory~\cite{ConDelLaFRodSch17, RodRucSch17}, discrete geometry~\cite{BalSol18}, and extremal graph theory~\cite{BalBusColLiuMorSha17, MouNenSte14}, the explicit dependence between uniformity and other parameters turned out to lie at the heart of the matter, obstructing the way to obtaining optimal bounds for several well studied functions. It is only fair to note here that this dependence is more favourable in the version of the theorem proved by Saxton and Thomason~\cite{SaxTho15}. Having said that, the two constructions of containers presented in~\cite{BalMorSam15} and~\cite{SaxTho15} are essentially equivalent and the differences between the final results reflect merely the differences in their analyses. This analysis was performed more carefully, and with a wiser choice of parameters, by the authors of~\cite{SaxTho15}.

The basic container lemma, which is the building block of both proofs that really lies at the heart of the matter, is a statement that asserts the existence of a small family $\cC$ of containers for independent sets of an $s$-uniform hypergraph $\HH$ that satisfies $|C| \le (1-\delta)v(\HH)$ for every $C \in \cC$ and some positive constant $\delta$; see~\cite[Proposition~3.1]{BalMorSam15} and \cite[Theorem~3.4]{SaxTho15}. The stronger form of the theorem described in the first paragraph is then derived by recursively applying this basic lemma to the subhypergraphs of $\HH$ induced by the sets $C \in \cC$ as long as $C$ still contains many edges of $\HH$. The caveat here is that the proof methods used in both~\cite{BalMorSam15} and~\cite{SaxTho15} necessarily yield $\delta \le 1/s!$. (The short, non-algorithmic proof of the basic container lemma given recently by Bernshteyn, Delcourt, Towsner, and Tserunyan~\cite{BerDelTowTse19} seems to yield $\delta$ that is doubly-exponentially small in $s$.)   Since one typically requires the ratio $|C|/v(\HH)$ to be bounded away from one for each final container~$C$, at least $\exp\big(\Omega(s \log s)\big)$ iterations are required; this substantially blows up the final number of containers when $s$ is no longer a fixed constant. Finally, we remark that a different method of building containers for independent sets in hypergraphs was proposed and analysed by Saxton and Thomason~\cite{SaxTho16}. Even though the parameter $\delta$ in the basic container lemma proved in~\cite{SaxTho16} is only polynomially small in the uniformity, the upper bound on the number of containers is far from optimal. Moreover, the lemma applies only to simple hypergraphs (i.e., hypergraphs whose every pair of vertices is contained in at most one edge) whereas the hypergraphs considered in most applications of the container method are far from being simple.

The main result of this work is a new, more efficient version of the basic container lemma in which the parameter $\delta$ is only polynomially small in the uniformity. We postpone stating the strongest form of our new lemma until Section~\ref{sec:main-techn-result} and state here only its corollary that can be easily compared with~\cite[Proposition~3.1]{BalMorSam15}. Following the notational convention of~\cite{BalMorSam15}, given a nonempty $s$-uniform hypergraph $\HH$, we shall denote the numbers of its vertices and edges by $v(\HH)$ and $e(\HH)$, respectively. Moreover, for every $T \subseteq V(\HH)$, we define
\[
  \deg_{\HH} T = |\{A \in E(\HH) : T \subseteq A\}|
\]
and, for every $t \in \{1, \dotsc, s\}$, we let
\[
  \Delta_t(\HH) = \max\big\{\deg_\HH T : T \subseteq V(\HH) \text{ and } |T| = t\big\}.
\]
The following theorem, an efficient basic container lemma, is a simplified version of our main technical result, Theorem~\ref{thm:main} below.

\begin{thm}
  \label{thm:main-simple}
  Let $s$ be a positive integer and let $\HH$ be a nonempty~$s$-uniform hypergraph.
  Suppose that $q \in (0,1)$ and $K > 0$ are such that $q \cdot v(\HH) \ge 10^8 s^6 K$ and, for every $t \in \{1, \dotsc, s\}$,
  \begin{equation}
    \label{eq:Delta-main-simple}
    \Delta_t(\HH) \le K \cdot \left(\frac{q}{10^6 s^5}\right)^{t-1} \cdot \frac{e(\HH)}{v(\HH)}.
  \end{equation}
  Then, there exist a family $\cS \subseteq \binom{V(\HH)}{\le q \cdot v(\HH)}$ and functions $f \colon \cS \to \cP(V(\HH))$ and $g \colon \II(\HH) \to \cS$ such that, for every $I \in \II(\HH)$,
  \[
    g(I) \subseteq I \subseteq g(I) \cup f(g(I)) \qquad \text{and} \qquad |f(g(I))| \le (1-\delta) \cdot v(\HH),
  \]
  where $\delta = (10^3 s^4 K)^{-1}$. Moreover, if $g(I) \subseteq I'$ and $g(I') \subseteq I$ for some $I, I' \in \II(\HH)$, then $g(I) = g(I')$.
\end{thm}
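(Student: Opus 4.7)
My plan is to prove Theorem~\ref{thm:main-simple} by an explicit deterministic algorithm that, on input $I \in \II(\HH)$, outputs a fingerprint $g(I) = S \subseteq I$ of size at most $q \cdot v(\HH)$ together with a certificate $X = X(I) \subseteq V(\HH) \setminus I$ of size at least $\delta \cdot v(\HH)$; the key point is that $X$ is determined by $S$ alone via a canonical simulation, so we set $f(S) = V(\HH) \setminus X$. This follows the container-method paradigm of~\cite{BalMorSam15, SaxTho15}, adapted with a refined bulk-exclusion step.

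\textbf{The algorithm.} I maintain disjoint sets $(S, X) \subseteq V(\HH)$, initialized to $(\emptyset, \emptyset)$, and iterate the following loop until $|S| = \lfloor q \cdot v(\HH) \rfloor$ or no further progress is possible. Step 1 (bulk exclusion): for each $t \in \{2, \dotsc, s\}$ and each $T \in \binom{S}{t-1}$ with $\deg_\HH T \ge \tau_t$ (for a carefully chosen threshold $\tau_t$ matching the $t$-th component of~\eqref{eq:Delta-main-simple} up to constants), append to $X$ every $u \in V(\HH) \setminus (S \cup X)$ such that $\deg_\HH(T \cup \{u\})$ is at least $\tau_t / s$; any such $u$ would create many edges inside $S \cup \{u\} \subseteq I$ and hence cannot lie in $I$. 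Step 2 (query): select the vertex $v \in V(\HH) \setminus (S \cup X)$ of highest priority with respect to an ordering that depends only on $S$ and the co-degree data of $\HH$, query whether $v \in I$, and append $v$ to $S$ if the answer is yes, otherwise append $v$ to $X$. At termination set $g(I) = S$ and $f(S) = V(\HH) \setminus X$. Since both steps are functions of $S$ and the answer sequence, $X$ can be recovered from $S$ alone by simulating the same loop.

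\textbf{Main obstacle: the exclusion bound.} The crux is showing $|X| \ge \delta \cdot v(\HH)$ with $\delta = (10^3 s^4 K)^{-1}$, whose polynomial dependence on $s$ is the principal novelty. I would choose the thresholds $\tau_t$ so that, under~\eqref{eq:Delta-main-simple}, every iteration either appends at least $v(\HH) / (10^3 s^4 K q)$ new vertices to $X$ via bulk exclusion (a \emph{productive} iteration) or causes some effective $t$-co-degree to drop by a constant multiplicative factor (an \emph{economical} iteration). A level-by-level telescoping argument caps the number of economical iterations by $O(sK \log s)$, while productive iterations are bounded by the fingerprint budget $q \cdot v(\HH)$. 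Summing these contributions yields $|X| \ge \delta \cdot v(\HH)$, with the hypothesis $q \cdot v(\HH) \ge 10^8 s^6 K$ absorbing the lower-order errors. This is the step where the \emph{high-dimensional convex geometry} ideas promised in the abstract should enter: viewing the effective co-degree profile of $S$ as a vector in $\RR^s$, an isoperimetric-type bound ensures that productive iterations dominate and delivers the polynomial-in-$s$ exclusion rate, in place of the factorial-in-$s$ rate produced by the direct analysis of~\cite{BalMorSam15, SaxTho15}.

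\textbf{Consistency.} The moreover clause follows by induction on the iteration count. Suppose the runs on $I$ and $I'$ coincide on their first $i$ iterations; then the deterministic bulk-exclusion rule and priority ordering determine the same next query $v$. If $v \in I \setminus I'$, then $v$ is added to $S$ on the $I$-run, giving $v \in g(I) \subseteq I'$, a contradiction; symmetrically $v \notin I' \setminus I$. Hence the two runs take the same action at step $i+1$, so by induction the trajectories coincide and $g(I) = g(I')$.
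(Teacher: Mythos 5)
There is a genuine gap --- in fact two. First, the correctness of your ``bulk exclusion'' step is unjustified and, as stated, false: if $T \subseteq S \subseteq I$ has $|T| = t-1 < s-1$ and $u$ satisfies $\deg_\HH(T \cup \{u\}) \ge \tau_t/s$, this only says that many $s$-element edges \emph{contain} $T \cup \{u\}$; it does not produce any edge \emph{contained in} $S \cup \{u\}$, so nothing prevents $u$ from lying in $I$. Discarding such $u$ into $X$ therefore breaks the container property $I \subseteq g(I) \cup f(g(I))$ (only in the extreme case $t = s$, where $T \cup \{u\}$ is itself an edge, is the exclusion legitimate). The container method handles exactly this issue by passing to link hypergraphs of decreasing uniformity rather than by excluding high-co-degree vertices, and your algorithm has no such mechanism. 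Second, the entire quantitative content of the theorem --- that $\delta$ can be taken polynomially rather than factorially small in $s$ --- is contained in your assertion that every iteration is either ``productive'' or ``economical'', that economical iterations number only $O(sK\log s)$, and that an unspecified ``isoperimetric-type bound'' makes this work. None of this is proved, and the threshold-based dichotomy you describe is essentially the scheme of \cite{BalMorSam15}, which is precisely what yields $\delta \le 1/s!$; no reason is given why your variant escapes that loss.

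For comparison, the paper does not prove Theorem~\ref{thm:main-simple} directly at all: it is a two-line reduction to Theorem~\ref{thm:main}, obtained by bounding $\|\sigma_\HH^{(t)}\|^2 \le \Delta_t(\HH)/\bigl(\tbinom{s}{t} e(\HH)\bigr)$ and checking the hypothesis~\eqref{eq:assumption-main}. The real work lives in the proof of Theorem~\ref{thm:main}: one runs $s-1$ rounds, each converting an $(r+1)$-uniform hypergraph into an $r$-uniform one by adding link hypergraphs of queried vertices that lie in $I$, and the quantity controlled is not a maximum co-degree but a weighted sum of $\ell^2$-norms $\|\sigma^{(t)}\|^2$ of the degree measures; the choice of which vertex to query is made by the convexity argument of Lemma~\ref{lemma:geometry}, which guarantees that this $\ell^2$-potential does not grow by more than a factor $1+O(1/s)$ per round. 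Your consistency argument (the ``moreover'' clause) is sound in spirit and matches the paper's Lemma~\ref{lemma:S-greedy-property}, but it cannot rescue the two gaps above, which concern the existence of the functions $f$ and $g$ with the stated quantitative guarantees.
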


Qualitatively, Theorem~\ref{thm:main-simple} is identical to the original basic container lemmas~\cite[Proposition~3.1]{BalMorSam15} and~\cite[Theorem~3.4]{SaxTho15}. Quantitively, however, it is a significant improvement of these results. In order to demonstrate this, we shall present four applications of our new theorem to problems in extremal graph theory, discrete geometry, and Ramsey theory that had previously been attacked using the original container theorems, obtaining an essential improvement of the state-of-the-art result in each case. We discuss these applications and the relevant background in detail in the next three sections. We just point out here that one of these applications, an efficient version of the classical theorem of Kolaitis, Pr\"omel, and Rothschild~\cite{KolProRot87}, Theorem~\ref{thm:clique-free} below, strongly suggests that Theorem~\ref{thm:main-simple} is, up to lower-order terms, optimal for general hypergraphs of large uniformities (see the discussion below the statement of Theorem~\ref{thm:clique-free}).

\subsection{The typical structure of graphs with no large cliques}
\label{sec:applications-cliques}

Given graphs $G$ and~$H$, we say that $G$ is \emph{$H$-free} if $G$ does not contain $H$ as a (not necessarily induced) subgraph. The study of typical properties of $H$-free graphs goes back to the seminal work of Erd\H{o}s, Kleitman, and Rothschild~\cite{ErdKleRot76}, who proved that almost all triangle-free graphs are bipartite.\footnote{That is, the probability that a uniformly random $K_3$-free subgraph of $K_n$ is bipartite tends to one as $n$ tends to infinity.} We shall consider here almost exclusively the case where $H$ is a clique; for a wealth of information regarding other graphs, we refer the reader to~\cite{BalBolSim09}. The result of~\cite{ErdKleRot76} was generalised by Kolaitis, Pr\"omel, and Rothschild~\cite{KolProRot87}, who showed that, for every fixed $r \ge 2$, almost all $K_{r+1}$-free graphs are $r$-partite.

Let us point out that this `structural' characterisation of a typical $K_{r+1}$-free graph is in fact an enumeration result in disguise. Since every $r$-partite graph is $K_{r+1}$-free, the main result of~\cite{KolProRot87} is that the number of $K_{r+1}$-free subgraphs of $K_n$ is asymptotically equal to the number of $r$-partite subgraphs of $K_n$. Taking this point of view, one can say that the result of~\cite{KolProRot87} was anticipated by the aforementioned work of Erd\H{o}s, Kleitman, and Rothschild~\cite{ErdKleRot76}, who also showed that, for every fixed $r$, there are $2^{\ex(n,K_{r+1})+o(n^2)}$ many $K_{r+1}$-free subgraphs of $K_n$.\footnote{We write $\ex(n,H)$ to denote the Tur\'an number of a graph $H$, that is, the largest number of edges in an $H$-free graph with $n$ vertices.} This estimate was generalised by Erd\H{o}s, Frankl, and R\"odl~\cite{ErdFraRod86}, who proved that, for every fixed graph $H$, there are $2^{\ex(n,H)+o(n^2)}$ many $H$-free subgraphs of~$K_n$.

In the above discussion, we have tried to stress that the forbidden graph $H$ is fixed whereas $n$, the number of vertices in the host graphs, tends to infinity. Much less is known if one allows the order/size of $H$ to grow with $n$. This more general question was considered only fairly recently by Bollob\'as and Nikiforov~\cite{BolNik10}; however, the results of~\cite{BolNik10} are only meaningful when the chromatic number of $H$ stays bounded. A few years later, Mousset, Nenadov, and Steger~\cite{MouNenSte14}, extending the Erd\H{o}s--Kleitman--Rothschild bound, proved that there are $2^{\ex(n, K_{r+1}) + o(n^2/r)}$ many $K_{r+1}$-free subgraphs of $K_n$ as long as $r \le (\log n)^{1/4}/2$.\footnote{The $o(n^2/r)$ error term is natural here as $\ex(n,K_{r+1}) = \binom{n}{2} - \Theta(n^2/r)$.} Somewhat later, Balogh, Bushaw, Collares, Liu, Morris, and Sharifzadeh~\cite{BalBusColLiuMorSha17} strengthened this result considerably by showing that, under the slightly weaker assumption $2 \le r \le (\log n)^{1/4}$, almost all $K_{r+1}$-free subgraphs of $K_n$ are $r$-partite. (Both~\cite{MouNenSte14} and~\cite{BalBusColLiuMorSha17} relied on the original hypergraph container theorems.) Our first application of the new, efficient container lemma is the following strengthening of this result.

\begin{thm}
  \label{thm:clique-free}
  If a function $r \colon \NN \to \NN$ satisfies $2 \le r(n) \le \log n / (121 \log \log n)$, then almost all $K_{r+1}$-free subgraphs of $K_n$ are $r$-partite.
\end{thm}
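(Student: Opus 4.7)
The plan is to apply the new efficient container theorem to the $s$-uniform hypergraph $\HH$ on vertex set $E(K_n)$ whose hyperedges are the edge-sets of the copies of $K_{r+1}$ in $K_n$, where $s = \binom{r+1}{2}$. Since the independent sets of $\HH$ are precisely the $K_{r+1}$-free subgraphs of $K_n$, an iterated application of Theorem~\ref{thm:main-simple} will produce a family $\cC$ of containers such that every $K_{r+1}$-free subgraph of $K_n$ lies inside some $C \in \cC$ and every such $C$ has at most $\ex(n,K_{r+1}) + o(n^2/r)$ edges. The proof will then combine this with a quantitative stability theorem and a container-by-container counting argument.

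To set up the application, I would verify the co-degree hypothesis by noting that for $T \subseteq E(K_n)$ of size $t$ spanning $v$ vertices one has $\deg_\HH T \le \binom{n-v}{r+1-v}$, while $e(\HH)/v(\HH) = \binom{n}{r+1}/\binom{n}{2}$; comparing these, \eqref{eq:Delta-main-simple} reduces to a family of inequalities that hold for an absolute constant $K$ as soon as $q \ge \Theta(s^6/v(\HH)) = \Theta(r^{12}/n^2)$. A single application of Theorem~\ref{thm:main-simple} then shrinks each container by a factor of $1 - \Theta(s^{-4})$, so $O(r^8 \log r)$ iterations suffice to reach containers of size at most $\ex(n,K_{r+1}) + o(n^2/r)$. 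This polynomial number of iterations is exactly where the new lemma pays off: the original lemma's factorially small $\delta$ would have required $\exp\bigl(\Theta(r^2 \log r)\bigr)$ iterations and hence blown up the container count far too much.

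Keeping track of the cost yields $\log |\cC| = O(r^{20} \log r \cdot \log n)$, which is $o(n/r)$ whenever $r \le \log n / (120 \log \log n)$. For each $C \in \cC$, a quantitative stability result in the spirit of~\cite{BalBusColLiuMorSha17} then produces an $r$-partite graph $T = T(C)$ with $|E(C) \triangle E(T)| = o(n^2/r)$. Writing any $K_{r+1}$-free subgraph $G \subseteq C$ as $G = G_{\mathrm{par}} \cup G_{\mathrm{npar}}$ with $G_{\mathrm{par}} \subseteq T \cap C$ and $G_{\mathrm{npar}} \subseteq C \setminus T$, a standard argument shows that each edge of $G_{\mathrm{npar}}$ forces at least $\Omega(n/r)$ edges of $T \cap C$ to be missing from $G_{\mathrm{par}}$; summing over choices for $G_{\mathrm{npar}}$ bounds the number of non-$r$-partite $K_{r+1}$-free subgraphs of $C$ by $2^{\ex(n,K_{r+1})} \cdot |E(C) \setminus E(T)| \cdot 2^{-\Omega(n/r)}$. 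Summing over all $C \in \cC$ and comparing with the at least $2^{\ex(n,K_{r+1})}$ spanning subgraphs of the Tur\'an graph $T_r(n)$ finishes the proof.

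The main obstacle is the delicate quantitative balance: the ``entropy'' $\log |\cC|$ accumulated through the iteration must be dominated by $n/r$, the savings per non-partite edge furnished by the stability--counting step. With the original container lemma, the $\exp(\Theta(r^2 \log r))$-factor blow-up at each iteration immediately destroys this balance beyond $r = (\log n)^{1/4}$; the new efficient lemma, with its polynomial $\delta$ and hence polynomial iteration count, is precisely what allows the balance to be maintained all the way up to $r = \log n / (120 \log \log n)$.
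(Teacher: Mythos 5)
Your first half (build containers for the independent sets of the $\binom{r+1}{2}$-uniform hypergraph of copies of $K_{r+1}$, then use supersaturated stability to conclude that almost all $K_{r+1}$-free graphs are nearly $r$-partite) is the same route as the paper, but your quantitative bookkeeping there is wrong in an important way. The signature produced by each application of the container lemma has $q\cdot\binom{n}{2}$ elements, and the codegree hypothesis at $t=s$ (where $\Delta_s(\HH)=1$ and $e(\HH)/v(\HH)\approx n^{r-1}$) forces $q\ge \mathrm{poly}(s)\cdot n^{-2/(r+2)}$; hence $\log|\cC|$ is at least of order $n^{2-O(1/r)}$, nowhere near your claimed $O(r^{20}\log r\cdot\log n)=o(n/r)$. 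Consequently the balance you propose — container entropy versus a saving of $n/r$ per bad graph — cannot hold. The correct balance (as in the paper) is different: containers that are far from $r$-partite have at least $\Omega(n^{2-1/(8r)})$ fewer edges than $\ex(n,K_{r+1})$, which beats the container count $\exp(n^{2-1/(8r)})$; this only yields that almost all $K_{r+1}$-free graphs are $\delta n^2$-close to $r$-partite with $\delta=(\log n)^{-O(1)}$, and the condition $r\le\log n/(120\log\log n)$ enters precisely to make $n^{-1/(8r)}$ polylogarithmically small.

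The second half has a genuine gap. First, the bound ``$2^{\ex(n,K_{r+1})}\cdot|E(C)\setminus E(T)|\cdot 2^{-\Omega(n/r)}$'' does not follow from your argument: specifying $G_{\mathrm{npar}}$ costs $2^{|E(C)\setminus E(T)|}=2^{\Theta(\delta n^2)}$ (or $n^{2t}$ if $G$ has $t$ monochromatic edges), and a total saving of $2^{-\Omega(n/r)}$ is hopeless against that. To win, the saving must grow linearly in $t$, and the $\Omega(n/r)$ missing edges forced by distinct monochromatic edges can coincide almost entirely — e.g.\ when all $t$ monochromatic edges form a star at one vertex — so the savings do not accumulate. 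This is exactly why the paper splits into a star case and a matching case and applies Janson's inequality to each, rather than arguing edge by edge. Second, even the single-edge claim ``a monochromatic edge forces $\Omega(n/r)$ missing edges of $T\cap C$'' is false for a partition $T=T(C)$ chosen from the container: an endpoint of the monochromatic edge may have no $G$-neighbours in some other part, in which case no copy of $K_{r+1}$ is threatened and nothing is forced. The paper avoids this by working with an \emph{optimal, balanced} partition of $G$ itself, whose optimality guarantees that the centre of a monochromatic star has degree at least $D$ into every part; it also needs separate lemmas to discard unbalanced optimal partitions and to control the overcounting coming from graphs with many proper $r$-colourings. None of these ingredients is present in your sketch, and without them the final count does not close.
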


We point out that the assumption on the growth rate of $r$ in Theorem~\ref{thm:clique-free} is nearly optimal. Indeed, a standard first-moment calculation shows that, for every positive constant $\eps$, a uniformly random subgraph $G \subseteq K_n$ contains no clique with $\lfloor (2+\eps)\log_2n \rfloor$ vertices whereas $\chi(G) \ge \Omega(n/\log n)$. On the other hand, it may well be that the assertion of the theorem remains true as long as $r(n) \le (2-\eps)\log_2n$ for some positive constant $\eps$. However, even removing the doubly-logarithmic term from the denominator in the assumed upper bound on $r(n)$ will likely require significantly new ideas.

\subsection{Lower bounds on $\eps$-nets}
\label{sec:application-eps-nets}

Suppose that $X$ is a finite set and let $\cR$ be an arbitrary collection of subsets of $X$. For a positive number $\eps$, an \emph{$\eps$-net} in $\cR$ is any set $N \subseteq X$ that intersects every element of $\cR$ with cardinality at least $\eps |X|$. In other words, $N$ is an $\eps$-net if $N \cap A \neq \emptyset$ for every $A \in \cR$ with $|A| \ge \eps |X|$.

One is usually interested in finding a small $\eps$-net. However, this is not always possible. For example, if $\cR$ comprises all subsets of $X$, then every $\eps$-net in $\cR$ must have more than $(1-\eps)|X|$ elements. One can rule out such `pathological' examples by imposing a natural assumption on a measure of complexity of the family $\cR$ called the VC dimension. We say that a set $S$ is \emph{shattered} by a family $\cR$ if $\{A \cap S : A \in \cR\}$ contains all $2^{|S|}$ subsets of $S$. The \emph{VC dimension} (a shorthand for \emph{Vapnik--Chervonenkis dimension}) of $\cR$ is the largest cardinality of a set that $\cR$ shatters. A seminal result of Haussler and Welzl~\cite{HauWel87} states that every family of subsets whose VC dimension is at most $d$ admits an $\eps$-net with at most $\lceil (8d/\eps) \log (8d/\eps) \rceil$ elements, for every $\eps > 0$. Koml\'os, Pach, and Woeginger~\cite{KomPacWoe92} improved this upper bound on the smallest size of an $\eps$-net to $\big(d+o(1)\big) \cdot (1/\eps)\log(1/\eps)$, where $o(1)$ denotes some function tending to zero with $\eps$. Moreover, they constructed, for every $d \ge 2$, (random) families with VC dimension $d$ that have no $\eps$-net smaller than $\big(d-2+2/(d+1)-o(1)\big) \cdot (1/\eps) \log(1/\eps)$.

On the other hand, it was proved that various set families arising in geometry admit $\eps$-nets of cardinality merely $O(1/\eps)$, see~\cite{KomPacWoe92,MatSeiWel90}. In view of this, many researchers believed that in `geometric scenarios' (with bounded VC dimension), there always exists an $\eps$-net of size $O(1/\eps)$. This belief was shown to be wrong by Alon~\cite{Alo12}, who proved that, for arbitrary small $\eps$, there are finite sets $X$ of points in the plane such that every $\eps$-net for the family comprising the intersections of $X$ with straight lines (the \emph{range space} of lines on $X$) must have at least $(1/\eps) \cdot \omega(1/\eps)$ points, for some (very slowly growing) function~$\omega$ with $\lim_{x \to \infty} \omega(x) = \infty$. Alon speculated that there are planar sets of points~$X$ for which the factor $\omega(1/\eps)$ in the above statement could be replaced by $\Omega\big(\log(1/\eps)\big)$.

In a paper that served as the main motivation for this work, Balogh and Solymosi~\cite{BalSol18} showed that, for arbitrarily small $\eps > 0$, there are sets $X \subseteq \RR^2$ such that the range space of lines on $X$ does not have $\eps$-nets with fewer than $(1/\eps)\big(\log(1/\eps)\big)^{1/3-o(1)}$ points; their proof relied on the hypergraph container theorems. We review the construction of Balogh and Solymosi~\cite{BalSol18} and, using our new, efficient container lemma, we further improve their lower bound, replacing the constant $1/3$ in the exponent with~$1/2$.

\begin{thm}
  \label{thm:eps-nets}
  The following holds for every $\eps_0 > 0$. There exists an $\eps \in (0, \eps_0)$ and a~finite set $X \subseteq \RR^2$ such that the smallest size of an $\eps$-net for the family of intersections of straight lines with $X$ is at least
  \[
    \frac{1}{80\eps} \cdot \sqrt{\frac{\log(1/\eps)}{\log\log(1/\eps)}}.
  \]
\end{thm}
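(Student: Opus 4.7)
The plan is to follow the argument of Balogh and Solymosi~\cite{BalSol18} \emph{mutatis mutandis}, substituting our efficient container lemma (Theorem~\ref{thm:main-simple}) for the classical basic container lemma used there. The improvement of the exponent from $1/3 - o(1)$ to $1/2$ reflects, essentially one-to-one, the gain in the container parameter $\delta$: whereas the classical lemma provides only $\delta \ge 1/s!$, Theorem~\ref{thm:main-simple} gives $\delta = \Omega(s^{-5})$, and this permits the uniformity $s$ to be taken as large as $\sqrt{\log(1/\eps)/\log\log(1/\eps)}$ at the optimum, rather than $(\log(1/\eps))^{1/3-o(1)}$.

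I would first recall the point-line configuration of~\cite{BalSol18}: a finite set $X \subseteq \RR^2$ of $n$ points, together with a family $\cL$ of $m \approx n^2/k^2$ \emph{rich} lines, each incident to exactly $k$ points of $X$, chosen so that any two points of $X$ lie on at most one line of $\cL$. Here the final $\eps$ is set to $k/n$ and the parameters $s \le k \le n$ are specified at the end. Define the $s$-uniform hypergraph $\HH$ on $X$ whose edges are the $s$-element subsets of lines in $\cL$. Since any $t \ge 2$ collinear points lie on at most one rich line, $\Delta_t(\HH) \le \binom{k-t}{s-t}$ for $t \ge 2$, while $\Delta_1(\HH)$ is controlled by an incidence count; one verifies that the hypothesis~\eqref{eq:Delta-main-simple} of Theorem~\ref{thm:main-simple} is satisfied with $K$ and $q$ of orders $s$ and $s^6/k$ respectively, provided $n$ is a sufficiently large polynomial in $k$. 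Theorem~\ref{thm:main-simple} then produces a family of containers for $\II(\HH)$, each of size at most $(1-\delta)n$ with $\delta = \Omega(s^{-5})$, indexed by fingerprints of total size $qn = O(s^6 n / k)$.

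Finally, I would convert this into a lower bound on the minimum $\eps$-net of $X$ via the counting argument of~\cite{BalSol18}: each $\eps$-net $N$ in $X$ corresponds to an (almost) independent set $X \setminus N$ in $\HH$, which lies in some container $C$, so that $|N| \ge n - |C|$; a more delicate double-counting, weighing the number of fingerprints $\exp\bigl(O(s^6 n / k)\bigr)$ against the collection of subsets of $X$ of a prescribed size, yields the claimed lower bound $\tfrac{1}{80\eps}\sqrt{\log(1/\eps)/\log\log(1/\eps)}$ after optimising with $s \asymp \sqrt{\log(1/\eps)/\log\log(1/\eps)}$.

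The main obstacle is to execute the counting and optimisation cleanly with the new bounds — in particular, to confirm that no other step of the argument in~\cite{BalSol18} imposes a stronger restriction on $s$ than the one coming from Theorem~\ref{thm:main-simple}, and to track the explicit constant $1/80$ through the optimisation. No conceptually new ingredient beyond our container lemma is required; the geometric construction of the point set $X$ can be taken verbatim from~\cite{BalSol18}.
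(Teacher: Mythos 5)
Your overall strategy---a grid-based point set, random sparsification, containers for the hypergraph of collinear $s$-tuples, and the diagnosis that the exponent improves from $1/3$ to $1/2$ because the container parameters are now polynomial in $s$ rather than of order $s!$---is exactly the route the paper takes (the paper works directly with the two-dimensional grid $\br{n}^2$ and lines of slope $M/h$ rather than a projected high-dimensional grid, but this is cosmetic).

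The conversion step as written, however, does not give the claimed bound. A single application of Theorem~\ref{thm:main-simple} produces containers of size at most $(1-\delta)v(\HH)$ with $\delta = \Theta(s^{-5})$, and the inequality $|N| \ge n - |C| \ge \delta n$ then yields only $|N| = \Omega(|X|/s^{5})$, which is of order $s^{-4}/\eps$---worse than the trivial bound $1/\eps$, let alone the claimed $\Theta(s/\eps)$. To obtain a net of size $\Omega(|X|) = \Omega(s/\eps)$ one must iterate the basic lemma about $\delta^{-1} = \Theta(s^{5})$ times until every container occupies at most half of the ground set; this is Theorem~\ref{thm:main-packaged}, and the iteration needs a supersaturation input (every subset of $\br{n}^2$ with at least $n^2/3$ points spans at least $|\cL|$ collinear $s$-tuples, Lemma~\ref{lemma:eps-nets-supersaturation}) to keep the degree hypotheses valid at every step. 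The gain from the new lemma thus enters through the \emph{number of iterations}, which keeps $\log|\cC|$ of order $\mathrm{poly}(s)\cdot q\log(1/q)\cdot n^2$ and lets the union bound close. Relatedly, the final step is not a double count against subsets of prescribed size: one fixes a $p$-random $R \subseteq \br{n}^2$, applies a Chernoff bound to $|R \cap C|$ for each of the at most $\exp(pn^2/300)$ containers (each of size at most $n^2/2$), and concludes that with high probability every independent subset of $R$ has fewer than $3pn^2/5 \le (3/4)|X|$ points. The window~\eqref{eq:eps-net-p} for $p$ in which both this and the trimming step succeed is nonempty precisely when $\log n \gtrsim s^2\log s$, which is where $s \asymp \sqrt{\log(1/\eps)/\log\log(1/\eps)}$ comes from. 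Finally, note that the trimming must remove \emph{all} $(s+1)$-element collinear sets and one must set $\eps = s/|X|$, so that the complement of an $\eps$-net is genuinely independent in the $s$-uniform hypergraph; with your choice $\eps = k/n$ and $s < k$, the complement of a net could still contain $s$ collinear points and your "(almost) independent" parenthesis papers over a real step.
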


We should mention here that, several years prior to~\cite{BalSol18}, Pach and Tardos~\cite{PacTar13} showed that the families defined by intersections of finite point sets with axis parallel rectangles (in $\RR^2$) and axis-parallel boxes in $\RR^4$ may require $\eps$-nets of sizes $\Omega\big((1/\eps) \log\log(1/\eps)\big)$ and $\Omega\big((1/\eps)\log(1/\eps)\big)$, respectively; both these lower bounds are tight up to multiplicative constants, see~\cite{AroEzrSha10}.

\subsection{Upper bounds on Ramsey numbers}
\label{sec:application-Ramsey}

Given graphs $G$ and $H$ and a positive integer $k$, we write $G \rightarrow (H)_k$, and say that \emph{$G$ is Ramsey for $H$ in $k$ colours}, if every $k$-colouring of the edges of $G$ contains a monochromatic copy of $H$. In other words, $G \rightarrow (H)_k$ if, for every $c \colon E(G) \to \br{k}$, there is some $i \in \br{k}$ such that the graph $c^{-1}(i)$ contains $H$ as a subgraph.\footnote{Throughout the paper, we write $\br{k}$ as a shorthand for $\{1, \dotsc k\}$.} The famous theorem of Ramsey~\cite{Ram29} states that, for all  positive integers $n$ and $k$, there is an integer $N$ such that $K_N \rightarrow (K_n)_k$; we shall denote the smallest such $N$, the \emph{$k$-colour Ramsey number of $K_n$}, by $R(n;k)$. It is well-known that $R(n;k) \le (kn)!/(n!)^k \le k^{kn}$, see~\cite{ErdSze35,GreGle55}.

Fifty years ago, Folkman~\cite{Fol70} proved that, for every $n$, there exists a graph $G$ such that $G \nsupseteq K_{n+1}$ but, nevertheless, $G \rightarrow (K_n)_2$ and Ne\v{s}et\v{r}il and R\"odl~\cite{NesRod76} generalised this result to an arbitrary number of colours. Define the \emph{$k$-colour Folkman number for $K_n$} by
\[
  F(n;k) = \min\big\{N \in \NN : G \rightarrow (K_n)_k \text{ for some $K_{n+1}$-free $G \subseteq K_N$}\big\}.
\]
The constructions given in~\cite{Fol70,NesRod76} yielded upper bounds on $F(n;k)$ that are tower functions of height polynomial in $n$ and $k$. On the other hand, the strongest lower bound on $F(n;k)$, due to Lefmann~\cite{Lef87}, is only exponential in $kn$. In the recent years, the transference theorems of Conlon--Gowers~\cite{ConGow16} and Schacht~\cite{Sch16} (see also~\cite{FriRodSch10}) were used by R\"odl, Ruci\'nski, and Schacht~\cite{RodRucSch-weak} and by Conlon and Gowers (unpublished) to give improved upper bounds on $F(n;k)$ that were merely doubly-exponential in $n$ and $k$. Soon afterwards, the first of these two groups of authors~\cite{RodRucSch17} used the hypergraph container theorems to give the first exponential bound $F(n;k) \le \exp\big(O(n^4\log n + n^3k\log k)\big)$. Our next application of the efficient container lemma is the following improvement of this result.

\begin{thm}
  \label{thm:Folkman}
  There exists a constant $C$ such that, for all positive integers $n$ and $k$,
  \[
    F(n;k) \le \big(CknR(n;k)\big)^{21n^2} \le \exp\left(Ckn^3\log k\right).
  \]
\end{thm}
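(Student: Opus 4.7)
The plan is to follow the probabilistic strategy of R\"odl, Ruci\'nski, and Schacht~\cite{RodRucSch17}, replacing their use of the original container lemma by Theorem~\ref{thm:main-simple}. The key quantitative saving comes from the improvement of $\delta$ from $\approx 1/s!$ to $\Omega\big(1/(s^4 K)\big)$: with $s = \binom{n}{2}$, this cuts the number of container iterations needed to reach the Tur\'an density from $\exp\big(\Omega(n^2 \log n)\big)$ down to $\mathrm{poly}(n)$, which is precisely what trims the $n^4\log n$ term from the exponent in~\cite{RodRucSch17}.

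For the setup and alteration, take $N = (CknR(n;k))^{21n^2}$ and let $p$ sit just below the threshold $N^{-2/n}$ at which the expected number of $K_{n+1}$-copies in $G(N,p)$ becomes comparable to $p\binom{N}{2}$. Let $G = G(N,p)$. A standard alteration argument reduces the theorem to establishing a \emph{robust Ramsey property}: with probability at least $3/4$, $G \to (K_n)_k$ remains true after the removal of an arbitrary set of $o(e(G))$ edges. Deleting one edge from every $K_{n+1}$-copy then produces a $K_{n+1}$-free graph that is still Ramsey for $K_n$ in $k$ colours, giving $F(n;k) \le N$.

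The main step, the container construction, proceeds as follows. Let $\HH$ be the $\binom{n}{2}$-uniform hypergraph on $V(\HH) = E(K_N)$ whose edges are the edge sets of copies of $K_n$ in $K_N$. A direct calculation gives $\Delta_t(\HH) \le \binom{N - v_t}{n - v_t}$, where $v_t = \lceil (1+\sqrt{1+8t})/2 \rceil = \Theta(\sqrt{t})$; one then checks that~\eqref{eq:Delta-main-simple} holds with $K = O(n^2)$ and $q = \Theta(n^{O(1)}/N)$, which yields $\delta = \Omega(n^{-10})$. Iterating Theorem~\ref{thm:main-simple} for $T = \mathrm{poly}(n)$ rounds produces a family $\cC$ of final containers, each of size at most $\ex(N, K_n) + o(N^2)$, with $\log|\cC| = \mathrm{poly}(n) \cdot \log N$. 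A colouring $c \colon E(G) \to \br{k}$ witnessing $G \not\to (K_n)_k$ provides, for each $i$, a container $C_i \supseteq c^{-1}(i)$ and a fingerprint $S_i \subseteq c^{-1}(i)$, with the $S_i$ pairwise disjoint and $\sum_i |S_i| \le k \cdot n^{O(1)}$. This covers the event $\{G \not\to (K_n)_k\}$ by the union, over admissible tuples $\big((S_i), (C_i)\big)$, of events $\big\{\bigsqcup_i S_i \subseteq E(G) \subseteq \bigcup_i C_i\big\}$.

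The main obstacle is controlling this union bound once $k \gtrsim n$, since then the trivial estimate $|\bigcup_i C_i| \le k(1 - 1/n + o(1))\binom{N}{2}$ is useless. To get past this I would exploit the very definition of $R(n;k)$: every copy of $K_{R(n;k)}$ present in $G$, once $k$-coloured, contains a monochromatic $K_n$, and hence $\binom{n}{2}$ edges of $G$ are forced inside a single $C_i$. Counting copies of $K_{R(n;k)}$ in $G$ and averaging against the tuple $(C_i)$ forces, for any $(C_i)$ witnessing a non-Ramsey $G$, either a non-trivial lower bound $\binom{N}{2} - |\bigcup_i C_i| \ge \Omega\big(N^2/R(n;k)^{O(1)}\big)$ or a compensating gain of $p^{\Omega(\binom{n}{2})}$ per forced monochromatic $K_n$. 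Plugging either outcome into the union bound together with $k \log|\cC| \le k \cdot \mathrm{poly}(n) \log N$ yields $\PP[G \not\to (K_n)_k] \le 1/4$ for our choice of $N$ and $p$, which combined with the alteration step completes the proof.
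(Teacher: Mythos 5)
Your overall architecture matches the paper's: a random graph $G_{N,p}$, an alteration deleting one edge per $K_{n+1}$, a container family for non-Ramsey colourings, and a supersaturated form of Ramsey's theorem to show every container misses many edges of $K_N$. The genuine divergence is that you build \emph{per-colour} containers for $K_n$-free graphs on the vertex set $E(K_N)$ and then union-bound over $k$-tuples $(C_1,\dotsc,C_k)$, whereas the paper containerises the colourings themselves: its hypergraph lives on $E(K_N)\times\br{k}$, a non-Ramsey colouring is a single independent set there, and a single container $C$ determines the set $X(C)$ of edges that can receive no colour. This removes the tuple bookkeeping entirely, and the factor $k$ in the number of containers is absorbed into $v(\HH)=k\binom{N}{2}$ and the parameter $\beta$.

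The step that does not go through as written is your resolution of the ``main obstacle.'' The second branch of your dichotomy --- ``a compensating gain of $p^{\Omega(\binom{n}{2})}$ per forced monochromatic $K_n$'' --- is not a valid move: the containers $C_i$ are \emph{supersets} of the colour classes $c^{-1}(i)$, so a copy of $K_n$ inside $C_i$ is not a monochromatic copy in the colouring and nothing about $G$ is forced; there is no event whose probability you can multiply into the union bound. The correct resolution, which the paper implements via Theorem~\ref{thm:main-packaged} and Lemma~\ref{lemma:supersaturation-Ramsey}, is deterministic and happens before any randomness enters: the stopping condition of the container iteration must be ``fewer than $E=(N/(2R))^n$ copies of $K_n$ after deleting a $\beta$-fraction of the container,'' not ``size at most $\ex(N,K_n)+o(N^2)$.'' Then, if $\bigcup_i C_i$ missed fewer than $(N/(4R))^2$ edges of $K_N$, one could $(k{+}1)$-colour $K_N$ so that colour $k{+}1$ is rare, and the supersaturated Ramsey lemma (applied to $K_N$, not to $G$) would place at least $\tfrac{1}{2}(N/R)^n$ copies of $K_n$ inside the containers, contradicting the stopping condition; only after this does the Chernoff/union bound over $G_{N,p}$ appear. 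Two further quantitative slips: $\log|\cC|$ is of order $\mathrm{poly}(n,k,R)^{-1}\, pN^2\log N$, not $\mathrm{poly}(n)\log N$; and $p\approx N^{-2/n}$ lies \emph{below} the admissible window --- the codegree hypotheses of the container lemma force $p\gtrsim N^{-2/(n+1)}\cdot\mathrm{poly}(k,n,R)\log N$, while the alteration only needs $p\lesssim N^{-2/(n+2)}/R$, which is the window the paper uses.
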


Another well studied variation of the classical Ramsey numbers are induced Ramsey numbers. Given graphs $G$ and $H$ and a positive integer $k$, we write $G \indarrow (H)_k$, and say that \emph{$G$ is induced-Ramsey for $H$ in $k$ colours}, if every $k$-colouring of the edges of $G$ contains a monochromatic induced copy of $H$. In other words, $G \indarrow (H)_k$ if, for every $c \colon E(G) \to \br{k}$, there are an $i \in \br{k}$ and an injection $\varphi \colon V(H) \to V(G)$ such that $\varphi\big(E(H)\big) \subseteq c^{-1}(i)$ and $\varphi\big(E(H)^c\big) \cap E(G) = \emptyset$. The existence of induced-Ramsey graphs for every $H$ and any number of colours $k$ was established, independently, by Deuber~\cite{Deu75}, by Erd\H{o}s, Hajnal, and P\'osa~\cite{ErdHajPos75}, and by R\"odl~\cite{Rod73}. We may thus define the \emph{$k$-colour Ramsey number of $H$} by
\[
  \Rind(H;k) = \min\big\{N \in \NN : G \indarrow (H)_k \text{ for some } G \subseteq K_N\big\}.
\]
The upper bounds on $\Rind(H;k)$ implied by the constructions of~\cite{Deu75,ErdHajPos75,Rod73} were enormous. In spite of that, Erd\H{o}s~\cite{Erd84} conjectured that, for every $n$-vertex graph $H$, the $2$-colour induced Ramsey number $\Rind(H;2)$ is only exponential in $n$. The best-known result to date was obtained by Conlon, Fox, and Sudakov~\cite{ConFoxSud12}, who proved that $\Rind(H;2) \le \exp\big(O(n\log n)\big)$ for every $n$-vertex graph $H$. However, the method of~\cite{ConFoxSud12} does not work when the number of colours is larger than two. The strongest general upper bound for $k$-colour induced Ramsey numbers in the case $k > 2$ that can be found in the literature is due to Fox and Sudakov~\cite{FoxSud09}, who showed that $\Rind(H;k) \le \exp(C_kn^3)$ for every $n$-vertex $H$, where $C_k$ depends only on $k$. However, Fox (private communication) informed us that the methods of~\cite{FoxSud09}, which were optimised for sparse graphs $H$, may be used to prove that $\Rind(H;k) \le \exp(Ckn^2\log k)$. Our final application of the efficient container lemma is a short derivation of this bound.

\begin{thm}
  \label{thm:induced-Ramsey}
  There exists a constant $C$ such that, for every positive integer $k$ and every $n$-vertex graph $H$,
  \[
    \Rind(H;k) \le \big(Cn^2kR(n;k)\big)^{7n} \le \exp\left(Ckn^2\log k\right).
  \]
\end{thm}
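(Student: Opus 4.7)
The plan is to prove the theorem by the probabilistic method, in the spirit of the Folkman bound of Theorem~\ref{thm:Folkman} (and the argument of~\cite{RodRucSch17}). We take $N = (Cn^2 k R(n;k))^{7n}$ for a sufficiently large absolute constant $C$ and a suitably chosen edge probability $p \in (0, 1)$, and show that the random graph $G = G(N, p)$ satisfies $G \indarrow (H)_k$ with positive probability.

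The central estimate is a container-based upper bound on the number of $k$-colourings of $E(K_N)$ that admit no monochromatic induced copy of $H$. Each such colouring is encoded, via the usual trick of labelling non-edges of $G$ with an auxiliary symbol $0$, as a $(k+1)$-colouring $\tilde c \colon E(K_N) \to \{0, 1, \dots, k\}$, which corresponds to an independent set in an auxiliary hypergraph $\HH$ on vertex set $E(K_N) \times \{0, 1, \dots, k\}$. The hyperedges of $\HH$ encode the event that a given triple $(S, \varphi, i)$ --- an $n$-subset $S \subseteq [N]$, a bijection $\varphi \colon V(H) \to S$, and a colour $i \in [k]$ --- becomes a monochromatic induced copy of $H$ in colour $i$, with non-edges of $\varphi(H)$ receiving labels in $\{0, \dots, k\} \setminus \{i\}$. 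The resulting hypergraph has uniformity $s = \binom{n}{2}$, roughly uniform degrees, and satisfies the hypotheses of Theorem~\ref{thm:main-simple} with parameters $K$ polynomial in $n$ and an appropriate $q$ driven by the $\Delta_2$ condition.

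Applying Theorem~\ref{thm:main-simple} iteratively to $\HH$ yields a family $\cC$ of at most $(Cn^2 k R(n;k))^{7n}$ containers, each so restrictive that for every label $j \in \{0, 1, \dots, k\}$ the slice $C_j = \{e \in E(K_N) : (e, j) \in C\}$ occupies only a tiny fraction of $E(K_N)$. The iteration is carried out $O(n)$ times, relying crucially on the polynomial-in-$s$ dependence of the parameter $\delta = \Theta(1/(s^4 K))$ in Theorem~\ref{thm:main-simple}; the classical $\delta \sim 1/s!$ bound would demand far more iterations and blow up the final number of containers out of reach.

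For each container $C \in \cC$, a bad colouring of $E(G)$ compatible with $C$ forces $E(G) \subseteq C_1 \cup \dots \cup C_k$ together with a matching condition on non-edges, and standard Chernoff-type bounds show that this event has probability at most $\exp\bigl(-\Omega(pN^2)\bigr)$ over the random choice of $G(N, p)$. Since $\log|\cC| = O(n\log(n^2 k R))$ whereas $pN^2$ is far larger for our choice of parameters, the union bound over $\cC$ closes and $G \indarrow (H)_k$ with positive probability. The main technical obstacle is the detailed degree analysis of $\HH$ and the coordinated choice of $K$, $q$, and iteration count, which must simultaneously produce a family $\cC$ of the claimed size and keep the container slices small enough for the final union bound to go through.
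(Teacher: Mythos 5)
Your high-level architecture is the paper's: encode $k$-colourings of subgraphs of $K_N$ as subsets of $E(K_N)\times\{0,\dotsc,k\}$ via the auxiliary label $0$ on non-edges, apply the container theorem to the resulting $\binom{n}{2}$-uniform hypergraph $\HH$, and run a union bound over containers against a random subgraph of $K_N$ (the paper takes $G\sim G_{N,1/2}$, which makes the compatibility probability exactly $2^{-|X_1(C)\cup X_2(C)|}$). However, there are genuine gaps. First, your hyperedges are mis-specified: for $\varphi$ to be a monochromatic \emph{induced} copy of $H$ in colour $i$, the non-edges of $H$ inside $\varphi(V(H))$ must be non-edges of $G$, i.e.\ must carry the label $0$ exactly, not merely a label in $\{0,\dotsc,k\}\setminus\{i\}$. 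With your definition a colouring containing a hyperedge need not contain a monochromatic induced copy, so colourings witnessing $G\not\indarrow(H)_k$ need not be independent sets of your $\HH$, and the container covering does not reach them. The correct hyperedge is $\left(\varphi(E(H))\times\{i\}\right)\cup\left(\varphi(E(K_n)\setminus E(H))\times\{0\}\right)$.

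Second, the step you assert without proof --- that every container is ``so restrictive'' that compatibility is exponentially unlikely --- is where the only Ramsey-theoretic input enters, and it is missing. The container theorem only guarantees that each $C$ is either small or has a large subset $W$ with $e(\HH[W])<E$; converting this into the statement that at least $(N/(4R))^2$ edges $e$ satisfy $(e,0)\notin C$ or $(\{e\}\times\br{k})\cap C=\emptyset$ requires a supersaturated Ramsey lemma (Lemma~\ref{lemma:supersaturation-Ramsey}): a $(k+1)$-colouring of $E(K_N)$ with few edges in colour $k+1$ contains at least $\tfrac12(N/R)^n$ monochromatic copies of $K_n$, each yielding a hyperedge of $\HH$ inside $W$ and contradicting $e(\HH[W])<E=(N/(2R))^n$. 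Note also that your formulation of restrictiveness (every slice $C_j$ tiny) cannot be right, since the slices must jointly cover almost all of $E(K_N)$ for $C$ to contain any full colouring. Finally, your bookkeeping is far off in a way that hides the real tension: the container family has size $\exp\left(\Theta(N^2/R^2)\right)$ (doubly exponential in $n$), not $(Cn^2kR(n;k))^{7n}=N$ --- indeed the signature sets alone have $\gtrsim s^7/(\alpha\beta)\gg n\log(nkR)$ elements --- and the per-container probability is $\exp\left(-cN^2/R^2\right)$, not $\exp(-\Omega(pN^2))$. Both exponents are of order $N^2/R^2$, and the union bound closes only because the constants can be arranged favourably; similarly the iteration depth is $\mathrm{poly}(n)\cdot kR^2\log k$ rather than $O(n)$, though your qualitative point --- that the polynomial dependence on $s$ is what makes the final bound attainable --- is correct.
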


Finally, let us mention that Conlon, Dellamonica Jr., La Fleur, R\"odl, and Schacht~\cite{ConDelLaFRodSch17} used the original container theorems to prove strong bounds on the induced Ramsey numbers of uniform hypergraphs.

\subsection{Packaged statement}

Each of the four illustrations of Theorem~\ref{thm:main-simple} presented in this paper requires iterative/recursive applications of the theorem. In order to save ourselves (and the reader) from repeating similar, routine arguments and calculations several times, it will be convenient for us to work with the following `packaged' version of the theorem that is analogous to~\cite[Theorem~2.2]{BalMorSam15} and~\cite[Corollary~3.6]{SaxTho15}.

\begin{thm}
  \label{thm:main-packaged}
  Let $s$ be a positive integer and let $\HH$ be a nonempty~$s$-uniform hypergraph.
  Suppose that $\alpha, \beta, q \in (0,1)$ and $E \ge v(\HH)$ are such that $\alpha \beta q \cdot v(\HH) \ge 10^9 s^7$ and $10^4 s^5 q \le \beta$ and, for every $t \in \{2, \dotsc, s\}$,
  \[
    \Delta_t(\HH) \le \left(\frac{q}{10^6 s^5}\right)^{t-1} \cdot \frac{E}{v(\HH)}.
  \]
  Then there is a family $\cC \subseteq \cP(V(\HH))$ of at most $\exp\left(10^4s^5\beta^{-1} \log(e/\alpha) \cdot q \log(e/q) \cdot v(\HH) \right)$ sets such that:
  \begin{enumerate}[label=(\roman*)]
  \item
    For every $I \in \II(\HH)$, there is a $C \in \cC$ such that $I \subseteq C$.
  \item
    For every $C \in \cC$, either $|C| \le \alpha v(\HH)$ or there is a subset $W \subseteq C$ with $|W| \ge (1-\beta)|C|$ such that $e(\HH[W]) < E$.
  \end{enumerate}
\end{thm}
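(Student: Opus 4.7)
The proof iteratively applies Theorem~\ref{thm:main-simple} to bad containers. We start with $\cC = \{V(\HH)\}$ and, at each step, for every $C \in \cC$ that fails~(ii) (that is, $|C| > \alpha v(\HH)$ and every $W \subseteq C$ with $|W| \ge (1-\beta)|C|$ has $e(\HH[W]) \ge E$), we apply Theorem~\ref{thm:main-simple} to a suitable subhypergraph of~$\HH[C]$ and replace~$C$ in~$\cC$ by the resulting finer family of subcontainers. The iteration halts once every container satisfies~(ii).

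The main task is to choose the parameters so that Theorem~\ref{thm:main-simple} applies. Since $C$ is bad, taking $W = C$ in the negation of~(ii) gives $e(\HH[C]) \ge E$, and combined with $|C| \le v(\HH)$ one sees immediately that for every $t \ge 2$,
\[
  \Delta_t(\HH[C]) \le \Delta_t(\HH) \le \left(\tfrac{q}{10^6 s^5}\right)^{t-1}\tfrac{E}{v(\HH)} \le \left(\tfrac{q}{10^6 s^5}\right)^{t-1}\tfrac{e(\HH[C])}{|C|},
\]
so the corresponding degree inequality of Theorem~\ref{thm:main-simple} is satisfied with $K = O(1)$ for all $t \ge 2$. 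The subtle case is $t = 1$, where no a priori control of $\Delta_1(\HH)$ is available from the packaged hypothesis. The standard remedy is to set aside the heavy vertices before applying the lemma: define $U := \{v \in C : \deg_{\HH[C]}(v) > \theta\}$ with $\theta$ chosen (concretely, $\theta \sim s^2 e(\HH[C])/(\beta|C|)$) so that a degree-sum argument gives $|U| \le \beta|C|/s$, apply Theorem~\ref{thm:main-simple} to $\HH[C-U]$, and attach~$U$ to every resulting container $g \cup f(g)$. Since $|C-U| \ge (1-\beta)|C|$, the badness of~$C$ still forces $e(\HH[C-U]) \ge E$, so the $t \ge 2$ estimates transfer to $\HH[C-U]$; meanwhile $\Delta_1(\HH[C-U]) \le \theta$, which dictates the value of~$K$ to be used.

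With this preprocessing the parameter $\delta = (10^3 s^4 K)^{-1}$ is of order $\beta/s^5$ in the typical regime, so each bad container shrinks by a factor of $1 - \Omega(\delta(1-\beta))$ per iteration and, after at most $r = O(s^5 \beta^{-1}\log(1/\alpha))$ steps, every container has $|C| \le \alpha v(\HH)$. Each application of Theorem~\ref{thm:main-simple} multiplies the container count by at most $\binom{|C|}{\le q|C|} \le \exp(q|C|\log(e/q))$, and since the maximum container sizes~$M_j$ form a geometrically shrinking sequence the contributions telescope:
\[
  \log|\cC| \le \sum_j qM_j \log(e/q) \le \frac{2qv(\HH)\log(e/q)}{\delta} = O\bigl(s^5 \beta^{-1} \cdot q v(\HH)\log(e/q)\bigr),
\]
comfortably inside the claimed exponent $10^4 s^5 \beta^{-1} \log(e/\alpha) \cdot q\log(e/q) \cdot v(\HH)$. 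The main obstacle is the calibration of the heavy-vertex threshold~$\theta$: it must be small enough to make $\Delta_1(\HH[C-U])$ manageable yet large enough that $|U|$ is only an $O(\beta)$-fraction of~$|C|$. Verifying that the hypotheses $q|C| \ge 10^8 s^6 K$ and $10^4 s^5 q \le \beta$ permit both constraints to be met simultaneously---and that $e(\HH[C])$ does not blow up relative to $e(\HH[C-U])$ when the edges of $\HH[C]$ happen to cluster on the heavy vertices---is the core technical work of the proof.
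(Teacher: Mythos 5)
Your overall architecture matches the paper's: iterate Theorem~\ref{thm:main-simple} on bad containers, use $e(\HH[C])\ge E$ together with $|C|\le v(\HH)\le E$ to transfer the $t\ge 2$ degree hypotheses, regularize $\Delta_1$ before each application, and count via the geometric decay of container sizes (your telescoping sum is fine and in fact slightly sharper than the paper's height-times-branching bound, since $\log(e/\alpha)\ge 1$). The gap is exactly where you flag it: the $\Delta_1$ regularization. Your threshold $\theta\sim s^2 e(\HH[C])/(\beta|C|)$ does give $|U|\le\beta|C|/s$ and hence $e(\HH[C-U])\ge E$, but the resulting bound $\Delta_1(\HH[C-U])\le\theta$ is calibrated against $e(\HH[C])$, which may be arbitrarily much larger than $e(\HH[C-U])$ when the edges cluster on the heavy vertices; then $K=\theta\cdot|C-U|/e(\HH[C-U])$ is unbounded and $\delta$ degenerates. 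Recalibrating $\theta$ against $E$ instead breaks the other constraint: $|U|\le s\,e(\HH[C])/\theta$ can then exceed $\beta|C|$. So neither choice of threshold closes the argument, and this is not a routine verification but the step that needs a new idea.

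The paper resolves it (Lemma~\ref{lemma:Delta-1-supersaturation}) by deleting \emph{edges} rather than vertices: take a maximal subhypergraph $\HH'\subseteq\HH[C]$ subject to $\Delta_1(\HH')\le\lceil sE/(\beta|C|)\rceil$, and let $X$ be the set of vertices saturating this cap. Every edge of $\HH[C]$ avoiding $X$ lies in $\HH'$ by maximality, so either $|X|\le\beta|C|$ and the robust-density hypothesis applied to $C\setminus X$ gives $e(\HH')\ge e(\HH[C]-X)\ge E$, or $|X|>\beta|C|$ and the degree sum over $X$ alone already gives $e(\HH')>E$. Either way $e(\HH')\ge E\ge|C|$, so the cap translates into $\Delta_1(\HH')\le(2s/\beta)\cdot e(\HH')/v(\HH')$, i.e.\ $K=2s/\beta$ and $\delta=\beta/(2\cdot10^3s^5)$, after which your shrinking and counting estimates go through essentially as you wrote them. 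Note the two features that make this work and that your version lacks: the cap is defined relative to the target $E$ (not the possibly inflated $e(\HH[C])$), and no vertices are removed, so there is no need to compare $e(\HH[C])$ with the edge count after removal.
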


The derivation of Theorem~\ref{thm:main-packaged} from Theorem~\ref{thm:main-simple} is presented in Section~\ref{sec:simple-pack-vers}

\subsection{Organisation of the paper}
\label{sec:organisation-paper}

The remainder of this paper is organised as follows. In Section~\ref{sec:main-techn-result}, we introduce the crucial concept of degree measures, state our main technical result, Theorem~\ref{thm:main}, and derive from it Theorems~\ref{thm:main-simple} and~\ref{thm:main-packaged}. Section~\ref{sec:prop-degree-measure} is devoted to establishing key properties of degree measures; these properties are used in the subsequent Section~\ref{sec:proof}, which contains the proof of Theorem~\ref{thm:main}. Two probabilistic inequalities needed for the four applications of our new container lemma are stated in Section~\ref{sec:prob-ineq}. Finally, Section~\ref{sec:typical-structure-clique-free} is devoted to the proof of Theorem~\ref{thm:clique-free}, Section~\ref{sec:lower-bounds-eps-nets} gives the proof of Theorem~\ref{thm:eps-nets}, and Section~\ref{sec:upper-bounds-Ramsey} contains proofs of Theorems~\ref{thm:Folkman} and~\ref{thm:induced-Ramsey}.

\subsection{Acknowledgement}

First of all, we are indebted to Rob Morris, David Saxton, and Andrew Thomason for sharing their numerous insights about the container theorems that had a strong bearing on this work. The notion of degree measures, which is central to our approach here, as well as the important idea of allowing hypergraphs to have multiple edges were first introduced by Andrew Thomason and David Saxton~\cite{SaxTho15}. Additionally, we would like to thank Noga Alon for his comments and suggestions regarding lower bounds on $\eps$-nets. We are also indebted to the anonymous referee for their extremely careful reading of the earlier version of this paper and their helpful comments and suggestions, which saved us from having several embarrassing mistakes in the final version of this work.

The second named author thanks Jacob Fox, Frank Mousset, and Bhargav Narayanan for inspiring discussions about upper-bounding induced Ramsey numbers. Last but not least, the second named author owes his deepest gratitude to Lev Buhovski for an inspiring discussion about high-dimensional convex geometry that laid foundations for Lemma~\ref{lemma:geometry}, which lies at the very heart of the proof of Theorem~\ref{thm:main}.

\section{The main technical result}
\label{sec:main-techn-result}

\subsection{A word of motivation}

The key idea behind the proof of the container lemma due to Morris and the authors~\cite{BalMorSam15} is to, given an $(r+1)$-uniform hypergraph $\HH$ and an independent set $I \in \II(\HH)$, consider a sequence of vertices of $\HH$ for inclusion in a small `signature' set $S$ and construct an $r$-uniform hypergraph $\GG$ from the neighbourhoods (link hypergraphs) of those among the considered vertices that belong to $I$. Crucially, each element of this sequence is allowed to depend only on the intersection of $I$ with the set of its predecessors; this guarantees that $\GG$ depends solely on $S$. Since $\GG$ comprises only neighbourhoods of vertices in $I$, we have $I \in \II(\GG)$. This facilitates induction on the uniformity of the hypergraph.

Whereas there is essentially one way to define containers for independent sets in a $1$-uniform hypergraph, the general description of the inductive step given above leaves plenty of room for manoeuvre. The approach taken in~\cite{BalMorSam15} was, roughly speaking, to cap the degrees of all vertices of $\GG$ at some predefined value $\Delta$ and, at the same time, make sure that $e(\GG) \ge \beta \Delta v(\GG)$ for some constant $\beta$; this way, the ratio of the maximum and the average degrees of the constructed hypergraph $\GG$ remained bounded by a constant. The advantage of this approach was its relative simplicity. However, this simplicity came at a price; the gap between the maximum and the average degrees was forced to grow by a factor of at least $r+1$, at each step of the induction (reducing uniformity from $r+1$ to $r$).\footnote{For those readers who are somewhat familiar with the proof in~\cite{BalMorSam15}, the essence of the above shortcoming was the following. Only one vertex of degree $\Delta$ in $\GG$ forced us to remove an edge from $\HH$, but while counting the edges of $\GG$ that contain some vertex with degree $\Delta$, we accounted for the possibility that every edge of $\GG$ contains $r$ vertices of degree $\Delta$.} As a result, the crucial parameter $\delta$ in the basic container lemma could not exceed $1/s!$, where $s$ is the uniformity of the original hypergraph.

Here, we use a similar high-level inductive strategy. However, we take a refined approach to choosing a sequence of vertices of the $(r+1)$-uniform $\HH$ while constructing the $r$-uniform $\GG$; this yields a much more favourable dependence of the parameter $\delta$ on the uniformity $s$. The key new idea is to abandon the wish to control the maximum degree of $\GG$ and instead focus on the $\ell^2$-norm of its degree sequence. In other words, we measure hypergraphs with $\ell^2$-norms, rather than $\ell^\infty$-norms, of their degree sequences. Viewing hypergraphs as vectors in high-dimensional Euclidean spaces allows us to reduce the problem of constructing a sequence of vertices to be considered for inclusion in the `signature' to an elementary problem in convex geometry.

\subsection{Degree measures}

We begin by extending the notion of the \emph{degree measure} of a hypergraph, which was introduced by Saxton and Thomason~\cite{SaxTho15}. For a non-empty $r$-uniform hypergraph $\HH$ with vertex set $V$ and a $t \in \br{r}$, we define the \emph{$t$-degree measure} of $\HH$, denoted by $\sigma_\HH^{(t)}$, to be the probability distribution on~$\binom{V}{t}$, the family of all $t$-element subsets of $V$, given by
\[
  \sigma_\HH^{(t)}(T) = \deg_\HH T \cdot \left( \sum_{U \in \binom{V}{t}} \deg_\HH U \right)^{-1} = \frac{\deg_\HH T}{\binom{r}{t} \cdot e(\HH)}.
\]
In other words, $\sigma_\HH^{(t)}$ is the probability distribution induced by the following random experiment. Select an edge $A$ of $\HH$ uniformly at random and output a $t$-element subset $T \subseteq A$ chosen uniformly at random from $\binom{A}{t}$.

Throughout this paper, we shall identify (as we already did in the above definition) the measure $\sigma_\HH^{(t)}$ with its density (with respect to the counting measure), which we shall view as an element of the $\binom{|V|}{t}$-dimensional vector space of $\RR$-valued functions on $\binom{V}{t}$. Since the $1$-degree measure will be of particularly high importance, we shall refer to it simply as the degree measure and often suppress the superscript $^{(1)}$ from the notation, denoting it by $\sigma_\HH$. Given a positive integer $d$ and a vector $\xi =(\xi_1, \ldots, \xi_d) \in \RR^d$, we denote by $\|\xi\|$ its $\ell^2$-norm, so that
\[
  \|\xi\|^2 = \sum_{i=1}^d \xi_i^2.
\]

\subsection{The main technical result}
We are now ready to state the main technical result of this paper, Theorem~\ref{thm:main} below. We postpone the proof of the theorem to Section~\ref{sec:proof}; the proof will use several simple properties of degree measures that will be derived in Section~\ref{sec:prop-degree-measure}.

\begin{thm}
  \label{thm:main}
  Let $s \in \NN$ and suppose that a nonempty $s$-uniform hypergraph $\HH$ and reals $p, \delta \in (0,1)$ satisfy 
  \begin{equation}
    \label{eq:assumption-main}
    300s^4 \cdot \sum_{t=1}^s \binom{s-1}{t-1} \left(\frac{5000s^3}{p}\right)^{t-1} \| \sigma_\HH^{(t)} \|^2 \le \frac{1}{\delta \cdot v(\HH)} \le \frac{p}{500}.
  \end{equation}
  Then, there exist a family $\cS \subseteq \binom{V(\HH)}{\le 30s^2p \cdot v(\HH)}$ and functions $f \colon \cS \to \cP(V(\HH))$ and $g \colon \II(\HH) \to \cS$ such that, for every $I \in \II(\HH)$,
  \[
    g(I) \subseteq I \subseteq g(I) \cup f(g(I)) \qquad \text{and} \qquad |f(g(I))| \le (1-\delta) \cdot v(\HH).
  \]
  Moreover, if $g(I) \subseteq I'$ and $g(I') \subseteq I$ for some $I, I' \in \II(\HH)$, then $g(I) = g(I')$.
\end{thm}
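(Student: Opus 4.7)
The plan is to proceed by induction on the uniformity $s$. The base case $s=1$ is direct: the condition on $\|\sigma_\HH^{(1)}\|^2$ forces the set of isolated vertices (which contains every independent set) to have size at most $(1-\delta)v(\HH)$, so we take $\cS = \{\emptyset\}$, $g(I) = \emptyset$ for all $I$, and $f(\emptyset) = V(\HH) \setminus V(E(\HH))$.

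For the inductive step I would construct, for each $I \in \II(\HH)$, its signature $g(I)$ and container $f(g(I))$ via an adaptive process that maintains three disjoint sets---a signature $S$, an excluded set $X$, and an undecided set $U = V(\HH) \setminus (S \cup X)$---together with an auxiliary $(s-1)$-uniform hypergraph $\GG$ on $V(\HH)$, initially edgeless. At each step, I would invoke the geometric lemma promised in the acknowledgments (the main new ingredient from convex geometry) to select, based solely on $S$ (and thus independent of the particular $I$), a vertex $v \in U$ whose status in $I$ is maximally informative with respect to the current $\ell^2$-norms of the degree measures of $\GG$ and the surviving subhypergraph of $\HH$. If $v \in I$, I move $v$ into $S$ and add to $\GG$ the portion of the link hypergraph $\HH_v$ supported on $U \setminus \{v\}$; if $v \notin I$, I move $v$ into $X$. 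The process halts when either $|X| \ge \delta \cdot v(\HH)$ or $|S| \ge 30s^2 p \cdot v(\HH)$.

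If the first halting condition is met, I set $f(S) := V(\HH) \setminus X$, which has size at most $(1-\delta)v(\HH)$, and we are done. If instead the second condition is met, then $\GG$ is $(s-1)$-uniform, $I \setminus S$ is independent in $\GG$ (any edge of $\GG$ contained in $I \setminus S$ would combine with its generating vertex in $S \subseteq I$ to give an edge of $\HH$ inside $I$), and---this is what the geometric lemma must guarantee---the norms $\|\sigma_\GG^{(t)}\|^2$ for $t=1,\dots,s-1$ satisfy the hypothesis of Theorem~\ref{thm:main} at uniformity $s-1$ with appropriately rescaled parameters $p'$ and $\delta'$. Applying the induction to $\GG$ yields a signature--container structure which I concatenate: $g(I) := S \cup g_\GG(I \setminus S)$ and $f(g(I)) := f_\GG(g_\GG(I \setminus S))$. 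The size bounds follow by bookkeeping at the stopping thresholds, and the consistency clause follows because every vertex-selection decision depends only on $S$ at the time of the decision (hence on $I \cap \{\text{vertices queried so far}\}$); so if $g(I) \subseteq I'$ and $g(I') \subseteq I$, the two processes agree step by step and produce identical signatures.

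The main obstacle is the geometric lemma itself, together with the accompanying norm-tracking computation: one must show that each "accept" step (moving $v$ into $S$ and augmenting $\GG$ by the relevant part of $\HH_v$) increases the weighted sum $\sum_t \binom{s-2}{t-1}(5000s^3/p)^{t-1}\|\sigma_\GG^{(t)}\|^2$ by an amount controlled by the corresponding mass of $\sigma_\HH^{(t+1)}$. Viewing each $\sigma^{(t)}$ as a vector in $\RR^{\binom{V}{t}}$, this should follow from an averaging argument over directions in Euclidean space, combined with the binomial identity $\binom{s-1}{t-1} = \binom{s-2}{t-1} + \binom{s-2}{t-2}$, which lets the inductive invariant telescope cleanly as the uniformity drops from $s$ to $s-1$. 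Lining up the numerical constants with the precise thresholds $30s^2 p \cdot v(\HH)$ and $\delta \cdot v(\HH)$ appears to be the most delicate point.
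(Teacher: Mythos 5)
Your architecture matches the paper's: uniformity is reduced one level at a time by querying vertices, accepted vertices contribute their link hypergraphs to an auxiliary hypergraph of one lower uniformity, the tracked quantity is a binomially weighted combination of the $\|\sigma^{(t)}\|^2$ whose weights telescope via Pascal's identity exactly as you describe, and consistency follows from the determinism of the selection rule. The difficulty is that the proposal defers precisely the part that constitutes the proof. The geometric lemma is not just ``an averaging argument over directions'': when a link $\HH_v$ is merged into the auxiliary hypergraph, the new degree measure is a convex combination $(1-x_v\lambda_v)\mu + x_v\lambda_v\nu_v$, and expanding the squared norm produces a quadratic error term $x_v^2\lambda_v^2\|\nu_v-\mu\|^2$; the averaging converts this into error terms of the form $\Del_1(\cA)/e(\GGn)\cdot\|\mu\|^2$ and $\Del_\alpha(\cA)/e(\GGn)$, and controlling \emph{these} is where the work lies. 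It forces three devices absent from your plan: (i) pruning vertices of $\HH$ whose degree exceeds $O(s/\eps)$ times the robust average $\Del_1(\HH)$, since otherwise a single huge link ruins the bound; (ii) seeding the auxiliary hypergraph with a suitably weighted complete $(s-1)$-uniform hypergraph, since otherwise $e(\GGn)$ is tiny at the start and the error terms dominate; and (iii) deleting low-degree vertices so that every accepted link contributes at least $\eps\, e(\HH)/v(\HH)$ edges, which is what ultimately yields $e(\GGn)\ge p\cdot e(\HH)$ and makes the coefficient $50(r+1)/(\eps^2p)$ of $\|\sigma^{(t+1)}\|^2$ in the invariant affordable. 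One also needs a potential-function analysis (bounding $e(\GGn^{(j)})\cdot(\|\sigma_\alpha(\GGn^{(j)})\|^2-(1+\eps)\sigma^2)$ via a two-case argument) rather than a per-step norm comparison, because the norm itself is not monotone along the process.

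Two bookkeeping points would also fail as written. First, $30s^2p\cdot v(\HH)$ is the \emph{total} signature budget; since each of the $s-1$ levels of the recursion contributes its own signature, the per-level budget must be $O(sp\cdot v(\HH))$ (the paper uses roughly $20sp\cdot v(\HH)$ per round). Second, reaching the signature budget does not by itself certify that the norms of the new hypergraph are controlled; the correct dichotomy is between ``the queried vertices have destroyed a constant fraction of $e(\HH)$'' --- which, after the degree pruning in (i), forces at least $\delta\cdot v(\HH)$ queries and hence yields the container directly --- and ``the surviving hypergraph retains most of its edges,'' which is the hypothesis under which the drift estimate is valid. Your stopping rule $|X|\ge\delta\cdot v(\HH)$ is the conclusion of the first branch, not a condition one can simply wait for.
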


\subsection{The simple and packaged versions}
\label{sec:simple-pack-vers}

In this section, we derive Theorems~\ref{thm:main-simple} and~\ref{thm:main-packaged} from our main technical result, Theorem~\ref{thm:main} above. We start with a short proof of Theorem~\ref{thm:main-simple}.

\begin{proof}[{Derivation of Theorem~\ref{thm:main-simple} from Theorem~\ref{thm:main}}]
  Let $p = q/(30s^2)$ and let $\delta = (10^3 s^4 K)^{-1}$. It suffices to verify that $\HH$, $p$, and $\delta$ satisfy the assumptions of Theorem~\ref{thm:main}, which will give us the claimed family $\cS$ and functions $g$ and $f$. To this end, note that
  \[
  \| \sigma_\HH^{(t)} \|^2 = \sum_{T \in \binom{V(\HH)}{t}} \left(\frac{\deg_\HH T}{\binom{s}{t} \cdot e(\HH)}\right)^2 \le \frac{\Delta_t(\HH)}{\binom{s}{t} \cdot e(\HH)} \cdot \sum_{T \in \binom{V(\HH)}{t}} \frac{\deg_\HH T}{\binom{s}{t} \cdot e(\HH)} = \frac{\Delta_t(\HH)}{\binom{s}{t} \cdot e(\HH)} 
  \]
  for every $t \in \br{s}$ and thus the assumptions of the theorem imply that
  \[
    \begin{split}
      \sum_{t=1}^s \binom{s-1}{t-1} \left(\frac{5000s^3}{p}\right)^{t-1} \| \sigma_\HH^{(t)} \|^2 & \le \sum_{t=1}^s \binom{s-1}{t-1} \left(\frac{150000s^5}{q}\right)^{t-1} \cdot \frac{\Delta_t(\HH)}{\binom{s}{t} \cdot e(\HH)} \\
      & \le \frac{K}{v(\HH)} \cdot \sum_{t=1}^s \frac{t}{s} \cdot \left(\frac{150000 s^5}{10^6 \cdot s^5}\right)^{t-1} \le \frac{2K}{v(\HH)}.
    \end{split}
  \]
  Moreover, $300s^4 \cdot 2K \le 1/\delta$ and $p \cdot \delta \cdot v(\HH) = q \cdot (30s^2 \cdot 10^3 s^4 K)^{-1} \cdot v(\HH) \ge 500$.
\end{proof}

We now turn to the proof of Theorem~\ref{thm:main-packaged}. The key ingredient here is the following lemma, which, roughly speaking, states that a hypergraph that is `robustly dense' contains a large subhypergraph whose maximum degree is not much larger than its average degree. The statement and the proof of the lemma are inspired by the work of Morris and Saxton~\cite{MorSax16}.

\begin{lemma}
  \label{lemma:Delta-1-supersaturation}
  Let $\HH$ be an $s$-uniform hypergraph and suppose that, for some positive $\beta$ and $M$, every set $W \subseteq V(\HH)$ with $|W| \ge (1-\beta)v(\HH)$ satisfies $e(\HH[W]) \ge M$. Then, there is a subhypergraph $\HH' \subseteq \HH$ with at least $M$ edges that satisfies
  \[
    \Delta_1(\HH') \le \left\lceil \frac{s}{\beta} \cdot \frac{e(\HH')}{v(\HH)} \right\rceil.
  \]
\end{lemma}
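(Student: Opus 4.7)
My plan is to construct $\HH'$ by an iterative greedy vertex-deletion procedure. Set $n := v(\HH)$ and $\HH_0 := \HH$. At step $i \ge 0$, I would check whether $\Delta_1(\HH_i) \le \lceil s\, e(\HH_i)/(\beta n)\rceil$; if so, I would halt and output $\HH' := \HH_i$, and otherwise choose $v_i$ to be a vertex of maximum degree in $\HH_i$ and continue with $\HH_{i+1} := \HH_i - v_i$. Each continuation step strictly decreases the number of edges, so the procedure halts in finitely many steps, and by the stopping rule the output automatically satisfies the required bound on $\Delta_1$.

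To verify $e(\HH') \ge M$, the natural strategy is to show that the procedure halts within the first $\lfloor \beta n \rfloor$ steps. If $k$ denotes the halting step, then $v(\HH_k) = n - k$, so $k \le \lfloor \beta n \rfloor$ gives $v(\HH_k) \ge (1-\beta) n$; applying the supersaturation hypothesis to $W := V(\HH_k)$ would then yield $e(\HH_k) = e(\HH[W]) \ge M$ immediately. The mechanism driving the halt is that, at every continuation step, the removed vertex has degree strictly greater than $s\, e(\HH_i)/(\beta n)$, and hence
\[
e(\HH_{i+1}) < e(\HH_i)\left(1 - \frac{s}{\beta n}\right);
\]
iterating this estimate produces a geometric decay of the edge count that should eventually clash with the supersaturation lower bound.

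The main obstacle is converting the geometric decay into a precise bound $k \le \lfloor \beta n \rfloor$. The naive iteration merely gives $e(\HH_{\lfloor \beta n \rfloor}) < e(\HH)\cdot e^{-s}$, which combined with $e(\HH_{\lfloor \beta n \rfloor}) \ge M$ only imposes $e(\HH) > M e^s$; this is a consequence, not a contradiction. To close this gap I would exploit the extra $+1$ coming from the ceiling in the stopping rule: the continuation rule actually forces $\deg_{\HH_i}(v_i) \ge \lceil s\, e(\HH_i)/(\beta n) \rceil + 1$, so $e(\HH_{i+1}) \le e(\HH_i)(1 - s/(\beta n)) - 1$, and a shift by $\beta n/s$ converts this into the strictly geometric recursion
\[
e(\HH_{i+1}) + \frac{\beta n}{s} \le \left(1 - \frac{s}{\beta n}\right)\left(e(\HH_i) + \frac{\beta n}{s}\right).
\]
This refinement, inspired by the supersaturation argument of Morris and Saxton~\cite{MorSax16}, is the delicate step I expect to require the most care, since the interplay between the floor/ceiling discrepancies and the supersaturation hypothesis is exactly where the ceiling in the statement of the lemma earns its keep.
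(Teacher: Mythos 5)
Your stopping rule is sound and your reduction is correct as far as it goes: if the procedure halts at some step $k\le\lfloor\beta n\rfloor$, then the hypothesis applied to $W=V(\HH_k)$ gives $e(\HH_k)\ge M$ and you are done. The gap is that the central claim --- that the procedure halts within $\lfloor\beta n\rfloor$ steps --- is not established, and in fact it is false. Your shifted recursion $e(\HH_{i+1})+\beta n/s\le(1-s/(\beta n))(e(\HH_i)+\beta n/s)$ is correctly derived, but iterating it and using $e(\HH_j)\ge 0$ only bounds the number of continuation steps by roughly $\frac{\beta n}{s}\log\bigl(1+\frac{s\,e(\HH)}{\beta n}\bigr)$, which exceeds $\beta n$ as soon as $e(\HH)>\frac{\beta n}{s}e^{s}$; geometric decay of the edge count is perfectly consistent with the process running long. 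Worse, because your threshold $\lceil s\,e(\HH_i)/(\beta n)\rceil$ shrinks as edges are removed, the deletions can cascade. A concrete failure (recall that the paper's hypergraphs may have multiple edges): take $s=1$, $\beta=1/2$, and $n$ vertices with multiplicities $m_j=2^{n-j}$ for $j\le n/2+1$ and $m_j=1$ otherwise. Here $M=n/2-1+2^{n/2-1}$, yet for $n\ge 8$ your procedure deletes the $n/2+1$ heavy vertices one by one (at each step the current maximum degree $2^{n-i-1}$ exceeds the current threshold, which is about $2^{n-i+1}/n$) and halts with only $n/2-1<M$ edges.

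The paper's proof avoids both problems by never deleting vertices and never moving the threshold: it fixes the cap $D=\lceil sM/(\beta v(\HH))\rceil$ once and for all, takes $\HH'$ to be an edge-maximal subhypergraph of $\HH$ with $\Delta_1(\HH')\le D$ (so a high-degree vertex merely loses some of its incident edges), and then runs a dichotomy on the set $X$ of vertices whose degree in $\HH'$ equals $D$. By maximality every edge of $\HH$ avoiding $X$ lies in $\HH'$, so if $|X|\le\beta v(\HH)$ then $e(\HH')\ge e(\HH-X)\ge M$ by the hypothesis; and if $|X|>\beta v(\HH)$ then summing degrees gives $e(\HH')\ge |X|D/s>M$ directly. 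To repair your argument you would need to replace the moving threshold and the vertex-deletion step by something of this fixed-cap, edge-maximal kind.
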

\begin{proof}
  Let $\HH'$ be a largest (in terms of the number of edges) subhypergraph of $\HH$ satisfying $\Delta_1(\HH') \le \lceil \frac{sM}{\beta v(\HH)} \rceil$ and let $X \subseteq V(\HH)$ be the set of vertices of $\HH'$ whose degree achieves the bound $\lceil \frac{sM}{\beta v(\HH)} \rceil$. Observe that every edge of $\HH$ that is disjoint from $X$ must belong to $\HH'$ and, consequently, $e(\HH') \ge e(\HH-X)$. If $|X| \le \beta v(\HH)$, then $e(\HH') \ge M$ by our assumption on $\HH$. Otherwise, if $|X| > \beta v(\HH)$,
  \[
    e(\HH') = \frac{1}{s} \sum_{v \in V(\HH)} \deg_{\HH'}v \ge \frac{|X|}{s} \cdot \left\lceil \frac{sM}{\beta v(\HH)} \right\rceil > M.
  \]
  This completes the proof of the lemma.
\end{proof}

\begin{proof}[{Proof of Theorem~\ref{thm:main-packaged}}]
  We shall say that a set $C \subseteq V(\HH)$ is a \emph{good container} if either $|C| \le \alpha v(\HH)$ or if there is a subset $W \subseteq C$ with $|W| \ge (1-\beta)|C|$ such that $e(\HH[W]) < E$. We will construct a rooted tree $\TT$ whose vertices are subsets of $V(\HH)$ that has the following properties:
  \begin{enumerate}[label=(\roman*)]
  \item
    \label{item:root}
    The root of $\TT$ is $V(\HH)$.
  \item
    If an independent set $I \in \II(\HH)$ is contained in a non-leaf vertex of $\TT$, then $I$ is contained in some child of this vertex in $\TT$.
  \item
    \label{item:good-container}
    Every leaf of $\TT$ is a good container.
  \item
    \label{item:max-deg-bound}
    Every non-leaf vertex of $\TT$ has at most $(e/q)^{q \cdot v(\HH)}$ children.
  \item
    \label{item:height-bound}
    The height of $\TT$ is at most $10^4s^5\beta^{-1} \cdot \log(e/\alpha)$.
  \end{enumerate}
  The set of leaves of $\TT$ will then form a collection of containers for the independent sets of $\HH$ that has the desired properties.

  We build such a tree starting from the root $V(\HH)$ by iteratively applying Theorem~\ref{thm:main-simple} to (a carefully chosen subhypergraph of) the subhypergraph of $\HH$ induced by a leaf $C$ of $\TT$ that is not yet a good container and attaching the resulting family of containers (for the independent sets of $\HH[C]$, that is, the independent sets of $\HH$ that are contained in $C$) as children of $C$ (which, as a result, ceases to be a leaf of $\TT$) until no such leaves are left. This way, properties~\ref{item:root}--\ref{item:good-container} are clearly satisfied. However, we still need to show that the final tree has properties~\ref{item:max-deg-bound} and~\ref{item:height-bound}.

  To this end, suppose that $C \subseteq V(\HH)$ is not a good container, that is, $|C| > \alpha v(\HH)$ and every $W \subseteq C$ with $|W| \ge (1-\beta)|C|$ satisfies $e(\HH[W]) \ge E$. Lemma~\ref{lemma:Delta-1-supersaturation} invoked with $\HH \leftarrow \HH[C]$ supplies a subhypergraph $\HH' \subseteq \HH[C]$ with at least $E$ edges that satisfies
  \[
    \Delta_1(\HH') \le \left\lceil \frac{s}{\beta} \cdot \frac{e(\HH')}{|C|} \right\rceil \le \frac{2s}{\beta} \cdot \frac{e(\HH')}{|C|} = \frac{2s}{\beta} \cdot \frac{e(\HH')}{v(\HH')},
  \]
  where the second inequality follows from our assumption that $e(\HH') \ge E \ge v(\HH) \ge |C|$. Since, for every $t \in \{2, \dotsc, s\}$,
  \[
    \Delta_t(\HH') \le \Delta_t(\HH) \le \left(\frac{q}{10^6 s^5}\right)^{t-1} \cdot \frac{E}{v(\HH)} \le \left(\frac{q}{10^6 s^5}\right)^{t-1} \cdot \frac{e(\HH')}{v(\HH')},
  \]
  Theorem~\ref{thm:main-simple} invoked with $\HH \leftarrow \HH'$ and $K \leftarrow 2s/\beta$ supplies sets $\cS \subseteq \binom{C}{\le q \cdot |C|}$ and functions $f \colon \cS \to \cP(C)$ and $g \colon \II(\HH') \to \cS$ such that, for every $I \in \II(\HH')$,
  \[
    I \subseteq g(I) \cup f(g(I)) \qquad \text{and} \qquad \frac{|f(g(I))|}{|C|} \le 1 - \frac{\beta}{2 \cdot 10^3 s^5}.
  \]
  Since $\HH' \subseteq \HH[C]$, we have $\II(\HH[C]) \subseteq \II(\HH')$ and we may define
  \[
    \cC_C = \big\{g(I) \cup f(g(I)) : I \in \II(\HH[C])\big\}.
  \]
  By construction, the family $\cC_C$ is a family of containers for the independent sets of $\HH[C]$ and
  \[
    |\cC_C| \le |\cS| \le \sum_{i = 0}^{q \cdot |C|} \binom{|C|}{i} \le \left(\frac{e}{q}\right)^{q \cdot |C|} \le \left(\frac{e}{q}\right)^{q \cdot v(\HH)},
  \]
  establishing~\ref{item:max-deg-bound}. Finally, for every $D \in \cC_C$,
  \[
    \frac{|D|}{|C|} \le q + 1 - \frac{\beta}{2s} \cdot \frac{1}{10^3s^4} \le 1 - \frac{\beta}{10^4s^5},
  \]
  since we assumed that $10^4 s^5 q \le \beta$. In particular, if a set $C$ is a non-leaf vertex of the final tree $\TT$ that lies at distance $d$ from the root, then
  \[
    \alpha \le \frac{|C|}{v(\HH)} \le \left(1 - \frac{\beta}{10^4s^5}\right)^d \le \exp\left(-\frac{\beta d}{10^4s^5}\right).
  \]
  This implies that
  \[
    \height(\TT) \le \frac{10^4s^5}{\beta} \cdot \log\left(\frac{1}{\alpha}\right) + 1 \le \frac{10^4s^5}{\beta} \cdot \log\left(\frac{e}{\alpha}\right),
  \]
  establishing~\ref{item:height-bound}.
\end{proof}

\section{Properties of degree measures}
\label{sec:prop-degree-measure}

\subsection{Norms of degree measures}

As we shall be estimating the $\ell^2$-norms of $t$-degree measures of various uniform hypergraphs, we collect here several useful properties of this quantity. We first give general lower and upper bounds on the $\ell^2$-norm of the $t$-degree measure of a hypergraph in terms of the numbers of its vertices and edges and its maximum $t$-degree. Throughout this section, $r$ is a~positive integer. We stress here that all of our hypergraphs are allowed to have multiple edges, that is, every edge can have an arbitrary positive multiplicity. (This idea was first introduced by Saxton and Thomason~\cite{SaxTho15}.) Moreover, when computing $\deg_\HH$ and $e(\HH)$, we always count edges with multiplicities.

\begin{fact}
  \label{fact:codegree-measure-norm-maximum-degree}
  Suppose that $\HH$ is a nonempty $r$-uniform hypergraph. For every $t \in \br{r}$,
  \[
  \max\left\{ \frac{1}{\binom{v(\HH)}{t}} , \frac{1}{\binom{r}{t} \cdot e(\HH)} \right\} \le \| \sigma_\HH^{(t)} \|^2 \le \frac{\Delta_t(\HH)}{\binom{r}{t} \cdot e(\HH)}.
  \]
\end{fact}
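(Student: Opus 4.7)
The plan is straightforward: all three inequalities should follow by substituting the definition $\sigma_\HH^{(t)}(T) = \deg_\HH(T)/\bigl(\binom{r}{t} e(\HH)\bigr)$ into the squared norm $\|\sigma_\HH^{(t)}\|^2 = \sum_T \sigma_\HH^{(t)}(T)^2$ and then exploiting the fact that $\sigma_\HH^{(t)}$ is a probability distribution on the $\binom{v(\HH)}{t}$-element set $\binom{V(\HH)}{t}$.

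For the upper bound, I would view $\sigma_\HH^{(t)}(T)^2$ as $\sigma_\HH^{(t)}(T)\cdot \sigma_\HH^{(t)}(T)$ and pull a uniform bound out of one of the factors. Since $\deg_\HH T \le \Delta_t(\HH)$ for every $T \in \binom{V(\HH)}{t}$, we get $\sigma_\HH^{(t)}(T) \le \Delta_t(\HH)/\bigl(\binom{r}{t} e(\HH)\bigr)$, and the remaining factor sums to one, yielding
\[
\|\sigma_\HH^{(t)}\|^2 \le \frac{\Delta_t(\HH)}{\binom{r}{t}\cdot e(\HH)} \cdot \sum_{T} \sigma_\HH^{(t)}(T) = \frac{\Delta_t(\HH)}{\binom{r}{t}\cdot e(\HH)}.
\]

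For the first lower bound, I would apply Cauchy--Schwarz to the pairing of $\sigma_\HH^{(t)}$ with the all-ones vector on $\binom{V(\HH)}{t}$:
\[
1 = \Bigl(\sum_T \sigma_\HH^{(t)}(T)\Bigr)^2 \le \binom{v(\HH)}{t} \cdot \|\sigma_\HH^{(t)}\|^2.
\]
For the second lower bound, the key observation is that every nonzero value of $\sigma_\HH^{(t)}$ is at least $1/\bigl(\binom{r}{t} e(\HH)\bigr)$, since $\deg_\HH T$ is a positive integer whenever $T$ lies in the support. Pulling this minimum out of one copy of $\sigma_\HH^{(t)}(T)$ and summing the other copy over the support gives
\[
\|\sigma_\HH^{(t)}\|^2 \ge \frac{1}{\binom{r}{t}\cdot e(\HH)} \cdot \sum_T \sigma_\HH^{(t)}(T) = \frac{1}{\binom{r}{t}\cdot e(\HH)}.
\]

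There is essentially no obstacle here; the entire statement is a routine consequence of the definitions together with Cauchy--Schwarz and the trivial lower/upper bounds on the point masses of $\sigma_\HH^{(t)}$, which is presumably why the authors label it a \emph{fact} rather than a lemma. The only minor care needed is to restrict the summation to the support of $\sigma_\HH^{(t)}$ when deducing the $1/\bigl(\binom{r}{t}e(\HH)\bigr)$ lower bound, so that the `$\deg_\HH T \ge 1$' bound is applicable.
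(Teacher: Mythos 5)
Your proposal is correct and matches the paper's argument in all essentials: the upper bound is identical, and both lower bounds rest on the same two ingredients (Cauchy--Schwarz against the indicator of the support, and the fact that $\deg_\HH T\ge 1$ on the support, which is valid even with edge multiplicities). The only cosmetic difference is that the paper packages the two lower bounds into a single Cauchy--Schwarz step via the bound $|\TT|\le\min\bigl\{\binom{v(\HH)}{t},\binom{r}{t}e(\HH)\bigr\}$ on the support $\TT$, whereas you prove them separately.
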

\begin{proof}
  The upper bound is straightforward:
  \[
  \| \sigma_\HH^{(t)} \|^2 = \sum_{T \in \binom{V(\HH)}{t}} \left(\frac{\deg_\HH T}{\binom{r}{t} \cdot e(\HH)}\right)^2 \le \frac{\Delta_t(\HH)}{\binom{r}{t} \cdot e(\HH)} \cdot \sum_{T \in \binom{V(\HH)}{t}} \frac{\deg_\HH T}{\binom{r}{t} \cdot e(\HH)} = \frac{\Delta_t(\HH)}{\binom{r}{t} \cdot e(\HH)}.
  \]
  For the lower bound, let
  \[
    \TT = \left\{ T \in \binom{V(\HH)}{t} : \deg_\HH T > 0\right\}
  \]
  and observe that
  \[
    |\TT| \le \min\left\{ \binom{v(\HH)}{t}, \binom{r}{t} \cdot e(\HH) \right\}.
  \]
  It follows form the Cauchy--Schwarz inequality that
  \[
  \| \sigma_\HH^{(t)} \|^2 = \sum_{T \in \TT} \left(\frac{\deg_\HH T}{\binom{r}{t} \cdot e(\HH)}\right)^2 \ge \frac{1}{|\TT|} \cdot \left(\sum_{T \in \TT} \frac{\deg_\HH T}{\binom{r}{t} \cdot e(\HH)}\right)^2 = \frac{1}{|\TT|},
  \]
  implying the lower bound. 
\end{proof}

Our second observation states that the $\ell^2$-norm of a $t$-degree measure of a hypergraph cannot increase much when one deletes from it a small proportion of its edges.

\begin{fact}
  \label{fact:codegree-measure-subhypergraph}
  If $\HH'$ is a nonempty subhypergraph of an $r$-uniform hypergraph $\HH$, then, for every $t \in \br{r}$,
  \[
  \|\sigma_{\HH'}^{(t)}\| \le \frac{e(\HH)}{e(\HH')} \cdot \|\sigma_{\HH}^{(t)}\|.
  \]
\end{fact}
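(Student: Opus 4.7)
The plan is to proceed pointwise on the density and then take $\ell^2$-norms. Since $\HH' \subseteq \HH$, every edge of $\HH'$ is also an edge of $\HH$ (with at most the same multiplicity), so for any $t$-set $T \subseteq V(\HH')$, we have $\deg_{\HH'} T \le \deg_\HH T$. This monotonicity is the only structural input needed.

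Unfolding the definition of the $t$-degree measure and using this inequality gives, for every $T$,
\[
\sigma_{\HH'}^{(t)}(T) = \frac{\deg_{\HH'} T}{\binom{r}{t} \cdot e(\HH')} \le \frac{\deg_{\HH} T}{\binom{r}{t} \cdot e(\HH')} = \frac{e(\HH)}{e(\HH')} \cdot \sigma_\HH^{(t)}(T).
\]
Both sides are nonnegative functions on $\binom{V(\HH)}{t}$ (extending $\sigma_{\HH'}^{(t)}$ by zero on $t$-sets not contained in $V(\HH')$ if needed), so squaring and summing over all $t$-subsets preserves the inequality, yielding
\[
\|\sigma_{\HH'}^{(t)}\|^2 \le \left(\frac{e(\HH)}{e(\HH')}\right)^{\!2} \|\sigma_{\HH}^{(t)}\|^2.
\]
Taking square roots gives the stated bound.

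There is essentially no obstacle here: the fact is a direct consequence of the monotonicity of $t$-degrees under passing to a subhypergraph together with the trivial observation that the $\ell^2$-norm is monotone on the cone of nonnegative functions. The only small care is to handle the extension of $\sigma_{\HH'}^{(t)}$ to $\binom{V(\HH)}{t}$ (by zero), but this is immediate since $t$-sets meeting $V(\HH) \setminus V(\HH')$ trivially have $\deg_{\HH'} = 0$ and do not increase $\|\sigma_{\HH'}^{(t)}\|$.
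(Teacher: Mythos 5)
Your proof is correct and matches the paper's argument exactly: the pointwise bound $\sigma_{\HH'}^{(t)}(T) \le \frac{e(\HH)}{e(\HH')}\cdot\sigma_\HH^{(t)}(T)$, coming from $\deg_{\HH'}T \le \deg_\HH T$, immediately gives the norm inequality. The remark about extending by zero is a harmless extra precaution.
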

\begin{proof}
  The assertion follows simply because $\deg_{\HH'} T \le \deg_\HH T$ for every $T \subseteq V(\HH')$ and hence
  \[
  \sigma_{\HH'}^{(t)}(T) \le \frac{e(\HH)}{e(\HH')} \cdot \sigma_\HH^{(t)}(T).\qedhere
  \]
\end{proof}

Our final lemma relates the $\ell^2$-norm of the degree measure of a uniform hypergraph to a simple property of its edge distribution.

\begin{lemma}
  \label{lemma:degree-measure}
  Suppose that $\HH$ is a nonempty $r$-uniform hypergraph. If a~set $D \subseteq V(\HH)$ satisfies $e(\HH - D) \le (1-\eps) \cdot e(\HH)$ for some $\eps > 0$, then
  \[
    |D| \ge \left(\frac{\eps}{r}\right)^2 \cdot \| \sigma_\HH \|^{-2}.
  \]
\end{lemma}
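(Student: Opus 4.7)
The proof should be a short, direct application of Cauchy--Schwarz, once we translate the hypothesis into an $\ell^1$-lower bound on the restriction of $\sigma_\HH$ to $D$. I would proceed as follows.

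First, observe that the hypothesis $e(\HH - D) \le (1-\eps) \cdot e(\HH)$ means that at least $\eps \cdot e(\HH)$ edges of $\HH$ must meet $D$. Since an edge $A \in E(\HH)$ with $A \cap D \ne \emptyset$ is counted at most $r$ times (once per vertex of $A \cap D$, and $|A| = r$) and at least once in $\sum_{v \in D} \deg_\HH v = \sum_{A \in E(\HH)} |A \cap D|$, we obtain
\[
  \sum_{v \in D} \deg_\HH v \ge \big|\{A \in E(\HH) : A \cap D \ne \emptyset\}\big| \ge \eps \cdot e(\HH).
\]

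Next, translate this to the degree measure. By definition, $\sigma_\HH(v) = \deg_\HH v / \big(r \cdot e(\HH)\big)$, so dividing the previous display by $r \cdot e(\HH)$ gives
\[
  \sum_{v \in D} \sigma_\HH(v) \ge \frac{\eps}{r}.
\]

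Finally, apply the Cauchy--Schwarz inequality to the vectors $\mathbf{1}_D$ and $\sigma_\HH \cdot \mathbf{1}_D$:
\[
  \left(\frac{\eps}{r}\right)^2 \le \left(\sum_{v \in D} \sigma_\HH(v)\right)^2 \le |D| \cdot \sum_{v \in D} \sigma_\HH(v)^2 \le |D| \cdot \|\sigma_\HH\|^2,
\]
which rearranges to the claimed bound $|D| \ge (\eps/r)^2 \cdot \|\sigma_\HH\|^{-2}$.

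There is no serious obstacle here: the entire argument is a two-line counting fact combined with Cauchy--Schwarz. The only small point to get right is that each edge hitting $D$ contributes at most $r$ (and at least $1$) to $\sum_{v \in D} \deg_\HH v$, which is where the factor $r$ in the bound comes from.
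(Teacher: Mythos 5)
Your proof is correct and is essentially identical to the paper's: both translate the hypothesis into the lower bound $\sum_{v \in D} \sigma_\HH(v) \ge \eps/r$ and then apply Cauchy--Schwarz to $\one_D$ and $\sigma_\HH$. (One cosmetic quibble: the factor $r$ comes from the normalisation $\sigma_\HH(v) = \deg_\HH v/(r \cdot e(\HH))$ rather than from the ``counted at most $r$ times'' observation, which is not actually used.)
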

\begin{proof}
  Let $\one_D \in \RR^{V(\HH)}$ be the characteristic vector of $D$. It follows from the Cauchy--Schwarz inequality that
  \[
    \lan \one_D, \sigma_\HH\ran^2 \le \| \one_D \|^2 \cdot \| \sigma_\HH \|^2 = |D| \cdot \| \sigma_\HH \|^2.
  \]
  Since at least an $\eps$-proportion of edges of $\HH$ contain at least one vertex of $D$,
  \[
    \lan \one_D, \sigma_\HH\ran = \frac{1}{r \cdot e(\HH)} \cdot \sum_{v \in D} \deg_\HH v \ge \frac{\eps}{r},
  \]
  giving the desired lower bound on $|D|$.
\end{proof}

\subsection{Degree measures and link hypergraphs}
 
Suppose that $\HH$ is an $(r+1)$-uniform hypergraph with vertex set $V$. Given a $v \in V$, we shall denote by $\HH_v$ the \emph{link hypergraph} of $v$ (the \emph{neighbourhood} of $v$ in $\HH$), that is, the $r$-uniform hypergraph with vertex set $V$ whose edges are all the $r$-element sets $A$ such that $\{v\} \cup A$ is an edge of $\HH$. A property of crucial importance for us is that, for each $t \in \br{r}$, the $t$-degree measure of $\HH$ is a convex combination of the $t$-degree measures of the link hypergraphs of its vertices. Moreover, each of these convex combinations has the same coefficients -- the coordinates of the $1$-degree measure vector $\sigma_\HH$.

\begin{remark}
  Even though $\sigma_\HH^{(t)}$ was defined only for nonempty hypergraphs $\HH$, for the sake of brevity, we shall often write $0 \cdot \sigma_\HH^{(t)}$ even if $\HH$ has no edges. In this case, $0 \cdot \sigma_\HH^{(t)}$ should be interpreted as the zero vector of appropriate dimension.
\end{remark}

\begin{fact}
  \label{fact:codegree-measure-neighbourhood-measure}
  Suppose that $\HH$ is a nonempty $(r+1)$-uniform hypergraph with vertex set $V$. For every $t \in \br{r}$,
  \[
  \sum_{v \in V} \sigma_\HH(v) \cdot \sigma_{\HH_v}^{(t)} = \sigma_\HH^{(t)}.
  \]
\end{fact}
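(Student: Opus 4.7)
The plan is to verify the identity pointwise: fix a set $T \in \binom{V}{t}$ and show that the $T$-coordinate of the left-hand side equals $\sigma_\HH^{(t)}(T) = \deg_\HH T / \big(\binom{r+1}{t} e(\HH)\big)$. This reduces the claim to a purely combinatorial double-counting identity.

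First, I would observe that if $v \in T$, then $\sigma_{\HH_v}^{(t)}(T) = 0$. This is because every edge of $\HH_v$ is an $r$-subset of $V$ whose union with $\{v\}$ has $r+1$ elements, which forces $v \notin A$ for every $A \in E(\HH_v)$. Consequently $T \not\subseteq A$ whenever $v \in T$, so $\deg_{\HH_v} T = 0$. (If $\HH_v$ has no edges at all, we interpret the summand as the zero vector as flagged in the remark.)

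For $v \notin T$, the map $A \mapsto \{v\} \cup A$ is a bijection between edges of $\HH_v$ containing $T$ and edges of $\HH$ containing $T \cup \{v\}$, hence $\deg_{\HH_v} T = \deg_\HH(T \cup \{v\})$. Similarly $e(\HH_v) = \deg_\HH v$. Plugging these into the definitions and cancelling the factor $\deg_\HH v$ gives
\[
\sigma_\HH(v) \cdot \sigma_{\HH_v}^{(t)}(T) = \frac{\deg_\HH v}{(r+1)\, e(\HH)} \cdot \frac{\deg_\HH(T \cup \{v\})}{\binom{r}{t} \deg_\HH v} = \frac{\deg_\HH(T \cup \{v\})}{(r+1)\binom{r}{t}\, e(\HH)}.
\]
Summing over $v \in V$ (the terms with $v \in T$ vanish), I use the double-counting identity
\[
\sum_{v \notin T} \deg_\HH(T \cup \{v\}) = (r+1-t) \cdot \deg_\HH T,
\]
which follows by counting pairs $(v, A)$ with $v \notin T$, $A \in E(\HH)$, and $T \cup \{v\} \subseteq A$: each edge $A \supseteq T$ contributes the $r+1-t$ vertices of $A \setminus T$. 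Finally, the elementary identity $(r+1-t)\binom{r+1}{t} = (r+1)\binom{r}{t}$ converts the resulting expression into $\deg_\HH T / \big(\binom{r+1}{t}\, e(\HH)\big) = \sigma_\HH^{(t)}(T)$, completing the verification.

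There is no real obstacle; the proof is a direct unpacking of the definitions combined with one double-counting step and one binomial identity. The only point requiring a tiny bit of care is the vanishing of $\sigma_{\HH_v}^{(t)}(T)$ when $v \in T$, which ensures that the summation is effectively over $v \notin T$ and makes the double-counting step clean.
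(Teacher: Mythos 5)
Your proof is correct and follows essentially the same route as the paper's: pointwise evaluation at $T$, the identities $e(\HH_v) = \deg_\HH v$ and $\deg_{\HH_v} T = \deg_\HH(T \cup \{v\})$ for $v \notin T$, the double-counting step $\sum_{v \notin T} \deg_\HH(T \cup \{v\}) = (r+1-t)\deg_\HH T$, and the binomial identity $(r+1)\binom{r}{t} = (r+1-t)\binom{r+1}{t}$. Nothing to add.
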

\begin{proof}
  It follows from our definition of a link hypergraph that $e(\HH_v) = \deg_\HH v$ for each $v \in V$ and, more generally, for every $T \in \binom{V}{t}$,
  \[
    \deg_{\HH_v} T =
    \begin{cases}
      \deg_\HH (T \cup \{v\}) & \text{if $v \notin T$},\\
      0 & \text{if $v \in T$}.
    \end{cases}
  \]
  Consequently,
  \[
  \begin{split}
    \sum_{v \in V} \sigma_\HH(v) \cdot \sigma_{\HH_v}^{(t)}(T) & = \sum_{v \in V} \frac{\deg_\HH v}{(r+1) \cdot e(\HH)} \cdot \frac{\deg_{\HH_v} T}{\binom{r}{t} \cdot e(\HH_v)} \\
    & = \frac{1}{(r+1)\binom{r}{t} \cdot e(\HH)} \cdot \sum_{v \in V \setminus T} \deg_\HH (T \cup \{v\}) \\
    & = \frac{(r+1-t) \cdot \deg_\HH T}{(r+1) \binom{r}{t} \cdot e(\HH)} = \frac{\deg_\HH T}{\binom{r+1}{t} \cdot e(\HH)} = \sigma_\HH^{(t)}(T),
  \end{split}
  \]
  where we used the identity $(r+1)\binom{r}{t} = \binom{r+1}{t}(r+1-t)$.
\end{proof}

In our arguments, we shall employ the following relation between the $\ell^2$-norm of the $(t+1)$-degree measure of a hypergraph and the $\ell^2$-norms of the $t$-degree measures of the link hypergraphs of its vertices.

\begin{fact}
  \label{fact:codegree-measure-norm-square}
  Suppose that $\HH$ is a nonempty $(r+1)$-uniform hypergraph with vertex set $V$. For every $t \in \br{r}$,
  \[
  \sum_{v \in V} \sigma_\HH(v)^2 \cdot \| \sigma_{\HH_v}^{(t)}\|^2 = \frac{\| \sigma_\HH^{(t+1)} \|^2}{t+1}.
  \]
\end{fact}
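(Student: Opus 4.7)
The proof will be a direct, essentially bookkeeping computation along the same lines as Fact~\ref{fact:codegree-measure-neighbourhood-measure}, but now tracking squares of degrees rather than the degrees themselves. The plan is to unfold the left-hand side using the definitions of $\sigma_\HH$, $\sigma_{\HH_v}^{(t)}$, the identity $e(\HH_v) = \deg_\HH v$, and the relation
\[
  \deg_{\HH_v}T = \begin{cases} \deg_\HH(T \cup \{v\}) & v \notin T, \\ 0 & v \in T,\end{cases}
\]
and then switch the order of summation to group contributions according to $S := T \cup \{v\}$.

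More precisely, I would first write
\[
  \sigma_\HH(v)^2 \cdot \| \sigma_{\HH_v}^{(t)}\|^2 = \frac{(\deg_\HH v)^2}{(r+1)^2 e(\HH)^2} \cdot \sum_{T \in \binom{V}{t}} \frac{(\deg_{\HH_v}T)^2}{\binom{r}{t}^2 (\deg_\HH v)^2},
\]
so that the $(\deg_\HH v)^2$ factors cancel. Summing over $v \in V$ and swapping the sums leaves $\sum_{T} \sum_{v \notin T} (\deg_\HH(T \cup \{v\}))^2$ in the numerator. Here comes the key combinatorial step: for each $(t+1)$-set $S$ there are exactly $t+1$ pairs $(T,v)$ with $|T|=t$, $v \notin T$, and $T \cup \{v\} = S$, so this double sum equals $(t+1) \sum_{S \in \binom{V}{t+1}} (\deg_\HH S)^2$.

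It then only remains to compare the resulting expression,
\[
  \sum_{v \in V} \sigma_\HH(v)^2 \cdot \| \sigma_{\HH_v}^{(t)}\|^2 = \frac{t+1}{(r+1)^2 \binom{r}{t}^2 \cdot e(\HH)^2} \cdot \sum_{S \in \binom{V}{t+1}} (\deg_\HH S)^2,
\]
with
\[
  \frac{\|\sigma_\HH^{(t+1)}\|^2}{t+1} = \frac{1}{(t+1)\binom{r+1}{t+1}^2 \cdot e(\HH)^2} \cdot \sum_{S \in \binom{V}{t+1}} (\deg_\HH S)^2.
\]
The two are equal once we verify $(t+1)^2 \binom{r+1}{t+1}^2 = (r+1)^2 \binom{r}{t}^2$, which is just the familiar identity $(t+1)\binom{r+1}{t+1} = (r+1)\binom{r}{t}$ obtained by expanding factorials.

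There is really no conceptual obstacle here; the only thing worth being careful about is the combinatorial factor $t+1$ that appears when counting ordered pairs $(T,v)$ giving a fixed $S$, since misplacing it (or confusing it with an $r+1$ coming from the uniformity) would throw off the final constant. Everything else is algebraic manipulation.
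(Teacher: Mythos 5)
Your computation is correct and matches the paper's proof essentially line for line: both unfold the definitions using $e(\HH_v)=\deg_\HH v$ and $\deg_{\HH_v}T=\deg_\HH(T\cup\{v\})$, regroup the double sum by $S=T\cup\{v\}$ picking up the factor $t+1$, and finish with the identity $(t+1)\binom{r+1}{t+1}=(r+1)\binom{r}{t}$. (The paper additionally offers a one-line probabilistic interpretation, but its explicit verification is exactly your argument.)
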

\begin{proof}
  A quick way to verify the claimed identity is to observe that both the left- and the right-hand sides of the claimed equality express the probability of obtaining the same outcome in two independent executions of the following random process: Pick an edge $A$ of $\HH$ uniformly at random, choose a $(t+1)$-element subset $S$ of $A$ uniformly at random, mark a vertex $v \in S$ chosen uniformly at random, and return the pair $(v,S)$.

  More explicitly, using the identities $\deg_\HH v = e(\HH_v)$, valid for every $v \in V$, and $\deg_{\HH_v} T = \deg_{\HH} (T \cup \{v\})$, valid for each $T \in \binom{V}{t}$ and $v \in V \setminus T$, and $(r+1)\binom{r}{t} = (t+1)\binom{r+1}{t+1}$, we get
  \begin{align*}
    \sum_{v \in V} \sigma_\HH(v)^2 \cdot \|\sigma_{\HH_v}^{(t)}\|^2
    & = \sum_{v \in V} \left(\frac{\deg_\HH v}{(r+1) \cdot e(\HH)}\right)^2 \cdot \sum_{T \in \binom{V}{t}} \left(\frac{\deg_{\HH_v} T}{\binom{r}{t} \cdot e(\HH_v)}\right)^2 \\
    & = \left(\frac{1}{(r+1)\binom{r}{t} \cdot e(\HH)}\right)^2 \cdot \sum_{T \in \binom{V}{t}} \sum_{v \in V \setminus T} \big(\deg_\HH (T \cup \{v\})\big)^2 \\
    & = \left(\frac{1}{(t+1)\binom{r+1}{t+1} \cdot e(\HH)}\right)^2 \cdot \sum_{S \in \binom{V}{t+1}} \sum_{v \in S} \left(\deg_\HH S\right)^2 \\
    & = \frac{1}{t+1} \cdot \sum_{S \in \binom{V}{t+1}} \left(\frac{\deg_\HH S}{\binom{r+1}{t+1} \cdot e(\HH)} \right)^2 = \frac{\| \sigma_\HH^{(t+1)}\|^2}{t+1}.\qedhere
  \end{align*}
\end{proof}

\subsection{Linear combinations of degree measures}
\label{sec:lin-comb-deg-meas}

It will be convenient to introduce another piece of notation. Given a vector $\alpha \in \RR^r$ with nonnegative coordinates and a nonempty hypergraph $\KK$ with uniformity at least $r$, we define
\[
  \sigma_\alpha(\KK) = \left( \alpha_1^{1/2} \cdot \sigma_{\KK}^{(1)}, \dotsc, \alpha_r^{1/2} \cdot \sigma_{\KK}^{(r)} \right) \in \RR^{\sum_{t=1}^r \binom{v(\KK)}{t}},
\]
so that
\[
  \| \sigma_\alpha(\KK) \|^2 = \sum_{t=1}^r \alpha_t \cdot \| \sigma_{\KK}^{(t)} \|^2.
\]
The following generalisation of Fact~\ref{fact:codegree-measure-neighbourhood-measure} holds.

\begin{fact}
  \label{fact:codegree-measure-neighbourhood-measure-alpha}
  Suppose that $\KK$ is a nonempty hypergraph with uniformity at least $r+1$. For every $\alpha \in \RR^r$ with nonnegative coordinates,
  \[
    \sigma_\alpha(\KK) = \sum_{v \in V(\KK)} \sigma_\KK(v) \cdot \sigma_\alpha(\KK_v).
  \]
\end{fact}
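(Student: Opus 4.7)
The plan is to verify the identity blockwise, since by definition $\sigma_\alpha(\KK) \in \RR^{\sum_{t=1}^{r} \binom{v(\KK)}{t}}$ is obtained by concatenating the scaled measures $\alpha_t^{1/2} \sigma_\KK^{(t)}$ for $t \in \br{r}$. It therefore suffices to prove that, for every $t \in \br{r}$, the projection of the right-hand side onto its $t$-th block equals $\alpha_t^{1/2} \sigma_\KK^{(t)}$.

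Since $\sigma_\alpha(\KK_v) = \bigl(\alpha_1^{1/2} \sigma_{\KK_v}^{(1)}, \dotsc, \alpha_r^{1/2} \sigma_{\KK_v}^{(r)}\bigr)$ by definition, the $t$-th block of $\sum_{v} \sigma_\KK(v) \cdot \sigma_\alpha(\KK_v)$ equals
\[
  \sum_{v \in V(\KK)} \sigma_\KK(v) \cdot \alpha_t^{1/2} \cdot \sigma_{\KK_v}^{(t)} = \alpha_t^{1/2} \cdot \sum_{v \in V(\KK)} \sigma_\KK(v) \cdot \sigma_{\KK_v}^{(t)},
\]
where we pulled the scalar $\alpha_t^{1/2}$ out of the sum. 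It then remains to invoke Fact~\ref{fact:codegree-measure-neighbourhood-measure} to identify the inner sum with $\sigma_\KK^{(t)}$.

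The only minor subtlety is that Fact~\ref{fact:codegree-measure-neighbourhood-measure} is stated for $(r+1)$-uniform hypergraphs whereas here $\KK$ is only assumed to have uniformity at least $r+1$. However, inspecting its proof, one sees that the two inputs really used are the identities $e(\KK_v) = \deg_\KK v$ and $\deg_{\KK_v} T = \deg_\KK(T \cup \{v\})$ for $v \notin T$, together with the combinatorial identity $s \binom{s-1}{t} = \binom{s}{t}(s-t)$, all of which hold for any uniformity $s \ge t+1$. Hence Fact~\ref{fact:codegree-measure-neighbourhood-measure} applies to $\KK$ for each $t \in \br{r}$ since $t \le r \le s-1$, and the proof is complete.

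There is no real obstacle here: the statement is essentially a blockwise repackaging of Fact~\ref{fact:codegree-measure-neighbourhood-measure}, so the only bookkeeping is to keep track of the scalars $\alpha_t^{1/2}$ and to verify that the ambient uniformity is large enough for every block index to fall within the range of applicability of that fact.
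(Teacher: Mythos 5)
Your proof is correct and matches the paper's intent exactly: the paper states Fact~\ref{fact:codegree-measure-neighbourhood-measure-alpha} without proof as an immediate generalisation of Fact~\ref{fact:codegree-measure-neighbourhood-measure}, and your blockwise verification (pulling out $\alpha_t^{1/2}$ and applying that fact in each coordinate block, with the uniformity parameter adjusted) is precisely the intended argument. The only cosmetic remark is that you need not re-inspect the proof of Fact~\ref{fact:codegree-measure-neighbourhood-measure}: if $\KK$ is $s$-uniform with $s \ge r+1$, the fact as stated already applies with $r+1$ replaced by $s$, covering every $t \in \br{r} \subseteq \br{s-1}$.
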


\section{Proof}
\label{sec:proof}

\subsection{Outline}
\label{sec:outline}

Our proof of Theorem~\ref{thm:main} follows the general strategy of~\cite{BalMorSam15}. We construct functions $g, f^* \colon \II(\HH) \to \cP(V(\HH))$ that satisfy the following three conditions for every $I \in \II(\HH)$:
\begin{enumerate}[label={(\textit{\alph*})}]
\item 
  $g(I) \subseteq I \subseteq g(I) \cup f^*(I)$;
\item
  $|g(I)| \le 30s^2 \cdot p \cdot v(\HH)$ and $|f^*(I)| \le (1-\delta) \cdot v(\HH)$;
\item
  \label{item:outline-consistency}
  if $g(I) \subseteq I'$ and $g(I') \subseteq I$ for some $I' \in \II(\HH)$, then $g(I) = g(I')$ and $f^*(I) = f^*(I')$.
\end{enumerate}
The existence of such functions easily implies the assertion of the theorem. Indeed, condition~\ref{item:outline-consistency} guarantees that there is an implicit decomposition $f^* = f \circ g$.

Given an independent set $I$, the sets $g(I)$ and $f^*(I)$ are constructed by an algorithm that operates in a sequence of at most $s-1$ rounds, which are indexed by $r = s-1, \dotsc, 1$. At the start of round~$r$, the algorithm receives as input an $(r+1)$-uniform hypergraph $\HH^{(r+1)}$ satisfying $I \in \II(\HH^{(r+1)})$; we initialise the first round, indexed by $r = s-1$, with $\HH^{(s)} = \HH$. During the round, the algorithm tries to construct an $r$-uniform hypergraph $\HH^{(r)}$ with $I \in \II(\HH^{(r)})$ and some additional desirable properties. The definition of `desirable' is where we significantly depart from the previous approaches. In~\cite{BalMorSam15}, as well as in~\cite{SaxTho15}, this desirability property was defined in terms of a lower bound on the number of edges of $\HH^{(r)}$ and upper bounds on the maximum degrees $\Delta_t(\HH^{(r)})$, for all $t \in \br{r}$. Here, we aim to control the $\ell^2$-norms of the $t$-degree measures of $\HH^{(r)}$. More precisely, `desirable' means that a carefully chosen linear combination of $\| \sigma_{\HH^{(r)}}^{(t)} \|^2$, where $t$ ranges over $\br{r}$, is small. As in~\cite{BalMorSam15}, in the event that such a hypergraph $\HH^{(r)}$ cannot be constructed for our input set $I$, the algorithm is able to define the required set $f^*(I)$ already at the end of round $r$. Crucially, the amount of information about the set $I$ that is needed to describe $\HH^{(r)}$, or the set $f^*(I)$, is rather small. More precisely, it naturally corresponds to a set of $O(p \cdot s \cdot v(\HH))$ elements of $I$, which we shall denote here by $S^{(r)}$. In particular, if we let $g(I)$ be the union of the sets $S^{(r)}$ from all of the (at most $s-1$) rounds of the algorithm, the knowledge of $g(I)$ alone (without any additional knowledge of the set $I$ other than the fact that it is independent) is sufficient to recreate the entire execution of the algorithm, and thus also the final set $f^*(I)$.

In order to construct the $r$-uniform $\HH^{(r)}$ given the $(r+1)$-uniform $\HH^{(r+1)}$ and the set $I$, our algorithm considers a sequence of queries `Does $v$ belong to $I$?' for some carefully chosen sequence of vertices $v \in V(\HH)$. We record all the positive answers by placing the respective vertices $v$ in the (initially empty) set $S^{(r)}$. Each queried vertex $v$ is clearly not in the set $I \setminus S^{(r)}$ and hence it may be omitted from the set $f^*(I)$. In particular, the algorithm will produce the desired set $f^*(I)$ if at least $\delta \cdot v(\HH)$ queries are made. In case a queried vertex $v$ does belong to $I$, we add its $r$-uniform link hypergraph $\HH_v^{(r+1)}$ to the (initially empty) hypergraph $\HH^{(r)}$. Note that this guarantees that $I \in \II(\HH^{(r)})$.

Recall that our aim is to produce a hypergraph $\HH^{(r)}$ whose $t$-degree measures have small $\ell^2$-norms. The crux of the matter is the choice of the next vertex $v$ to be queried for membership in $I$. Indeed, if $v$ happens to belong to $I$, then $\HH_v^{(r+1)}$ will be added to $\HH^{(r)}$ and, as a result of this operation, the $t$-degree measures of $\HH^{(r)}$ will change -- the new $\sigma_{\HH^{(r)}}^{(t)}$ will be a convex combination of the old $\sigma_{\HH^{(r)}}^{(t)}$ and of $\sigma_{\HH_v^{(r+1)}}^{(t)}$, with appropriate coefficients (our hypergraphs are allowed to have multiple edges). It turns out that choosing the `right' candidate vertex $v$ is an optimisation problem that admits a rather simple geometric description. The solution to this geometric problem, presented as Lemma~\ref{lemma:geometry} and expressed in the language of degree measures of hypergraphs in Proposition~\ref{prop:geometry-hypergraph}, lies at the heart of our argument.

The bottom line is that there is a way to choose a sequence of vertices to be queried for membership in $I$ such that, if at least $\Omega(p \cdot s \cdot v(\HH))$ out of the first $\delta \cdot v(\HH)$ queried vertices belong to the set $I$, some linear combination of $\| \sigma_{\HH^{(r)}}^{(t)} \|^2$, where $t$ ranges over $\br{r}$, will be at most $1+O(1/s)$ times larger than a respective linear combination of $\| \sigma_{\HH^{(r+1)}}^{(t)} \|^2$, where $t$ ranges over $\br{r+1}$. Consequently, either one of the $s-1$ rounds of the algorithm will output a desired set $f^*(I)$ of size $\delta \cdot c(\HH)$ or the algorithm will eventually produce a $1$-uniform hypergraph $\HH^{(1)}$ such that $I \in \II(\HH^{(1)})$ and
\[
  \| \sigma_{\HH^{(1)}} \| \le \big(1+O(1/s)\big)^{s-1} \cdot \| \sigma_{\HH} \| \le O(1) \cdot \|\sigma_\HH\|.
\]
In case the latter happens, we may simply let $f^*(I)$ comprise all vertices $v$ such that $\{v\} \not\in \HH^{(1)}$. The upper bound on $\| \sigma_{\HH^{(1)}} \|$ implies that there are at most $(1-\delta) \cdot v(\HH)$ such vertices, as shown in Lemma~\ref{lemma:degree-measure}.

\subsection{The key lemma}
\label{sec:key-lemma}

The following lemma summarises a single round of our new, refined algorithm for constructing containers. We denote by $\Hyp_r(V)$ the family of $r$-uniform hypergraphs with vertex set $V$; recall again that we allow our hypergraphs to have multiple edges.

\begin{lemma}
  \label{lemma:key}
  Let $\GG$ be an $(r+1)$-uniform hypergraph with vertex set $V$. Suppose that $\eps \in \big(0, (9(r+1))^{-1}\big)$ and $p \in (0,1)$ satisfy
  \[
    \| \sigma_\GG \|^2 \le \frac{\eps^3 p}{ 50 (r+1)}.
  \]
  Let $\alpha \in \RR^r$ be a vector with nonnegative coordinates and define
  \begin{equation}
    \label{eq:def-alphan-b}
    \alphan = (1+\eps)^{10} \cdot (\alpha, 0) + \frac{50 (r+1)}{\eps^2 p} \cdot (0, \alpha) \in \RR^{r+1}
    \qquad \text{and} \qquad
    b = \left\lceil \frac{2p}{\eps} \cdot |V|\right\rceil.
  \end{equation}
  Then, there exist (disjoint) families $\cS', \cS'' \subseteq \binom{V}{\le b}$ and functions $S \colon \II(\GG) \to \cS' \cup \cS''$,  $C \colon \cS' \to \cP(V)$, and $\FF \colon \cS'' \to \Hyp_r(V)$ such that, for every $I \in \II(\GG)$, we have $S_I \subseteq I$. Moreover:
  \begin{enumerate}[label={(\arabic*)}]
  \item
    If $S_I \in \cS'$, then $I \setminus S_I \subseteq C(S_I)$ and $|V| - |C(S_I)| \ge \frac{\eps^2}{(r+1)^2} \cdot \| \sigma_\GG \|^{-2}$.
  \item
    If $S_I \in \cS''$, then $I \in \II(\FF(S_I))$ and $\| \sigma_\alpha(\FF(S_I)) \| \le \| \sigma_{\alphan}(\GG) \|$.
  \end{enumerate}
  Finally, if $S_I \subseteq I'$ and $S_{I'} \subseteq I$ for some $I, I' \in \II(\GG)$, then $S_I = S_{I'}$.
\end{lemma}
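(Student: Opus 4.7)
The plan is to design, for each $I \in \II(\GG)$, an iterative procedure that builds up $S_I$ by querying vertices of $V$ one at a time for membership in $I$. Throughout the procedure I would maintain a set $S \subseteq V$ of positive answers (ultimately $S_I$), a set $D \subseteq V$ of negative answers, and the $r$-uniform multi-hypergraph $\FF$ obtained as the disjoint union of the link hypergraphs $\GG_v$ over $v \in S$. At each step, I would apply the geometric selection principle (which is what Proposition~\ref{prop:geometry-hypergraph}, the hypergraph incarnation of the key Lemma~\ref{lemma:geometry}, will provide) to the triple $(\GG[V \setminus D], \FF, \alpha)$ to choose a vertex $v^* \in V \setminus (S \cup D)$ enjoying two complementary guarantees: (a) the link $\GG_{v^*}$ carries enough $\sigma_\GG$-mass that moving $v^*$ into $D$ produces a uniform additive loss in $e(\GG[V \setminus D])$; and (b) if instead $v^*$ joins $S$ and $\GG_{v^*}$ is merged into $\FF$, the norm $\|\sigma_\alpha(\FF)\|^2$ grows by only the per-step budget accounted for in $\alphan$. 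Then I query whether $v^* \in I$ and update $S$ or $D$ accordingly.

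The procedure terminates in exactly one of two ways. If the geometric selection succeeds until $|S|$ reaches $b = \lceil 2p|V|/\eps\rceil$, I place $S_I$ into $\cS''$ and define $\FF(S_I) := \FF$; summing at most $b$ per-step increments to $\|\sigma_\alpha(\FF)\|^2$ (controlled by Facts~\ref{fact:codegree-measure-neighbourhood-measure-alpha} and~\ref{fact:codegree-measure-norm-square}) yields $\|\sigma_\alpha(\FF)\| \le \|\sigma_{\alphan}(\GG)\|$. The two pieces of $\alphan$ are precisely tuned so that $(1+\eps)^{10}(\alpha,0)$ absorbs the multiplicative slippage from iterated convex combinations in $\sigma_\FF^{(t)}$, while $\frac{50(r+1)}{\eps^2 p}(0,\alpha)$ absorbs the additive contributions that arise when Fact~\ref{fact:codegree-measure-norm-square} is invoked to relate $\sum_v \sigma_\GG(v)^2\|\sigma_{\GG_v}^{(t)}\|^2$ back to $\|\sigma_\GG^{(t+1)}\|^2$. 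Otherwise, the geometric step must have failed before $|S|$ reached $b$; by property (a), such failure can only occur once $e(\GG[V \setminus D]) \le (1-\eps) e(\GG)$, so I place $S_I$ into $\cS'$ and set $C(S_I) := V \setminus D$. Lemma~\ref{lemma:degree-measure}, applied to $\GG$ and the set $D$, immediately gives $|V| - |C(S_I)| = |D| \ge (\eps/(r+1))^2 \|\sigma_\GG\|^{-2}$, as required.

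For well-definedness of $C$ and $\FF$ as functions of $S_I$ (and hence for $\cS'$ and $\cS''$ to be disjoint), I would observe that every state transition depends only on the yes/no sequence received so far, which can be reproduced by simulating the algorithm while answering ``yes'' precisely on vertices of $S$; the terminal branch is therefore also a function of $S_I$ alone. The final consistency clause follows by running the algorithm on $I$ and $I'$ in parallel under the hypothesis $S_I \subseteq I'$ and $S_{I'} \subseteq I$: at the first step where the answers would disagree the queried vertex $v^*$ would have to lie in $(S_I \setminus I') \cup (S_{I'} \setminus I)$, and either possibility contradicts a hypothesis, so the two executions remain in lockstep and produce $S_I = S_{I'}$.

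The main obstacle will be establishing the geometric selection itself: showing that, under the hypothesis $\|\sigma_\GG\|^2 \le \eps^3 p/(50(r+1))$, a vertex $v^*$ satisfying both (a) and (b) always exists until one of the two termination conditions triggers. Via the convex-combination identity of Fact~\ref{fact:codegree-measure-neighbourhood-measure} (and its weighted generalisation, Fact~\ref{fact:codegree-measure-neighbourhood-measure-alpha}), this reduces to an elementary statement in convex geometry about atomic measures whose mean has small $\ell^2$-norm, which is exactly what Lemma~\ref{lemma:geometry} provides. Once this selection principle is in hand, the remainder of the argument is bookkeeping: matching the precise constants appearing in $\alphan$, $b$, and the right-hand side of the size bound in~(1) with the accumulated increments of the iteration.
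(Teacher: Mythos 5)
Your architecture matches the paper's: an adaptive sequence of membership queries chosen by a geometric criterion, $\FF$ assembled from the links of the positive answers, a dichotomy between ``many vertices queried'' and ``norm controlled'', and well-definedness/consistency via the observation that the execution is determined by the yes/no history. Your route to case (1) --- applying Lemma~\ref{lemma:degree-measure} directly to the set of discarded vertices once an $\eps$-fraction of $e(\GG)$ has been destroyed --- is in fact slightly cleaner than the paper's (which counts queries against the maximum degree of a pruned copy of $\GG$), and your argument for the final consistency clause is exactly the paper's.

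The gap is in case (2). You assert that $\|\sigma_\alpha(\FF)\| \le \|\sigma_{\alphan}(\GG)\|$ follows by ``summing at most $b$ per-step increments'', with the two pieces of $\alphan$ absorbing multiplicative and additive slippage, and that the rest is bookkeeping. It is not. The per-step bound that the geometric selection actually delivers (Proposition~\ref{prop:geometry-hypergraph}) carries error terms $\Del_1(\cA)/e(\FF)$ and $\Del_\alpha(\cA)/e(\FF)$, which are infinite when $\FF$ is empty and enormous while $\FF$ is small, so a naive accumulation of increments diverges at the start of the procedure. The paper needs three devices you do not mention: (i) $\FF$ is seeded with an $m$-fold multiple of the complete $r$-uniform hypergraph, the multiplicities of $\GG$ being rescaled so that $e(\GGn^{(0)}) \asymp \Del_1(\GG)$ --- and one must then check that the seed ends up being a negligible fraction of $\FF$, which forces each added link to have at least $\eps\, e(\GG)/|V|$ edges and hence the low-degree pruning of Fact~\ref{fact:min-avg-degree}; (ii) $\GG$ is pre-pruned to $\Delta_1 \le \frac{r+1}{\eps}\Del_1(\GG)$ so that no single added link dominates $\FF$; and (iii) the quantity minimised at each step is the potential $e(\GGn^{(j,v)})\bigl(\|\sigma_\alpha(\GGn^{(j,v)})\|^2 - (1+\eps)\sigma^2\bigr)$, and what one proves (Lemma~\ref{lemma:main-estimate}) is the invariant $\|\sigma_\alpha(\GGn^{(j)})\|^2 \le (1+\eps)\sigma^2 + a\Del/e(\GGn^{(j)})$, maintained by induction with a two-case analysis --- not a telescoping sum of uniformly bounded increments. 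Relatedly, your guarantee (a) is not supplied by the geometric selection: Lemma~\ref{lemma:geometry} only guarantees that the chosen index has positive weight, so the degree lower bound on $v^*$ must be engineered separately. Until these points are filled in, case (2) is unproved.
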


Before we embark on the proof of Lemma~\ref{lemma:key}, we shall first show, in the next subsection, how it implies Theorem~\ref{thm:main}. The remainder of this section, Subsections~\ref{sec:pruning-hypergraphs}--\ref{sec:proof-key-lemma} will be devoted to the proof of the lemma.

\subsection{Derivation of Theorem~\ref{thm:main}}
\label{sec:derivation-thm-main}

Let $\HH$ be a nonempty $s$-uniform hypergraph with vertex set $V$ and suppose that $\delta, p \in (0,1)$ satisfy~\eqref{eq:assumption-main}, that is,
\[
  \tag{\ref{eq:assumption-main}}
  300s^4 \cdot \sum_{t=1}^s \binom{s-1}{t-1} \left(\frac{5000s^3}{p}\right)^{t-1} \| \sigma_\HH^{(t)} \|^2 \le \frac{1}{\delta \cdot v(\HH)} \le \frac{p}{500}.
\]
Define
\[
  \eps = \frac{1}{10s} \qquad \text{and} \qquad \Gamma = \frac{50s}{\eps^2}
\]
and let $\alpha^{(1)} \in \RR^1$, \ldots, $\alpha^{(s)} \in \RR^s$ be vectors defined by
\[
  \alpha^{(r)}_t = \binom{r-1}{t-1} \cdot (1+\eps)^{10(r-t)} \cdot \left(\frac{\Gamma}{p}\right)^{t-1},
\]
for every $r \in \br{s}$ and each $t \in \br{r}$. Given an independent set $I$ of $\HH$, we construct the sets $g(I)$ and $f^*(I)$ using the following procedure.

\medskip
\noindent
\textbf{Construction of the container.}
Let $\HH^{(s)} = \HH$. Do the following for $r = s-1, \dotsc, 1$:
\begin{enumerate}[label=(C\arabic*)]
\item
  \label{item:constr-run-algorithm}
  Invoke Lemma~\ref{lemma:key} with $\GG \leftarrow \HH^{(r+1)}$ and $\alpha \leftarrow \alpha^{(r)}$ to obtain families $\cS'$ and $\cS''$ and functions $S$, $C$, and $\FF$, as in the statement of the lemma.\footnote{In order to do so, we have to make sure that $\|\sigma_{\HH^{(r+1)}}\| \le \frac{\eps^3p}{50(r+1)}$. In the analysis of the procedure, below, we will verify that this is always the case.}
\item
  \label{item:constr-update-signature}
  Let $S^{(r)} \leftarrow S_I$.
\item
  \label{item:constr-determine-many}
  If $S_I \in \cS'$, then let $g(I) = S^{(s-1)} \cup \dotsb \cup S^{(r)}$ and $f^*(I) = C(S_I)$ and \texttt{STOP}.
\item
  \label{item:constr-reduce-uniformity}
  Otherwise, if $S_I \in \cS''$, we let $\HH^{(r)} \leftarrow \FF(S_I)$ and \texttt{CONTINUE}.
\end{enumerate}
If \texttt{STOP} has not been called, then $r=1$ and $\HH^{(1)}$ has been defined. Let $g(I) = S^{(s-1)} \cup \dotsb \cup S^{(1)}$ and $f^*(I) = \big\{ v \in V : \{v\} \not\in \HH^{(1)} \big\}$.

\medskip

In the remainder of this section, we shall show that, for every $I \in \II(\HH)$, the above procedure indeed constructs sets $g(I)$ and $f^*(I)$ that have the desired properties; in particular, we shall show that $f^*$ decomposes as $f^* = f \circ g$.

\begin{claim}
  \label{claim:container-propties}
  For each $r \in \br{s}$, the hypergraph $\HH^{(r)}$, if it was defined, satisfies:
  \begin{enumerate}[label=(\textit{\roman*})]
  \item
    \label{item:container-propty-1}
    $I \in \II(\HH^{(r)})$ and
  \item
    \label{item:container-propty-2}
    $\| \sigma_{\alpha^{(r)}}(\HH^{(r)}) \|^2 \le \frac{\eps^2}{s^2 \delta |V|}$.
  \end{enumerate}
\end{claim}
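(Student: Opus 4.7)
The plan is to proceed by downward induction on $r$, starting from $r = s$ and using Lemma~\ref{lemma:key} at each inductive step.

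\emph{Base case.} When $r = s$, we have $\HH^{(s)} = \HH$, so property~\ref{item:container-propty-1} is immediate from $I \in \II(\HH)$. For~\ref{item:container-propty-2}, unravel
\[
  \|\sigma_{\alpha^{(s)}}(\HH)\|^2 = \sum_{t=1}^{s} \binom{s-1}{t-1} (1+\eps)^{10(s-t)} \left(\frac{\Gamma}{p}\right)^{t-1} \|\sigma_\HH^{(t)}\|^2,
\]
and observe that $(1+\eps)^{10(s-t)} \le (1+\eps)^{10s} \le e$ (as $\eps = 1/(10s)$) while $\Gamma/p = 5000s^3/p$. The assumption~\eqref{eq:assumption-main} then yields $\|\sigma_{\alpha^{(s)}}(\HH)\|^2 \le \frac{e}{300 s^4 \delta v(\HH)}$, and combining with $1/(\delta v(\HH)) \le p/500$ gives both desired upper bounds (since $e/300 < 1/100 = \eps^2/s^2$ and $e \cdot p/(150000s^4) < p/(50000s^4) = \eps^3 p/(50s)$).

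\emph{Inductive step.} Suppose the claim holds for $r+1$. Since $\alpha^{(r+1)}_1 = (1+\eps)^{10r} \ge 1$, the induction hypothesis gives $\|\sigma_{\HH^{(r+1)}}\|^2 \le \|\sigma_{\alpha^{(r+1)}}(\HH^{(r+1)})\|^2 \le \eps^3 p/(50s) \le \eps^3 p/(50(r+1))$, which is exactly the hypothesis needed to apply Lemma~\ref{lemma:key} to $\GG \leftarrow \HH^{(r+1)}$ with $\alpha \leftarrow \alpha^{(r)}$ in step~\ref{item:constr-run-algorithm}. If $S_I \in \cS''$, so that $\HH^{(r)}$ is defined as $\FF(S_I)$, then Lemma~\ref{lemma:key} directly gives $I \in \II(\HH^{(r)})$ (property~\ref{item:container-propty-1}) together with
\[
  \|\sigma_{\alpha^{(r)}}(\HH^{(r)})\|^2 \le \|\sigma_{(\alpha^{(r)})^*}(\HH^{(r+1)})\|^2,
\]
where $(\alpha^{(r)})^* \in \RR^{r+1}$ is the vector defined in~\eqref{eq:def-alphan-b}.

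\emph{The main computation.} The crux of the argument is to show that $(\alpha^{(r)})^* \le \alpha^{(r+1)}$ coordinate-wise; the conclusion for property~\ref{item:container-propty-2} then follows by monotonicity of $\|\sigma_\alpha(\KK)\|$ in $\alpha$ and the induction hypothesis applied to $\HH^{(r+1)}$. Explicitly, writing out the definitions, one finds for every $t \in \{2,\dotsc,r\}$ that
\[
  \frac{(\alpha^{(r)})^*_t}{\alpha^{(r+1)}_t} = \frac{\binom{r-1}{t-1} + \frac{r+1}{s}\binom{r-1}{t-2}}{\binom{r}{t-1}},
\]
using that $\frac{50(r+1)/(\eps^2 p)}{\Gamma/p} = \frac{r+1}{s}$. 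Since $r+1 \le s$, Pascal's identity $\binom{r}{t-1} = \binom{r-1}{t-1} + \binom{r-1}{t-2}$ shows this ratio is at most $1$. The boundary coordinates $t = 1$ and $t = r+1$ are handled by the same observation, yielding respectively $(\alpha^{(r)})^*_1 = (1+\eps)^{10r} = \alpha^{(r+1)}_1$ and $(\alpha^{(r)})^*_{r+1}/\alpha^{(r+1)}_{r+1} = (r+1)/s \le 1$.

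The main technical obstacle is precisely this bookkeeping: one must verify that the exponents of $(1+\eps)^{10}$, the powers $(\Gamma/p)^{t-1}$, and the binomial coefficients $\binom{r-1}{t-1}$ defining $\alpha^{(r)}$ are calibrated so that applying the transformation $\alpha \mapsto \alpha^*$ of Lemma~\ref{lemma:key} produces a vector dominated by $\alpha^{(r+1)}$. The choices $\Gamma = 50s/\eps^2$ and the extra factor $(1+\eps)^{10}$ are precisely what makes this work; once the ratios telescope correctly, the rest of the argument is routine.
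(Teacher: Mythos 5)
Your proposal is correct and follows essentially the same route as the paper: downward induction with the base case read off from~\eqref{eq:assumption-main}, Lemma~\ref{lemma:key} for the inductive step, and the coordinate-wise domination $(\alpha^{(r)})^* \le \alpha^{(r+1)}$ (the paper's Fact~\ref{fact:alpha-induction}) verified via Pascal's identity and $50(r+1)/(\eps^2 p) = \tfrac{r+1}{s}\cdot\Gamma/p$. The only blemish is the base-case line ``$e/300 < 1/100 = \eps^2/s^2$,'' where you mean $e/(300s^4) < 1/(100s^4) = \eps^2/s^2$; the surrounding computation makes clear the $s^4$ is tracked correctly.
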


The proof of Claim~\ref{claim:container-propties} requires the following simple fact, which justifies our definition of the vectors $\alpha^{(1)}, \dotsc, \alpha^{(s)}$.

\begin{fact}
  \label{fact:alpha-induction}
  Suppose that $r \in \br{s-1}$. Let $\alpha \in \RR^r$ and let $\alphan \in \RR^{r+1}$ be defined as in~\eqref{eq:def-alphan-b}. If $\alpha_t \le \alpha_t^{(r)}$ for all $t \in \br{r}$, then $\alphan_t \le \alpha_t^{(r+1)}$ for all $t \in \br{r+1}$.
\end{fact}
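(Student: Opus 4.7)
The plan is a direct coordinatewise comparison. Unpacking the definition of $\alphan$ from~\eqref{eq:def-alphan-b}, its coordinates are $\alphan_1 = (1+\eps)^{10}\alpha_1$, $\alphan_t = (1+\eps)^{10}\alpha_t + \frac{50(r+1)}{\eps^2 p}\alpha_{t-1}$ for $2 \le t \le r$, and $\alphan_{r+1} = \frac{50(r+1)}{\eps^2 p}\alpha_r$. Since $r+1 \le s$, the prefactor satisfies $\frac{50(r+1)}{\eps^2 p} \le \frac{50 s}{\eps^2 p} = \Gamma/p$. Applying the hypothesis $\alpha_t \le \alpha_t^{(r)}$ coordinate by coordinate, it suffices to show that the three expressions on the right (with each $\alpha_t$ replaced by $\alpha_t^{(r)}$) are bounded by $\alpha_t^{(r+1)}$.

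The key observation is that $\alpha_t^{(r)} = \binom{r-1}{t-1}(1+\eps)^{10(r-t)}(\Gamma/p)^{t-1}$ is essentially the $t$-th term in the binomial-like expansion of $((1+\eps)^{10}+\Gamma/p)^{r-1}$, and the recursion defining $\alphan$ mirrors precisely the recursion that advances such an expansion from $r-1$ to $r$. I expect the two boundary cases to fall out immediately: for $t=1$, $\alphan_1 \le (1+\eps)^{10}\cdot(1+\eps)^{10(r-1)} = (1+\eps)^{10r} = \alpha_1^{(r+1)}$, and for $t = r+1$, $\alphan_{r+1} \le (\Gamma/p)\cdot(\Gamma/p)^{r-1} = (\Gamma/p)^r = \alpha_{r+1}^{(r+1)}$.

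The interior case $2 \le t \le r$ is where Pascal's identity enters. Substituting the formula for $\alpha_t^{(r)}$ and $\alpha_{t-1}^{(r)}$ and collecting, the factors $(1+\eps)^{10(r+1-t)}$ and $(\Gamma/p)^{t-1}$ are common to both summands, leaving the coefficient $\binom{r-1}{t-1} + \binom{r-1}{t-2} = \binom{r}{t-1}$, which is exactly the binomial coefficient in $\alpha_t^{(r+1)}$. I do not anticipate any real obstacle: the fact is algebraic bookkeeping chosen so that the induction on $r$ closes cleanly, and the only numerical input beyond Pascal's identity is the harmless bound $r+1 \le s$.
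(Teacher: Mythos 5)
Your proof is correct and matches the paper's argument essentially verbatim: the same coordinatewise unpacking of $\alphan$, the same reduction of the prefactor via $50(r+1)\le 50s=\Gamma\eps^2$, the same immediate treatment of the boundary coordinates $t=1$ and $t=r+1$, and the same use of Pascal's identity $\binom{r-1}{t-1}+\binom{r-1}{t-2}=\binom{r}{t-1}$ for the interior case. Nothing is missing.
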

\begin{proof}
  Note first that
  \[
    \alphan_1 = (1+\eps)^{10} \cdot \alpha_1 \le (1+\eps)^{10} \cdot \alpha_1^{(r)} = (1+\eps)^{10r} = \alpha_1^{(r+1)}
  \]
  and, since $50(r+1) \le 50s = \Gamma\eps^2$,
  \[
    \alphan_{r+1} = \frac{50(r+1)}{\eps^2p} \cdot \alpha_r \le \frac{\Gamma}{p} \cdot \alpha_r^{(r)} = \left(\frac{\Gamma}{p}\right)^{r} = \alpha_{r+1}^{(r+1)}.
  \]
  Finally, if $1 < t \le r$, then
  \[
    \begin{split}
      \alphan_t & = (1+\eps)^{10} \cdot \alpha_t + \frac{50(r+1)}{\eps^2p} \cdot \alpha_{t-1} \le (1+\eps)^{10} \cdot \alpha_t^{(r)} + \frac{\Gamma}{p} \cdot \alpha_{t-1}^{(r)} \\
      & = \binom{r-1}{t-1} (1+\eps)^{10(r-t+1)} \left(\frac{\Gamma}{p}\right)^{t-1} + \binom{r-1}{t-2} (1+\eps)^{10(r-(t-1))} \left(\frac{\Gamma}{p}\right)^{t-2+1} \\
      & = \binom{r}{t-1} (1+\eps)^{10(r+1-t)} \left(\frac{\Gamma}{p}\right)^{t-1} = \alpha_t^{(r+1)},
    \end{split}
  \]
  where the second to last equality is Pascal's formula.
\end{proof}

\begin{proof}[{Proof of Claim~\ref{claim:container-propties}}]
  We prove the claim by induction on $s-r$. The basis of the induction is the case $r = s$. Property~\ref{item:container-propty-1} is satisfied, as $\HH^{(s)} = \HH$ and $I \in \II(\HH)$. In order to see that property~\ref{item:container-propty-2} holds as well, note first that the first inequality in the main assumption~\eqref{eq:assumption-main} of Theorem~\ref{thm:main} gives
  \[
    \begin{split}
      \|\sigma_{\alpha^{(s)}}(\HH^{(s)})\|^2 = \| \sigma_{\alpha^{(s)}}(\HH) \|^2 & = \sum_{t=1}^s \binom{s-1}{t-1} (1+\eps)^{10(s-t)} \left(\frac{\Gamma}{p}\right)^{t-1} \| \sigma_\HH^{(t)} \|^2 \\
      & \le (1+\eps)^{10s} \cdot  \sum_{t=1}^s \binom{s-1}{t-1} \left(\frac{\Gamma}{p}\right)^{t-1} \| \sigma_\HH^{(t)} \|^2  \\
      & \le (1+\eps)^{10s} \cdot \frac{1}{300s^4 \cdot \delta \cdot |V|} \le \frac{\eps^2}{s^2 \cdot \delta \cdot |V|},
    \end{split}
  \]
  where the second inequality holds as $\Gamma = 5000s^3$ and the last inequality holds since $\eps = 1/(10s)$ and, consequently, $(1+\eps)^{10s} \le e^{10\eps s} = e \le 3$.
    
  Suppose now that $r \in \br{s-1}$ and that the hypergraph $\HH^{(r+1)}$ was defined. We first argue that we are allowed to invoke Lemma~\ref{lemma:key} in step~\ref{item:constr-run-algorithm} above. Indeed, since the second inequality in \eqref{eq:assumption-main} implies that
  \[
    \frac{1}{s \cdot \delta \cdot |V|} = \frac{10\eps}{\delta \cdot |V|} \le \frac{\eps p}{50}
  \]
  and $\alpha_1^{(r+1)} = (1+\eps)^{10r} \ge 1$, our inductive assumption implies that
  \[
    \|\sigma_{\HH^{(r+1)}}\|^2 \le \alpha_1^{(r+1)} \cdot \|\sigma_{\HH^{(r+1)}}\|^2 \le \|\sigma_{\alpha^{(r+1)}} (\HH^{(r+1)})\|^2 \le \frac{\eps^2}{s^2\delta |V|} \le \frac{\eps^3p}{50(r+1)},
  \]
  as needed (in order to apply Lemma~\ref{lemma:key} with $\GG \leftarrow \HH^{(r+1)}$). If the hypergraph $\HH^{(r)}$ was defined, in step~\ref{item:constr-reduce-uniformity}, then Lemma~\ref{lemma:key} guarantees that
  \[
    \|\sigma_{\alpha^{(r)}}(\HH^{(r)})\|^2 \le \| \sigma_{\alpha^*} (\HH^{(r+1)}) \|^2,
  \]
  where $\alphan$ is defined as in \eqref{eq:def-alphan-b}, with $\alpha \leftarrow \alpha^{(r)}$. However, Fact~\ref{fact:alpha-induction} states that $\alphan \le \alpha^{(r+1)}$ coordinate-wise and, therefore, we may conclude that
  \[
    \|\sigma_{\alpha^{(r)}}(\HH^{(r)})\|^2 \le \| \sigma_{\alpha^*} (\HH^{(r+1)}) \|^2 \le \| \sigma_{\alpha^{(r+1)}} (\HH^{(r+1)}) \|^2,
  \]
  as needed.
\end{proof}

We now verify that $g(I)$ and $f^*(I)$ have the desired properties and that $f^*$ decomposes as $f^* = f \circ g$. Since $g(I) = S^{(s-1)} \cup \dotsb \cup S^{(r)}$, for some $r \in \br{s-1}$, the fact that $g(I) \subseteq I$ is an immediate consequence of the definitions of $S^{(s-1)}, \dotsc, S^{(r)}$, made in step~\ref{item:constr-update-signature}, and the fact that the respective sets $S_I$ are all contained in $I$, as guaranteed by Lemma~\ref{lemma:key}. Moreover, since each of these sets $S_I$ has at most $\lceil 2p|V|/\eps \rceil$ elements, see Lemma~\ref{lemma:key}, we have
\[
  |g(I)| \le |S^{(s-1)}| + \dotsb + |S^{(r)}| \le (s-1) \cdot \left\lceil \frac{2p \cdot |V|}{\eps} \right\rceil \le \frac{2sp}{\eps} \cdot |V| = 20s^2p \cdot |V|,
\]
where the last inequality holds because the main assumption~\eqref{eq:assumption-main} and Fact~\ref{fact:codegree-measure-norm-maximum-degree} imply that $p \ge s^4 \cdot \| \sigma_\HH^{(1)}\|^2 \ge s^4 / |V|.$

The set $f^*(I)$ is defined either in step~\ref{item:constr-determine-many}, for some $r \in \br{s-1}$, or at the end of the procedure, if the $1$-uniform hypergraph $\HH^{(1)}$ is constructed. In the former case, $f^*(I) = C(S_I)$ for functions $S$ and $C$ obtained from Lemma~\ref{lemma:key}. Note that $I \setminus g(I) \subseteq I \setminus S_I \subseteq C(S_i) = f^*(I)$, as $g(I) \supseteq S^{(r)} = S_I$. Moreover, $|f^*(I)| \le (1-\delta)|V|$, since, on the one hand, Lemma~\ref{lemma:key} guarantees that
\[
  |V| - |f^*(I)| \ge \frac{\eps^2}{(r+1)^2} \cdot \| \sigma_{\HH^{(r+1)}} \|^{-2}
\]
and, on the other hand, by Claim~\ref{claim:container-propties}, as $\alpha_1^{(r+1)} = (1+\eps)^{10r} \ge 1$,
\[
  \| \sigma_{\HH^{(r+1)}} \|^2 \le \alpha_1^{(r+1)} \cdot \|\sigma_{\HH^{(r+1)}}\|^2 \le \|\sigma_{\alpha^{(r+1)}} (\HH^{(r+1)})\|^2 \le \frac{\eps^2}{s^2 \delta|V|} \le \frac{\eps^2}{(r+1)^2 \delta |V|}.
\]
In the latter case, $f^*(I) = \big\{v \in V : \{v\} \not\in \HH^{(1)} \big\}$. In particular, we must have $I \subseteq f^*(I)$, since otherwise $I$ would not be an independent set in $\HH^{(1)}$, which would contradict property~\ref{item:container-propty-1} in Claim~\ref{claim:container-propties}. Moreover, Lemma~\ref{lemma:degree-measure}, invoked with $\HH \leftarrow \HH^{(1)}$, $D \leftarrow V \setminus f^*(I)$, and $\eps \leftarrow 1$, gives $|V \setminus f^*(I)| \ge \| \sigma_{\HH^{(1)}} \|^{-2}$. Since $\alpha_1^{(1)} = 1$, we have $\sigma_{\HH^{(1)}} = \sigma_{\alpha^{(1)}} (\HH^{(1)})$ and property~\ref{item:container-propty-2} in Claim~\ref{claim:container-propties} allows us to conclude that
\[
  |V| - |f^*(I)| \ge \| \sigma_{\alpha^{(1)}}(\HH^{(1)}) \|^{-2} \ge \frac{s^2 \delta |V|}{\eps^2} \ge \delta |V|.
\]

Finally, we show that $f^*$ decomposes as $f^* = f \circ g$. To this end, it suffices to show that if $g(I) = g(I')$, for some $I, I' \in \II(\HH)$, then $f^*(I) = f^*(I')$.
In fact, we shall prove the following stronger statement.

\begin{claim}
  \label{claim:f-star-decomposes}
  If $g(I) \subseteq I'$ and $g(I') \subseteq I$ for some $I, I' \in \II(\HH)$, then $g(I) = g(I')$ and $f^*(I) = f^*(I')$.
\end{claim}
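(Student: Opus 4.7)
The plan is to prove by downward induction on $r \in \{s-1, s-2, \dotsc, 1\}$ the following invariant: if both runs of the Construction of the Container, applied to $I$ and to $I'$, have completed rounds $s-1, s-2, \dotsc, r+1$ without invoking \texttt{STOP}, then the hypergraphs $\HH^{(r+1)}$ constructed on the two runs coincide, both $I$ and $I'$ belong to $\II(\HH^{(r+1)})$, and the signature sets $S^{(r)}$ produced at round $r$ agree. This immediately implies that the decision to invoke \texttt{STOP} at round $r$ (which depends only on whether $S^{(r)}$ lies in $\cS'$ or in $\cS''$) is the same for both runs. Hence the two runs stop at a common round $r^*$ (or neither stops, in which case $r^*=1$), produce the same sequence $S^{(s-1)}, \dotsc, S^{(r^*)}$, and therefore agree on $g$ and on $f^*$.

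The base case $r = s-1$ begins from $\HH^{(s)} = \HH$ on both runs, with $I, I' \in \II(\HH)$. Lemma~\ref{lemma:key}, applied with $\GG = \HH$, outputs $S_I^{(s-1)}$ and $S_{I'}^{(s-1)}$. To invoke the consistency clause of Lemma~\ref{lemma:key} I need $S_I^{(s-1)} \subseteq I'$ and $S_{I'}^{(s-1)} \subseteq I$; the key observation is that every $S^{(r)}$ defined during a run is a subset of the eventual $g$-value of that run, by the very definition of $g$. Hence $S_I^{(s-1)} \subseteq g(I) \subseteq I'$ and symmetrically, and the consistency clause yields $S_I^{(s-1)} = S_{I'}^{(s-1)}$. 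If the two runs continue, then $\HH^{(s-1)} = \FF(S_I^{(s-1)}) = \FF(S_{I'}^{(s-1)})$ is the same on both runs, and clause~(2) of Lemma~\ref{lemma:key} ensures $I, I' \in \II(\HH^{(s-1)})$.

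The inductive step is essentially a repetition: assuming the invariant at round $r+1$, the two runs apply Lemma~\ref{lemma:key} to the common hypergraph $\GG = \HH^{(r+1)}$, the signatures $S_I^{(r)}$ and $S_{I'}^{(r)}$ are again contained in $g(I) \subseteq I'$ and $g(I') \subseteq I$ respectively, and the consistency clause gives $S_I^{(r)} = S_{I'}^{(r)}$. If this round does not trigger \texttt{STOP}, then $\HH^{(r)} = \FF(S_I^{(r)})$ is the same on both runs and $I, I' \in \II(\HH^{(r)})$ by clause~(2) of Lemma~\ref{lemma:key}, completing the induction.

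To finish, consider the common stopping round $r^*$. If $r^* \ge 1$ and \texttt{STOP} is invoked, then $f^*(I) = C(S_I^{(r^*)}) = C(S_{I'}^{(r^*)}) = f^*(I')$ by the invariant. If the loop exhausts without stopping, then $r^* = 1$, the common hypergraph $\HH^{(1)}$ determines $f^*(I) = \{v \in V : \{v\} \notin \HH^{(1)}\} = f^*(I')$. In both cases $g(I) = S^{(s-1)} \cup \dotsb \cup S^{(r^*)} = g(I')$. The only genuine input is the remark that the hypothesis $g(I) \subseteq I'$ (and its symmetric counterpart) supplies exactly the containment needed to invoke the consistency clause of Lemma~\ref{lemma:key} at every level; the remaining obstacle is purely notational bookkeeping through the inductive loop, which requires some care but no new ideas.
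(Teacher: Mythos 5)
Your proposal is correct and follows essentially the same route as the paper: downward induction on $r$ showing that the two runs produce identical hypergraphs $\HH^{(r)}$ and identical signature sets $S^{(r)}$, with the crucial step being that $S_I \subseteq g(I) \subseteq I'$ and $S_{I'} \subseteq g(I') \subseteq I$, which activates the consistency clause of Lemma~\ref{lemma:key} to give $S_I = S_{I'}$. The paper's argument is a more compressed version of exactly this induction.
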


Note that Claim~\ref{claim:f-star-decomposes} implies the desired property of $f^*$. Indeed, assume that $g(I) = g(I')$.
Since $g(I) \subseteq I$ and $g(I') \subseteq I'$, as shown above, we have $g(I) = g(I') \subseteq I \cap I'$ and the claim yields $f^*(I) = f^*(I')$.

\begin{proof}[{Proof of Claim~\ref{claim:f-star-decomposes}}]
  The claim is an easy consequence of the respective property of the function $S$ from the statement of Lemma~\ref{lemma:key}. Indeed, it suffices to show that the container-constructing procedure described above defines the same sets $S^{(r)}$ and the same hypergraphs $\HH^{(r)}$ when applied to both $I$ and $I'$; this is because $g(I)$ and $g(I')$ are unions of the respective sets $S^{(r)}$ and the sets $f^*(I)$ and $f^*(I')$ depend only on the sets $S^{(r)}$ and the hypergraphs $\HH^{(r)}$. One may prove this assertion by induction on $s-r$. For the induction step, note that while the procedure performs step~\ref{item:constr-run-algorithm}, the respective hypergraphs $\HH^{(r+1)}$ are identical (by the inductive assumption) and therefore so are the functions $S$, $C$, and $\FF$. Moreover, since $S_I \subseteq g(I)$ and $S_{I'} \subseteq g(I')$, by our definition of $g(I)$ and $g(I')$, we may conclude that $S_I \subseteq I'$ and $S_{I'} \subseteq I$ and thus, the final assertion of Lemma~\ref{lemma:key} gives us the equality $S_I = S_{I'}$.
\end{proof}

\subsection{Pruning hypergraphs}
\label{sec:pruning-hypergraphs}

In order to streamline the analysis of our algorithm that constructs the $r$-uniform hypergraph $\HH^{(r)}$ from the $(r+1)$-uniform $\HH^{(r+1)}$, we will first prune the latter hypergraph by removing from it vertices with unusually high degree. More precisely, define, for a nonempty $r$-uniform hypergraph $\HH$ and $t \in \br{r}$,
\begin{equation}
  \label{eq:Del}
  \Del_t(\HH) = \frac{r}{t} \cdot e(\HH) \cdot \| \sigma_\HH^{(t)} \|^2;
\end{equation}
one should think of $\Del_t(\HH)$ as a robust analogue of the maximum degree $\Delta_t(\HH)$. In particular, Fact~\ref{fact:codegree-measure-norm-maximum-degree} implies that $\Del_1(\HH) \le \Delta_1(\HH)$; even though equality sometimes holds (when $\HH$ is regular), in general the ratio $\Delta_1(\HH) / \Del_1(\HH)$ can be arbitrarily large.  Our next lemma shows that this inequality becomes nearly tight, up to a multiplicative factor of $O(r)$, after we delete a small proportion of the edges of $\HH$.

\begin{lemma}
  \label{lemma:Delta-Del}
  Suppose that $\HH$ is a nonempty $r$-uniform hypergraph. Then, for every $R \ge r$, there is an $\HH' \subseteq \HH$ with $e(\HH') > (1 - \frac{r}{R}) \cdot e(\HH)$ such that $\Delta_1(\HH') \le R \cdot \Del_1(\HH)$.
\end{lemma}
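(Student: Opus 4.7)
The plan is a one-shot pruning argument: delete all vertices whose degree exceeds the target threshold, and then verify via a second-moment calculation that only a small fraction of edges are lost. The key quantity to use is the identity
\[
  \sum_{v \in V(\HH)} \deg_\HH(v)^2 \;=\; r \cdot e(\HH) \cdot \Del_1(\HH),
\]
which is just the expansion of the definition $\Del_1(\HH) = r \cdot e(\HH) \cdot \|\sigma_\HH^{(1)}\|^2$ together with $\|\sigma_\HH^{(1)}\|^2 = \sum_v (\deg_\HH(v)/(r e(\HH)))^2$.

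First I would define the set of \emph{bad} vertices
\[
  B \;=\; \{v \in V(\HH) : \deg_\HH(v) > R \cdot \Del_1(\HH)\}
\]
and let $\HH'$ be the subhypergraph obtained from $\HH$ by deleting every edge that meets $B$. Then every vertex of $\HH'$ either lies in $B$ (and now has degree $0$) or lies outside $B$ (and has degree at most $\deg_\HH(v) \le R \cdot \Del_1(\HH)$), so $\Delta_1(\HH') \le R \cdot \Del_1(\HH)$ automatically holds.

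The one thing to check is that not too many edges are removed, and this is where the identity above is used in a Markov-style way. Since each $v \in B$ satisfies $\deg_\HH(v) > R \cdot \Del_1(\HH)$, we get
\[
  R \cdot \Del_1(\HH) \cdot \sum_{v \in B} \deg_\HH(v) \;<\; \sum_{v \in B} \deg_\HH(v)^2 \;\le\; \sum_{v \in V(\HH)} \deg_\HH(v)^2 \;=\; r \cdot e(\HH) \cdot \Del_1(\HH),
\]
so $\sum_{v \in B} \deg_\HH(v) < r \cdot e(\HH)/R$. The number of edges meeting $B$ is at most $\sum_{v \in B} \deg_\HH(v)$, whence $e(\HH') > (1 - r/R) \cdot e(\HH)$, as required. (Note $\Del_1(\HH) > 0$ since $\HH$ is nonempty, so the division is legitimate; and if $B = \emptyset$ we simply take $\HH' = \HH$.)

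I do not expect a genuine obstacle here: the lemma is really a Markov-type statement about $\ell^2$-concentration of the degree sequence, and the $\ell^2$-norm of $\sigma_\HH^{(1)}$ is designed precisely to capture $\sum_v \deg_\HH(v)^2$. The only mild subtlety is keeping track of the strict vs.\ non-strict inequality (strictness in the definition of $B$ feeds the strict inequality in the final edge count, matching the statement).
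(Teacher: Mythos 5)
Your proof is correct and is essentially identical to the paper's: both delete all edges meeting the set of vertices of degree exceeding $R\cdot\Del_1(\HH)$ and bound the loss by the same second-moment/Markov computation (the paper phrases it via $\|\sigma_\HH\|^2$ rather than $\sum_v \deg_\HH(v)^2$, but these differ only by the factor $(r\cdot e(\HH))^2$). Your explicit handling of the $B=\emptyset$ case is a small point the paper leaves implicit.
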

\begin{proof}
  Given a nonempty $r$-uniform hypergraph $\HH$ and $R \ge r$, define
  \[
    X = \left\{v \in V(\HH) : \deg_\HH v > R \cdot \Del_1(\HH) \right\}.
  \]
  By the definition of $X$ and $\Del_1(\HH)$, we have
  \[
  \| \sigma_\HH \|^2 \ge \sum_{v \in X} \left(\frac{\deg_\HH v}{r \cdot e(\HH)}\right)^2 > \frac{R \cdot \Del_1(\HH)}{r \cdot e(\HH)} \cdot \sum_{v \in X} \frac{\deg_\HH v}{r \cdot e(\HH)} = R \cdot \| \sigma_\HH \|^2 \cdot \sum_{v \in X} \frac{\deg_\HH v}{r \cdot e(\HH)},
  \]
  which implies that
  \[
  \sum_{v \in X} \deg_\HH v < \frac{r}{R} \cdot e(\HH).
  \]
  In particular, deleting from $\HH$ all edges containing at least one vertex of $X$ yields a hypergraph $\HH'$ satisfying the assertion of this lemma.
\end{proof}

Since the degree measures $\sigma^{(t)}(\HH)$ are not defined when $\HH$ is an empty hypergraph, in order to streamline our analysis, we will start building the hypergraph $\HH^{(r)}$ by seeding it with a fixed well-behaved $r$-uniform hypergraph. In order to guarantee that, at the end of the algorithm, this initial seed constitutes only a negligible proportion of $\HH^{(r)}$, we need to make sure that the link hypergraphs $\HH_v^{(r+1)}$ that the algorithm adds to $\HH^{(r)}$ are somewhat large. We will achieve this by (temporarily) removing vertices of very small degree from various subhypergraphs of $\HH^{(r+1)}$.

\begin{fact}
  \label{fact:min-avg-degree}
  Suppose that $\HH$ is a nonempty hypergraph with vertex set $V$. Then, for every $\beta > 0$, there is a spanning $\HH' \subseteq \HH$ such that $e(\HH') \ge (1-\beta) \cdot e(\HH)$ and, for each $v \in V$, $\deg_{\HH'} v$ is either zero or at least $\beta \cdot e(\HH) / |V|$.
\end{fact}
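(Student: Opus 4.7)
\medskip
\noindent
\textbf{Proof proposal.}
The plan is a standard greedy peeling argument. Set the threshold $\tau = \beta \cdot e(\HH)/|V|$ and start with $\HH' = \HH$. As long as there exists a vertex $v \in V$ with $0 < \deg_{\HH'} v < \tau$, delete from $\HH'$ every edge that contains $v$; this reduces $\deg_{\HH'} v$ to zero while strictly decreasing the degrees of the other vertices. Repeat until no such vertex remains. The resulting spanning subhypergraph $\HH'$ has, by construction, the property that every $v \in V$ satisfies $\deg_{\HH'} v = 0$ or $\deg_{\HH'} v \ge \tau$, which is exactly the required dichotomy.

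It remains to bound the number of deleted edges. The key observation is that once a vertex $v$ is processed (i.e.\ all its incident edges are removed), it stays at degree $0$ forever, since deletions only decrease degrees. Hence each vertex is processed at most once, so the number of iterations is at most $|V|$. Each iteration removes fewer than $\tau = \beta \cdot e(\HH)/|V|$ edges, so the total number of deleted edges is strictly less than $|V| \cdot \tau = \beta \cdot e(\HH)$, giving $e(\HH') > (1-\beta) \cdot e(\HH)$, which is stronger than the claimed inequality.

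There is no real obstacle here; the only mild subtlety is keeping track of the fact that the threshold $\tau$ is defined in terms of the original $e(\HH)$ rather than the current number of edges, which is what makes the degree lower bound on surviving vertices an absolute (rather than iteration-dependent) quantity and lets the counting argument close cleanly.
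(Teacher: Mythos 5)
Your proof is correct and is essentially identical to the paper's: both iteratively delete all edges through a vertex of positive degree below the threshold $\beta\cdot e(\HH)/|V|$, note that each vertex is processed at most once, and bound the total loss by $|V|$ times the threshold. No gaps.
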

\begin{proof}
  Form a spanning subgraph $\HH'$ of $\HH$ by iteratively deleting all edges containing some vertex with degree smaller than $\beta \cdot e(\HH) / |V|$. Clearly, each edge of $\HH'$ contains only vertices with degrees at least $\beta \cdot e(\HH) / |V|$. Moreover, the number of edges deleted from $\HH$ while forming $\HH'$ cannot exceed $\beta \cdot e(\HH)$.
\end{proof}

\subsection{The algorithm}
\label{sec:algorithm}

We are ready to present the algorithm that underlies the proof of Lemma~\ref{lemma:key}.

\subsubsection*{Input}

Let $\GG$ be an $(r+1)$-uniform hypergraph with vertex set $V$. Let $\eps \in \big(0, (9(r+1))^{-1}\big)$ and $p \in (0,1)$, define
\[
  a = \frac{25}{\eps^2},
\]
and suppose that
\begin{equation}
  \label{eq:sigma-GG-assumption}
  \| \sigma_\GG \|^2 \le \frac{\eps^3 p}{ 50 (r+1)} = \frac{\eps p}{2 a (r+1)}.
\end{equation}
Observe that uniformly scaling the multiplicities of all edges of $\GG$ by a positive integer factor $k$ does not affect the $t$-degree measure $\sigma_{\GG}^{(t)}$, for any $t \in \br{r}$, nor does it change the family $\II(\GG)$ of independent sets of $\GG$. It does, however, increase the value of $e(\GG)$, and thus also the value of $\Del_t(\GG)$, for each $t \in \br{r}$, by the same multiplicative factor $k$. Consequently, we may assume, without loss of generality, that there is a (large) positive integer $m$ such that
\begin{equation}
  \label{eq:m-choice}
  \frac{m}{2a} \cdot \binom{|V|}{r} \le \Del_1(\GG) \le \frac{m}{a} \cdot \binom{|V|}{r}.
\end{equation}
Finally, let $\alpha \in \RR^r$ be a vector with nonnegative coordinates and let $I$ be an independent set of $\GG$.

\subsubsection*{Setup}

Let $L$ be the empty set and let $\GGn^{(0)}$ be the hypergraph obtained from the complete $r$-uniform hypergraph with vertex set $V$ by changing the multiplicities of all of its edges to $m$, so that $e(\GGn^{(0)}) = m \binom{|V|}{r}$.
Further, apply Lemma~\ref{lemma:Delta-Del}, with $R \leftarrow \frac{r+1}{\eps}$, to find an $\cA^{(0)} \subseteq \GG$ satisfying
\begin{equation}
  \label{eq:A0-properties}
  e(\cA^{(0)}) \ge (1-\eps) \cdot e(\GG) \qquad \text{and} \qquad \Delta_1(\cA^{(0)}) \le \frac{r+1}{\eps} \cdot \Del_1(\GG).
\end{equation}
Finally, let
\[
  b = \left\lceil \frac{2p}{\eps} \cdot |V|\right\rceil
  \qquad \text{and} \qquad
  \sigma = (1-3\eps)^{-1} \cdot \| \sigma_\alpha(\GG) \|.
\]

\subsubsection*{Main loop}

Do the following for $j = 0, 1, \ldots$:
\begin{enumerate}[label={(S\arabic*)}]
\item
  \label{item:alg-stop}
  If $|L| = b$ or $e(\cA^{(j)}) < (1-2\eps) \cdot e(\GG)$, then let $J = j$ and \texttt{STOP}.
\item
  \label{item:alg-best-offer}
  Let $\gcA^{(j)}$ be a canonically chosen spanning subgraph of $\cA^{(j)}$ satisfying the assertion of Fact~\ref{fact:min-avg-degree} with $\beta \leftarrow \eps$. For each $v \in V$, let $\GGn^{(j,v)} = \GGn^{(j)} \cup \gcA_v^{(j)}$ and let $v_j$ be a canonically chosen vertex that minimises the quantity
  \[
  e(\GGn^{(j,v)}) \cdot \left( \| \sigma_\alpha(\GGn^{(j,v)}) \|^2 - (1+\eps) \cdot \sigma^2 \right)
  \]
  over all $v \in V$ whose degree in $\gcA^{(j)}$ is nonzero.
  
\item
  \label{item:alg-main-step}
  If $v_j \in I$, then add $j$ to the set $L$ and let
  \[
  \GGn^{(j+1)} = \GGn^{(j,v_j)} = \GGn^{(j)} \cup \gcA_{v_j}^{(j)}.
  \]
  Otherwise, let $\GGn^{(j+1)} = \GGn^{(j)}$.
\item
  \label{item:alg-cleanup}
  Let $\cA^{(j+1)}$ be the hypergraph obtained from $\cA^{(j)}$ by removing all edges containing $v_j$.
\end{enumerate}

\subsubsection*{Output}

After \texttt{STOP} is called in the main loop, let $\cA = \cA^{(J)}$ and $\GGn = \GGn^{(J)} \setminus \GGn^{(0)}$, that is, $\GGn$ is the hypergraph satisfying $\GGn^{(J)} = \GGn \cup \GGn^{(0)}$. (Recall once more that all our hypergraphs contain edges with multiplicities.)

\subsection{Basic properties of the algorithm and the key dichotomy}
\label{sec:basic-prop-alg}

In this section, we establish several basic properties of the algorithm and state its key `dichotomy' property, which we shall derive in later sections. Moreover, we explain how to use the algorithm to prove Lemma~\ref{lemma:key}. We start by showing that the algorithm terminates on every input and that the output hypergraph $\GGn$ and the final set $L$ retain important information about the input set $I$.

\begin{obs}
  \label{obs:algorithm-basic-properties}
  For every $I \in \II(\GG)$, the map $\{0, \dotsc, J-1\} \ni j \mapsto v_j \in V$ is injective and hence the algorithm terminates. Moreover, $I \in \II(\GGn)$ and $L = \big\{j \in \{0, \dotsc, J-1\} : v_j \in I\big\}$.
\end{obs}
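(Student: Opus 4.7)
The plan is to verify the three assertions in the order listed: injectivity of the map $j \mapsto v_j$ (which will yield termination as a corollary), the independence $I \in \II(\GGn)$, and the identification of $L$.

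For injectivity, the key point is that step~\ref{item:alg-cleanup} deletes every edge containing $v_j$ from $\cA^{(j+1)}$, so $v_j$ has degree $0$ in $\cA^{(j')}$ for all $j' > j$ (since $\cA^{(j')} \subseteq \cA^{(j+1)}$), and consequently also in every spanning subhypergraph, including $\gcA^{(j')}$. Since step~\ref{item:alg-best-offer} only considers vertices of \emph{nonzero} degree in $\gcA^{(j')}$, the vertex $v_j$ cannot be reselected, proving injectivity. This immediately gives $J \le |V|$, provided step~\ref{item:alg-best-offer} can always be carried out whenever the stopping conditions in~\ref{item:alg-stop} are not met. The latter is a short check: when $|L| < b$ and $e(\cA^{(j)}) \ge (1-2\eps) e(\GG) > 0$, Fact~\ref{fact:min-avg-degree} applied with $\beta \leftarrow \eps$ yields $e(\gcA^{(j)}) \ge (1-\eps) e(\cA^{(j)}) > 0$, so at least one vertex has positive degree in $\gcA^{(j)}$ and the minimum in step~\ref{item:alg-best-offer} is taken over a non-empty set.

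For the second half of the observation, I would simply unpack the output. Step~\ref{item:alg-main-step} modifies $\GGn^{(j)}$ only when $v_j \in I$, in which case $j$ is simultaneously added to $L$; therefore
\[
  \GGn \;=\; \bigcup_{j \in L} \gcA_{v_j}^{(j)} \qquad \text{and} \qquad L \;=\; \{j \in \{0, \dotsc, J-1\} : v_j \in I\},
\]
the second identity being the desired characterisation of $L$. To establish $I \in \II(\GGn)$, take any edge $A$ of $\gcA_{v_j}^{(j)}$ for some $j \in L$. Since $\gcA_{v_j}^{(j)} \subseteq \cA_{v_j}^{(j)} \subseteq \GG_{v_j}$, the set $A \cup \{v_j\}$ is an edge of $\GG$; combining $v_j \in I$ with $I \in \II(\GG)$ forces $A \not\subseteq I$. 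Taking the union over $j \in L$ then gives independence of $I$ in $\GGn$.

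I do not anticipate a serious obstacle: the observation is essentially a sanity check that the algorithm is well-defined and does what is advertised, and every claim reduces to tracing through the definitions of $\cA^{(j)}$, $\gcA^{(j)}$, $\GGn^{(j)}$ and $L$. The one detail that deserves explicit mention is the non-stalling check for step~\ref{item:alg-best-offer}, without which the passage from injectivity to the bound $J \le |V|$ would be meaningless.
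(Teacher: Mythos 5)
Your argument is correct and follows the same route as the paper: injectivity comes from step~\ref{item:alg-cleanup} zeroing out the degree of $v_j$ in all later $\cA^{(j')}$ (hence in $\gcA^{(j')}$), and the remaining assertions are read off from the fact that $\GGn$ is built only from link hypergraphs $\gcA^{(j)}_{v_j} \subseteq \GG_{v_j}$ with $v_j \in I$. Your extra check that step~\ref{item:alg-best-offer} never stalls (via Fact~\ref{fact:min-avg-degree}) is a worthwhile detail the paper leaves implicit.
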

\begin{proof}
  The first assertion holds because in step~\ref{item:alg-cleanup}, all edges containing $v_j$ are removed, and hence its degree remains zero in each $\cA^{(j')}$ with $j' > j$. Therefore, the algorithm stops after at most $|V|$ iterations of the main loop. The second assertion holds because $\GGn$ comprises only edges of the link hypergraphs $\GG_v$ for which $v \in I$ and because $j \in L$ if and only if $v_j \in I$.
\end{proof}

We next observe that the set $L$ contains all the information about the input set $I$ that is needed to reconstruct the execution of the algorithm.

\begin{obs}
  \label{obs:algorithm-consistency}
  If the algorithm produces the same set $L$ for two inputs $I, I' \in \II(\GG)$, then also the hypergraphs $\GGn$, the numbers $J$, and the sequences $v_0, \dotsc, v_{J-1}$ are the same for both inputs.
\end{obs}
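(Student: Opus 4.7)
The plan is to prove the statement by induction on the iteration index $j$, showing that at every step before the algorithm has terminated, the auxiliary objects $\cA^{(j)}$, $\gcA^{(j)}$, $\GGn^{(j)}$, and the chosen vertex $v_j$ coincide for the two inputs $I$ and $I'$. Granting this, the \texttt{STOP} condition in step~\ref{item:alg-stop} is a function solely of $|L|$, $\cA^{(j)}$, $\GG$, and $\eps$, so the halting time $J$ must also agree, and consequently $\GGn = \GGn^{(J)} \setminus \GGn^{(0)}$ does too.

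For the base case $j=0$, none of $\cA^{(0)}$ (chosen from $\GG$ via Lemma~\ref{lemma:Delta-Del}), $\GGn^{(0)}$, $b$, $\sigma$, or the canonical choice of $\gcA^{(0)}$ from $\cA^{(0)}$ depend on the input independent set. For the inductive step, suppose the claim holds through step $j$. The choice of $v_j$ in step~\ref{item:alg-best-offer} is a canonical minimiser depending only on $\alpha$, $\gcA^{(j)}$, and $\GGn^{(j)}$, all of which match by the inductive hypothesis, so $v_j$ is the same in both executions. The update to $\cA^{(j+1)}$ in step~\ref{item:alg-cleanup} is a function of $\cA^{(j)}$ and $v_j$ alone and does not reference $I$. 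The only place where the input enters is step~\ref{item:alg-main-step}, where $\GGn^{(j+1)}$ equals $\GGn^{(j)} \cup \gcA_{v_j}^{(j)}$ if $v_j \in I$ and $\GGn^{(j)}$ otherwise. By Observation~\ref{obs:algorithm-basic-properties}, the event $v_j \in I$ is precisely $j \in L$, so this decision is driven entirely by the common set $L$. Hence $\GGn^{(j+1)}$ is the same for both inputs, completing the induction.

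The only subtlety, and the single step one must handle with care, is to verify that every choice the algorithm makes outside of step~\ref{item:alg-main-step} is genuinely canonical, meaning a function of $\GG$ and of $(\cA^{(j)}, \gcA^{(j)}, \GGn^{(j)})$ rather than of $I$. The explicit emphasis on the word \emph{canonically} when extracting $\gcA^{(j)}$ from Fact~\ref{fact:min-avg-degree} and when breaking ties in the argmin defining $v_j$ is precisely what makes the induction go through; if any of these ties were resolved using information about $I$, the claim could fail. No quantitative estimate is required, so the argument is a straightforward bookkeeping induction.
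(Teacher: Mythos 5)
Your proof is correct and takes essentially the same approach as the paper, which simply notes that the only input-dependent decisions occur in step~\ref{item:alg-main-step} and are encoded in $L$; your induction merely spells out the bookkeeping (canonicity of $\cA^{(j)}$, $\gcA^{(j)}$, $v_j$) that the paper leaves implicit.
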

\begin{proof}
  The only decisions that depend on the input set $I$ are taken in step~\ref{item:alg-main-step} of the algorithm. Each time this step is executed, the decision taken is encoded in the set $L$ by placing, or not placing, the index $j$ in $L$.
\end{proof}

The function $S \colon \II(\GG) \to \cP(V)$ whose existence is asserted by Lemma~\ref{lemma:key} will be defined as follows:
\begin{equation}
  \label{eq:SI-def}
  S_I = \{v_j \colon j \in L\} = \{v_j \colon 0 \le j < J \text{ and } v_j \in I\}.
\end{equation}
In other words, $S_I$ comprises precisely those among the queried vertices $v_0, \dotsc, v_{J-1}$ that belong to the input set $I$. Note that $|S_I| = |L| \le b$, since the algorithm terminates in step~\ref{item:alg-stop} as soon as $L$ has $b$ elements. We shall now show that the knowledge of the set $S_I$ is enough to reconstruct the final set $L$ and hence, as stated in Observation~\ref{obs:algorithm-consistency}, the entire execution of the algorithm. In fact, the following stronger statement is true.

\begin{lemma}
  \label{lemma:S-greedy-property}
  Suppose that, for two inputs $I, I' \in \II(\GG)$, we have $S_I \subseteq I'$ and $S_{I'} \subseteq I$. Then, for both these inputs, the algorithm outputs the same set $L$.
\end{lemma}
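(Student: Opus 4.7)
The plan is to run the algorithm in parallel on the two inputs $I$ and $I'$ and show, by induction on the iteration counter $j$, that the two executions remain in lockstep: before iteration $j$, they have identical hypergraphs $\GGn^{(j)}$ and $\cA^{(j)}$ and identical sets $L$; if iteration $j$ executes at all, they pick the same vertex $v_j$ and make the same decision in step~\ref{item:alg-main-step}. Once this is established, both runs call \texttt{STOP} at the same value of $J$ with the same final set $L$, which is exactly the claim.

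The base case is trivial because the initial data $\GGn^{(0)}$, $\cA^{(0)}$, and $L=\emptyset$ are defined purely in terms of $\GG$, $\eps$, $p$, and $m$, with no dependence on the input independent set. For the inductive step, assume the shared state has been maintained through iteration $j-1$. Then the \texttt{STOP} test in step~\ref{item:alg-stop} (which examines only $|L|$ and $e(\cA^{(j)})$) gives the same answer on both runs, so either both terminate with $J=j$ or both continue. If both continue, the pruning in step~\ref{item:alg-best-offer}, the link hypergraphs $\GGn^{(j,v)}$, and the canonical minimiser $v_j$ depend only on the shared state, so $v_j^I=v_j^{I'}$. Thus only the membership test in step~\ref{item:alg-main-step} can possibly split the two runs.

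The crux is to show that this test produces the same answer, i.e., $v_j\in I$ iff $v_j\in I'$, where $v_j:=v_j^I=v_j^{I'}$. Suppose $v_j\in I$. In the $I$-execution, step~\ref{item:alg-main-step} then places $j$ into $L$, so $j\in L^I$ at termination, and by the definition~\eqref{eq:SI-def} we have $v_j\in S_I$; since by hypothesis $S_I\subseteq I'$, we conclude $v_j\in I'$. The converse direction is symmetric using $S_{I'}\subseteq I$: if $v_j\in I'$, then $j\in L^{I'}$ and $v_j\in S_{I'}\subseteq I$. Hence the two runs take the same branch in step~\ref{item:alg-main-step}, producing identical $\GGn^{(j+1)}$, $\cA^{(j+1)}$, and updated $L$, which closes the induction.

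I do not anticipate any real obstacle: the algorithm's only input-sensitive branching is the query $v_j\in I$?, and the very definition of $S_I$ as the set of vertices $v_j$ (for $j<J$) that lie in $I$ is precisely what is needed to let the hypotheses $S_I\subseteq I'$ and $S_{I'}\subseteq I$ synchronise the queries. The only mild subtlety is that $S_I$ refers to the \emph{completed} run on $I$, but this is harmless because the hypothesis $S_I\subseteq I'$ is a static set-theoretic containment that may be invoked at any moment during the proof.
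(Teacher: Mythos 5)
Your proposal is correct and is essentially the paper's argument in contrapositive form: the paper takes the smallest index at which the two output sets $L$ and $L'$ differ and derives $S_I \nsubseteq I'$, while you run the same lockstep induction forwards and use $S_I \subseteq I'$, $S_{I'} \subseteq I$ to synchronise the membership query at each step. The subtlety you flag (that $S_I$ refers to the completed run) is handled correctly.
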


Suppose that $S_I = S_{I'}$ for some $I, I' \in \II(\GG)$. As $S_I \subseteq I$ and $S_{I'} \subseteq I'$, by construction, Lemma~\ref{lemma:S-greedy-property} implies that the output set $L$ must be the same for both $I$ and $I'$.

\begin{proof}[Proof of Lemma~\ref{lemma:S-greedy-property}]
  Suppose that two inputs $I$ and $I'$ yield sets $L$ and $L'$, respectively, with $L \neq L'$. Let $j$ be the smallest index such that $j \in (L \setminus L') \cup (L' \setminus L)$; without loss of generality, we may assume that $j \in L \setminus L'$. Since $L \cap \{0, \dotsc, j-1\} = L' \cap \{0, \dotsc, j-1\}$, the algorithm produces the same sequences $v_0, \dotsc, v_j$ while working with inputs $I$ and $I'$. Since $j \in L \setminus L'$, we must have $v_j \in S_I$ and $v_j \notin I'$. In particular, $S_I \nsubseteq I'$.
\end{proof}

Finally, define the vector $\alphan \in \RR^{r+1}$ as in~\eqref{eq:def-alphan-b}:
\[
  \alphan = (1+\eps)^{10} \cdot (\alpha, 0) + \frac{50(r+1)}{\eps^2 p} \cdot (0, \alpha).
\]
The key dichotomy property, stated in our next lemma, is that either the algorithm inspects many vertices of the hypergraph (before encountering the $b$th vertex of $I$) or the final hypergraph $\GGn$ is a good `model' of $\GG$, in the sense that the $\ell^2$-norm of $\sigma_\alpha(\GGn)$ does not exceed the $\ell^2$-norm of $\sigma_{\alphan}(\GG)$.

\begin{lemma}
  \label{lemma:analysis}
  At least one of the following holds:
  \begin{enumerate}[label={(\arabic*)}]
  \item
    \label{item:many-skipped}
    $J \ge \frac{\eps^2}{(r+1)^2} \cdot \| \sigma_\GG \|^{-2}$,
  \item
    \label{item:good-GGn}
    $\| \sigma_\alpha(\GGn) \| \le \| \sigma_{\alphan}(\GG) \|$.
  \end{enumerate}
\end{lemma}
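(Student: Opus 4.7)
The plan is to analyse the algorithm separately depending on which of the two stopping conditions in step (S1) triggers first. If $e(\cA^{(J)}) < (1-2\eps) e(\GG)$ fires first, then since $e(\cA^{(0)}) \ge (1-\eps) e(\GG)$ by \eqref{eq:A0-properties}, step (S4) must have deleted at least $\eps \cdot e(\GG)$ edges over the course of the main loop. Each execution of (S4) removes at most $\Delta_1(\cA^{(0)}) \le (r+1) \Del_1(\GG)/\eps$ edges by \eqref{eq:A0-properties} (obtained from Lemma~\ref{lemma:Delta-Del}), and substituting $\Del_1(\GG) = (r+1) \cdot e(\GG) \cdot \|\sigma_\GG\|^2$ from \eqref{eq:Del} yields
\[
  J \ge \frac{\eps^2 \cdot e(\GG)}{(r+1) \cdot \Del_1(\GG)} = \frac{\eps^2}{(r+1)^2 \|\sigma_\GG\|^2},
\]
which is conclusion~(1).

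In the complementary case the algorithm halts because $|L| = b$, and I would establish conclusion~(2) by tracking, across the $b$ rounds $j \in L$, the quantity
\[
  \Psi_j := e(\GGn^{(j)}) \bigl( \|\sigma_\alpha(\GGn^{(j)})\|^2 - (1+\eps) \sigma^2 \bigr),
\]
which is precisely what step (S2) minimises. Writing $\sigma_{\HH_1 \cup \HH_2}^{(t)}$ as the convex combination of $\sigma_{\HH_1}^{(t)}$ and $\sigma_{\HH_2}^{(t)}$ with weights proportional to edge counts, I would expand $\Psi_{j+1}$ for $j \in L$, bound the minimum over $v$ by the $\sigma_{\gcA^{(j)}}$-weighted average, and then simplify that average using Fact~\ref{fact:codegree-measure-neighbourhood-measure-alpha} (which collapses $\sum_v \sigma_{\gcA^{(j)}}(v)\, \sigma_\alpha(\gcA_v^{(j)})$ into $\sigma_\alpha(\gcA^{(j)})$) and Fact~\ref{fact:codegree-measure-norm-square} (which converts $\sum_v \sigma_{\gcA^{(j)}}(v)^2 \|\sigma_{\gcA_v^{(j)}}^{(t)}\|^2$ into $\|\sigma_{\gcA^{(j)}}^{(t+1)}\|^2 / (t+1)$). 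Since $e(\gcA^{(j)}) \ge (1-3\eps) e(\GG)$ throughout the loop (from the surviving stopping condition together with Fact~\ref{fact:min-avg-degree}), Fact~\ref{fact:codegree-measure-subhypergraph} trades norms of degree measures of $\gcA^{(j)}$ for those of $\GG$ at a cost of the factor $(1-3\eps)^{-2}$, which is exactly the slack built into the definition $\sigma = (1-3\eps)^{-1} \|\sigma_\alpha(\GG)\|$.

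The technical heart of the argument, and where I expect the main obstacle to lie, is the geometric lemma (Lemma~\ref{lemma:geometry}), packaged for hypergraphs as Proposition~\ref{prop:geometry-hypergraph}: it is needed to upgrade the weighted-average bound into a sharp one-step inequality guaranteeing $\Psi_{j+1} \le 0$ for each $j \in L$. Telescoping this single-step bound over the $b$ rounds in $L$ then yields $\|\sigma_\alpha(\GGn^{(J)})\|^2 \le (1+\eps)\sigma^2$, while the cross-level error terms of the form $\|\sigma_\GG^{(t+1)}\|^2$ coming from Fact~\ref{fact:codegree-measure-norm-square} are absorbed by the $50(r+1)/(\eps^2 p)$ scaling of the $(0,\alpha)$ summand of $\alphan$, and the $(1+\eps)(1-3\eps)^{-2}$ multiplicative loss is absorbed by the $(1+\eps)^{10}$ cushion on the $(\alpha, 0)$ summand. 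Finally, since $\GGn = \GGn^{(J)} \setminus \GGn^{(0)}$ and the seed $\GGn^{(0)}$ was scaled via \eqref{eq:m-choice} so that $e(\GGn^{(0)})$ is a negligible fraction of $e(\GGn^{(J)})$ (which is at least $b \cdot \eps \cdot e(\GG)/|V|$ thanks to Fact~\ref{fact:min-avg-degree}), Fact~\ref{fact:codegree-measure-subhypergraph} lets me pass from $\GGn^{(J)}$ to $\GGn$ without spoiling the inequality, giving the desired $\|\sigma_\alpha(\GGn)\| \le \|\sigma_\alphan(\GG)\|$.
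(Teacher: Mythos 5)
Your proposal follows essentially the same route as the paper: case~(1) is handled exactly as in Lemma~\ref{lemma:few-edges-left}, and case~(2) by tracking the quantity $e(\GGn^{(j)})\bigl(\|\sigma_\alpha(\GGn^{(j)})\|^2 - (1+\eps)\sigma^2\bigr)$ minimised in step~\ref{item:alg-best-offer}, invoking Proposition~\ref{prop:geometry-hypergraph} for the one-step bound, using $e(\gcA^{(j)}) \ge (1-3\eps)e(\GG)$ together with Fact~\ref{fact:codegree-measure-subhypergraph} to trade $\gcA^{(j)}$ for $\GG$, and finally discarding the seed $\GGn^{(0)}$ at negligible cost. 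One imprecision is worth flagging: the invariant you can actually maintain is not $\Psi_j \le 0$ but $\Psi_j \le a \cdot \Del$ with $\Del = 3\Del_\alpha(\GG)$ (the paper's Lemma~\ref{lemma:main-estimate}). Indeed, $\Psi_0 \le 0$ already fails in general, since the seed satisfies only $\|\sigma_\alpha(\GGn^{(0)})\|^2 \le 2a\,\Del_\alpha(\GG)/e(\GGn^{(0)})$ by the choice of $m$ in~\eqref{eq:m-choice}, which need not be below $(1+\eps)\sigma^2$. The surviving additive error $a\Del/e(\GGn^{(j)})$ decays as $e(\GGn^{(j)})$ grows and is only brought under control at the very end via $e(\GGn^{(J)}) \ge p\cdot e(\GG)$ (Lemma~\ref{lemma:many-edges-left}), at which point it is exactly what the $\frac{50(r+1)}{\eps^2 p}(0,\alpha)$ component of $\alphan$ absorbs --- your closing remark about absorbing cross-level error terms shows you have the right picture, so this is a local slip in the stated intermediate goal rather than a flaw in the strategy.
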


We shall prove Lemma~\ref{lemma:analysis}, which lies at the heart of the matter, in the next two sections. We finish the current section with a short derivation of Lemma~\ref{lemma:key}, which is now straightforward. Given an $(r+1)$-uniform hypergraph $\GG$ and numbers $\eps$ and $p$ as in the statement of the lemma, we may define the function $S \colon I(\GG) \to \cP(V)$ as in~\eqref{eq:SI-def}, by running the algorithm on each input $I \in \II(\GG)$. If $J \ge \frac{\eps^2}{(r+1)^2} \cdot \| \sigma_\GG \|^{-2}$, we place $S_I$ in the family $\cS'$ and let $C(S_I) = V \setminus \{v_j : 0 \le j < J\}$; note that $I \setminus S_I \subseteq C(S_I)$ as $S_I = I \cap (V \setminus C(S_I))$ by~\eqref{eq:SI-def}. Otherwise, we place $S_I$ in the family $\cS''$ and let $\FF(S_I) = \GGn$; Lemma~\ref{lemma:analysis} implies the desired property of each such hypergraph $\FF(S_I)$. Lemma~\ref{lemma:S-greedy-property} and Observation~\ref{obs:algorithm-consistency} guarantee that the set $C(S_I)$ or the hypergraph $\FF(S_I)$ depend only on the set $S_I$, and not on $I$ itself, and that the function $S$ has the claimed consistency property.

\subsection{The geometric lemma}

The most important elementary operation performed by the algorithm described in Section~\ref{sec:algorithm} is to choose some $v \in V$ and add the $r$-uniform link hypergraph $\gcA_v^{(j)}$ to the hypergraph $\GGn^{(j)}$, obtaining a new hypergraph $\GGn^{(j,v)} = \GGn^{(j)} \cup \gcA_v^{(j)}$. Since we want the final hypergraph $\GGn$ to have small $\ell^2$-norm of $\sigma_{\alpha}(\GGn)$, in step~\ref{item:alg-best-offer} of the algorithm, we consider a vertex $v$ that, essentially, minimises the $\ell^2$-norm of $\sigma_{\alpha}(\GGn^{(j,v)})$ over all eligible $v \in V$. The following proposition, which is the core of the proof of Theorem~\ref{thm:main}, bounds the minimum of $\| \sigma_\alpha(\GGn^{(j,v)}) \|$ from above. Given an $(r+1)$-uniform hypergraph $\cA$ and a vector $\alpha \in \RR^r$, we define
\[
  \Del_\alpha(\cA) = \sum_{t=1}^r \alpha_t \cdot \Del_{t+1}(\cA).
\]

\begin{prop}
  \label{prop:geometry-hypergraph}
  Suppose that $\cA$ is an $(r+1)$-uniform hypergraph with vertex set $V$ and that $\GGn$ is an $r$-uniform hypergraph with the same vertex set. Suppose that $\alpha \in \RR^r$ has nonnegative coordinates. Then, there exists a vertex $v \in V$ with nonzero degree in $\cA$ such that the hypergraph $\GGnv = \GGn \cup \cA_v$ satisfies
  \begin{equation}
    \label{eq:sigma-GG-norm-change}
    \| \sigma_\alpha(\GGn^v) \|^2 \le \| \sigma_\alpha(\GGn) \|^2 + \frac{\deg_\cA v}{e(\GGnv)} \cdot \left( \left(2 \cdot \frac{\|\sigma_\alpha(\cA)\| }{\|\sigma_\alpha(\GGn)\|}  - 2 + \frac{\Del_1(\cA)}{e(\GGn)}\right) \cdot \| \sigma_\alpha(\GGn) \|^2 + \frac{\Del_{\alpha}(\cA)}{e(\GGn)} \right).
  \end{equation}
\end{prop}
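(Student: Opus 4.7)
The plan is to find the required vertex by averaging: I will show that, for each $v$, the failure of the desired inequality~\eqref{eq:sigma-GG-norm-change} is controlled by three quantities $T_1(v),T_2(v),T_3(v)$ whose sum has nonpositive average against the probability weights $\sigma_\cA(v)$, whence some $v$ with $\deg_\cA v > 0$ must realise the bound.

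First I would decompose $\sigma_\alpha(\GGnv)$ as a convex combination. Writing $\lambda_v = \deg_\cA v / e(\GGnv)$ and noting that $\GGnv = \GGn \cup \cA_v$, the definition of the $t$-degree measure immediately gives $\sigma_{\GGnv}^{(t)} = (1-\lambda_v)\sigma_{\GGn}^{(t)} + \lambda_v \sigma_{\cA_v}^{(t)}$ for every $t$, and hence $\sigma_\alpha(\GGnv) = (1-\lambda_v)\sigma_\alpha(\GGn) + \lambda_v \sigma_\alpha(\cA_v)$. Expanding the squared norm yields the identity
\[
\|\sigma_\alpha(\GGnv)\|^2 - \|\sigma_\alpha(\GGn)\|^2 = 2\lambda_v \langle \sigma_\alpha(\GGn), \sigma_\alpha(\cA_v) - \sigma_\alpha(\GGn)\rangle + \lambda_v^2 \|\sigma_\alpha(\cA_v) - \sigma_\alpha(\GGn)\|^2.
\]
Since the coordinates of $\sigma_\alpha(\cA_v)$ and $\sigma_\alpha(\GGn)$ are all nonnegative, $\|\sigma_\alpha(\cA_v) - \sigma_\alpha(\GGn)\|^2 \le \|\sigma_\alpha(\cA_v)\|^2 + \|\sigma_\alpha(\GGn)\|^2$. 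Combining this with the pointwise estimate $\lambda_v \le \deg_\cA v / e(\GGn)$ and dividing the target inequality by $\lambda_v > 0$ reduces the proposition to exhibiting some $v$ with $\deg_\cA v > 0$ such that $T_1(v) + T_2(v) + T_3(v) \le 0$, where
\begin{align*}
T_1(v) &= 2\langle \sigma_\alpha(\GGn), \sigma_\alpha(\cA_v)\rangle - 2\|\sigma_\alpha(\cA)\|\cdot\|\sigma_\alpha(\GGn)\|,\\
T_2(v) &= \frac{(\deg_\cA v - \Del_1(\cA))\|\sigma_\alpha(\GGn)\|^2}{e(\GGn)},\\
T_3(v) &= \frac{\deg_\cA v \cdot \|\sigma_\alpha(\cA_v)\|^2 - \Del_\alpha(\cA)}{e(\GGn)}.
\end{align*}

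The heart of the argument is then to check that the $\sigma_\cA$-weighted average of $T_1 + T_2 + T_3$ over $V$ is nonpositive; since $\sigma_\cA(v)$ is positive exactly when $\deg_\cA v > 0$, this immediately yields a suitable vertex. Fact~\ref{fact:codegree-measure-neighbourhood-measure-alpha} gives $\sum_v \sigma_\cA(v)\,\sigma_\alpha(\cA_v) = \sigma_\alpha(\cA)$, so the weighted average of $T_1$ equals $2\langle \sigma_\alpha(\GGn), \sigma_\alpha(\cA)\rangle - 2\|\sigma_\alpha(\cA)\|\cdot\|\sigma_\alpha(\GGn)\|$, which is $\le 0$ by Cauchy--Schwarz. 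The weighted average of $T_2$ vanishes because $\sum_v \sigma_\cA(v)\deg_\cA v = (r+1)\,e(\cA)\,\|\sigma_\cA\|^2$, and this is exactly $\Del_1(\cA)$ by the definition~\eqref{eq:Del}. For $T_3$, summing Fact~\ref{fact:codegree-measure-norm-square} against the coordinates of $\alpha$ gives $\sum_v \sigma_\cA(v)^2 \|\sigma_\alpha(\cA_v)\|^2 = \Del_\alpha(\cA)/((r+1)e(\cA))$; multiplying through by $(r+1)e(\cA)$ shows that $\sum_v \sigma_\cA(v)\deg_\cA v\,\|\sigma_\alpha(\cA_v)\|^2 = \Del_\alpha(\cA)$, so the weighted average of $T_3$ is also zero.

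The main obstacle I anticipate is spotting that $\sigma_\cA$ is the correct weighting and then verifying that the robust maximum-degree quantities $\Del_1(\cA)$ and $\Del_\alpha(\cA)$ were tuned precisely so that the weighted averages of $T_2$ and $T_3$ cancel exactly; it is this tight alignment between~\eqref{eq:Del} and Fact~\ref{fact:codegree-measure-norm-square} that allows the crude pointwise estimate $\|\sigma_\alpha(\cA_v) - \sigma_\alpha(\GGn)\|^2 \le \|\sigma_\alpha(\cA_v)\|^2 + \|\sigma_\alpha(\GGn)\|^2$ to be recovered on average, while the Cauchy--Schwarz slack in $T_1$ is what dictates the appearance of the ratio $\|\sigma_\alpha(\cA)\|/\|\sigma_\alpha(\GGn)\|$ in the statement.
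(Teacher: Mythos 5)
Your proof is correct and is essentially the paper's argument: the same convex-combination identity for $\sigma_\alpha(\GGnv)$, the same expansion of the squared norm with the crude bound $\|\sigma_\alpha(\cA_v)-\sigma_\alpha(\GGn)\|^2\le\|\sigma_\alpha(\cA_v)\|^2+\|\sigma_\alpha(\GGn)\|^2$, Cauchy--Schwarz on the cross term, and a $\sigma_\cA$-weighted averaging using Facts~\ref{fact:codegree-measure-neighbourhood-measure-alpha} and~\ref{fact:codegree-measure-norm-square} to identify $\Del_1(\cA)$ and $\Del_\alpha(\cA)$. The only difference is presentational: the paper isolates the expansion-plus-averaging step as the abstract convex-geometry Lemma~\ref{lemma:geometry}, whereas you inline it directly in the hypergraph language.
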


Since the right-hand side of~\eqref{eq:sigma-GG-norm-change} is rather complicated, let us explain the underlying intuition. The two terms $\Del_1(\cA) / e(\GGn)$ and $\Del_\alpha(\cA) / e(\GGn)$ should be viewed as `error terms'. If we assumed that they are both zero, inequality~\eqref{eq:sigma-GG-norm-change} would simplify to
\begin{equation}
  \label{eq:sigma-GG-norm-change-intuition}
  \| \sigma_\alpha(\GGn^v) \|^2 \le \| \sigma_\alpha(\GGn) \|^2 + \frac{2\deg_\cA v}{e(\GGnv)} \cdot \big( \|\sigma_\alpha(\cA)\| - \|\sigma_\alpha(\GGn)\| \big) \cdot \| \sigma_\alpha(\GGn) \|.
\end{equation}
This simplified inequality~\eqref{eq:sigma-GG-norm-change-intuition} states that, as long as the $\ell^2$-norm of $\sigma_\alpha(\GGn)$ exceeds that of $\sigma_\alpha(\cA)$, there is a vertex $v \in V$ such that the $\ell^2$-norm of $\sigma_\alpha(\GGn^v)$ is strictly smaller than that of $\sigma_\alpha(\GGn)$. Moreover, the difference $\| \sigma_\alpha(\GGn) \| - \| \sigma_\alpha(\GGn^v) \|$ is proportional to the difference $\| \sigma_\alpha(\cA) \| - \| \sigma_\alpha(\GGn) \|$. Proposition~\ref{prop:geometry-hypergraph} will allow us to show that, as we repeatedly update $\GGn \leftarrow \GGn^v$ in step~\ref{item:alg-main-step} of the algorithm, the value $\| \sigma_\alpha(\GGn) \|$ drifts, rather quickly, towards $\| \sigma_\alpha(\cA) \|$.

The reason why Proposition~\ref{prop:geometry-hypergraph} is true stems from Fact~\ref{fact:codegree-measure-neighbourhood-measure-alpha}, which states that the vector $\sigma_\alpha(\cA)$ is a convex combination of the vectors $\sigma_\alpha(\cA_v)$, where $v$ ranges over $V$, and the coefficient of each $\sigma_\alpha(\cA_v)$ in this combination is proportional to $\deg_\cA v$. This basic property of the degree measures enables us to express the problem of minimising $\|\sigma_\alpha(\GGn^v)\|$, solved by the proposition, in a simple, abstract way, as we now do in the next lemma.

\begin{lemma}
  \label{lemma:geometry}
  Suppose that $\nu_1, \dotsc, \nu_k \in \RR^d$ and $\lambda \in \RR^k$ all have nonnegative coordinates and $\|\lambda\|_1 =  \lambda_1 + \dotsb + \lambda_k = 1$. Define
  \[
  \nu = \sum_{i=1}^k \lambda_i \cdot \nu_i.
  \]
  For every positive $x$, every $\mu \in \RR^d$ with nonnegative coordinates, and all $x_1, \dotsc, x_k \in (0, x]$, there exists an $i \in \br{k}$ such that $\lambda_i > 0$ and the vector $\mu_i$ defined by
  \[
  \mu_i = (1 - x_i\lambda_i) \cdot \mu + x_i \lambda_i \cdot \nu_i
  \]
  satisfies
  \begin{equation}
    \label{eq:mui-mu-norm-difference}
    \| \mu_i \|^2 \le \| \mu \|^2 + \lambda_i x_i \cdot \left(\left( 2 \cdot \frac{\|\nu\|}{\|\mu\|} - 2 + x \cdot \|\lambda\|^2 \right) \cdot \|\mu\|^2 + x \cdot \sum_{j=1}^k \lambda_j^2 \|\nu_j\|^2 \right).
  \end{equation}
\end{lemma}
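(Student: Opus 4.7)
The plan is to reduce inequality~\eqref{eq:mui-mu-norm-difference} to a pointwise scalar inequality and then obtain it by an averaging argument. First, writing $\mu_i = \mu - x_i \lambda_i (\mu - \nu_i)$ and expanding with the bilinearity of the inner product gives
\[
\|\mu_i\|^2 - \|\mu\|^2 = 2 x_i \lambda_i \bigl(\langle \mu, \nu_i\rangle - \|\mu\|^2\bigr) + x_i^2 \lambda_i^2 \bigl(\|\mu\|^2 - 2\langle \mu, \nu_i\rangle + \|\nu_i\|^2\bigr).
\]
Denote by $\Theta$ the expression in large parentheses on the right-hand side of~\eqref{eq:mui-mu-norm-difference}. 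Provided $\lambda_i > 0$, the desired conclusion is equivalent to
\[
A_i \;:=\; 2\bigl(\langle \mu, \nu_i\rangle - \|\mu\|^2\bigr) + x_i \lambda_i \bigl(\|\mu\|^2 - 2\langle \mu, \nu_i\rangle + \|\nu_i\|^2\bigr) \;\le\; \Theta.
\]

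Since $\lambda$ is a probability distribution, it suffices to bound the weighted average $\sum_i \lambda_i A_i$ by $\Theta$; this immediately forces at least one index $i$ with $\lambda_i > 0$ to satisfy the desired pointwise inequality. For the linear part, $\sum_i \lambda_i \cdot 2(\langle \mu,\nu_i\rangle - \|\mu\|^2) = 2\langle \mu,\nu\rangle - 2\|\mu\|^2$, which the Cauchy--Schwarz inequality bounds by $(2\|\nu\|/\|\mu\| - 2)\|\mu\|^2$. For the quadratic part, the bound $x_i \le x$ together with the nonnegativity of $\mu$ and of each $\nu_i$ (which guarantees $\langle \mu, \nu_i\rangle \ge 0$) yields
\[
\sum_i \lambda_i^2 x_i \bigl(\|\mu\|^2 - 2\langle \mu, \nu_i\rangle + \|\nu_i\|^2\bigr) \le x \|\lambda\|^2 \|\mu\|^2 + x \sum_i \lambda_i^2 \|\nu_i\|^2,
\]
where the cross term $-2 x_i \lambda_i^2 \langle \mu, \nu_i \rangle$ has been discarded using its nonpositivity. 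Adding the two bounds reproduces $\Theta$ exactly, which completes the argument.

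The only genuinely substantive step is the observation that the cross term can be dropped; this is precisely where the hypothesis that $\mu$ and the $\nu_i$ have nonnegative coordinates is used, and it is what allows the right-hand side of~\eqref{eq:mui-mu-norm-difference} to feature the clean quantity $x \sum_i \lambda_i^2 \|\nu_i\|^2$ rather than a more cumbersome expression involving $\|\mu - \nu_i\|^2$. Everything else is routine algebra and a single application of Cauchy--Schwarz, so I do not anticipate any real obstacle. In the projected application (Proposition~\ref{prop:geometry-hypergraph}) one takes $\lambda_v = \sigma_\cA(v)$, $\nu_v = \sigma_\alpha(\cA_v)$, and $\mu = \sigma_\alpha(\GGn)$, so that Fact~\ref{fact:codegree-measure-neighbourhood-measure-alpha} supplies the identity $\nu = \sigma_\alpha(\cA)$ and the nonnegativity hypothesis is automatic because degree measures have nonnegative entries.
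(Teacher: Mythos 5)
Your proposal is correct and is essentially the paper's own argument: the same expansion of $\|\mu_i\|^2$, the same discarding of the cross term $-2\langle\mu,\nu_i\rangle$ via nonnegativity, the same single application of Cauchy--Schwarz to the linear part, and the same averaging step (your $\sum_i\lambda_i A_i$ is literally the paper's $\sum_i(\|\mu_i\|^2-\|\mu\|^2)/x_i$). No gaps.
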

\begin{proof}
  Note first that, for every $i \in \br{k}$,
  \begin{equation}
    \label{eq:norm-mu-i}
    \|\mu_i\|^2 = \| \mu + x_i\lambda_i \cdot (\nu_i - \mu) \|^2 = \| \mu \|^2 + 2 x_i\lambda_i \cdot \lan \nu_i - \mu, \mu \ran + x_i^2\lambda_i^2  \cdot \|\nu_i - \mu\|^2.
  \end{equation}
  Since $\lan \nu_i, \mu \ran \ge 0$, by our assumption on non-negativity of the coordinates, we have
  \[
    \|\nu_i - \mu\|^2 = \|\nu_i\|^2 - 2 \lan \nu_i, \mu \ran + \|\mu\|^2 \le \|\mu\|^2 + \|\nu_i\|^2.
  \]
  Substituting this inequality into~\eqref{eq:norm-mu-i}, dividing both sides by $x_i$, and summing over $i \in \br{k}$ yields
  \begin{equation}
    \label{eq:norm-mu-i-sum}
    \sum_{i=1}^k \frac{\|\mu_i\|^2 - \|\mu\|^2}{x_i} \le \sum_{i=1}^k 2\lambda_i \cdot \lan \nu_i - \mu, \mu \ran+ \max_ix_i \cdot \left( \|\lambda\|^2 \|\mu\|^2 + \sum_{i=1}^k \lambda_i^2 \|\nu_i\|^2 \right).    
  \end{equation}
  The definition of $\nu$, the assumption $\| \lambda \|_1 = 1$, and the Cauchy--Schwarz inequality give
  \[
  \sum_{i=1}^k \lambda_i \cdot \lan \nu_i - \mu, \mu \ran = \lan \nu - \mu, \mu \ran = \lan \nu, \mu \ran - \|\mu\|^2  \le  \|\nu\| \cdot \|\mu\| -  \|\mu\|^2.
  \]
  Substituting this inequality into~\eqref{eq:norm-mu-i-sum}, recalling the assumption that $\max_i x_i \le x$, yields
  \begin{equation}
    \label{eq:sum-norm-mu-i-mu}
    \sum_{i=1}^k \frac{\|\mu_i\|^2 - \|\mu\|^2}{x_i} \le 2 \left( \|\nu\| \cdot \| \mu \| - \|\mu\|^2 \right) + x \cdot \left( \|\lambda\|^2 \|\mu\|^2 + \sum_{i=1}^k \lambda_i^2 \|\nu_i\|^2 \right).
  \end{equation}
  Finally, as $\|\lambda\|_1 = 1$, there must exist an $i \in \br{k}$ such that $\lambda_i > 0$ and the $i$th summand in the left-hand side of~\eqref{eq:sum-norm-mu-i-mu} is at most $\lambda_i$ times the right-hand side of~\eqref{eq:sum-norm-mu-i-mu}. This gives
  \[
    \| \mu_i \|^2 - \| \mu \|^2 \le \lambda_i x_i \cdot \left(2 \left( \|\nu\| \cdot \|\mu\| - \|\mu\|^2 \right) + x \left(\|\lambda\|^2 \|\mu\|^2 + \sum_{j=1}^k \lambda_j^2 \|\nu_j\|^2 \right)\right),
  \]
  which is easily seen to be equivalent to the desired inequality~\eqref{eq:mui-mu-norm-difference}.
\end{proof}

\begin{proof}[{Proof of Proposition~\ref{prop:geometry-hypergraph}}]
  For every $v \in V$, let $\GGn^v = \GGn \cup \cA_v$. We claim that, for each $t \in \br{r}$, the $t$-degree measure of $\GGn^v$ is a convex combination of the $t$-degree measures of $\GGn$ and $\cA_v$ and the coefficients in this convex combination are proportional to $e(\GGn)$ and $\deg_\cA v$, respectively. Indeed, since for every $T \subseteq V$, the degree of $T$ in $\GGn^{v}$ is simply the sum of the degrees of $T$ in $\GGn$ and $\cA_v$, we have
  \[
    e(\GGn^v) \cdot \sigma_{\GGn^v}^{(t)} = e(\GGn) \cdot \sigma_{\GGn}^{(t)} + e(\cA_v) \cdot \sigma_{\cA_v}^{(t)} = e(\GGn) \cdot \sigma_{\GGn}^{(t)} + \deg_\cA v \cdot \sigma_{\cA_v}^{(t)}.
  \]
  Dividing the above equality through by $e(\GGn^v) = e(\GGn) + \deg_\cA v$, we obtain
  \[
    \sigma_{\GGn^v}^{(t)} = \left(1 - \frac{\deg_\cA v}{e(\GGn^v)}\right) \cdot \sigma_{\GGn}^{(t)} + \frac{\deg_{\cA}v}{e(\GGn^v)} \cdot \sigma_{\cA_v}^{(t)}.
  \]
  Define, for each $v \in V$,
  \begin{equation}
    \label{eq:xv-def}
    x_v = \frac{(r+1)\cdot e(\cA)}{e(\GGn^v)} \le \frac{(r+1) \cdot e(\cA)}{e(\GGn)}.
  \end{equation}
  Since $\cA$ is $(r+1)$-uniform, we have, for each $v \in V$,
  \[
    x_v \sigma_\cA(v) =  \frac{\deg_{\cA}v}{e(\GGn^v)}
  \]
  and, consequently, for each $t \in \br{r}$,
  \begin{equation}
    \label{eq:GGnv-convex-combination}
    \sigma_{\GGn^v}^{(t)} = \big( 1 - x_v \sigma_\cA(v) \big) \cdot \sigma_{\GGn}^{(t)} + x_v \sigma_\cA(v) \cdot \sigma_{\cA_v}^{(t)}.
  \end{equation}
  
  We now invoke Lemma~\ref{lemma:geometry} with $k = |V|$, the vectors $\nu_1, \dotsc, \nu_k$ replaced by $\{\sigma_\alpha(\cA_v) : v \in V\}$, the vector $\lambda$ replaced by $\sigma_{\cA}$, the vector $\mu$ replaced by $\sigma_\alpha(\GGn)$, the numbers $x_1, \ldots, x_k$ replaced by $\{x_v : v \in V\}$, and $x$ replaced by $(r+1) \cdot e(\cA) / e(\GGn)$; note that $x_1, \dotsc, x_k \le x$, see~\eqref{eq:xv-def}. Fact~\ref{fact:codegree-measure-neighbourhood-measure-alpha} implies that
  \[
    \nu = \sum_{i=1}^k \lambda_i \cdot \nu_i = \sum_{v \in V} \sigma_\cA(v) \cdot \sigma_\alpha(\cA_v) = \sigma_\alpha(\cA),
  \]
  and, if $i \in \br{k}$ corresponds to $v \in V$, then $\lambda_i x_i = x_v \sigma_\cA (v) = \deg_{\cA}v / e(\GGn^v)$ and thus $\mu_i = \sigma_\alpha(\GGn^v)$. Recalling the definition of $\Del_1(\cdot)$ from~\eqref{eq:Del}, we further have
  \[
    x \cdot \| \lambda \|^2 = \frac{(r+1) \cdot e(\cA)}{e(\GGn)} \cdot  \| \sigma_\cA \|^2 = \frac{\Del_1(\cA)}{e(\GGn)}.
  \]
  Finally, Fact~\ref{fact:codegree-measure-norm-square} implies that
  \[
  \begin{split}
    x \cdot \sum_{i=1}^k \lambda_i^2 \| \nu_i \|^2 & = \frac{(r+1) \cdot e(\cA)}{e(\GGn)} \cdot \sum_{v \in V} \sigma_\cA(v)^2 \cdot \|\sigma_\alpha(\cA_v)\|^2 \\
    & = \frac{(r+1) \cdot e(\cA)}{e(\GGn)} \cdot \sum_{t=1}^r \alpha_t \cdot \sum_{v \in V} \sigma_\cA(v)^2 \cdot \| \sigma_{\cA_v}^{(t)} \|^2 \\
    & = \frac{(r+1) \cdot e(\cA)}{e(\GGn)} \cdot \sum_{t=1}^r \alpha_t \cdot \frac{\|\sigma_\cA^{(t+1)}\|^2}{t+1} \\
    & = \frac{1}{e(\GGn)} \cdot \sum_{t=1}^r \alpha_t \cdot \Del_{t+1}(\cA) = \frac{\Del_\alpha(\cA)}{e(\GGn)}.
  \end{split}
  \]
  It is now straightforward to verify that Lemma~\ref{lemma:geometry} implies the existence of a vertex $v \in V$ satisfying the assertion of the proposition.
\end{proof}

\subsection{Proof of the key dichotomy property}
\label{sec:dichotomy}

In this section, we use Proposition~\ref{prop:geometry-hypergraph} to bound the expression from step~\ref{item:alg-best-offer} in the description of our algorithm. This is the most technically demanding part of the proof. Throughout this section, we use the notation introduced in Section~\ref{sec:algorithm}. We start with an easy dichotomy.

\begin{lemma}
  \label{lemma:few-edges-left}
  If $e(\cA) < (1-2\eps) \cdot e(\GG)$, then
  \[
    J \ge \frac{\eps^2}{(r+1)^2} \cdot \| \sigma_\GG \|^{-2}.
  \]
\end{lemma}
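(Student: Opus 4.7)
The idea is that the hypothesis $e(\cA) < (1-2\eps) e(\GG)$ forces the set of vertices queried by the algorithm, namely $D = \{v_0, \dotsc, v_{J-1}\}$, to destroy an $\eps$-proportion of the edges of $\GG$, and Lemma~\ref{lemma:degree-measure} then translates this into the desired lower bound on $|D| = J$.

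First, unpack what $\cA = \cA^{(J)}$ is. By step~\ref{item:alg-cleanup} of the main loop, the hypergraph $\cA^{(j+1)}$ is obtained from $\cA^{(j)}$ by deleting every edge that contains $v_j$. Iterating from $j=0$ to $j=J-1$, we get $\cA = \cA^{(0)} - D$, where $D = \{v_0,\dotsc,v_{J-1}\}$. By Observation~\ref{obs:algorithm-basic-properties}, the map $j \mapsto v_j$ is injective on $\{0,\dotsc,J-1\}$, so $|D| = J$.

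Next, chain the edge-count inequalities. From~\eqref{eq:A0-properties} we have $e(\cA^{(0)}) \ge (1-\eps) \cdot e(\GG)$, and by assumption $e(\cA) < (1-2\eps) \cdot e(\GG)$. Subtracting gives
\[
  e(\cA^{(0)}) - e(\cA^{(0)} - D) \;=\; e(\cA^{(0)}) - e(\cA) \;>\; \eps \cdot e(\GG).
\]
Since $\cA^{(0)} \subseteq \GG$, every edge of $\cA^{(0)}$ that meets $D$ is also an edge of $\GG$ that meets $D$, so
\[
  e(\GG) - e(\GG - D) \;\ge\; e(\cA^{(0)}) - e(\cA^{(0)} - D) \;>\; \eps \cdot e(\GG),
\]
i.e.\ $e(\GG - D) \le (1-\eps) \cdot e(\GG)$.

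Finally, apply Lemma~\ref{lemma:degree-measure} to the $(r+1)$-uniform hypergraph $\GG$ with this set $D$ and with $\eps$ playing the role of $\eps$ in that lemma. It yields
\[
  J \;=\; |D| \;\ge\; \left(\frac{\eps}{r+1}\right)^{2} \cdot \|\sigma_\GG\|^{-2},
\]
which is precisely the desired bound. There is no genuine obstacle here; the only thing to be careful about is not conflating $\cA^{(0)}$ with $\GG$ when counting destroyed edges, and remembering that $\GG$ has uniformity $r+1$ (not $r$) when invoking Lemma~\ref{lemma:degree-measure}.
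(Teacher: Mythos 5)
Your proof is correct. The two arguments share the same first half -- both establish that the queried vertices $v_0,\dotsc,v_{J-1}$ are responsible for destroying more than $\eps \cdot e(\GG)$ edges, using $e(\cA^{(0)}) \ge (1-\eps)e(\GG)$ from~\eqref{eq:A0-properties} together with the hypothesis -- but they convert this edge count into a lower bound on $J$ by different mechanisms. The paper bounds the number of edges each queried vertex can destroy by $\Delta_1(\cA^{(0)})$ and then invokes the \emph{second} half of~\eqref{eq:A0-properties}, the degree cap $\Delta_1(\cA^{(0)}) \le \frac{r+1}{\eps}\Del_1(\GG)$ engineered by the pruning step (Lemma~\ref{lemma:Delta-Del}); unwinding $\Del_1(\GG) = (r+1)e(\GG)\|\sigma_\GG\|^2$ gives the stated bound. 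You instead push the edge loss up from $\cA^{(0)}$ to $\GG$ itself and apply the Cauchy--Schwarz argument of Lemma~\ref{lemma:degree-measure} to the $(r+1)$-uniform $\GG$, which lands on exactly the same constant. Your route is slightly cleaner in that it uses only the edge-count half of~\eqref{eq:A0-properties} and never needs the maximum-degree control; the paper's route makes visible why the pruning constant $R = (r+1)/\eps$ was chosen as it was. Both are complete; your care in distinguishing $\cA^{(0)}$ from $\GG$ and in using uniformity $r+1$ in Lemma~\ref{lemma:degree-measure} is exactly what is needed.
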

\begin{proof}
  Since $\cA$ is obtained from $\cA^{(0)}$ by removing all edges that contain at least one of the vertices $v_0, \dotsc, v_{J-1}$, we have
  \[
    e(\cA^{(0)}) - e(\cA) \le \sum_{j=0}^{J-1} \deg_{\cA^{(0)}} v_J \le J \cdot \Delta_1(\cA^{(0)}).
  \]
  Consequently, it follows from~\eqref{eq:A0-properties} and our upper bound on $e(\cA)$ that
  \[
    J \ge \frac{e(\cA^{(0)}) - e(\cA)}{\Delta_1(\cA^{(0)})} \ge \frac{\eps^2 \cdot e(\GG)}{(r+1) \cdot \Del_1(\GG)} = \frac{\eps^2}{(r+1)^2} \cdot \| \sigma_\GG \|^{-2}.\qedhere
  \]
\end{proof}

\begin{lemma}
  \label{lemma:many-edges-left}
  If $e(\cA) \ge (1-2\eps) \cdot e(\GG)$, then $e(\GGn) \ge p \cdot e(\GG)$.
\end{lemma}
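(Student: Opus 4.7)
The plan is to unpack the hypotheses of the \texttt{STOP} condition and then bound $e(\GGn)$ below by summing the contributions of the $|L|$ successful vertex queries.

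First I would observe that if $e(\cA) = e(\cA^{(J)}) \ge (1-2\eps) \cdot e(\GG)$, then the algorithm did not stop because of the edge-count criterion in~\ref{item:alg-stop}, so it must have stopped because $|L| = b$. In particular, $|L| = b \ge 2p|V|/\eps$. Second, since $\cA^{(j+1)}$ is obtained from $\cA^{(j)}$ by removing edges, the function $j \mapsto e(\cA^{(j)})$ is non-increasing, so $e(\cA^{(j)}) \ge e(\cA^{(J)}) \ge (1-2\eps) \cdot e(\GG)$ for every $j \le J$. Applying Fact~\ref{fact:min-avg-degree} with $\beta = \eps$, which is what defines $\gcA^{(j)}$ in step~\ref{item:alg-best-offer}, yields $e(\gcA^{(j)}) \ge (1-\eps) \cdot e(\cA^{(j)}) \ge (1-\eps)(1-2\eps) \cdot e(\GG)$.

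Next I would exploit that $v_j$ is chosen among vertices with nonzero degree in $\gcA^{(j)}$, so by the conclusion of Fact~\ref{fact:min-avg-degree},
\[
  \deg_{\gcA^{(j)}} v_j \ge \frac{\eps \cdot e(\gcA^{(j)})}{|V|} \ge \frac{\eps(1-\eps)(1-2\eps)}{|V|} \cdot e(\GG).
\]
Since $\GGn$ is the multiset union $\bigcup_{j \in L} \gcA_{v_j}^{(j)}$ (by the definition of $\GGn = \GGn^{(J)} \setminus \GGn^{(0)}$ together with step~\ref{item:alg-main-step}), the edges of $\GGn$ are counted with multiplicity as
\[
  e(\GGn) = \sum_{j \in L} e\bigl(\gcA_{v_j}^{(j)}\bigr) = \sum_{j \in L} \deg_{\gcA^{(j)}} v_j \ge |L| \cdot \frac{\eps(1-\eps)(1-2\eps) \cdot e(\GG)}{|V|}.
\]

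Finally, substituting $|L| = b \ge 2p|V|/\eps$ collapses this to $e(\GGn) \ge 2p(1-\eps)(1-2\eps) \cdot e(\GG)$, and since $\eps < 1/(9r) \le 1/9$ we have $2(1-\eps)(1-2\eps) \ge 1$, yielding the desired bound $e(\GGn) \ge p \cdot e(\GG)$. I do not expect any serious obstacle here; the only mild care needed is to correctly identify that $\GGn$ is a sum (as a multi-hypergraph) of the link hypergraphs $\gcA_{v_j}^{(j)}$ over $j \in L$, and to use the monotonicity $e(\cA^{(j)}) \ge e(\cA^{(J)})$ so that the lower bound on $e(\gcA^{(j)})$ is uniform across $j \in L$.
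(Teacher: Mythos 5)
Your proposal is correct and follows essentially the same route as the paper's proof: identify that the \texttt{STOP} in step~\ref{item:alg-stop} must have been triggered by $|L|=b\ge 2p|V|/\eps$, write $e(\GGn)=\sum_{j\in L}\deg_{\gcA^{(j)}}v_j$, and lower-bound each summand via Fact~\ref{fact:min-avg-degree} together with the monotonicity of $e(\cA^{(j)})$. The only difference is cosmetic: the paper bounds $\deg_{\gcA^{(j)}}v_j\ge \eps\, e(\cA^{(j)})/|V|$ directly (giving the factor $2(1-2\eps)\ge 1$ for $\eps<1/4$), whereas you pass through $e(\gcA^{(j)})\ge(1-\eps)e(\cA^{(j)})$ and absorb the extra $(1-\eps)$ using $\eps<1/9$ — both work.
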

\begin{proof}
  By construction,
  \[
    e(\GGn) = e(\GGn^{(J)}) - e(\GGn^{(0)}) = \sum_{j=0}^{J-1} \left( e(\GGn^{(j+1)}) - e(\GGn^{(j)})\right) = \sum_{j \in L} \deg_{\gcA^{(j)}} v_j \ge \sum_{j \in L} \frac{\eps \cdot e(\cA^{(j)})}{|V|}.
  \]
  Our assumption $e(\cA) \ge (1-2\eps) \cdot e(\GG)$ implies that the algorithm terminated with $|L| = b \ge 2p/\eps \cdot |V|$, see step~\ref{item:alg-stop}. Consequently, recalling that $\eps < 1/4$,
  \[
    e(\GGn) \ge |L| \cdot \frac{\eps \cdot e(\cA)}{|V|} \ge \frac{2p \cdot |V|}{\eps} \cdot \frac{\eps \cdot (1-2\eps) \cdot e(\GG)}{|V|} \ge p \cdot e(\GG).\qedhere
  \]
\end{proof}

Our next lemma lies at the heart of the matter. For brevity, define
\[
  \Del := 3 \cdot \Del_\alpha(\GG).
\]

\begin{lemma}
  \label{lemma:main-estimate}
  For every $j \in \{0, \dotsc, J\}$,
  \begin{equation}
    \label{eq:main-estimate}
    \| \sigma_\alpha(\GGn^{(j)}) \|^2 \le (1+\eps) \cdot \sigma^2 + \frac{a \cdot \Del}{e(\GGn^{(j)})}.
  \end{equation}
\end{lemma}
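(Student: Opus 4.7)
The natural strategy is to prove~\eqref{eq:main-estimate} by induction on $j$. For the base case $j=0$, recall that $\GGn^{(0)}$ is $m$ copies of the complete $r$-uniform hypergraph on $V$, so $\sigma_{\GGn^{(0)}}^{(t)}$ is the uniform distribution on $\binom{V}{t}$ and $\|\sigma_{\GGn^{(0)}}^{(t)}\|^2 = 1/\binom{|V|}{t}$. By Fact~\ref{fact:codegree-measure-norm-maximum-degree}, $\|\sigma_\GG^{(t)}\|^2 \ge 1/\binom{|V|}{t}$, so
\[
  \|\sigma_\alpha(\GGn^{(0)})\|^2 \le \|\sigma_\alpha(\GG)\|^2 \le \sigma^2 \le (1+\eps)\sigma^2,
\]
and the base case holds (the error term $a\Del/e(\GGn^{(0)})$ is even superfluous here).

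For the inductive step, suppose the estimate holds at stage $j$ with $j+1 \le J$. If $v_j \notin I$, then $\GGn^{(j+1)} = \GGn^{(j)}$ and there is nothing to check. Otherwise, apply Proposition~\ref{prop:geometry-hypergraph} with $\GGn \leftarrow \GGn^{(j)}$ and $\cA \leftarrow \gcA^{(j)}$ to produce a vertex $v^\star$ satisfying~\eqref{eq:sigma-GG-norm-change}. Since $v_j$ was chosen in step~\ref{item:alg-best-offer} to minimise $e(\GGn^{(j,v)})\big(\|\sigma_\alpha(\GGn^{(j,v)})\|^2 - (1+\eps)\sigma^2\big)$ over eligible $v$, the quantity $e(\GGn^{(j+1)})(\|\sigma_\alpha(\GGn^{(j+1)})\|^2 - \mu)$, where $\mu = (1+\eps)\sigma^2$, is bounded by the same quantity evaluated at $v^\star$. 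Writing $g = e(\GGn^{(j)})$, $d = \deg_{\gcA^{(j)}} v^\star$, $N = \|\sigma_\alpha(\GGn^{(j)})\|^2$, and $A = \|\sigma_\alpha(\gcA^{(j)})\|^2$, the proposition gives
\[
  (g+d)(\|\sigma_\alpha(\GGn^{(j,v^\star)})\|^2 - \mu) \le g(N - \mu) + d\bigl[\,A - \mu - (\sqrt{N}-\sqrt{A})^2 + \tfrac{\Del_1(\gcA^{(j)})}{g}N + \tfrac{\Del_\alpha(\gcA^{(j)})}{g}\,\bigr],
\]
after using the identity $2\sqrt{AN} - N = A - (\sqrt{N}-\sqrt{A})^2$. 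By the inductive hypothesis, $g(N-\mu) \le a\Del$, so it suffices to argue that the bracketed expression is nonpositive.

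The drift towards $\mu$ comes from Fact~\ref{fact:codegree-measure-subhypergraph}: since the main loop has not yet stopped at stage $j$, we have $e(\gcA^{(j)}) \ge (1-\eps)e(\cA^{(j)}) \ge (1-3\eps)e(\GG)$, and hence $A \le (1-3\eps)^{-2}\|\sigma_\alpha(\GG)\|^2 = \sigma^2$. Consequently $A - \mu \le -\eps\sigma^2$, which must swallow both error terms. For this, one combines $\Del_1(\gcA^{(j)}) \le \Delta_1(\cA^{(0)}) \le \tfrac{r+1}{\eps}\Del_1(\GG)$ from~\eqref{eq:A0-properties}, the bound $\Del_\alpha(\gcA^{(j)}) \le (1-3\eps)^{-1}\Del/3$ (again by Fact~\ref{fact:codegree-measure-subhypergraph}), the lower bound $g \ge e(\GGn^{(0)}) = m\binom{|V|}{r} \ge a\Del_1(\GG)$ from~\eqref{eq:m-choice}, and the a priori inductive upper bound $N \le \mu + a\Del/g$.

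The main obstacle is precisely this bookkeeping: one needs to verify that the choice $a = 25/\eps^2$ is large enough so that $\tfrac{\Del_1(\gcA^{(j)})}{g}N + \tfrac{\Del_\alpha(\gcA^{(j)})}{g} \le \eps\sigma^2$ throughout the loop, despite the factor $r+1$ appearing in the bound on $\Del_1(\gcA^{(j)})$ and despite $N$ potentially being as large as $\mu + a\Del/g$. This in turn requires exploiting the assumption~\eqref{eq:sigma-GG-assumption} on $\|\sigma_\GG\|^2$, which controls $\Del_1(\GG)/e(\GG)$ and hence the relative sizes of $\Del$ and $\sigma^2$. Once these estimates are assembled, the bracket is nonpositive, yielding $(g+d)(\|\sigma_\alpha(\GGn^{(j+1)})\|^2 - \mu) \le a\Del$, which is equivalent to~\eqref{eq:main-estimate} at stage $j+1$ and closes the induction.
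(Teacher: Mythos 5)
Your setup coincides with the paper's: the same induction on $j$, the same combination of Proposition~\ref{prop:geometry-hypergraph} with the minimising choice of $v_j$ in step~\ref{item:alg-best-offer}, and the same completion of the square turning $2\sqrt{AN}-N-\mu$ into $A-\mu-(\sqrt N-\sqrt A)^2$. Your base case is correct, and in fact slightly more direct than the paper's, which instead bounds $\|\sigma_\alpha(\GGn^{(0)})\|^2$ by $2a\,\Del_\alpha(\GG)/e(\GGn^{(0)})$, i.e.\ by the error term alone.

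The gap is in the inductive step, at the sentence ``it suffices to argue that the bracketed expression is nonpositive.'' That bracket need not be nonpositive, and the inequality you say one must verify, $\tfrac{\Del_1(\gcA^{(j)})}{g}N+\tfrac{\Del_\alpha(\gcA^{(j)})}{g}\le\eps\sigma^2$, is false in general. The obstruction is the term $\Del_\alpha(\gcA^{(j)})/g$: assumption~\eqref{eq:sigma-GG-assumption} controls only $\|\sigma_\GG^{(1)}\|^2$, whereas $\Del_\alpha(\GG)=\sum_t\alpha_t\Del_{t+1}(\GG)$ is built from the higher codegree norms $\|\sigma_\GG^{(t+1)}\|^2$, which can be arbitrarily large compared with $\sigma^2\cdot\Del_1(\GG)$ (take $\alpha=(1,0,\dotsc,0)$ and a hypergraph with highly concentrated pair-degrees). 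Since $g\ge a\Del_1(\GG)$ is the only available lower bound on $g$, the ratio $\Del/g$ can dwarf $\eps\sigma^2$; and if simultaneously $N\le\sigma^2$ and $A$ is close to $N$, the square term is negligible and the bracket is strictly positive. This is precisely why the error term $a\Del/e(\GGn^{(j)})$ appears in the statement of the lemma at all. The paper's proof therefore splits into two cases according to whether $N$ exceeds $(1+\eps)\sigma^2+(a/2)\Del/g$. If it does not, the bracket may well be positive, but it is at most $\eps\sigma^2+\Del/g$, and the resulting extra contribution $d\cdot\Del/g\le(a/2)\Del$ (using $d\le\Delta_1(\gcA^{(j)})\le(a/2)g$, inequality~\eqref{eq:Delta1-cAj-upper}) is absorbed by the $(a/2)\Del$ of slack that this case assumption leaves in the target bound $a\Del$. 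If it does, then $\Del/g<(2/a)N$, so the whole bracket is at most $\bigl(5/a-(\sigma/\sqrt N-1)^2\bigr)N$, which is nonpositive because $N\ge(1+\eps)\sigma^2$ and $a=25/\eps^2$. You need this dichotomy; no choice of the constant $a$ rescues the single-case argument, since $\Del_\alpha(\GG)/\Del_1(\GG)$ is not bounded in terms of $\sigma^2$.
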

\begin{proof}
  We prove~\eqref{eq:main-estimate} by induction on $j$. Since $\GGn^{(0)}$ is (an integer multiple of) the complete $r$-uniform hypergraph, $\sigma_{\GGn^{(0)}}^{(t)}$ is the uniform probability measure on $\binom{V}{t}$ and, consequently,
  \[
    \| \sigma_{\GGn^{(0)}}^{(t)} \|^2 = \binom{|V|}{t}^{-1}
  \]
  for every $t \in \br{r}$. On the other hand, Fact~\ref{fact:codegree-measure-norm-square} implies that, for every $t \in \br{r}$,
  \[
    \Del_{t+1}(\GG) = \frac{r+1}{t+1} \cdot e(\GG) \cdot \| \sigma_{\GG}^{(t+1)} \|^2 = (r+1) \cdot e(\GG) \cdot \sum_{v \in V} \sigma_\GG(v)^2 \cdot \| \sigma_{\GG_v}^{(t)} \|^2.
  \]
  Fact~\ref{fact:codegree-measure-norm-maximum-degree} implies that, for every $v \in V$ such that $\GG_v$ is nonempty,
  \[
    \| \sigma_{\GG_v}^{(t)} \|^2 \ge \binom{|V|}{t}^{-1} = \| \sigma_{\GGn^{(0)}}^{(t)} \|^2
  \]
  and therefore
  \[
    \Del_{t+1}(\GG) \ge (r+1) \cdot e(\GG) \cdot \sum_{v \in V} \sigma_\GG(v)^2 \cdot \| \sigma_{\GGn^{(0)}}^{(t)} \|^2 = \Del_1(\GG) \cdot \| \sigma_{\GGn^{(0)}}^{(t)} \|^2.
  \]
  Recall from~\eqref{eq:m-choice} that we have chosen $m$ so that
  \[
    \Del_1(\GG) \ge \frac{m}{2a} \cdot \binom{|V|}{r} = \frac{e(\GGn^{(0)})}{2a},
  \]
  which, substituted into the previous inequality, implies that
  \begin{equation}
    \label{eq:sigma-GGn-t-basis}
    \| \sigma_{\GGn^{(0)}}^{(t)} \|^2 \le \frac{2a}{e(\GGn^{(0)})} \cdot \Del_{t+1}(\GG).
  \end{equation}
  After we multiply both sides of~\eqref{eq:sigma-GGn-t-basis} by $\alpha_t$ and sum the resulting inequalities over all $t \in \br{r}$, we obtain
  \[
    \| \sigma_{\alpha}(\GGn^{(0)}) \|^2 \le 2a \cdot \frac{\Del_\alpha(\GG)}{e(\GGn^{(0)})},
  \]
  which implies~\eqref{eq:main-estimate} when $j=0$.

  Suppose now that $j \ge 0$ and assume that~\eqref{eq:main-estimate} holds; we shall show that this inequality remains true after we replace $j$ with $j+1$. We may assume that $\GGn^{(j+1)} = \GGn^{(j)} \cup \gcA_{v_j}^{(j)}$, as otherwise $\GGn^{(j+1)} = \GGn^{(j)}$ and there is nothing to prove. Let $v$ be a vertex satisfying the assertion of Proposition~\ref{prop:geometry-hypergraph} with $\cA \leftarrow \gcA^{(j)}$ and $\GGn \leftarrow \GGn^{(j)}$. The vertex $v_j$ was chosen in step~\ref{item:alg-best-offer} so that
  \begin{equation}
    \label{eq:main-estimate-reduced}
    e(\GGn^{(j+1)}) \cdot \left( \| \sigma_\alpha(\GGn^{(j+1)}) \|^2 - (1+\eps) \cdot \sigma^2 \right) \le e(\GGn^{(j,v)}) \cdot \left( \| \sigma_\alpha(\GGn^{(j,v)}) \|^2 - (1+\eps) \cdot \sigma^2 \right),
  \end{equation}
  so it suffices to bound the right-hand side of~\eqref{eq:main-estimate-reduced} from above by $a \cdot \Del$.

  The assertion of Proposition~\ref{prop:geometry-hypergraph}, inequality~\eqref{eq:sigma-GG-norm-change}, is equivalent to the inequality
  \begin{multline}
    \label{eq:sigma-GG-error-change}
    e(\GGn^{(j,v)}) \cdot \left( \| \sigma_\alpha(\GGn^{(j,v)}) \|^2 - \sigma^2 \right) \le e(\GGn^{(j,v)}) \cdot \left( \| \sigma_\alpha(\GGn^{(j)}) \|^2 - \sigma^2 \right) \\
    + \deg_{\gcA^{(j)}} v \cdot \left( \left(\frac{\Del_1(\gcA^{(j)})}{e(\GGn^{(j)})} + 2 \cdot \frac{\|\sigma_\alpha(\gcA^{(j)})\| }{\|\sigma_\alpha(\GGn^{(j)})\|}  - 2\right) \cdot \| \sigma_\alpha(\GGn^{(j)}) \|^2 + \frac{\Del_{\alpha}(\gcA^{(j)})}{e(\GGn^{(j)})} \right).
  \end{multline}
  Since $e(\GGn^{(j,v)}) = e(\GGn^{(j)}) + \deg_{\gcA^{(j)}} v$, inequality~\eqref{eq:sigma-GG-error-change} may be rewritten as
  \begin{multline}
    \label{eq:sigma-GG-error-change-rewritten}
    e(\GGn^{(j,v)}) \cdot \left( \| \sigma_\alpha(\GGn^{(j,v)}) \|^2 - \sigma^2 \right) \le e(\GGn^{(j)}) \cdot \left( \| \sigma_\alpha(\GGn^{(j)}) \|^2 - \sigma^2 \right) \\
    + \deg_{\gcA^{(j)}} v \cdot \left( \left(\frac{\Del_1(\gcA^{(j)})}{e(\GGn^{(j)})} + 2 \cdot \frac{\|\sigma_\alpha(\gcA^{(j)})\| }{\|\sigma_\alpha(\GGn^{(j)})\|}  - 1\right) \cdot \| \sigma_\alpha(\GGn^{(j)}) \|^2 - \sigma^2 + \frac{\Del_{\alpha}(\gcA^{(j)})}{e(\GGn^{(j)})} \right).
  \end{multline}

  We shall now simplify the right-hand side of~\eqref{eq:sigma-GG-error-change-rewritten} somewhat. To this end, observe first that, as the algorithm did not terminate in step~\ref{item:alg-stop}, we must have
  \[
    e(\gcA^{(j)}) \ge (1-\eps) \cdot e(\cA^{(j)}) \ge (1-\eps) \cdot (1-2\eps) \cdot e(\GG) \ge (1-3\eps) \cdot e(\GG).
  \]
  Consequently, Fact~\ref{fact:codegree-measure-subhypergraph} implies that, for every $t \in \br{r+1}$,
  \begin{equation}
    \label{eq:sigma-cAj-upper}
    \| \sigma_{\gcA^{(j)}}^{(t)} \| \le (1-3\eps)^{-1} \cdot \| \sigma_{\GG}^{(t)}\|
  \end{equation}
  and, since clearly $e(\gcA^{(j)}) \le e(\GG)$,
  \begin{equation}
    \label{eq:Delta-cAj-upper}
    \Del_t(\gcA^{(j)}) \le (1-3\eps)^{-2} \cdot \Del_t(\GG) \le 3 \cdot \Del_t(\GG).
  \end{equation}
  Summing~\eqref{eq:sigma-cAj-upper}, with both sides squared, and~\eqref{eq:Delta-cAj-upper} over all $t$, with appropriate weights, yields
  \begin{equation}
    \label{eq:sigma-GG-error-change-error-1}
    \| \sigma_\alpha(\gcA^{(j)}) \| \le (1-3\eps)^{-1} \cdot \| \sigma_\alpha(\GG) \| = \sigma \qquad \text{and} \qquad \Del_\alpha(\gcA^{(j)}) \le \Del.
  \end{equation}
  Furthermore, recall from~\eqref{eq:m-choice} that we have chosen $m$ so that
  \[
    \Del_1(\GG) \le \frac{m}{a} \cdot \binom{|V|}{r} = \frac{e(\GGn^{(0)})}{a} \le \frac{e(\GGn^{(j)})}{a}.
  \]
  Consequently
  \begin{equation}
    \label{eq:sigma-GG-error-change-error-2}
    \frac{\Del_1(\gcA^{(j)})}{e(\GGn^{(j)})} \le \frac{(1-3\eps)^{-2}}{a} \le \frac{3}{a}
  \end{equation}
  and, by~\eqref{eq:A0-properties},
  \begin{equation}
    \label{eq:Delta1-cAj-upper}
    \begin{split}
      \Delta_1(\gcA^{(j)}) & \le \Delta_1(\cA^{(j)}) \le \Delta_1(\cA^{(0)}) \le \frac{r+1}{\eps} \cdot \Del_1(\GG) \\
      & \le \frac{r+1}{\eps} \cdot \frac{e(\GGn^{(j)})}{a} \le \frac{a}{2} \cdot e(\GGn^{(j)}),
    \end{split}
  \end{equation}
  where the last inequality holds as $a^2 = 625/\eps^4 \ge 2(r+1)/\eps$. We may now substitute~\eqref{eq:sigma-GG-error-change-error-1} and~\eqref{eq:sigma-GG-error-change-error-2} into~\eqref{eq:sigma-GG-error-change-rewritten} and rearrange the terms to obtain the following inequality:
  \begin{multline}
    \label{eq:sigma-GG-error-change-final}
    e(\GGn^{(j,v)}) \cdot \left( \| \sigma_\alpha(\GGn^{(j,v)}) \|^2 - \sigma^2 \right) \le e(\GGn^{(j)}) \cdot \left( \| \sigma_\alpha(\GGn^{(j)}) \|^2 - \sigma^2 \right) \\
    + \deg_{\gcA^{(j)}} v \cdot \left( \left(\frac{3}{a} - \left(\frac{ \sigma  }{\|\sigma_\alpha(\GGn^{(j)})\|} - 1\right)^2\right) \cdot \| \sigma_\alpha(\GGn^{(j)}) \|^2 + \frac{\Del}{e(\GGn^{(j)})} \right).
  \end{multline}

  We now consider two cases, depending on how large $\| \sigma_\alpha(\GGn^{(j)}) \|^2$ is.

  \medskip
  \noindent
  \textbf{Case 1. $\| \sigma_\alpha(\GGn^{(j)}) \|^2 \le (1+\eps) \cdot \sigma^2 + a/2 \cdot \Del / e(\GGn^{(j)})$.}
  \medskip

  We first claim that
  \begin{equation}
    \label{eq:main-term-case-1}
    \left(\frac{3}{a} - \left(\frac{ \sigma  }{\|\sigma_\alpha(\GGn^{(j)})\|} - 1\right)^2\right) \cdot \| \sigma_\alpha(\GGn^{(j)}) \|^2 \le \frac{12\sigma^2}{a} \le \eps \cdot \sigma^2.
  \end{equation}
  To see this, note that the left-hand side of~\eqref{eq:main-term-case-1} is negative when $\| \sigma_\alpha(\GGn^{(j)}) \| > 2\sigma$, as $a \ge 12$. Otherwise, the first factor in the left-hand side is at most $3/a$ and the second factor is at most $4\sigma^2$. Substituting~\eqref{eq:main-term-case-1} into~\eqref{eq:sigma-GG-error-change-final}, using the assumed upper bound on $\| \sigma_\alpha(\GGn^{(j)}) \|^2$, yields
  \[
    \begin{split}
      e(\GGn^{(j,v)}) \cdot \left( \| \sigma_\alpha(\GGn^{(j,v)}) \|^2 - \sigma^2 \right) & \le e(\GGn^{(j)}) \cdot \eps \cdot \sigma^2 + \frac{a}{2} \cdot \Del + \deg_{\gcA^{(j)}} v \cdot \left(\eps \cdot \sigma^2 + \frac{\Del}{e(\GGn^{(j)})} \right) \\
      & \le \left( e(\GGn^{(j)}) + \deg_{\gcA^{(j)}} v\right) \cdot \eps \cdot \sigma^2 + \left(\frac{a}{2}+\frac{a}{2}\right) \cdot \Del,
    \end{split}
  \]
  where the second inequality holds because $\deg_{\gcA^{(j)}} v \le \Delta_1(\gcA^{(j)}) \le (a/2) \cdot e(\GGn^{(j)})$, see~\eqref{eq:Delta1-cAj-upper}. Finally, since $e(\GGn^{(j)}) + \deg_{\gcA^{(j)}} v =  e(\GGn^{(j,v)})$, we may conclude that
  \begin{equation}
    \label{eq:case-1-conclusion}
    e(\GGn^{(j,v)}) \cdot \left(\| \sigma_\alpha(\GGn^{(j,v)}) \|^2 -  (1+\eps) \cdot \sigma^2 \right)  \le a \cdot \Del.
  \end{equation}
  By~\eqref{eq:main-estimate-reduced}, this proves the desired estimate (inequality~\eqref{eq:main-estimate} with $j$ replaced by $j+1$).
  
  \medskip
  \noindent
  \textbf{Case 2. $(1+\eps) \cdot \sigma^2 + a/2 \cdot \Del / e(\GGn^{(j)}) < \| \sigma_\alpha(\GGn^{(j)}) \|^2 \le (1+\eps) \cdot \sigma^2 + a \cdot \Del / e(\GGn^{(j)})$.}
  \medskip

  We will show that the second term in the right-hand side of~\eqref{eq:sigma-GG-error-change-final} is nonpositive, which will give
  \[
    \begin{split}
      e(\GGn^{(j,v)}) \cdot \| \sigma_\alpha(\GGn^{(j,v)}) \|^2 & \le e(\GGn^{(j)}) \cdot \left( \| \sigma_\alpha(\GGn^{(j)}) \|^2 - \sigma^2 \right) + e(\GGn^{(j,v)}) \cdot \sigma^2 \\
      & \le e(\GGn^{(j)}) \cdot \eps \cdot \sigma^2 + a \cdot \Del + e(\GGn^{(j,v)}) \cdot \sigma^2,
    \end{split}
  \]
  which in turn implies the desired estimate (inequality~\eqref{eq:case-1-conclusion}), as $e(\GGn^{(j)}) \le e(\GGn^{(j,v)})$. The lower bound on $\| \sigma_{\alpha} (\GGn^{(j)})\|^2$ assumed in Case~2 implies that
  \[
    \frac{\Del}{e(\GGn^{(j)})} < \frac{2}{a} \cdot \| \sigma_\alpha(\GGn^{(j)}) \|^2
  \]
  and hence
  \[
    \left(\frac{3}{a} - \left(\frac{ \sigma  }{\|\sigma_\alpha(\GGn^{(j)})\|} - 1\right)^2\right) \cdot \| \sigma_\alpha(\GGn^{(j)}) \|^2 + \frac{\Del}{e(\GGn^{(j)})} \le \left(\frac{5}{a}  - \left(\frac{ \sigma  }{\|\sigma_\alpha(\GGn^{(j)})\|} - 1\right)^2\right) \cdot \| \sigma_\alpha(\GGn^{(j)}) \|^2.
  \]
  Moreover, since $\| \sigma_\alpha(\GGn^{(j)}) \|^2 \ge (1+\eps) \cdot \sigma^2$, we have
  \[
  \begin{split}
    \frac{5}{a}  - \left(\frac{\sigma}{\|\sigma_\alpha(\GGn^{(j)})\|}-1\right)^2 &
    \le \frac{5}{a} - \left(\frac{1}{\sqrt{1+\eps}}-1\right)^2
    = \frac{5}{a} - \frac{\left(\sqrt{1+\eps} - 1\right)^2}{1+\eps} \\
    & = \frac{\eps^2}{5} - \frac{\eps^2}{\left(1 + \sqrt{1+\eps}\right)^2 \cdot (1+\eps)} \le 0,
  \end{split}
  \]
  where the last inequality holds due to our assumption that $\eps < (9(r+1))^{-1} \le 1/18$.
\end{proof}

\subsection{Proof of the key lemma}
\label{sec:proof-key-lemma}

After having made all the preparations, we are finally ready to prove Lemma~\ref{lemma:analysis}.

\begin{proof}[{Proof of Lemma~\ref{lemma:analysis}}]
  If $e(\cA) < (1-2\eps) \cdot e(\GG)$, then Lemma~\ref{lemma:few-edges-left} immediately gives~\ref{item:many-skipped}. We may therefore assume that $e(\cA) \ge (1-2\eps) \cdot e(\GG)$. Recall from~\eqref{eq:m-choice} that we have chosen $m$ so that
  \[
    e(\GGn^{(0)}) = m \cdot \binom{|V|}{r} \le 2a \cdot \Del_1(\GG) = 2a \cdot (r+1) \cdot e(\GG) \cdot \| \sigma_\GG \|^2.
  \]
  It thus follows from~\eqref{eq:sigma-GG-assumption} and Lemma~\ref{lemma:many-edges-left} that
  \[
    e(\GGn^{(0)}) \le 2a \cdot (r+1) \cdot \frac{e(\GGn)}{p} \cdot \frac{\eps p}{2a(r+1)} \le \eps \cdot e(\GGn).
  \]
  Consequently, Fact~\ref{fact:codegree-measure-subhypergraph} implies that
  \[
    \| \sigma_\alpha(\GGn) \| \le \frac{e(\GGn^{(J)})}{e(\GGn)} \cdot \| \sigma_\alpha(\GGn^{(J)}) \| = \frac{e(\GGn) + e(\GGn^{(0)})}{e(\GGn)} \cdot \| \sigma_\alpha (\GGn^{(J)})\| \le (1+\eps) \cdot \| \sigma_\alpha(\GGn^{(J)}) \|.
  \]
  It now follows from Lemma~\ref{lemma:main-estimate} that
  \[
    \| \sigma_\alpha(\GGn) \|^2 \le (1+\eps)^3 \cdot \sigma^2 + (1+\eps)^2 \cdot \frac{a \cdot \Del}{e(\GGn^{(J)})}.
  \]
  Since $e(\GGn^{(J)}) \ge e(\GGn) \ge p \cdot e(\GG)$, by Lemma~\ref{lemma:many-edges-left} and $(1+\eps)^7(1-3\eps)^2 \ge 1$, as $\eps < (9(r+1))^{-1} \le 1/18$, we further have
  \[
    \begin{split}
      \| \sigma_\alpha(\GGn) \|^2 & \le (1+\eps)^3 \cdot \sigma^2 + (1+\eps)^2 \cdot \frac{a \cdot \Del}{p \cdot e(\GG)} \\
      & \le \frac{(1+\eps)^3}{(1-3\eps)^2} \cdot \|\sigma_\alpha(\GG)\|^2 + (1+\eps)^2 \cdot \frac{3a \cdot \Del_\alpha(\GG)}{p \cdot e(\GG)} \\
      & \le (1+\eps)^{10} \cdot \| \sigma_\alpha(\GG) \|^2 + \frac{4a \cdot \Del_\alpha(\GG)}{p \cdot e(\GG)} \\
      & = \sum_{t=1}^{r} \alpha_t \cdot \left( (1+\eps)^{10} \cdot \| \sigma_\GG^{(t)} \|^2 + \frac{4a}{p} \cdot \frac{r+1}{t+1} \cdot \|\sigma_\GG^{(t+1)} \|^2  \right).
    \end{split}
  \]
  Recall the definition of $\alphan$ given in~\eqref{eq:def-alphan-b}. Since $4a/(t+1) \le 50/\eps^2$ for every $t \in \br{r}$, we may conclude that
  \[
    \| \sigma_\alpha(\GGn) \|^2 \le \| \sigma_{\alphan}(\GG) \|^2,
  \]
  as claimed.
\end{proof}

\section{Probabilistic inequalities}
\label{sec:prob-ineq}

The proofs of Theorems~\ref{thm:clique-free}, \ref{thm:eps-nets}, and~\ref{thm:Folkman} make use of well-known probabilistic inequalities. The first of them are standard tail bounds for binomial distributions.

\begin{lemma}
  \label{lemma:binomial-tails}
  Let $n$ be a positive integer, let $p \in [0,1]$, and suppose that $X \sim \Bin(n,p)$. Then, for every $\delta \in [0,1]$,
  \[
    \Pr\big(X \le (1-\delta)np\big) \le \exp(-\delta^2np/2)
    \quad
    \text{and}
    \quad
    \Pr\big(X \ge (1+\delta)np\big) \le \exp(-\delta^2np/3).
  \]
\end{lemma}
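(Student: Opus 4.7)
The plan is to apply the Chernoff--Cram\'er exponential moment method, which for the binomial distribution has an especially clean form. First I would note that, writing $X = X_1 + \dotsb + X_n$ as a sum of i.i.d.\ Bernoulli$(p)$ variables, the moment generating function factorises as $\Ex[e^{tX}] = (1 - p + pe^t)^n \le \exp\big(np(e^t-1)\big)$, using $1 + x \le e^x$. Then, by Markov's inequality applied to $e^{tX}$, for any $t > 0$ one has $\Pr(X \ge a) \le e^{-ta} \cdot \exp\big(np(e^t - 1)\big)$, and symmetrically for the lower tail with $t < 0$ (or, equivalently, the upper tail of $-X$).

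For the upper tail, I would set $a = (1+\delta)np$ and optimise over $t$, which yields the well-known estimate
\[
  \Pr\big(X \ge (1+\delta)np\big) \le \left(\frac{e^{\delta}}{(1+\delta)^{1+\delta}}\right)^{np}.
\]
The desired form $\exp(-\delta^2 np/3)$ then follows from the elementary inequality $(1+\delta)\log(1+\delta) - \delta \ge \delta^2/3$ valid for $\delta \in [0,1]$, which I would verify by checking that the function $\varphi(\delta) = (1+\delta)\log(1+\delta) - \delta - \delta^2/3$ vanishes at $0$ and has nonnegative derivative on $[0,1]$.

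For the lower tail, the analogous optimisation gives
\[
  \Pr\big(X \le (1-\delta)np\big) \le \left(\frac{e^{-\delta}}{(1-\delta)^{1-\delta}}\right)^{np},
\]
and the bound $\exp(-\delta^2 np/2)$ follows from the companion inequality $(1-\delta)\log(1-\delta) + \delta \ge \delta^2/2$ on $\delta \in [0,1]$, again verified by differentiating. The slight asymmetry between the constants $1/2$ and $1/3$ comes directly from this difference in the Taylor expansions of $(1 \pm \delta)\log(1 \pm \delta)$.

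There is no real obstacle here; the whole argument is a few lines of calculus. The only mildly delicate point is verifying the two numerical inequalities comparing $(1 \pm \delta) \log(1 \pm \delta)$ to a quadratic in $\delta$ on the range $[0,1]$, and for these I would simply observe that the required difference is $0$ at $\delta=0$ and compute its derivative to confirm monotonicity. Since the statement is entirely standard, an alternative would be to cite any textbook reference and skip the internal verification.
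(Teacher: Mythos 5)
Your argument is correct and complete: it is the standard Chernoff--Cram\'er computation, and the two calculus facts you invoke do hold (for the upper tail, $\varphi'(\delta)=\log(1+\delta)-2\delta/3$ vanishes at $0$, is concave-then-convex in the relevant sense via $\varphi''(\delta)=1/(1+\delta)-2/3$, and stays nonnegative on $[0,1]$ because $\log 2>2/3$; for the lower tail, $-\log(1-\delta)\ge\delta$ does the job immediately). The paper itself offers no proof of this lemma -- it is stated as a standard tail bound -- so there is nothing to compare against, and citing a textbook reference, as you suggest, is exactly what the authors implicitly do.
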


We shall also need the following version of Janson's inequality~\cite{Jan90}, which can be easily deduced from the statements found in~\cite[Chapter~8]{AloSpe16}.

\begin{thm}[Janson's inequality]
  \label{thm:Janson}
  Suppose that $\Omega$ is a finite set and let $B_1, \dotsc, B_k$ be arbitrary subsets of $\Omega$. Form a random subset $R \subseteq \Omega$ by independently keeping each $\omega \in \Omega$ with probability $p_\omega \in [0,1]$. For each $i \in \br{k}$, let $X_i$ be the indicator of the event that $B_i \subseteq R$ and define
  \[
    \mu = \sum_{i=1}^k \Ex[X_i] \qquad \text{and} \qquad \Delta = \sum_{\substack{1 \le i < j \le k \\ B_i \cap B_j \neq \emptyset}} \Ex[X_i X_j].
  \]
  Then,
  \[
    \Pr\big( \text{$B_i \nsubseteq R$ for all $i \in \br{k}$} \big) \le \exp \left(-\min\left\{\frac{\mu}{2}, \frac{\mu^2}{4\Delta}\right\}\right).
  \]
\end{thm}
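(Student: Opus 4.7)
The plan is to apply the standard basic and extended forms of Janson's inequality as found in \cite[Chapter~8]{AloSpe16}, separating the two regimes $\Delta \le \mu/2$ and $\Delta > \mu/2$. Define $A_i := \{B_i \subseteq R\}$, so that each $X_i = \one_{A_i}$ is increasing in $R$ and $A_i$, $A_j$ are independent whenever $B_i \cap B_j = \emptyset$; this is precisely the setup of Janson's inequalities. With the present normalisation (sum over unordered pairs $\{i,j\}$ with $B_i \cap B_j \neq \emptyset$), the basic form of the inequality reads
\[
  \Pr\Bigl(\bigcap_i \overline{A_i}\Bigr) \le \prod_i \bigl(1-\Pr(A_i)\bigr) \cdot \exp(\Delta) \le \exp(-\mu + \Delta),
\]
where the second inequality uses $1-x \le e^{-x}$.

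When $\Delta \le \mu/2$, this immediately yields $\Pr(\bigcap_i \overline{A_i}) \le \exp(-\mu/2)$, which matches the target because $\min\{\mu/2, \mu^2/(4\Delta)\} = \mu/2$ in this regime. The remaining case $\Delta > \mu/2$ will be handled by the standard deletion trick. Set $q := \mu/(2\Delta) \in (0,1)$ and form a random subset $T \subseteq \br{k}$ by keeping each index independently with probability $q$. Writing $\mu(S)$ and $\Delta(S)$ for the parameters of the subfamily $\{B_i\}_{i \in S}$, linearity of expectation gives $\Ex[-\mu(T) + \Delta(T)] = -q\mu + q^2\Delta = -\mu^2/(4\Delta)$, so there exists a deterministic $S \subseteq \br{k}$ with $-\mu(S) + \Delta(S) \le -\mu^2/(4\Delta)$. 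Since $\bigcap_{i=1}^k \overline{A_i} \subseteq \bigcap_{i \in S}\overline{A_i}$, applying the basic Janson inequality to the subfamily indexed by $S$ yields
\[
  \Pr\Bigl(\bigcap_{i=1}^k \overline{A_i}\Bigr) \le \exp\bigl(-\mu(S) + \Delta(S)\bigr) \le \exp\bigl(-\mu^2/(4\Delta)\bigr),
\]
completing the proof in this regime and hence overall.

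The only item that requires any care is the convention for $\Delta$: some presentations of Janson's inequality sum over ordered pairs, producing slightly different constants in the exponent, so one needs to quote the version of \cite[Chapter~8]{AloSpe16} that matches the unordered-pair convention used in the statement above. Once the correct form of the basic inequality is in hand, both the first regime (a direct substitution) and the deletion argument in the second regime are routine, and no genuine obstacle arises.
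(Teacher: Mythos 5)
Your proof is correct and is precisely the deduction the paper intends: the paper gives no proof of this statement, merely remarking that it "can be easily deduced from the statements found in~\cite[Chapter~8]{AloSpe16}", and your two-regime argument (the basic inequality $\Pr\bigl(\bigcap_i \overline{A_i}\bigr) \le e^{-\mu+\Delta}$ when $\Delta \le \mu/2$, and the standard deletion/random-subfamily trick when $\Delta > \mu/2$) is exactly that deduction, with the unordered-pair normalisation of $\Delta$ handled correctly.
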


\section{The typical structure of $K_{r+1}$-free graphs}
\label{sec:typical-structure-clique-free}

\subsection{Outline}
\label{sec:outline-clique-free}

The first, key part of the proof of Theorem~\ref{thm:clique-free} is showing that, for sufficiently small $\delta$, the number of $K_{r+1}$-free subgraphs of $K_n$ that are not $\delta n^2$-close to being $r$-partite is much smaller than $2^{\ex(n,K_{r+1})}$, which is a trivial lower bound on the number of $K_{r+1}$-free graphs. This statement is derived from a container lemma for $K_{r+1}$-free graphs (Proposition~\ref{prop:containers-clique-free} below), which is obtained by applying Theorem~\ref{thm:main-packaged} to the $\binom{r+1}{2}$-uniform hypergraph that encodes copies of $K_{r+1}$ in $K_n$, and the `supersaturated' version of the stability theorem of Erd\H{o}s and Simonovits proved in~\cite{BalBusColLiuMorSha17} and stated as Lemma~\ref{lemma:supersaturated-stability-cliques} below. Proposition~\ref{prop:containers-clique-free}, which is the main result of this section, supplies a covering of all $K_{r+1}$-free subgraphs of $K_n$ with few containers, each of which is a subgraph of $K_n$ with either fewer than $n^2/8$ edges or fewer than $n^{r+1/2}$ copies of $K_{r+1}$ (after we delete from it some $n^{2-1/(8r)}$ edges), whereas Lemma~\ref{lemma:supersaturated-stability-cliques} is used to show that all containers with nearly $\ex(n,K_{r+1})$ edges must be close to being $r$-partite.

The remainder of the proof is showing that all but an $2^{-n/(10r)^4}$-proportion of $K_{r+1}$-free subgraphs of $K_n$ that are $\delta n^2$-close to being $r$-partite are in fact $r$-partite. Our three-step argument is loosely based on the methods of~\cite{BalMorSamWar16}. First, we show that all but a tiny fraction of graphs in our collection admit an optimal, balanced $r$-partition with at most $\delta n^2$ monochromatic edges (i.e., edges whose both endpoints belong to the same part of the partition); an $r$-partition is optimal if it minimises the number of monochromatic edges and it is balanced if each partite set comprises at least $n/(2r)$ vertices. Second, we bound from above the number of remaining graphs whose associated $r$-partition induces a monochromatic copy of $K_{1,D}$ in one of the parts, where $D =  \lfloor n/(2^{14} r^5\log n) \rfloor$. Third, we bound from above the number of remaining graphs whose associated $r$-partition induces a monochromatic matching with a given number of edges in one of the parts. The second and third steps complement one another as every graph with $t$ edges contains either a copy of $K_{1,D}$ or a matching with at least $t/D$ edges.

\subsection{An efficient container lemma for $K_{r+1}$-free graphs}
\label{sec:container-lemma-clique-free}

The following statement, which is the main technical result of this section, is an efficient container lemma for $K_{r+1}$-free subgraphs of~$K_n$. It is obtained by applying Theorem~\ref{thm:main-packaged} to the $\binom{r+1}{2}$-uniform hypergraph that encodes copies of $K_{r+1}$ in $K_n$.

\begin{prop}
  \label{prop:containers-clique-free}
  For almost all $n$ and every $r$ satisfying $2 \le r \le \log n / (121 \log \log n)$, there exists a collection $\GG$ of at most $\exp\left(n^{2-1/(8r)}\right)$ subgraphs of $K_n$ such that:
  \begin{enumerate}[label=(\roman*)]
  \item
    Each $K_{r+1}$-free subgraph of $K_n$ is contained in some member of $\GG$.
  \item
    Each $G \in \GG$ either has fewer than $n^2/8$ edges or it contains a subgraph $G'$ with $e(G') \ge e(G) - n^{2-1/(8r)}$ that has fewer than $n^{r+1/2}$ copies of $K_{r+1}$.
  \end{enumerate}
\end{prop}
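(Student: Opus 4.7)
The plan is to apply the packaged container lemma (Theorem~\ref{thm:main-packaged}) to the auxiliary $s$-uniform hypergraph $\HH$, with $s = \binom{r+1}{2}$, whose vertex set is $V(\HH) = E(K_n)$ and whose edges are the edge sets of copies of $K_{r+1}$ in $K_n$. Then $v(\HH) = \binom{n}{2}$, $e(\HH) = \binom{n}{r+1}$, and the independent sets of $\HH$ correspond bijectively to the $K_{r+1}$-free subgraphs of $K_n$. To bound the $t$-degrees, I observe that any $t$-element set $T \subseteq V(\HH)$, viewed as a graph, spans at least $k_t := \lceil(1 + \sqrt{8t+1})/2\rceil$ vertices, and $\deg_\HH T$ is nonzero only if $T$ embeds into a copy of $K_{r+1}$; in that case $\deg_\HH T \le \binom{n - k_t}{r+1 - k_t} \le n^{r+1 - k_t}$. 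So $\Delta_t(\HH) \le n^{r+1 - k_t}$ for every $t \in \{1, \dotsc, s\}$.

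I will invoke Theorem~\ref{thm:main-packaged} with $\alpha = 1/4$, $\beta = 2 n^{-1/(8r)}$, $E = n^{r + 1/2}$, and $q = C_0 \cdot s^5 \cdot n^{-1/(2r)}$, where $C_0$ is a sufficiently large absolute constant. The hypotheses $E \ge v(\HH)$, $\alpha \beta q \cdot v(\HH) \ge 10^9 s^7$, and $10^4 s^5 q \le \beta$ are routine for large $n$. The substantive hypothesis, using $E/v(\HH) \ge 2 n^{r - 3/2}$, is that
\[
  n^{(k_t - 5/2) - (t-1)/(2r)} \ge C_0'
\]
for some absolute constant $C_0'$ and each $t \in \{2, \dotsc, s\}$. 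An elementary calculation shows that the minimum of $(k_t - 5/2)/(t-1)$ over this range is attained at $t = s$, where it equals $(2r-3)/((r-1)(r+2)) \ge 1/(2r)$, with equality only at $r = 2$, $t = 3$. Thus the exponent is nonnegative, and a large enough choice of $C_0$ handles the tight case.

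Theorem~\ref{thm:main-packaged} then produces a family $\cC$ of at most
\[
  \exp\bigl(10^4 s^5 \beta^{-1} \log(e/\alpha) \cdot q \log(e/q) \cdot v(\HH)\bigr)
\]
containers. Using $s \le r^2$, $\beta^{-1} \le n^{1/(8r)}$, $v(\HH) \le n^2/2$, and $q \log(e/q) = O(n^{-1/(2r)} \log n)$, this exponent is at most $O(r^{20} \log n) \cdot n^{2 - 3/(8r)}$, which is bounded by $n^{2 - 1/(8r)}$ once $80 r \log r + O(r \log \log n) \le \log n$; this last inequality is implied by the hypothesis $r \le \log n / (120 \log \log n)$, since then $r \log r \le \log n / 120$. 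Viewing a container $C \in \cC$ as the subgraph $G \subseteq K_n$ with $E(G) = C$, condition~(i) of the proposition is immediate from Theorem~\ref{thm:main-packaged}(i), and Theorem~\ref{thm:main-packaged}(ii) says that either $e(G) = |C| \le \alpha v(\HH) < n^2/8$, or there is a subset $W \subseteq C$ with $|C| - |W| \le \beta v(\HH) \le n^{2 - 1/(8r)}$ and $e(\HH[W]) < E$. In the latter case, the subgraph $G' \subseteq G$ with edge set $W$ satisfies $e(G') \ge e(G) - n^{2 - 1/(8r)}$ and contains fewer than $n^{r + 1/2}$ copies of $K_{r+1}$, yielding the second alternative of~(ii).

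The main technical obstacle is the simultaneous parameter balancing: $q$ must be roughly $n^{-1/(2r)}$ to satisfy the $\Delta_t$-bound at $t = s$, yet the product $10^4 s^5 \beta^{-1} \cdot q \log(e/q) \cdot v(\HH)$ must remain below $n^{2 - 1/(8r)}$. The restriction $r \le \log n / (120 \log \log n)$ is calibrated precisely so that the multiplicative overhead from $s^{10}$ and $\beta^{-1} = n^{1/(8r)}$ consumes only $3/8$ of the $1/(2r)$ of exponent saved by the choice of $q$, leaving a positive margin of $1/(8r)$. Once this calibration is verified, the remainder of the argument is a formal translation between the hypergraph-theoretic statement of Theorem~\ref{thm:main-packaged} and the graph-theoretic statement of Proposition~\ref{prop:containers-clique-free}.
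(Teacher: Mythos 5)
Your proposal is correct and follows essentially the same route as the paper: apply Theorem~\ref{thm:main-packaged} to the $\binom{r+1}{2}$-uniform hypergraph of $K_{r+1}$-copies with $\alpha=1/4$, $\beta\approx n^{-1/(8r)}$, $E=n^{r+1/2}$, and $q$ a small negative power of $n$ (the paper takes $q=n^{-3/(8r)}$ rather than your $C_0 s^5 n^{-1/(2r)}$, but both choices satisfy all the hypotheses). One minor slip: the minimum of $(k_t-5/2)/(t-1)$ is \emph{not} always attained at $t=s$ (for $r\in\{3,4,5\}$ it is attained at $t=3$, where it equals $1/4$), but since $\min\bigl\{1/4,\,(2r-3)/((r-1)(r+2))\bigr\}\ge 1/(2r)$ for all $r\ge 2$, the bound you actually need still holds and the argument goes through.
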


\begin{proof}
  Let $n$ be a large integer and suppose that $r$ satisfies $2 \le r \le \log n / (121 \log \log n)$. Let $\gamma = 1/(8r)$ and observe that
  \begin{equation}
    \label{eq:n-gamma-bounds}
    n^{\gamma} = \exp\left(\frac{\log n}{8r}\right) \ge \exp(12 \log\log n) = (\log n)^{12}.
  \end{equation}
  Let $\HH$ be the $\binom{r+1}{2}$-uniform hypergraph with vertex set $E(K_n)$ whose edges are the edge sets of all copies of $K_{r+1}$ in $K_n$ and set
  \[
    s = \binom{r+1}{2}, \qquad q = n^{-3\gamma}, \qquad \text{and} \qquad E = n^{r+1/2}.
  \]
  We now verify that we may apply Theorem~\ref{thm:main-packaged}, with $\alpha \leftarrow 1/4$ and $\beta \leftarrow n^{-\gamma}$, to the hypergraph $\HH$. First, as $s \le r^2 \le (\log n)^2$, we have
  \[
    \alpha \beta q \cdot v(\HH) = \frac{n^{-4\gamma}}{4} \cdot \binom{n}{2} \ge n \ge 10^9 s^7
  \]
  and, using~\eqref{eq:n-gamma-bounds},
  \[
    10^4s^5q = 10^4s^5n^{-3\gamma} \le 10^4 (\log n)^{10} n^{-3\gamma} \le n^{-\gamma} = \beta,
  \]
  provided that $n$ is sufficiently large. Second, suppose that $t \in \{2, \dotsc, s\}$ and let $\ell \in \{3, \dotsc, r+1\}$ be the unique integer satisfying $\binom{\ell-1}{2} < t \le \binom{\ell}{2}$, so that
  \[
    t-1 \le \binom{\ell}{2}-1 = \frac{(\ell+1)(\ell-2)}{2} \le r(\ell-2).
  \]
  Since a graph with $t$ edges must have at least $\ell$ vertices, we have
  \[
    \Delta_t(\HH) = \binom{n - \ell}{r+1-\ell} \le n^{r+1-\ell}
  \]
  and, consequently,
  \begin{align*}
    \left(\frac{q}{10^6s^5}\right)^{t-1} \cdot \frac{E}{v(\HH)} \cdot \frac{1}{\Delta_t(\HH)}
    & \ge \left(\frac{q}{10^6s^5}\right)^{r(\ell-2)} \cdot \frac{n^{r+1/2}}{n^2} \cdot \frac{1}{n^{r+1-\ell}} \\
    & = \left(\frac{n^{-3\gamma}}{10^6 s^5}\right)^{r(\ell-2)} \cdot n^{\ell-5/2} \ge \left(\frac{n^{1/(2r)-3\gamma}}{10^6s^5}\right)^{r(\ell-2)} \\
    & \ge \left(\frac{n^{\gamma}}{10^6(\log n)^{10}}\right)^{r(\ell-2)} \ge 1,
  \end{align*}
  where the last inequality follows from~\eqref{eq:n-gamma-bounds}. Theorem~\ref{thm:main-packaged} supplies a collection $\GG$ of containers for the independent sets of $\HH$ (that is, $K_{r+1}$-free subgraphs of $K_n$) satisfying
  \[
    \begin{split}
      |\GG| & \le \exp\left(10^4s^5\beta^{-1}\log(e/\alpha) \cdot q\log(e/q) \cdot v(\HH)\right) \\
      & \le \exp\left(10^5(\log n)^{10}n^\gamma \cdot n^{-3\gamma} \log n \cdot n^2\right) \le \exp\left(n^{2-\gamma}\right),
    \end{split}
  \]
  where the last inequality follows from~\eqref{eq:n-gamma-bounds}, such that, for every $G \in \GG$, either $e(G) \le \alpha \cdot v(\HH) \le n^2/8$ or there is a subgraph $G' \subseteq G$ with $e(G') \ge (1-\beta)e(G) \ge e(G) - n^{2-\gamma}$ and $e(\HH[G']) < E$ (that is, $G'$ contains fewer than $E$ copies of $K_{r+1}$).
\end{proof}

\subsection{Almost all $K_{r+1}$-free graphs are almost $r$-partite}
\label{sec:almost-all-almost-partite}

The following theorem, which is a rather straightforward consequence of our container lemma for $K_{r+1}$-free graphs (Proposition~\ref{prop:containers-clique-free}) and the `supersaturated' version of the stability theorem of Erd\H{o}s and Simonovits (Lemma~\ref{lemma:supersaturated-stability-cliques} below) proved by Balogh, Bushaw, Collares, Liu, Morris, and Sharifzadeh~\cite{BalBusColLiuMorSha17}, may be viewed as an approximate version of Theorem~\ref{thm:clique-free}. It states that, under the assumptions of Theorem~\ref{thm:clique-free}, almost all $K_{r+1}$-free subgraphs of $K_n$ are almost $r$-partite. To make this notion precise, given nonnegative integers $r$ and $t$ with $r \ge 2$, we shall say that a graph $G$ is \emph{$t$-close to being $r$-partite} if $G$ can be made $r$-partite by removing from it at most $t$ edges. In other words, $G$ is $t$-close to being $r$-partite if $G$ contains an $r$-partite subgraph $G'$ with $e(G') \ge e(G) - t$. Conversely, we shall say that a graph $G$ is \emph{$t$-far from being $r$-partite} if $G$ is not $t$-close to being $r$-partite or, in other words, if $\chi(G') > r$ for every $G' \subseteq G$ with $e(G') \ge e(G) - t$.

\begin{thm}
  \label{thm:almost-all-almost-partite}
  The following holds for sufficiently large $n$ and all $r$ satisfying $2 \le r \le \log n / (121 \log \log n)$. Let $\FF$ denote the family of $K_{r+1}$-free subgraphs of $K_n$ that are $(8\log n)^{-15}n^2$-far from being $r$-partite. Then
  \[
    |\FF| \le 2^{\ex(n, K_{r+1}) - n}.
  \]
\end{thm}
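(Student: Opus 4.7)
The plan is to combine the container lemma for $K_{r+1}$-free graphs, Proposition~\ref{prop:containers-clique-free}, with the quantitative supersaturated stability result, Lemma~\ref{lemma:supersaturated-stability-cliques}. Let $\GG$ be the family of containers supplied by Proposition~\ref{prop:containers-clique-free}; since every $F \in \FF$ is contained in some $G \in \GG$, it suffices to bound $\sum_{G \in \GG} |\FF \cap \cP(G)|$. I would partition $\GG$ into a sparse part $\GG_s$, consisting of containers with $e(G) \le \ex(n, K_{r+1}) - 2n^{2-1/(8r)}$, and a dense part $\GG_d$ of containers with $e(G) > \ex(n, K_{r+1}) - 2n^{2-1/(8r)}$; the two parts will be handled by completely different arguments.

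For $G \in \GG_s$, the trivial bound $|\FF \cap \cP(G)| \le 2^{e(G)}$ is already enough. Using $|\GG| \le \exp(n^{2-1/(8r)})$ together with the estimate $n^{2-1/(8r)} \ge n^2/(\log n)^{15}$ (which follows from the hypothesis on $r$), one obtains
\[
  \sum_{G \in \GG_s} |\FF \cap \cP(G)| \le |\GG| \cdot 2^{\ex(n, K_{r+1}) - 2n^{2-1/(8r)}} \le 2^{\ex(n, K_{r+1}) - n}/2,
\]
because the net gap $(2 - \log_2 e) n^{2-1/(8r)}$ between the exponent in the right-hand side and $\ex(n, K_{r+1})$ easily dwarfs $n$ for large $n$.

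The more interesting case is $G \in \GG_d$, where I would prove the stronger statement $|\FF \cap \cP(G)| = 0$ by showing that $G$ itself is already very close to being $r$-partite. Since $e(G) > n^2/8$, Proposition~\ref{prop:containers-clique-free}(ii) supplies a subgraph $G' \subseteq G$ with $e(G') \ge \ex(n, K_{r+1}) - 3 n^{2-1/(8r)}$ and fewer than $n^{r+1/2}$ copies of $K_{r+1}$. Invoking Lemma~\ref{lemma:supersaturated-stability-cliques}, $G'$ must be $\eta n^2$-close to being $r$-partite for some $\eta$ satisfying $\eta \le (8 \log n)^{-13} - n^{-1/(8r)}$. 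Adding back the at most $n^{2-1/(8r)}$ edges that were removed to form $G'$ from $G$, we see that $G$ is $(8\log n)^{-13} n^2$-close to being $r$-partite. Since the property of being $t$-close to $r$-partite is inherited by subgraphs (any $r$-partition that certifies closeness for $G$ certifies at least the same closeness for $F \subseteq G$), every $F \subseteq G$ is likewise $(8\log n)^{-13} n^2$-close to being $r$-partite, and hence does not lie in $\FF$.

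The main obstacle is verifying the quantitative form of the supersaturated stability lemma: one needs that a graph with $\ex(n, K_{r+1}) - O(n^{2 - 1/(8r)})$ edges and fewer than $n^{r+1/2}$ copies of $K_{r+1}$ can be made $r$-partite by deleting at most $n^2/(\log n)^{13}$ edges, a bound substantially sharper than what classical Erd\H{o}s--Simonovits stability provides. This refined statement (essentially extracted from~\cite{BalBusColLiuMorSha17}) is exactly what Lemma~\ref{lemma:supersaturated-stability-cliques} will supply, and with it in hand, summing the contributions from $\GG_s$ and $\GG_d$ yields $|\FF| \le 2^{\ex(n, K_{r+1}) - n}$, as required.
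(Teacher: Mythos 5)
Your proposal is correct and follows essentially the same route as the paper: the same split of the containers into dense and sparse parts according to edge count (your threshold $\ex(n,K_{r+1}) - 2n^{2-1/(8r)}$ differs from the paper's $(1-1/r)\frac{n^2}{2} - 2n^{2-1/(8r)}$ by at most $n$, which is immaterial), the same use of Proposition~\ref{prop:containers-clique-free}(ii) together with Lemma~\ref{lemma:supersaturated-stability-cliques} to show that every dense container is already $(8\log n)^{-13}n^2$-close to being $r$-partite and hence contains no member of $\FF$, and the same trivial count over the sparse containers. One small slip: the inequality $n^{2-1/(8r)} \ge n^2/(\log n)^{15}$ you cite is actually reversed under the hypothesis on $r$ (which gives $n^{-1/(8r)} \le (\log n)^{-15}$); what your sparse-case count needs, and what holds trivially since $r \ge 2$, is only $n^{2-1/(8r)} \ge n^{31/16} \gg n$.
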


\begin{lemma}[\cite{BalBusColLiuMorSha17}]
  \label{lemma:supersaturated-stability-cliques}
  Suppose that $n$, $r$, and $t$ are positive integers. Every $n$-vertex graph $G$ that is $t$-far from being $r$-partite contains at least 
  \[
    \frac{n^{r-1}}{e^{2r}\cdot r!} \left( e(G)+t-\left(1-\frac{1}{r}\right)\frac{n^2}{2}\right)
  \]
  copies of $K_{r+1}$.
\end{lemma}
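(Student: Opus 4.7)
\medskip

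\textbf{Plan.} I would prove the lemma by induction on $r$. The base case $r = 1$ is immediate: being $t$-far from $1$-partite (edgeless) means $e(G) > t$, so the number of $K_2$'s equals $e(G) \ge (e(G)+t)/2 \ge (e(G)+t)/e^2$, matching the bound since $(1-1/r)n^2/2 = 0$ in this case.

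\medskip

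For the inductive step, I would exploit the standard link identity
\[
\#K_{r+1}(G) \;=\; \frac{1}{r+1} \sum_{v \in V(G)} \#K_r\bigl(G[N(v)]\bigr),
\]
applying the induction hypothesis to each link with parameters $n \leftarrow d_v := \deg_G v$, $r \leftarrow r-1$, and $t \leftarrow \tau_v$, where $\tau_v$ is the distance of $G[N(v)]$ from being $(r-1)$-partite. This gives
\[
\#K_r(G[N(v)]) \;\ge\; \frac{d_v^{\,r-2}}{e^{2(r-1)}(r-1)!}\Bigl(e(G[N(v)]) + \tau_v - \tfrac{r-2}{r-1}\cdot \tfrac{d_v^2}{2}\Bigr)_+,
\]
where $(\cdot)_+$ denotes positive part. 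After summing over $v$, a Jensen/power-mean-type averaging (using that $\sum_v d_v = 2e(G)$ and that the terms with $d_v$ too small contribute negligibly since the parenthetical is then non-positive) reduces the task to a single aggregate inequality: one must show that
\[
\sum_{v \in V(G)} \Bigl(e(G[N(v)]) + \tau_v - \tfrac{r-2}{r-1}\cdot \tfrac{d_v^2}{2}\Bigr)_+ \;\ge\; \frac{e^2\cdot r\,(r+1)}{r-1} \cdot \frac{n}{1} \cdot \Bigl(e(G) + t - (1 - \tfrac{1}{r})\tfrac{n^2}{2}\Bigr),
\]
up to absolute multiplicative constants, which then feeds back through the link identity to yield the claimed bound with the advertised constant $1/(e^{2r} r!)$.

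\medskip

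The main obstacle is establishing this local-to-global inequality, and this is where the $t$-far hypothesis must be leveraged. My plan is to fix an $r$-partition $\Pi = (V_1, \dotsc, V_r)$ of $V(G)$ minimising the number $m$ of monochromatic edges. By the $t$-far hypothesis we have $m > t$, and by Tur\'an's theorem applied to the $(r-1)$-partite subgraph obtained by deleting all monochromatic edges we have $m \ge e(G) - (1 - 1/r)n^2/2$; averaging these two bounds gives $2m \ge e(G) + t - (1 - 1/r)n^2/2$. Moreover, optimality of $\Pi$ implies the crucial local condition $d_{V_i}(w) \le d_{V_j}(w)$ for every $w \in V_i$ and every $j$, so every vertex has roughly at most a $1/r$ fraction of its degree inside its own class. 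Now, for each monochromatic edge $\{u,w\} \subseteq V_i$, I would argue that both $u$ and $w$ certify an $(r-1)$-partite obstruction in their respective links: restricting $\Pi$ to $N(u)$ yields an $(r-1)$-partition (dropping class $V_i$) of $N(u) \setminus V_i$, and the vertex $w \in N(u) \cap V_i$ together with the local optimality condition forces at least one ``extra'' monochromatic edge to appear in any $(r-1)$-partition of $G[N(u)]$, thereby contributing to $\tau_u$ (or, when this contribution is absorbed by edge count in the link, to $e(G[N(u)])$). Double-counting these contributions over all monochromatic edges, using the bound $2m \ge e(G) + t - (1-1/r)n^2/2$, then yields the required aggregate inequality with a multiplicative loss of at most $e^2$ per inductive layer; this loss is precisely what drives the factor $e^{2r}$ in the denominator of the statement. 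The most delicate step is the bookkeeping that controls the interaction between the convex averaging over $v$ (which prefers near-regularity in the $d_v$'s) and the distribution of monochromatic edges across classes of $\Pi$; handling low-degree vertices may require either restricting the averaging to $v$ with $d_v \ge (1 - 1/r)n$ (which exist abundantly once the ``excess'' is positive) or a subsidiary induction on $n$ via vertex deletion.
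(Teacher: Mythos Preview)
The paper does not prove this lemma; it is quoted from \cite{BalBusColLiuMorSha17} and used as a black box in the proof of Theorem~\ref{thm:almost-all-almost-partite}. So there is no proof in the present paper to compare your attempt against.

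On the merits of your sketch: the high-level strategy (induction on $r$ via the link identity, combined with an optimal $r$-partition) is sound and is essentially the route taken in \cite{BalBusColLiuMorSha17}. However, two steps in your write-up are genuine gaps rather than routine details. First, the displayed ``aggregate inequality'' has dropped the weight $d_v^{\,r-2}$ that the induction hypothesis attaches to each summand; without that weight the inequality is not even dimensionally consistent with the target bound (compare powers of $n$), and your appeal to ``Jensen/power-mean averaging'' does not explain how a lower bound on $\sum_v d_v^{\,r-2}(\cdots)_+$ is extracted from information about $\sum_v d_v$ and $\sum_v(\cdots)_+$ separately. Second, the mechanism ``each monochromatic edge $\{u,w\}\subseteq V_i$ forces an extra monochromatic edge in every $(r-1)$-partition of $G[N(u)]$'' is asserted but not argued, and it is not true as stated: the presence of $w\in N(u)\cap V_i$ says nothing by itself about $\tau_u$, since the optimal $(r-1)$-partition of $G[N(u)]$ is free to place $w$ wherever it likes. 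The actual argument in \cite{BalBusColLiuMorSha17} routes the defect through the \emph{edge count} $e(G[N(v)])$ together with degree lower bounds coming from optimality of $\Pi$, and the interaction with the $d_v^{\,r-2}$ weights is handled carefully rather than by a one-line convexity appeal. Your outline has the right skeleton but would need substantial reworking at these two points to become a proof.
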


\begin{proof}[Proof of Theorem~\ref{thm:almost-all-almost-partite}]
  Let $\delta = (8\log n)^{-15}$ so that $\FF$ is the family of $K_{r+1}$-free subgraphs of $K_n$ that are $\delta n^2$-far from being $r$-partite. Let $\GG$ be the family of containers for $K_{r+1}$-free graphs supplied by Proposition~\ref{prop:containers-clique-free}. We partition $\GG$ into two parts as follows:
  \[
    \GG_1 = \left\{G \in \GG : e(G) \ge \left(1-\frac{1}{r}\right)\frac{n^2}{2} - 2n^{2-1/(8r)}\right\} \qquad \text{and} \qquad \GG_2 = \GG \setminus \GG_1.
  \]
  Fix an arbitrary $G \in \GG_1$. Since $e(G) > n^2/8$, it must be the case that $G$ contains a subgraph $G'$ with
  \begin{equation}
    \label{eq:eG'-lower}
    e(G') \ge e(G) - n^{2-1/(8r)} \ge \left(1-\frac{1}{r}\right) \frac{n^2}{2} - 3n^{2-1/(8r)}
  \end{equation}
  that contains fewer than $n^{r+1/2}$ copies of $K_{r+1}$. Let $G'$ be any such subgraph of $G$ and let $t'$ be the smallest number of edges one can delete from $G'$ to make it $r$-partite. Lemma~\ref{lemma:supersaturated-stability-cliques} implies that
  \[
    \frac{n^{r-1}}{e^{2r}\cdot r!} \left( e(G')+t'-1-\left(1-\frac{1}{r}\right)\frac{n^2}{2}\right) < n^{r+1/2}
  \]
  and hence, by~\eqref{eq:eG'-lower},
  \[
    t' - 3n^{2-1/(8r)} \le e^{2r} \cdot r! \cdot n^{3/2} +1 \le r^{4r} \cdot n^{3/2} \le n^{7/4}.
  \]
  Since $G'$ is $t'$-close to being $r$-partite, $G$ is $t$-close from being $r$-partite for all $t \ge 5n^{2-1/(8r)}$, since
  \[
    5n^{2-1/(8r)} > n^{7/4} + 4n^{2-1/(8r)} \ge t' + e(G) - e(G').
  \]
  In particular, since
  \[
    n^{-1/(8r)} = \exp\left(-\frac{\log n}{8r}\right) = \exp\left(-\frac{121}{8} \cdot \log \log n\right) \ll \frac{1}{(\log n)^{15}},
  \]
  neither $G$ nor any of its subgraphs can be $\delta n^2$-far from being $r$-partite.

  Thus, every graph in $\FF$ must be contained in some element of $\GG_2$. Since
  \[
    \ex(n, K_{r+1}) \ge \left(1 - \frac{1}{r}\right) \binom{n}{2} \ge \left(1-\frac{1}{r}\right)\frac{n^2}{2} - n,
  \]
  we have
  \begin{align*}
    \frac{|\FF|}{2^{\ex(n, K_{r+1})}}
    & \le \sum_{G \in \GG_2} 2^{e(G)-\left(1-\frac{1}{r}\right)\frac{n^2}{2}+n} \le |\GG| \cdot 2^{n-2n^{2-1/(8r)}} \\
    & \le \exp\left(n^{2-1/(8r)} + 2\log 2 \cdot \left(n- n^{2-1/(8r)}\right)\right) \le 2^{-n},
  \end{align*}
  as claimed.
\end{proof}

\subsection{Balanced and unbalanced $r$-partitions}
\label{sec:balanced-partitions}

Let $\Part$ be an arbitrary $r$-partition of~$\br{n}$. We shall say that $\Part$ is \emph{balanced} if $\min_{P \in \Part} |P| \ge \frac{n}{2r}$ and that it is \emph{unbalanced} otherwise. In the sequel, we denote by $K_\Part$ the complete $r$-partite graph whose colour classes are the parts of $\Part$.

\begin{fact}
  \label{fact:unbalanced-partition}
  Suppose that $r \ge 2$ and let $\Part$ be an unbalanced $r$-partition of $\br{n}$. Then
  \[
    e(K_\Part) \le \ex(n, K_{r+1}) - \frac{n^2}{16r^2}+ n.
  \]
\end{fact}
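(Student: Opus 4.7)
The plan is to reduce the claim to an elementary inequality about the sum $\sum n_i^2$, where $n_1 \le \dotsb \le n_r$ denote the sizes of the parts of $\Part$. Writing $e(K_{\Part}) = (n^2 - \sum_i n_i^2)/2$, the task becomes to show that $\sum_i n_i^2$ exceeds $n^2/r$ by a quantity of order $n^2/r^2$. Tur\'an's theorem supplies $\ex(n, K_{r+1}) = e(T_r(n))$, and a direct computation of the balanced part sizes of $T_r(n)$ (writing $n = qr + s$ with $0 \le s < r$) gives $\sum_i (n_i^*)^2 = n^2/r + s(r-s)/r \le n^2/r + r/4$, so $\ex(n, K_{r+1}) \ge n^2(1 - 1/r)/2 - r/8$.

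The key step will be the algebraic identity
\[
k^2 + \frac{(n-k)^2}{r-1} - \frac{n^2}{r} = \frac{(n - rk)^2}{r(r-1)},
\]
which is easily verified by multiplying out both sides by $r(r-1)$ and expanding. Combined with the power-mean inequality $\sum_{i \ge 2} n_i^2 \ge (n - n_1)^2/(r-1)$ applied to $n_2, \dotsc, n_r$ (which sum to $n - n_1$), this identity with $k = n_1$ yields
\[
\sum_{i=1}^r n_i^2 \ge n_1^2 + \frac{(n - n_1)^2}{r-1} = \frac{n^2}{r} + \frac{(n - rn_1)^2}{r(r-1)}.
\]
The unbalancedness assumption $n_1 < n/(2r)$ gives $n - rn_1 > n/2$, and hence $\sum_i n_i^2 > n^2/r + n^2/(4r(r-1)) \ge n^2/r + n^2/(4r^2)$.

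Putting the pieces together, the plan is to conclude
\[
e(K_{\Part}) < \frac{n^2(1 - 1/r)}{2} - \frac{n^2}{8r^2} \le \ex(n, K_{r+1}) + \frac{r}{8} - \frac{n^2}{8r^2} \le \ex(n, K_{r+1}) - \frac{n^2}{16r^2} + n,
\]
where the final step uses $r/8 \le n$, valid in any regime of interest (and certainly when $r \le n$). The whole argument is elementary and no step presents a real obstacle; the only mild subtlety is to compare $e(K_{\Part})$ to the exact Tur\'an number $e(T_r(n))$ rather than to the weaker $(1-1/r)\binom{n}{2}$, so that the small additive discrepancy from the rounding in the part sizes of $T_r(n)$ can be absorbed into the $+n$ term on the right-hand side of the conclusion.
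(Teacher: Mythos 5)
Your proof is correct, and it takes a genuinely different route from the paper's. The paper argues by a single local move: it picks a part $P$ with $|P| < \frac{n}{2r}$ and a part $Q$ with $|Q| \ge \frac{n}{r}$ (which exists by averaging), transfers $m = \lfloor (|Q|-|P|)/2 \rfloor$ vertices from $Q$ to $P$, and computes that this increases the edge count of the complete multipartite graph by $(|Q|-|P|)m - m^2 \ge m^2 \ge \frac{n^2}{16r^2} - n$; since the modified graph is still $r$-partite, its edge count is at most $\ex(n,K_{r+1})$, and the bound follows. This avoids any explicit computation of the Tur\'an number. Your argument is instead a global second-moment computation: you lower-bound $\sum_i n_i^2$ via Cauchy--Schwarz on the parts other than the smallest one, use the identity $k^2 + \frac{(n-k)^2}{r-1} = \frac{n^2}{r} + \frac{(n-rk)^2}{r(r-1)}$ (which checks out), and compare against the exact value $\sum_i (n_i^*)^2 = \frac{n^2}{r} + \frac{s(r-s)}{r}$ for the Tur\'an graph. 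All the estimates are verified correctly, including the final absorption of the $+\frac{r}{8}$ rounding error into the $+n$ slack using $r \le n$. Your route even yields the slightly stronger intermediate deficit $\frac{n^2}{8r^2}$ before weakening to match the statement; the paper's route is shorter and self-contained in that it never needs the explicit Tur\'an count, only the fact that an $r$-partite graph has at most $\ex(n,K_{r+1})$ edges.
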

\begin{proof}
  Let $P \in \Part$ be an arbitrary part satisfying $|P| < \frac{n}{2r}$ and let $Q \in \Part$ be an arbitrary part satisfying $|Q| \ge \frac{n}{r}$. Set
  \[
    m = \left\lfloor \frac{|Q| - |P|}{2} \right\rfloor,
  \]
  let $\Part'$ be the partition obtained from $\Part$ by moving some $m$ vertices from $Q$ to $P$, and observe that
  \[
    e(K_{\Part'}) - e(K_\Part) = (|P| + m)(|Q| - m) - |P||Q| = (|Q| - |P|) \cdot m - m^2 \ge m^2.
  \]
  Since $|Q| - |P| > \frac{n}{2r}$, then $m \ge \left\lfloor\frac{n}{4r}\right\rfloor$ and, consequently, $m^2 \ge \frac{n^2}{16r^2} - n$. Finally, since $K_{\Part'}$ is an $r$-partite graph, we have $e(K_{\Part'}) \le \ex(n, K_{r+1})$ and the claimed upper bound on $e(K_\Part)$ follows.
\end{proof}

Our next lemma bounds from above the number of subgraphs of $K_n$ that admit an unbalanced $r$-partition with few monochromatic edges.

\begin{lemma}
  \label{lemma:number-graphs-close-to-unbalanced}
  The following holds for all sufficiently large $n$ and all $r$ satisfying $2 \le r \le \log n$. Let $\FF$ denote the family of all $G \subseteq K_n$ that satisfy $e(G \setminus K_\Part) \le \frac{n^2}{(r\log n)^2}$ for some unbalanced $r$-partition $\Part$. Then
  \[
    |\FF| \le 2^{\ex(n, K_{r+1}) - n}.
  \]
\end{lemma}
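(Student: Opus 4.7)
The plan is to bound $|\FF|$ by a straightforward union bound: first over the choice of unbalanced partition $\Part$, then over the two pieces of $G$ — the part contained in $K_\Part$ (which may be an arbitrary subset of $E(K_\Part)$) and the small set of ``bad'' edges outside $K_\Part$. Set $t = \lfloor n^2/(r\log n)^2 \rfloor$. The number of $r$-partitions of $\br{n}$ is at most $r^n$, and for each such partition $\Part$ the number of $G \subseteq K_n$ with $e(G \setminus K_\Part) \le t$ is at most
\[
  2^{e(K_\Part)} \cdot \sum_{i=0}^{t} \binom{\binom{n}{2}}{i}.
\]

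For unbalanced $\Part$, Fact~\ref{fact:unbalanced-partition} gives $e(K_\Part) \le \ex(n, K_{r+1}) - \frac{n^2}{16r^2} + n$. To control the second factor, I would use the standard estimate $\sum_{i \le t} \binom{N}{i} \le (eN/t)^{t}$ with $N = \binom{n}{2}$, which here yields
\[
  \log_2 \sum_{i=0}^{t} \binom{\binom{n}{2}}{i} \le t \cdot \log_2\!\left(\frac{e \cdot (r\log n)^2}{2}\right) \le C \cdot \frac{n^2 \log\log n}{(r\log n)^2}
\]
for some absolute constant $C$, using the assumption $r \le \log n$ so that $\log r \le \log\log n$.

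Assembling the three contributions, I obtain
\[
  \log_2 |\FF| \le n \log_2 r \;+\; \left(\ex(n,K_{r+1}) - \frac{n^2}{16r^2} + n\right) \;+\; C \cdot \frac{n^2 \log\log n}{(r\log n)^2}.
\]
It then remains to verify that the dominant saving $\frac{n^2}{16r^2}$ swallows the other three terms with room to spare for an extra $-n$. Since $r \le \log n$, one has $n \log r \le n \log\log n \ll n^2/(16r^2)$ once $n$ is large (as $r^2 \log r \le (\log n)^2 \log\log n = o(n)$), and similarly $\frac{n^2 \log\log n}{(r\log n)^2}  = \frac{n^2}{16r^2} \cdot \frac{16 \log\log n}{(\log n)^2} \ll \frac{n^2}{16r^2}$. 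Therefore $\log_2 |\FF| \le \ex(n,K_{r+1}) - n$ for all sufficiently large $n$, as required.

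The argument has no real obstacle — the only thing to be careful about is that the bound $(eN/t)^t$ on the tail sum of binomial coefficients only yields $O(\log\log n)$ in the exponent (not a power of $\log n$), which is what allows the $n^2/(r\log n)^2$ slack in the definition of $\FF$ to be enough; a weaker bound on $t$ would not leave this margin. Conceptually the whole point is that the gain $n^2/(16r^2)$ coming from unbalancedness is quadratic in $n$ while the partition and correction counts are essentially linear in $n$ (up to logarithmic factors).
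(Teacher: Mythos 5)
Your proof is correct and follows essentially the same route as the paper: a union bound over the at most $r^n$ unbalanced partitions, Fact~\ref{fact:unbalanced-partition} to gain $n^2/(16r^2)$ in the exponent, and a count of at most $2^{e(K_\Part)}\sum_{i\le t}\binom{\binom{n}{2}}{i}$ graphs per partition. The only (immaterial) difference is that the paper uses the cruder tail bound $\sum_{i\le t}\binom{\binom{n}{2}}{i}\le (n^2)^t = 2^{O(n^2/(r^2\log n))}$, which already suffices — so your closing remark that the sharper $(eN/t)^t$ estimate is needed is not actually true, though it does no harm.
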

\begin{proof}
  Denote by $\cP_u$ the family of all unbalanced $r$-partitions of $\br{n}$. For every $\Part \in \cP_u$, let $\FF_\Part$ denote the family of all graphs $G \subseteq K_n$ that satisfy $e(G \setminus K_\Part) \le \frac{n^2}{(r \log n)^2}$. We have
  \[
    |\FF_\Part| \le \sum_{t=0}^{n^2/(r \log n)^2}\binom{\binom{n}{2}}{t} \cdot 2^{e(K_\Part)} \le \left(n^2\right)^{\frac{n^2}{(r \log n)^2}} \cdot 2^{e(K_\Part)} \le 2^{e(K_\Part) + \frac{4n^2}{r^2\log n}}.
  \]
  Since $|\cP_u| \le r^n \le 2^{2n\log n}$, Fact~\ref{fact:unbalanced-partition} gives
  \[
    |\FF| \le \sum_{\Pi \in \cP_u} |\FF_\Part| \le 2^{\ex(n,K_{r+1}) - \frac{n^2}{16r^2} + \frac{4n^2}{r^2\log n} + n + 2n\log n} \le 2^{\ex(n, K_{r+1}) - n},
  \]
  provided that $n$ is sufficiently large.
\end{proof}

Let $\Col_r(n)$ denote the family of all $r$-partite subgraphs of $K_n$. Even though some graphs in $\Col_r(n)$ admit many different proper $r$-colourings, our next lemma, which is implicit in the work of Pr\"omel and Steger~\cite{ProSte95}, shows that the average number of proper $r$-colourings of a graph in $\Col_r(n)$ is only slightly larger than one. Our proof of the lemma is an adaptation of the argument underlying the proof of \cite[Proposition~5.5]{BalMorSamWar16}.

\begin{lemma}
  \label{lemma:number-r-col}
  The following holds for all sufficiently large $n$ and all $r$ satisfying $2 \le r \le \log n$. Denoting by $\cP$ the family of all $r$-partitions of $\br{n}$, we have
  \[
    \sum_{\Part \in \cP} 2^{e(K_\Part)} \le \left(1+2^{-\frac{n}{5r^4}}\right) \cdot |\Col_r(n)|.
  \]
\end{lemma}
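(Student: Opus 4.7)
The plan is to interpret the sum combinatorially via double counting. Observe that
\[
\sum_{\Pi \in \cP} 2^{e(K_\Pi)} = |\{(G, \Pi) : G \subseteq K_\Pi\}| = \sum_{G \in \Col_r(n)} c(G),
\]
where $c(G) := |\{\Pi \in \cP : G \subseteq K_\Pi\}|$ is the number of $r$-partitions of $\br{n}$ that properly colour $G$. Since $c(G) \ge 1$ for every $G \in \Col_r(n)$, the lemma is equivalent to
\[
\sum_{G \in \Col_r(n)} (c(G) - 1) \le 2^{-n/(5r^4)} \cdot |\Col_r(n)|.
\]
Applying the elementary inequality $c - 1 \le c(c-1)/2$, valid for $c \in \ZZ_{\ge 0}$, together with the double-counting identity $\sum_G c(G)(c(G)-1) = \sum_{\Pi \ne \Pi'} 2^{e(K_\Pi \cap K_{\Pi'})}$, the problem reduces to proving
\[
\sum_{\Pi \ne \Pi'} 2^{e(K_\Pi \cap K_{\Pi'})} \le 2 \cdot 2^{-n/(5r^4)} \cdot |\Col_r(n)|.
\]

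The heart of the argument is a geometric lower bound on $e(K_\Pi \setminus K_{\Pi'})$ for distinct $\Pi, \Pi' \in \cP$. After relabelling the parts of $\Pi'$ so as to minimise the number of vertices whose class differs, let $D$ denote this set of vertices and put $d := |D| \ge 1$. When $\Pi$ is balanced (each part has at least $n/(2r)$ vertices), for each $v \in D$ with $\Pi'(v) = j \ne \Pi(v)$, every edge from $v$ to $V_j \setminus D$ lies in $K_\Pi \setminus K_{\Pi'}$: indeed, each such $u$ satisfies $\Pi(u) = j \ne \Pi(v)$ but $\Pi'(u) = j = \Pi'(v)$. Since edges from $D$ to $\br{n} \setminus D$ are counted at most once per $D$-endpoint and $|V_j \setminus D| \ge n/(2r) - d$, one obtains $e(K_\Pi \setminus K_{\Pi'}) \ge d\cdot n/(4r)$ whenever $d \le n/(4r)$. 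For larger $d$ one still gets a lower bound of order $n^2/r^3$ by counting cross-edges between a pair of $\Pi'$-classes that is sufficiently distributed across the $\Pi$-classes of a balanced $\Pi$.

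Combining this with the crude estimate that at most $\binom{n}{d} r^d \le (rn)^d$ partitions lie at distance $d$ from $\Pi$ yields, for every balanced $\Pi$,
\[
\sum_{\Pi' \ne \Pi} 2^{-e(K_\Pi \setminus K_{\Pi'})} \le \sum_{d \ge 1} (rn)^d \cdot 2^{-dn/(4r)} \le 2^{-n/(5r^4)},
\]
where the final inequality uses the hypothesis $r \le \log n$ together with $r^3 \ge r$ to make the geometric series dominated by twice its first term. Multiplying through by $2^{e(K_\Pi)}$ and summing over balanced $\Pi$ gives a contribution of at most $2^{-n/(5r^4)} \sum_{\Pi} 2^{e(K_\Pi)}$. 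The contribution from unbalanced $\Pi$ is handled separately using Fact~\ref{fact:unbalanced-partition}, which gives $e(K_\Pi) \le \ex(n, K_{r+1}) - n^2/(16r^2) + n$ and hence $\sum_{\Pi \text{ unbalanced}} 2^{e(K_\Pi)} \le r^n \cdot 2^{\ex(n, K_{r+1}) - n^2/(17r^2)}$, negligible compared to the lower bound $|\Col_r(n)| \ge 2^{\ex(n, K_{r+1})}$ (which holds since every subgraph of $K_{\Pi_0}$, for balanced $\Pi_0$, is $r$-partite). A short bootstrap then closes the inequality.

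The main obstacle is executing the geometric lower bound on $e(K_\Pi \setminus K_{\Pi'})$ uniformly across all regimes of $d$ (in particular the range $d \ge n/(4r)$) and across balanced versus unbalanced $\Pi$, with sufficient care about double-counting of edges with both endpoints in $D$, so that the summation over $\Pi'$ yields a geometric series whose leading term comfortably beats the factor $2^{-n/(5r^4)}$.
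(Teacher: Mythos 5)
Your overall strategy is the same as the paper's: reduce the lemma by inclusion--exclusion over pairs of partitions to bounding $\sum_{\Pi' \neq \Pi} 2^{e(K_\Pi \cap K_{\Pi'})}$ for each balanced $\Pi$, dispose of unbalanced partitions via Fact~\ref{fact:unbalanced-partition}, and control the pair sum by trading an entropy count of the partitions $\Pi'$ at a given ``distance'' from $\Pi$ against a lower bound on $e(K_\Pi \setminus K_{\Pi'})$ at that distance. Your double-counting identity, the inequality $c-1 \le c(c-1)/2$, the bound $e(K_\Pi\setminus K_{\Pi'}) \ge d\bigl(n/(2r)-d\bigr)$ for small $d$, and the count $\binom{n}{d}r^d$ are all correct.

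The genuine gap is the regime $d > n/(4r)$, which you yourself flag as the main obstacle. The assertion that every such $\Pi'$ satisfies $e(K_\Pi\setminus K_{\Pi'}) = \Omega(n^2/r^3)$ is not established by ``counting cross-edges between a pair of $\Pi'$-classes'': once some classes of $\Pi'$ are small, the majority-colour map $j \mapsto \arg\max_i |P_i \cap P_j'|$ need not be injective and the per-vertex cost $|P_j \setminus D|$ of a vertex moved into class $j$ can be tiny, so neither your small-$d$ argument nor the quoted sentence applies; the claim is also quantitatively delicate (for $r=3$, $|P_2|=|P_3|=n/6$ and $\Pi'$ obtained by merging $P_2$ with $P_3$ and splitting $P_1$ in half, one gets $d = n/2$ but $e(K_\Pi\setminus K_{\Pi'}) = n^2/36 < n^2/r^3$). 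Relatedly, your displayed series $\sum_{d\ge 1}(rn)^d 2^{-dn/(4r)}$ silently uses the exponent $dn/(4r)$ for all $d$, which is precisely what is unavailable beyond $d = n/(4r)$; the correct version would split the sum and invoke the unproven $\Omega(n^2/r^3)$ bound for the tail. The paper avoids this dichotomy altogether by parametrising pairs not by Hamming distance but by $s$, the largest \emph{second-largest} subclass $|P_i \cap P_j'|$: a pigeonhole argument shows two subclasses whose product is at least $sn/(2r^4)$ must land in a common class of $\Pi'$, and the number of $\Pi'$ with a given $s$ is at most $n^{(s+1)r^2}$, so one estimate covers all regimes uniformly. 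Supplying a correct uniform lower bound for your large-$d$ case is essentially this pigeonhole step, so what is missing is the heart of the argument rather than a routine verification.
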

\begin{proof}
  Denote by $\cP_b$ the family of all balanced $r$-partitions of $\br{n}$ and let $\cP_u = \cP \setminus \cP_b$. Since $|\cP_u| \le r^n \le 2^{2n\log n}$, Fact~\ref{fact:unbalanced-partition} gives
  \[
    \sum_{\Pi \in \cP_u} 2^{e(K_\Part)} \le 2^{\ex(n,K_{r+1}) - \frac{n^2}{16r^2} + n + 2n\log n} \le 2^{\ex(n, K_{r+1}) - n},
  \]
  provided that $n$ is sufficiently large. (This bound is also a consequence of Lemma~\ref{lemma:number-graphs-close-to-unbalanced}.) As $\ex(n,K_{r+1})$ is the (largest) number of edges of an $n$-vertex $r$-colourable graph, we have
  \[
    \sum_{\Pi \in \cP_u} 2^{e(K_\Part)} \le 2^{-n} \cdot |\Col_r(n)|.
  \]

  Since, for every pair $\Part, \Part' \in \cP$, there are exactly $2^{e(K_\Part \cap K_{\Part'})}$ subgraphs of $K_n$ that are properly $r$-coloured by both $\Part$ and $\Part'$, Bonferroni's inequality (the inclusion-exclusion principle) gives
  \[
    |\Col_r(n)| \ge \sum_{\Part \in \cP_b} 2^{e(K_\Part)} - \sum_{\{\Part, \Part'\} \in \binom{\cP_b}{2}} 2^{e(K_\Part \cap K_{\Part'})}.
  \]
  The claimed inequality will thus follow once we establish the following claim.

  \begin{claim}
    For every $\Part \in \cP_b$,
    \[
      \sum_{\Part' \in \cP_b \setminus \{\Part\}} 2^{e(K_\Part \cap K_{\Part'})} \le 2^{e(K_\Part) - \frac{n}{4r^4}}.
    \]
  \end{claim}

  Fix distinct $\Part, \Part' \in \cP_b$. Suppose that $\Part = \{P_1, \dotsc, P_r\}$ and $\Part' = \{P_1', \dotsc, P_r'\}$ and, for all $i, j \in \br{r}$, let $P_{i,j} = P_i \cap P_j'$. We will say that the vertices in $P_{i,j}$ are \emph{moved} from $P_i$ to $P_j'$. For every $i \in \br{r}$, define $L_i$ and $S_i$ as the largest and the second largest subclasses of $P_i$, respectively (with ties broken arbitrarily). Note that $|P_i| \ge \frac{n}{2r}$ implies that $|L_i| \ge \frac{n}{2r^2}$. Set $s = \max_{j \in \br{r}} |S_j|$ and let $S = S_j$ for the smallest $j$ for which the maximum in the definition of $s$ is achieved. Note that $1 \le s \le n/2$, as $s = 0$ would imply that $(P_1', \dotsc, P_r')$ is a permutation of $(P_1, \dotsc, P_r)$, and therefore $\Part = \Part'$.

  By the pigeonhole principle, either some pair $\{L_i, L_j\}$ of largest subclasses or some largest subclass $L_i$ and $S$, where $S \nsubseteq P_i$, are moved to the same vertex class $P_k'$. Since $P_k'$ is an independent set in $K_{\Part'}$, it follows that $K_{\Part} \cap K_{\Part'}$ has no edges between the sets $L_i$ and $L_j$ or $L_i$ and $S$. Since,
  \[
    \min\{ |L_i| \cdot |L_j|, |L_i| \cdot |S| \} \ge \min\left\{ \left(\frac{n}{2r^2}\right)^2, \frac{n}{2r^2} \cdot s \right\} \ge \frac{sn}{2r^4},
  \]
  we have $e(K_\Part \cap K_{\Part'}) \le e(K_\Part) - \frac{sn}{2r^4}$.
  
  Observe that, given a $\Part \in \cP$, we can describe any $\Part' \in \cP \setminus \{\Part\} $ by first picking the (ordered) partitions $(P_{i,j})_{j \in \br{r}}$ for every $i$ and then setting $P_j' = \bigcup_{i \in \br{r}} P_{i,j}$. We claim that, for every $s$, the number of ways to choose all $P_{i,j}$ in such a way that $\max_{i \in \br{r}} |S_i| = s$ is at most $n^{r^2} \cdot n^{sr^2}$. Indeed, one may first specify the sequence $\big(|P_{i,j}|\big)_{i,j \in \br{r}}$ and then specify, for each $i \in \br{r}$, the elements of each $P_{i,j}$ with $j \in \br{r}$, apart from $L_i$ (which will comprise all the remaining, unspecified elements of $P_i$).

  We may thus conclude that
  \[
    \begin{split}
      \sum_{\Part' \in \cP_b \setminus \{\Part\}} 2^{e(K_\Part \cap K_{\Part'})}
      & \le \sum_{s \ge 1} n^{(s+1)r^2} \cdot 2^{e(K_\Part)-\frac{sn}{2r^4}} \\
      & \le 2^{e(K_\Part)} \cdot \sum_{s \ge 1} 2^{4sr^2\log n - \frac{sn}{2r^4}} \le 2^{e(K_\Part) - \frac{n}{4r^4}},
    \end{split}
  \]
  as claimed.
\end{proof}

\subsection{The number of $K_{r+1}$-free graphs with a monochromatic star}
\label{sec:large-star}

 The following lemma will be used to bound from above the number of $K_{r+1}$-free graphs whose optimal $r$-partition induces a monochromatic copy of $K_{1,D}$ in one of the parts.

\begin{lemma}
  \label{lemma:Kr-free-large-degree}
  Let $D$ be an integer satisfying $D \ge 2^rr$. Suppose that $\Part$ is an $r$-partition of $\br{n}$ and that $S$ is a copy of $K_{1, D}$ with $V(K_{1,D}) \subseteq P$ for some $P \in \Part$. If $v \in P$ is the centre vertex of $S$, then
  \[
    \left|\big\{G \subseteq K_\Part : \text{$G \cup S \nsupseteq K_{r+1}$ and $\deg_G(v,Q) \ge D$ for all $Q \in \Part \setminus \{P\}$}\big\}\right| \le 2^{e(K_\Part) - \frac{D^2}{8r^2}}.
  \]
\end{lemma}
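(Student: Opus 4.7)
My plan is to apply Janson's inequality (Theorem~\ref{thm:Janson}) to a uniformly random $G \subseteq K_\Part$, showing $\Pr[G \in \FF] \le 2^{-D^2/(8 r^2)}$, from which $|\FF| \le 2^{e(K_\Part) - D^2/(8 r^2)}$ follows. I will first reformulate the $K_{r+1}$-free condition. Since $G$ is $r$-partite, $G$ itself contains no $K_{r+1}$, so any $K_{r+1}$ in $G \cup S$ must use at least one edge of $S$. As $S$ is a star centered at $v$ with its $D$ leaves $T$ inside $P$ and $K_\Part$ has no edges within $P$, every such $K_{r+1}$ has the form $\{v, u\} \cup \{a_2, \ldots, a_r\}$ with $u \in T$, $a_i \in N_G(v, P_i)$ for each $i \in \{2, \ldots, r\}$, and all $\binom{r}{2}$ edges $\{u a_i : 2 \le i \le r\} \cup \{a_i a_j : 2 \le i < j \le r\}$ present in $G$. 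Calling this collective edge set the bad event $E_{u, \vec a}$, the condition $G \cup S \nsupseteq K_{r+1}$ becomes: no $E_{u, \vec a}$ occurs.

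I will then condition on $A := N_G(v) \cap (V \setminus P)$, an event of probability $2^{-(n-|P|)}$ that is independent of all non-$v$-incident edges of $G$. Writing $A_{P_i} := A \cap P_i$, the degree condition becomes $|A_{P_i}| \ge D$ for all $i \ge 2$, and the bad events (involving no $v$-edges) remain determined by uniform random edges, so Janson's inequality applies with
\[
  \mu = D \prod_{i=2}^{r} |A_{P_i}| \cdot 2^{-\binom{r}{2}}.
\]
To bound $\Delta$, I will enumerate pairs of distinct bad events $(E_{u, \vec a}, E_{u', \vec a'})$ that share at least one edge, classified by whether $u = u'$ and by the set $\{i : a_i = a_i'\}$. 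The pairs sharing exactly one edge dominate and come in two types: (a) $u = u'$ with exactly one coordinate match (shared star edge $u a_i$), and (b) $u \ne u'$ with exactly two coordinate matches (shared transversal edge $a_i a_j$). A direct calculation using $|A_{P_i}| \ge D$ bounds their combined contribution by
\[
  \Delta \le \binom{r}{2} \Big( \prod_{i=2}^{r} |A_{P_i}| \Big)^{\!2} \cdot 2^{-2\binom{r}{2}+1},
\]
whence $\mu^2/(4\Delta) \ge D^2/(2r^2)$; higher-overlap contributions are smaller by factors of $\Omega(D)$. The hypothesis $D \ge 2^r r$ also ensures $\mu/2 \ge D^r \cdot 2^{-\binom{r}{2}-1} \ge D^2/(2r^2)$, so Janson's inequality gives $\Pr[\text{no } E_{u, \vec a} \mid A] \le \exp(-D^2/(2r^2)) \le 2^{-D^2/(8r^2)}$, the final step using $1/(2\ln 2) > 1/8$.

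Summing over the at most $2^{n-|P|}$ admissible choices of $A$ and cancelling the $2^{-(n-|P|)}$ conditioning factor yields $\Pr[G \in \FF] \le 2^{-D^2/(8r^2)}$, completing the proof. The main obstacle will be the overlap analysis for $\Delta$: one must carefully enumerate pairs of bad events by their matching pattern, verify that the single-edge-overlap pairs dominate (the essential input being $|A_{P_i}| \ge D$), and track the constants precisely enough that the Janson bound lands inside the target $2^{-D^2/(8r^2)}$; the hypothesis $D \ge 2^r r$ is needed only to ensure that the $\mu/2$ branch of Janson's bound is not binding.
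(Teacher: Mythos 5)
Your proposal is correct and follows essentially the same route as the paper's proof: condition on the neighbourhood of $v$ outside $P$, observe that every copy of $K_{r+1}$ in $G\cup S$ consists of $v$, one leaf $u$ of the star, and one $G$-neighbour of $v$ in each other part, and apply Janson's inequality to this family of $K_r$'s, with $\mu \approx D^r 2^{-\binom{r}{2}}$ and $\Delta$ dominated by the single-shared-edge pairs. The only differences are cosmetic (you keep the full neighbourhoods $A_{P_i}$ where the paper trims to exactly $D$ per part, and your intermediate constant $D^2/(2r^2)$ for $\mu^2/(4\Delta)$ is off by a small factor that is absorbed by the slack in the target $D^2/(8r^2)$).
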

\begin{proof}
  Let $G$ be a uniformly chosen random subgraph of $K_\Part$. Expose $G$ on all the edges of $K_\Part$ that have an endpoint in $P$ and condition on $\deg_G(v,Q) \ge D$ for all $Q \in \Part \setminus \{P\}$. It suffices to show that, for every such conditioning, $\Pr(G \cup S \nsupseteq K_{r+1}) \le 2^{-\frac{D^2}{8r^2}}$. For each $Q \in \Part \setminus \{P\}$, choose an arbitrary set of $D$ neighbours of $v$ in $Q$ and let $\KK$ be the family of all $D^r$ copies of $K_r$ in $K_\Part$ whose vertices belong to the chosen $D$-element sets or to $V(K_{1,D}) \setminus \{v\} \subseteq P$. Since, under our conditioning, $v$ is adjacent (in $G \cup S$) to all the vertices of each $K \in \KK$, we have $\Pr(G \cup S \nsupseteq K_{r+1}) \le \Pr(K \nsubseteq G \text{ for all } K \in \KK)$. We may bound the latter probability from above using Janson's inequality. Define, as in the statement of Theorem~\ref{thm:Janson},
  \[
    \mu = \sum_{K \in \KK} \left(\frac{1}{2}\right)^{e(K)}
    \qquad \text{and} \qquad
    \Delta = \sum_{\substack{\{K, K'\} \in \binom{\KK}{2} \\ E(K) \cap E(K') \neq \emptyset}} \left(\frac{1}{2}\right)^{e(K \cup K')}.
  \]
  Observe that the function $\{2, \dotsc, r\} \ni \ell \mapsto D^\ell \cdot 2^{-\binom{\ell}{2}}$ is increasing. Indeed, our assumption implies that, for each $\ell \le r-1$, we have $D \cdot 2^{\binom{\ell}{2} - \binom{\ell+1}{2}} \ge D \cdot 2^{-r} \ge 1$. It follows that
  \[
    \frac{\mu}{2} = \frac{D^r}{2} \cdot \left(\frac{1}{2}\right)^{\binom{r}{2}} \ge \frac{D^2}{4}.
  \]
  We now estimate $\Delta$. To this end, fix some $K \in \KK$ and observe that, for each $k \in \{2, \dotsc, r-1\}$, there are at most $\binom{r}{k} \cdot D^{r-k}$ many $K' \in \KK$ that share exactly $k$ vertices with $K$ and that $e(K \cup K') = 2\binom{r}{2} - \binom{k}{2}$ for each such $K'$. We conclude that
  \[
    2\Delta \le \mu \cdot \sum_{k=2}^{r-1} \binom{r}{k} \cdot D^{r-k} \cdot \left(\frac{1}{2}\right)^{\binom{r}{2}-\binom{k}{2}} \le \mu^2 \cdot \sum_{k=2}^{r-1} \frac{2^{\binom{k}{2}}r^k}{D^k}.
  \]
  Our assumption implies that, for every $k \in \{2, \dotsc, r-2\}$,
  \[
    \frac{2^{\binom{k+1}{2}}r^{k+1}}{D^{k+1}} = \frac{2^kr}{D} \cdot \frac{2^{\binom{k}{2}}r^k}{D^k} \le \frac{1}{2} \cdot \frac{2^{\binom{k}{2}}r^k}{D^k}
  \]
  and hence,
  \[
    \frac{2\Delta}{\mu^2} \le 2 \cdot \frac{2r^2}{D^2}.
  \]
  We conclude that
  \[
    \Pr(G \cup S \nsubseteq K_{r+1}) \le \exp\left(- \min\left\{\frac{\mu}{2}, \frac{\mu^2}{4\Delta}\right\}\right) \le \exp\left(-\frac{D^2}{8r^2}\right),
  \]
  as claimed.
\end{proof}

\subsection{The number of $K_{r+1}$-free graphs with a monochromatic matching}
\label{sec:large-matching}

The following lemma will be used to bound from above the number of $K_{r+1}$-free graphs whose optimal $r$-partition induces a monochromatic matching with a given number of edges in one of the parts.

\begin{lemma}
  \label{lemma:Kr-free-matching}
  Suppose that $\Part$ is a balanced $r$-partition of $\br{n}$ and that $M$ is a matching with $m$ edges such that $V(M) \subseteq P$ for some $P \in \Part$. If $r^2 \cdot 2^{r+3} \le n$, then
  \[
    \left|\big\{G \subseteq K_\Part : G \cup M \nsupseteq K_{r+1} \big\}\right| \le 2^{e(K_\Part) - \frac{mn}{2^{10}r^4}}.
  \]
\end{lemma}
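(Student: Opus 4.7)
The plan is to pick a uniformly random subgraph $G \subseteq K_\Part$ and to prove
\[
  \Pr\big(G \cup M \nsupseteq K_{r+1}\big) \le \exp\big(-mn/(2^{10}r^4)\big)
\]
by Janson's inequality; the claim then follows upon multiplying both sides by $2^{e(K_\Part)}$ and using $e^{-x} \le 2^{-x}$. Since $G$ is $r$-partite and $M$ lies entirely inside the class $P$, a short counting argument shows that any $K_{r+1} \subseteq G \cup M$ must use \emph{exactly one} edge of $M$ (at most one, because $M$ is a matching and $G$ contributes no intra-class edges; at least one, since $G$ alone is $K_{r+1}$-free); the remaining $r-1$ vertices then pick out one element from each of the other classes $Q_1,\dotsc,Q_{r-1}$.

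Accordingly, I will apply Theorem~\ref{thm:Janson} with $\Omega = E(K_\Part)$, each $p_\omega = 1/2$, and bad events indexed by pairs $(i,\bar w)$ with $i \in \br{m}$ and $\bar w = (w_1,\dotsc,w_{r-1}) \in Q_1 \times \dotsb \times Q_{r-1}$:
\[
  B_{i,\bar w} = \{u_i w_j,\, v_i w_j : j \in \br{r-1}\} \cup \{w_j w_k : 1 \le j < k \le r-1\}.
\]
Each $|B_{i,\bar w}| = 2(r-1) + \binom{r-1}{2} = (r-1)(r+2)/2$, and the balanced-partition bound $|Q_j| \ge n/(2r)$ gives
\[
  \mu \ge m \cdot \left(\frac{n}{2r}\right)^{r-1} \cdot 2^{-(r-1)(r+2)/2}.
\]
A direct calculation using $n \ge r^2 \cdot 2^{r+3}$ confirms that $\mu/2 \ge mn/(2^{10} r^4)$.

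The heart of the proof is estimating the correlation sum $\Delta$. Every overlap $B_{i,\bar w} \cap B_{j,\bar w'}$ is controlled by $s := |\{k : w_k = w_k'\}|$, so I will split into two cases. In \emph{Case A} ($i=j$ with $s \ge 1$) the shared edges number $2s + \binom{s}{2}$. In \emph{Case B} ($i \ne j$, which forces $s \ge 2$) the vertex-disjointness of $M$ confines all shared edges to the $K_{r-1}$-clique on $\bar w \cap \bar w'$, contributing $\binom{s}{2}$. In either case, the number of tuple pairs with overlap of size $s$ is at most $\binom{r-1}{s} \bigl(\prod_k |Q_k|\bigr)^2 / (n/(2r))^s$; the ratios of consecutive terms are bounded by $1/2$ under $n \ge r^2 \cdot 2^{r+3}$, so each geometric series collapses to its leading term. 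The upshot is
\[
  \frac{\Delta}{\mu^2} \le \frac{C_1 r^2}{mn} + \frac{C_2 r^4}{n^2} \le \frac{256\, r^4}{mn}
\]
for absolute constants $C_1, C_2$, where the last step uses $m \le |P|/2 \le n/2$. Equivalently, $\mu^2/(4\Delta) \ge mn/(2^{10} r^4)$.

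Combining these two estimates, Theorem~\ref{thm:Janson} delivers the desired probability bound and completes the argument. The main technical obstacle is packaging the double sum for $\Delta$ cleanly: once one exploits that the only cross-interactions between distinct matching edges $e_i,\,e_j$ must pass through the common $w$-clique (a consequence of $M$ being a matching), the $i \ne j$ pairs contribute $O(r^4/n^2)$ while the $i = j$ pairs contribute $O(r^2/(mn))$, and both are comfortably dominated by $r^4/(mn)$ under the hypothesis.
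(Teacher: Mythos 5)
Your proposal is correct and follows essentially the same route as the paper: a uniformly random $G \subseteq K_\Part$, Janson's inequality applied to the family of $K_{r+1}^-$ configurations obtained by joining a matching edge to one vertex from each other class, the observation that $\mu/2$ comfortably exceeds $mn/(2^{10}r^4)$, and the same two-case split of $\Delta$ (same matching edge sharing $s\ge 1$ outside vertices versus distinct matching edges sharing $s \ge 2$ outside vertices), with each sum collapsing to its leading term under $r^2 2^{r+3} \le n$. The deferred computations all check out against the paper's, which arrives at $2\Delta/\mu^2 \le 258 r^4/(mn)$ and hence the same conclusion.
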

\begin{proof}
  Let $G$ be a uniformly chosen random subgraph of $K_\Part$, so that the assertion of the lemma becomes equivalent to the inequality $\Pr(G \cup M \nsupseteq K_{r+1}) \le 2^{-\frac{mn}{2^{10}r^4}}$. Let $N = \prod_{Q \in \Part \setminus \{P\}} |Q|$ and note that the assumption that $\Part$ is balanced implies that $N \ge \left(\frac{n}{2r}\right)^{r-1}$. Denote by $K_{r+1}^-$ the graph obtained from $K_{r+1}$ by removing from it a single edge and let $\KK$ be the collection of all copies of $K_{r+1}^-$ in $K_\Part$ that form a $K_{r+1}$ with an edge of $M$. Note that $|\KK| = mN$ and that $\Pr(G \cup M \nsupseteq K_{r+1}) = \Pr(K \nsubseteq G \text{ for all } K \in \KK)$. We may thus bound this probability from above using Janson's inequality. Define, as in the statement of Theorem~\ref{thm:Janson},
  \[
    \mu = \sum_{K \in \KK} \left(\frac{1}{2}\right)^{e(K)}
    \qquad \text{and} \qquad
    \Delta = \sum_{\substack{\{K, K'\} \in \binom{\KK}{2} \\ E(K) \cap E(K') \neq \emptyset}} \left(\frac{1}{2}\right)^{e(K \cup K')}.
  \]
  Observe that the function $\{2, \dotsc, r\} \ni \ell \mapsto \left(\frac{n}{2r}\right)^{\ell-1} \cdot 2^{-\binom{\ell+1}{2}}$ is increasing. Indeed, our assumption on $r$ implies that, for each $\ell \le r-1$, we have $\frac{n}{2r} \cdot 2^{\binom{\ell}{2} - \binom{\ell+1}{2}} \ge \frac{n}{2r} \cdot 2^{-r} \ge 1$. It follows that
  \[
    \frac{\mu}{2} = \frac{mN}{2} \cdot \left(\frac{1}{2}\right)^{\binom{r+1}{2}-1} \ge m \cdot \left(\frac{n}{2r}\right)^{r-1} \cdot \left(\frac{1}{2}\right)^{\binom{r+1}{2}} \ge \frac{mn}{16r}.
  \]
  We now estimate $\Delta$. To this end, fix some $K \in \KK$ and observe that:
  \begin{enumerate}[label=(\alph*)]
  \item
    For each $k \in \{1, \dotsc, r-2\}$, there are at most $\binom{r-1}{k} \cdot N \cdot \left(\frac{n}{2r}\right)^{-k}$ many $K' \in \KK$ that share with $K$ the two vertices in $P$ and exactly $k$ other vertices; we have $e(K \cup K') = 2\binom{r+1}{2} - \binom{k+2}{2} -1$ for each such $K'$.
  \item
    For each $k \in \{2, \dotsc, r-1\}$, there are at most $m \cdot \binom{r-1}{k} \cdot N \cdot \left(\frac{n}{2r}\right)^{-k}$ many $K' \in \KK$ that share with $K$ only some $k$ vertices outside of $P$; we have $e(K \cup K') = 2\binom{r+1}{2} - \binom{k}{2} -2$ for each such $K'$.
  \end{enumerate}
  We conclude that
  \[
    \begin{split}
      \frac{2\Delta}{\mu^2}
      & \le \frac{1}{m} \cdot \sum_{k=1}^{r-2} \binom{r-1}{k} \cdot \left(\frac{n}{2r}\right)^{-k} \cdot 2^{\binom{k+2}{2}-1} + \sum_{k=2}^{r-1} \binom{r-1}{k} \cdot \left(\frac{n}{2r}\right)^{-k} \cdot 2^{\binom{k}{2}} \\
      & \le \frac{1}{m} \cdot \sum_{k=1}^{r-2} \frac{(2r^2)^k \cdot 2^{\binom{k+2}{2}-1}}{n^k} + \sum_{k=2}^{r-1} \frac{(2r^2)^k \cdot 2^{\binom{k}{2}}}{n^k} \\
      & \le \frac{8r^2}{mn} + \sum_{k=2}^r \frac{(2r^2)^k \cdot 2^{\binom{k+2}{2}}}{n^k}.
    \end{split}
  \]
  Our assumption on $r$ implies that, for every $k \in \{2, \dotsc, r-1\}$,
  \[
    \frac{(2r^2)^{k+1} 2^{\binom{k+3}{2}}}{n^{k+1}} = \frac{r^2 2^{k+3}}{n} \cdot \frac{(2r^2)^k 2^{\binom{k+2}{2}}}{n^k} \le \frac{1}{2} \cdot \frac{(2r^2)^k 2^{\binom{k+2}{2}}}{n^k}
  \]
  and hence, as $m \le n/2$ and $r \ge 2$,
  \[
    \frac{2\Delta}{\mu^2} \le\frac{8r^2}{mn} + 2 \cdot \frac{4r^4 \cdot 2^6}{n^2} \le \frac{258r^4}{mn}.
  \]
  We conclude that
  \[
    \Pr(G \cup M \nsubseteq K_{r+1}) \le \exp\left(- \min\left\{\frac{\mu}{2}, \frac{\mu^2}{4\Delta}\right\}\right) \le \exp\left(-\frac{mn}{2^{10}r^4}\right),
  \]
  as claimed.
\end{proof}

\subsection{Proof of Theorem~\ref{thm:clique-free}}
\label{sec:proof-theorem-clique}

Suppose that positive integers $n$ and $r$ satisfy $2 \le r \le \log n / (121 \log \log n)$ and let $\Col_r(n)$ and $\FF$ denote the families of all $r$-partite and all $K_{r+1}$-free subgraphs of $K_n$, respectively. Since $\Col_r(n) \subseteq \FF$, it suffices to show that
\begin{equation}
  \label{eq:FF-minus-Col}
  |\FF \setminus \Col_r(n)| \le 2^{-\frac{n}{(10r)^4}} \cdot |\Col_r(n)|.
\end{equation}

Let $\delta = (8 \log n)^{-15}$ and let $\cP$ be the family of all $r$-partitions of $\br{n}$. Define, for every graph $G \in \FF$,
\[
  t(G) = \min\{e(G \setminus K_\Part) : \Part \in \cP\}
\]
and let
\[
  \Fclose = \{G \in \FF : 1 \le t(G) \le \delta n^2\}
  \qquad \text{and} \qquad
  \Ffar = \{G \in \FF : t(G) > \delta n^2\},
\]
so that $\Fclose \cup \Ffar = \FF \setminus \Col_r(n)$. Furthermore, for every $G \in \Fclose$, let $\Part(G)$ be an arbitrary $r$-partition that achieves the minimum in the definition of $t(G)$. Let $\Fclose^b$ comprise these $G$ in $\Fclose$ for which $\Part(G)$ is a balanced partition and let $\Fclose^u = \Fclose \setminus \Fclose^b$. Finally, for every balanced partition $\Part \in \cP$ and every integer $t$ satisfying $1 \le t \le \delta n^2$, define
\[
  \FF_{t,\Part} = \{G \in \Fclose^b : t(G) = t \text{ and } \Part(G) = \Part\}.
\]
Letting $\cP_b$ denote the set of balanced $r$-partitions of $\br{n}$, we thus have
\begin{equation}
  \label{eq:FF-minus-Col-upper}
  |\FF \setminus \Col_r(n)| \le |\Ffar| + |\Fclose^u| + \sum_{\Part \in \cP_b} \sum_{t=1}^{\delta n^2} |\FF_{t,\Part}|.
\end{equation}
It follows from Theorem~\ref{thm:almost-all-almost-partite} and Lemma~\ref{lemma:number-graphs-close-to-unbalanced} that the first and the second terms in the right-hand side of~\eqref{eq:FF-minus-Col-upper} are at most $2^{\ex(n,K_{r+1})-n}$ each. To bound the final term, we shall derive the following estimate.

\begin{claim}
  \label{claim:FtPart-upper}
  For every integer $t$ satisfying $1 \le t \le \delta n^2$ and each $\Part \in \cP_b$,
  \[
    |\FF_{t,\Part}| \le 2^{e(K_\Part)-\frac{n}{(8r)^4}-t}.
  \]
\end{claim}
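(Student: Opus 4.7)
The plan is to decompose every $G \in \FF_{t, \Part}$ uniquely as $G = G_\Part \cup M$, where $G_\Part$ consists of the edges of $G$ lying in $K_\Part$ and $M = G \setminus K_\Part$ is a $t$-edge graph whose edges lie strictly inside parts of $\Part$. This gives
\[
|\FF_{t,\Part}| \le \sum_M N(M),
\]
where $N(M) = |\{G_\Part \subseteq K_\Part : G_\Part \cup M \nsupseteq K_{r+1}\}|$ and the sum runs over such $M$'s. Setting the threshold $D = \lfloor n/(2^{14} r^4 \log n) \rfloor$, I will split the $M$'s by $\Delta(M)$ into a \emph{star regime} $\mathcal{M}_t^A = \{M : \Delta(M) \ge D\}$ and a \emph{matching regime} $\mathcal{M}_t^B = \{M : \Delta(M) < D\}$, to be handled via Lemma~\ref{lemma:Kr-free-large-degree} and Lemma~\ref{lemma:Kr-free-matching} respectively.

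In the star regime, the plan is to pick a vertex $v_0 \in P$ with $\deg_M(v_0) \ge D$ and let $S \subseteq M$ be a $K_{1,D}$ centred at $v_0$; then split $N(M)$ according to whether every $Q \in \Part \setminus \{P\}$ satisfies $\deg_{G_\Part}(v_0, Q) \ge D$. The positive case is bounded by $2^{e(K_\Part) - D^2/(8r^2)}$ via Lemma~\ref{lemma:Kr-free-large-degree}, while the negative case gives $r \binom{|Q|}{<D} \cdot 2^{e(K_\Part) - |Q|} \le r \cdot 2^{e(K_\Part) - |Q|/2}$ by directly enumerating the at-most-$D$ neighbours of $v_0$ in some offending $Q$, using $|Q| \ge n/(2r)$ and noting that the entropy correction $D \log(e|Q|/D) = O(D \log \log n)$ is dominated by $|Q|/2$. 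I will estimate $|\mathcal{M}_t^A|$ by first choosing $v_0$ and its $D$ incident edges in $S$, obtaining $|\mathcal{M}_t^A| \le n \binom{n}{D} \binom{\binom{n}{2}}{t-D}$; crucially $\binom{\binom{n}{2}}{t-D}/\binom{\binom{n}{2}}{t} \le (2t/n^2)^D$, which makes $|\mathcal{M}_t^A|$ much smaller than $\binom{\binom{n}{2}}{t}$ whenever $t$ is large. In the matching regime, pigeonhole singles out a part $P$ with $e(M|_P) \ge t/r$; since $\Delta(M|_P) < D$, Vizing's theorem supplies a matching $M^* \subseteq M|_P$ of size $m^* \ge \max(1, \lceil t/(rD) \rceil)$, and Lemma~\ref{lemma:Kr-free-matching} applied to $M^*$ gives $N(M) \le 2^{e(K_\Part) - m^* n/(2^{10} r^4)}$. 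Substituting our choice of $D$ turns this into a saving of at least $\max(n/(2^{10} r^4), 16 t \log n / r)$ per $M$, and the trivial bound $|\mathcal{M}_t^B| \le \binom{\binom{n}{2}}{t}$ will suffice.

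The final bookkeeping to verify $|\FF_{t,\Part}| \le 2^{e(K_\Part) - n/(8r)^4 - t}$ is routine but delicate. The principal obstacle is the matching regime when $t$ is close to $\delta n^2 = n^2/(8\log n)^{13}$: the union-bound cost is $\log_2 \binom{\binom{n}{2}}{t} \approx 13 t \log\log n$, while the per-$M$ saving from Lemma~\ref{lemma:Kr-free-matching} is $16 t \log n / r$, and these precisely compete. The hypothesis $r \le \log n / (120 \log\log n)$ enters exactly here: it forces $16 \log n / r \ge 1920 \log\log n$, giving a multiplicative margin of roughly $150$ over the union-bound cost, which in particular absorbs the target terms $n/(8r)^4 + t$. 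A symmetric but easier computation in the star regime, leveraging the extra factor $(2t/n^2)^D$ that shrinks $|\mathcal{M}_t^A|$, shows its contribution is at most half of the target bound, completing the argument.
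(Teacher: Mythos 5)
Your overall architecture matches the paper's proof: the same threshold $D$, the same star/matching dichotomy, Lemmas~\ref{lemma:Kr-free-large-degree} and~\ref{lemma:Kr-free-matching} applied in the same places, and your bookkeeping in the matching regime (including the point where $r \le \log n/(120\log\log n)$ enters) is sound. The gap is in the star regime, in your ``negative case'' where some $Q$ has $\deg_{G_\Part}(v_0,Q) < D$. There you save only a factor $2^{-|Q|/2} \le 2^{-n/(4r)}$ per choice of $M$, and this must still be summed over $M \in \mathcal{M}_t^A$. The factor $(2t/n^2)^D$ shrinks $|\mathcal{M}_t^A|$ by only $2^{-O(D\log\log n)}$, whereas $\binom{\binom{n}{2}}{t} = 2^{\Theta(t\log\log n)}$ and $t$ may be as large as $\delta n^2 = n^2/(8\log n)^{13}$, which is far larger than both $D$ and $n/(4r)$. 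So for $t$ near $\delta n^2$ the negative-case total is roughly $2^{e(K_\Part)+\Theta(t\log\log n)-n/(4r)}$, vastly exceeding the target $2^{e(K_\Part)-t-n/(8r)^4}$; even the additive $-t$ alone in the target's exponent defeats a saving of $n/(4r)$, since $t$ can exceed $n$.

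The negative case cannot be beaten by counting; it must be \emph{excluded}, and this is exactly what the definition of $\FF_{t,\Part}$ provides but your decomposition $|\FF_{t,\Part}| \le \sum_M N(M)$ discards: $\Part = \Part(G)$ is an optimal partition for $G$. If $v_0$ has $D$ monochromatic neighbours inside its own part $P$ but $\deg_G(v_0,Q) < D$ for some other part $Q$, then moving $v_0$ from $P$ to $Q$ yields a partition with strictly fewer monochromatic edges, contradicting optimality. Hence every graph in the star regime automatically satisfies $\deg_G(v_0,Q) \ge D$ for all $Q$, the negative case is vacuous, and only the bound of Lemma~\ref{lemma:Kr-free-large-degree} is needed; the paper then uses even the crude bound $(n^2)^t$ on the number of choices of $M$, which suffices because $D^2/(8r^2) \ge 8\delta n^2 \log n$.
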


Let us first argue that inequality~\eqref{eq:FF-minus-Col-upper} and Claim~\ref{claim:FtPart-upper} imply~\eqref{eq:FF-minus-Col}. Indeed, assuming Claim~\ref{claim:FtPart-upper}, we have
\[
  \begin{split}
    |\FF \setminus \Col_r(n)|
    & \le 2 \cdot 2^{\ex(n,K_{r+1})-n} + \sum_{\Part \in \cP_b} \sum_{t=1}^{\delta n^2} 2^{e(K_\Part)-\frac{n}{(8r)^4}-t} \\
    & \le 2^{\ex(n,K_{r+1})-n+1} + 2^{-\frac{n}{(8r)^r}} \cdot \sum_{\Part \in \cP} 2^{e(K_\Part)}.
  \end{split}
\]
Finally, since $|\Col_r(n)| \ge 2^{\ex(n,K_{r+1})}$ and $\sum_{\Part \in \cP} 2^{e(K_\Part)} \le 2|\Col_r(n)|$, by Lemma~\ref{lemma:number-r-col}, we conclude that
\[
  |\FF \setminus \Col_r(n)| \le \left(2^{-n+1} + 2^{-\frac{n}{(8r)^4} + 1} \right) \cdot |\Col_r(n)|,
\]
which yields~\eqref{eq:FF-minus-Col}. It thus suffices to prove Claim~\ref{claim:FtPart-upper}.

\begin{proof}[{Proof of Claim~\ref{claim:FtPart-upper}}]
  Let $D = \left\lfloor \frac{n}{2^{14}r^5\log n} \right\rfloor$ and define
  \begin{align*}
    \FF_{t,\Part}^S & = \big\{G \in \FF_{t,\Part} : G[P] \supseteq K_{1,D} \text{ for some } P \in \Part\big\}, \\
    \FF_{t,\Part}^M & = \big\{G \in \FF_{t,\Part} : G[P] \text{ has a matching of size $\lceil t/(Dr) \rceil$ for some $P \in \Part$}\big\}.
  \end{align*}
  Since every graph with $t$ edges contains either a vertex with degree at least $D$ or a matching with at least $t/D$ edges, we have $\FF_{t,\Part} = \FF_{t,\Part}^S \cup \FF_{t,\Part}^M$ and we may bound $|\FF_{t,\Part}|$ from above in two steps.

  First, we claim that if $G \in \FF_{t,\Part}^S$ and $v \in P \in \Part$ is the centre vertex of a copy of $K_{1,D}$ in $G[P]$, then $\deg_G(v,Q) \ge D$ for all $Q \in \Part$. Indeed, if this were not true, then moving $v$ from $P$ to $Q$ would yield a partition $\Part'$ such that
  \[
    e(G \setminus K_{\Part'}) = e(G \setminus K_{\Part}) + \deg_G(v,Q) - \deg_G(v,P) < e(G \setminus K_{\Part}),
  \]
  which would contradict our assumption that $\Part = \Part(G)$. It thus follows from Lemma~\ref{lemma:Kr-free-large-degree} (which we may apply as $D \ge n^{1/2} \ge 2^rr$ when $n$ is sufficiently large) that
  \[
    |\FF_{t,\Part}^S| \le \left(n^2\right)^t \cdot 2^{e(K_\Part) - \frac{D^2}{8r^2}} \le 2^{e(K_\Part) - \frac{D^2}{8r^2} + 4t\log n} \le 2^{e(K_\Part) - n - t},
  \]
  where the last inequality holds because, by our choice of $\delta$ and $D$,
  \[
    \frac{D^2}{8r^2} \ge \frac{n^2}{2^{32}r^{12}(\log n)^2} \ge \frac{n^2}{(8\log n)^{14}}\ge 8 \delta n^2 \log n \ge 4t\log n + n + t.
  \]
  Second, it follows from Lemma~\ref{lemma:Kr-free-matching} that
  \[
    |\FF_{t,\Part}^M| \le \left(n^2\right)^t \cdot 2^{e(K_\Part) - \frac{\lceil t/(Dr) \rceil n}{2^{10}r^4}} \le 2^{e(K_\Part) - \frac{\lceil t/(Dr) \rceil n}{2^{10}r^4} + 4t\log n} \le 2^{e(K_\Part) - \frac{n}{2^{11}r^4} - t},
  \]
  where the last inequality holds because
  \[
    \frac{\lceil t/(Dr) \rceil n}{2^{10}r^4} - 4t\log n - t \ge \min_{\tau \in \{1, 2, \dotsc\}} \left\{\frac{\tau n}{2^{10}r^4} - \tau Dr \cdot (4\log n+1)\right\} \ge \frac{n}{2^{11}r^4},
  \]
  as the definition of $D$ assures that $n \ge 2^{14}Dr^5\log n$. Since $|\FF_{t,\Part}| \le |\FF_{t,\Part}^S| + |\FF_{t,\Part}^M|$, combining the two bounds above gives the assertion of the claim.
\end{proof}

\section{Lower bounds for $\eps$-nets}
\label{sec:lower-bounds-eps-nets}

\subsection{Outline}
\label{sec:outline-eps-nets}

Our (randomised) construction of planar point sets $X$ without a small $\eps$-net for the range space of lines on $X$ is a slight simplification of the construction of Balogh and Solymosi~\cite{BalSol18}. The high-level idea of both constructions, which can be traced back to the work of Alon~\cite{Alo12}, may be summarised as follows. We find an integer $s$, a finite set $X \subseteq \RR^2$, and a sub-collection $\cL$ of all lines in $\RR^2$ with the following property: Let $\HH$ be the $s$-uniform hypergraph with vertex set $X$ whose edges are all intersections of the lines in $\cL$ with $X$ that have exactly $s$ points. The independence number of $\HH$ is at most $(1-c)|X|$, for some constant $c > 0$.

Given such $s$, $X$, and $\cL$, we set $\eps = s/|X|$ and observe that the complement of every $\eps$-net $N$ for the range space of lines on $X$ is an independent set of $\HH$. Indeed, every such $N$ intersects every line that contains at least $s$ points of $X$; in particular, $N$ must intersect every line in $\cL$ that contains exactly $s$ points of $X$. This means that $|N| \ge c|X| = cs/\eps$, which improves upon the trivial bound $|N| \ge \Omega(1/\eps)$ if $s$ can be made arbitrarily large. The challenge is to make $s$ as large as possible, as a function of $|X|$.

In the construction of Alon~\cite{Alo12}, the set $X$ is a generic projection of the $d$-dimensional grid $\br{s}^d$ to $\RR^2$ and $\cL$ is the image of all combinatorial lines in $\br{s}^d$ via this projection; the key property is guaranteed, for large enough $d$, by the density version of the Hales--Jewett theorem proved by Furstenberg and Katznelson~\cite{FurKat91}. In Balogh and Solymosi's~\cite{BalSol18} construction, $X$ was a generic projection of a random subset of a larger, high-dimensional integer grid, trimmed appropriately (so that each line in $\cL$ contains no more than $s$ points of $X$), and the key property of $\HH$ was established, for a careful choice of $\cL$, with the use of the hypergraph container theorem of Saxton and Thomason~\cite{SaxTho15}. Here, we take $X$ to be a random subset of $\br{n}^2$, trimmed appropriately (as in~\cite{BalSol18}), and establish the key property of $\HH$, for a careful choice of $\cL$, using our efficient container lemma, Theorem~\ref{thm:main-packaged}.

\subsection{Proof of Theorem~\ref{thm:eps-nets}}
\label{sec:proof-eps-nets}

Let $s$ be a positive integer, let $m = 10s$, and let $M$ be a prime number satisfying $m^{m^2-1} \le M \le 2m^{m^2-1}$. Set $n = m M$, so that
\begin{equation}
  \label{eq:n-m-relation}
  n \ge m^{m^2} \qquad \text{and} \qquad m \ge \sqrt{\frac{\log n}{\log \log n}}.
\end{equation}
We shall find an $\eps \in (0, 1/n)$ and a set $X \subseteq \RR^2$ without a small $\eps$-net among the subsets of the integer grid~$\br{n}^2$, which we shall from now on denote by $P$. We will be able to prove the claimed lower bound on the smallest size of an $\eps$-net of $X$ for the range space of all lines in $\RR^2$ by considering only a fairly small family $\cL$ of lines that we now specify.

Given an integer $h \in \br{M-1}$ and a point $(x_0, y_0) \in \br{n} \times \br{M}$, we let $\ell(x_0, y_0; h)$ be the line passing through $(x_0, y_0)$ whose slope is $M / h$, that is,
\[
  \ell(x_0, y_0; h) = \big\{(x_0, y_0) + t \cdot (h, M) : t \in \RR \big\}.
\]
Since $M$ is prime, and thus co-prime with $h$, the vector $t \cdot (h,M)$ has integer coordinates if and only if $t \in \ZZ$. Moreover, if $t$ is an integer, then $y_0 + tM \in \br{n}$ if and only if $t \in \{0, \dotsc, m-1\}$. In particular, $\ell(x_0, y_0; h)$ intersects $P$ in at most $m$ points; it intersects $P$ in exactly $m$ points if and only if $x_0 + (m-1)h \le n$. Now, for every $h \in \br{M-1}$, let
\[
  \cL_h = \big\{\ell(x_0, y_0; h) : (x_0, y_0) \in \br{n} \times \br{M} \text{ and } x_0 + (m-1)h \le n\big\},
\]
so that every line in $\cL_h$ intersects $P$ in exactly $m$ points. Since the lines in $\cL_h$ are pairwise disjoint (as they are parallel), we have
\[
  \left| \left(\bigcup \cL_h\right) \cap P \right| = m \cdot |\cL_h| = m \cdot \big(n - (m-1)h\big) \cdot M = n^2 \cdot \left(1 - \frac{(m-1)h}{n}\right).
\]
Let $\hmax = \lfloor n/(10m) \rfloor$ so that $\bigcup \cL_h$ has at least $9n^2/10$ points of $P$ for every $h \in \br{\hmax}$. Finally, define
\[
  \cL = \bigcup_{h=1}^{\hmax} \cL_h
\]
and note that
\begin{equation}
  \label{eq:cL-size}
  \frac{n^3}{12m^2} \le \frac{9n^2}{10m} \cdot \hmax \le |\cL \cap P| \le \frac{n^2}{m} \cdot \hmax \le \frac{n^3}{10m^2}.
\end{equation}

We shall say that a set $A \subseteq P$ is \emph{$\cL$-collinear} if $A$ is contained in some line in $\cL$. As every line in $\cL$ contains exactly $m$ points of $P$, the number of $a$-element $\cL$-collinear subsets of $P$ is precisely $|\cL| \cdot \binom{m}{a}$ for every $a \in \{2, \dotsc, m\}$.

Suppose that $p$ satisfies
\begin{equation}
  \label{eq:eps-net-p}
  K \cdot m^{10} \cdot n^{-1/(s-1)} \cdot \log n \le p \le m^{-1} \cdot n^{-1/s}
\end{equation}
for some large absolute constant $K$; such a number does indeed exist as
\[
  n^{1/(s-1)-1/s} \ge n^{1/s^2} \ge m^{m^2/s^2} = m^{100} \ge K \cdot m^{10} \cdot \log n,
\]
provided that $m$ is sufficiently large. Let $R$ be a $p$-random subset of $P$ and let $X \subseteq R$ be a largest subset of $R$ that contains no $\cL$-collinear subset of $s+1$ points. By maximality of $X$, every point of $R \setminus X$ forms an $\cL$-collinear $(s+1)$-element set with some $s$ points of~$X$. In particular, $|R \setminus X|$ is at most the number of $\cL$-collinear $(s+1)$-element subsets of~$R$. It follows that
\[
  \Ex\big[|R \setminus X|\big] \le |\cL| \cdot \binom{m}{s+1} \cdot p^{s+1} \le \frac{n^3}{10m^2} \cdot m^{s+1} \cdot p^{s+1} \le \frac{n^2p}{10m},
\]
by the second inequality in~\eqref{eq:eps-net-p}, and consequently, by Markov's inequality,
\[
  \Pr\big(|R \setminus X| \ge n^2p/10\big) \le \frac{\Ex\big[|R \setminus X|\big]}{n^2p/10} \le \frac{1}{m} \le \frac{1}{10}.
\]
On the other hand, standard estimates for lower tails of binomial distributions (Lemma~\ref{lemma:binomial-tails}) yield 
\[
  \Pr\big(|R| \le 9n^2p/10\big) \le \exp\left(-n^2p/200\right) \le \exp(-n/200).
\]
It follows that
\[
  \Pr\big(|X| \ge 4n^2p/5 \big) \ge 4/5,
\]
provided that $n$ is sufficiently large.

\begin{claim}
  \label{claim:eps-net-containers}
  With probability at least $1/2$, every set $I \subseteq R$ with $|I| \ge 3n^2p/5$ contains an $\cL$-collinear subset of $s$ points.
\end{claim}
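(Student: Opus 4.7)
The plan is to apply Theorem~\ref{thm:main-packaged} to the $s$-uniform hypergraph $\HH$ on vertex set $V(\HH) = P = \br{n}^2$ whose edges are the $\cL$-collinear $s$-element subsets of $P$, followed by a Chernoff bound for each container and a union bound over the resulting family. Since $\II(\HH)$ is exactly the family of subsets of $P$ with no $\cL$-collinear $s$-subset, the claim is equivalent to showing that, with probability at least $1/2$, no $I \in \II(\HH)$ contained in $R$ satisfies $|I| \ge 3n^2p/5$.

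The degree conditions of Theorem~\ref{thm:main-packaged} are easily verified: every point of $P$ lies on at most $\hmax$ lines of $\cL$, giving $\Delta_1(\HH) \le \hmax\binom{m-1}{s-1}$, and any $t$-subset with $t \ge 2$ lies on at most one line of $\cL$ (since two points determine a line), giving $\Delta_t(\HH) \le \binom{m-t}{s-t}$. The crucial step is a judicious choice of the parameter $E$ that forces every container produced by the theorem to be small. To this end, observe that $v = (a,b) \in P$ lies on all $\hmax$ lines of $\cL$ through it provided $a \in [1 + t\hmax,\, n - (m-1-t)\hmax]$, where $t = \lfloor(b-1)/M\rfloor$; since $(m-1)\hmax < n/10$, this ``interior'' region contains at least $0.89\,n^2$ points. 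Therefore, for any $W \subseteq P$ with $|W| \ge n^2/2$, the number of (vertex, line) incidences between $W$ and $\cL$ satisfies
\[
  \sum_{\ell \in \cL} |W \cap \ell| \;\ge\; (|W|-0.11\,n^2)\,\hmax \;\ge\; 0.39\,n^2\,\hmax.
\]
A standard pigeonhole argument, together with the bound $|\cL| \le n^3/(10m^2)$ from~\eqref{eq:cL-size} and the identity $m = 10s$, then shows that at least $|\cL|/4$ of the lines in $\cL$ contain $\ge s$ points of $W$, whence $e(\HH[W]) \ge |\cL|/4$.

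Setting $E = |\cL|/5$, $\alpha = 1/2$, $\beta$ a small absolute constant, and $q = \Theta(p/(s^5\log(1/p)))$ with a sufficiently small implicit constant, I will then verify the hypotheses of Theorem~\ref{thm:main-packaged}. The most restrictive verification is the $t=s$ degree condition $1 = \Delta_s(\HH) \le (q/(10^6 s^5))^{s-1}\, E/v(\HH)$, which uses the lower bound $p \ge Km^{10} n^{-1/(s-1)}\log n$ in an essential way. The theorem then supplies a family $\cC$ of containers with $\log|\cC| \ll n^2 p$, and every $C \in \cC$ satisfies $|C| \le (3/5 - \eta)n^2$ for some absolute constant $\eta > 0$: either $|C| \le \alpha n^2 = n^2/2$ in the first alternative, or else $|W| < n^2/2$ by the argument above and hence $|C| \le |W|/(1-\beta) < 0.55\,n^2$ in the second. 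A standard Chernoff estimate (Lemma~\ref{lemma:binomial-tails}) gives $\Pr(|R \cap C| \ge 3n^2p/5) \le \exp(-c'n^2p)$ for an absolute constant $c'>0$, and the union bound $|\cC|\cdot\exp(-c'n^2p) \le 1/2$ then yields the claim.

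The main obstacle is the tension in the choice of $E$: it must be simultaneously large enough for the degree conditions of Theorem~\ref{thm:main-packaged} (with the binding constraint occurring at $t = s$) and small enough that $e(\HH[W]) < E$ rules out $|W| \ge n^2/2$. These two requirements can be reconciled only because the lower bound in~\eqref{eq:eps-net-p} on $p$ permits $q$ to be chosen of order $p/\mathrm{polylog}$, which is precisely what is needed to balance the two competing constraints.
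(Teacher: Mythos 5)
Your proposal is correct and follows essentially the same route as the paper: the same hypergraph $\HH$, an application of Theorem~\ref{thm:main-packaged} with a supersaturation statement that forces every container to be small, and then a Chernoff bound plus a union bound over the container family. The only (minor) difference is in the supersaturation sub-step: the paper's Lemma~\ref{lemma:eps-nets-supersaturation} uses Jensen's inequality applied to the convex extension of $x \mapsto \binom{x}{s}$ (threshold $|Q| \ge n^2/3$, conclusion $e(\HH[Q]) \ge |\cL|$, so $E = |\cL|$ and all containers have size at most $n^2/2$), whereas you use a pigeonhole count of point--line incidences restricted to the points lying on all $\hmax$ lines; both arguments are valid and yield the same conclusion.
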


Together with the above calculations, Claim~\ref{claim:eps-net-containers} implies that there exists a set $X \subseteq P$ of at least $4n^2p/5$ points that has the following two properties:
\begin{enumerate}[label=(\alph*)]
\item
  $X$ has no $\cL$-collinear subset with $s+1$ elements;
\item
  every set of $3n^2p/5$ elements of $X$ contains an $\cL$-collinear $s$-element subset.
\end{enumerate}
Suppose that $X$ is such a set, let $\eps = s/|X|$, and assume that $N \subseteq X$ is an $\eps$-net for the range space of lines. In particular, $N$ intersects every $\cL$-collinear subset of $X$ that has at least $s = \eps |X|$ elements. Since $X$ contains no $\cL$-collinear set with more than $s$ points, $X \setminus N$ contains no $\cL$-collinear subset of $s$ points, and thus 
\[
  |N| = |X| - |X \setminus N| > |X| - 3n^2p/5 \ge |X|/4 = s/(4\eps).
\]
Finally, since $1/\eps = |X|/s \le |P| = n^2$, we have, using~\eqref{eq:n-m-relation},
\[
  \frac{s}{4} = \frac{m}{40} \ge \frac{1}{40} \cdot \sqrt{\frac{\log n}{\log \log n}} \ge \frac{1}{80} \cdot \sqrt{\frac{\log(1/\eps)}{\log \log(1/\eps)}}.
\]
This gives the assertion of the theorem.

We now prove Claim~\ref{claim:eps-net-containers}. Let $\HH$ be the $s$-uniform hypergraph with vertex set $P$ whose edges are all $\cL$-collinear $s$-element subsets of $P$. The assertion of the claim is that, with probability at least $1/2$, the random set $R$ contains no independent set of $\HH$ that has at least $3n^2p/5$ elements. This is a simple consequence of the following lemma, which lies at the heart of the matter.

\begin{lemma}
  \label{lemma:containers-for-eps-nets}
  There is a family $\cC$ of at most $\exp(pn^2/300)$ containers for the independent sets of $\HH$ such that $|C| \le n^2/2$ for every $C \in \cC$.
\end{lemma}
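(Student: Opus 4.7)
The plan is to apply Theorem~\ref{thm:main-packaged} directly to $\HH$ and then use a short supersaturation argument to rule out the theorem's ``large container'' alternative. First I would note that, since any two distinct points of $P$ determine a unique line in $\RR^2$, a $t$-element subset $T \subseteq P$ with $t \ge 2$ satisfies $\deg_\HH T \le \binom{m-t}{s-t} \le m^{s-t}$: its degree is nonzero only if $T$ is contained in some line of $\cL$, of which there is then at most one, and that line carries $m$ points of $P$.

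The supersaturation statement I would establish is the following: there is an absolute constant $c_0 > 0$ such that every $W \subseteq P$ with $|W| \ge 2n^2/5$ satisfies $e(\HH[W]) \ge c_0^s\,e(\HH)$. Setting $w_\ell = |W \cap \ell|$, one has $\sum_{\ell \in \cL} w_\ell = \sum_{v \in W}|\{\ell \in \cL : v \in \ell\}|$, and a short calculation using $|\cL| \le n^3/(10m^2)$ together with the fact that each of the $\hmax$ slopes contributes a line through all but a $1/10$-fraction of the points of $P$ shows that $\bar w := |\cL|^{-1}\sum_\ell w_\ell$ is at least a positive constant times $m$. Since $x \mapsto \binom{x}{s}$ is convex, Jensen's inequality then yields
\[
  e(\HH[W]) \;=\; \sum_{\ell \in \cL}\binom{w_\ell}{s} \;\ge\; |\cL|\binom{\bar w}{s} \;\ge\; c_0^s\cdot|\cL|\binom{m}{s} \;=\; c_0^s\,e(\HH).
\]

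I would then invoke Theorem~\ref{thm:main-packaged} with $\alpha = 1/2$, $\beta = 1/10$, $E = c_0^s\,e(\HH)$, and $q$ of the form $q = c_1 p/(s^5 \log(1/p))$ for a small absolute constant $c_1 > 0$. Substituting the above estimates into condition~(c), its worst case $t = s$ reduces to an inequality of the form $q \ge c_2 \cdot s^6/(m\cdot n^{1/(s-1)})$, which is precisely where the hypothesis $p \ge K m^{10} n^{-1/(s-1)}\log n$, with $K$ large enough, delivers an admissible $q$. The remaining conditions $\alpha\beta q\, v(\HH) \ge 10^9 s^7$ and $10^4s^5 q \le \beta$ are routine given the prescribed ranges of $p$, $s$, and $n$.

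Finally, if some container $C$ produced by the theorem satisfied $|C| > \alpha v(\HH) = n^2/2$, it would contain a subset $W$ with $|W| \ge (1-\beta)|C| > 9n^2/20 > 2n^2/5$ and $e(\HH[W]) < E$, contradicting the supersaturation bound. Thus every $C \in \cC$ obeys $|C| \le n^2/2$, and
\[
  |\cC| \;\le\; \exp\!\left(10^4 s^5 \beta^{-1}\log(e/\alpha)\cdot q\log(e/q)\cdot n^2\right) \;\le\; \exp(pn^2/300)
\]
is immediate from the choice of $q$. The main obstacle is the careful bookkeeping needed to verify condition~(c) of the packaged theorem—this is exactly where the chosen lower bound on $p$ earns its keep.
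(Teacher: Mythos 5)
Your proposal is correct and follows essentially the same route as the paper: apply Theorem~\ref{thm:main-packaged} to $\HH$ with $\alpha=1/2$, a constant $\beta$, $q$ of order $p/(s^5\log)$, and an $E$ certified by a Jensen-type supersaturation bound over the lines of $\cL$, then rule out the large-container alternative; the paper's choices ($\beta=1/3$, $E=|\cL|$, $q=p/(300m^5\log n)$) differ from yours only by inessential normalisations, and your identification of $t=s$ as the binding case of the codegree condition, resolved by the lower bound on $p$, matches the paper's verification.
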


We first show how Lemma~\ref{lemma:containers-for-eps-nets} implies the assertion of Claim~\ref{claim:eps-net-containers}. Let $\cC$ be a family of containers for the independent sets of $\HH$ supplied by the lemma and let $\cB$ be the event that $R$ contains an independent set of $\HH$ with at least $3n^2p/5$ elements. Since every independent set of $\HH$ is contained in some member of $\cC$, each of which has at most $n^2/2$ elements, we have
\[
  \Pr(\cB) \le \sum_{C \in \cC} \Pr\left(|R \cap C| \ge 3n^2p/5\right) \le |\cC| \cdot \Pr\left(\Bin(n^2/2, p) \ge 3n^2p/5\right).
\]
Standard estimates for upper tails of binomial distributions (Lemma~\ref{lemma:binomial-tails}) yield
\[
  \Pr\left(\Bin(n^2/2, p) \ge 3n^2p/5\right) \le \exp\left(-n^2p/150\right)
\]
and, consequently,
\[
  \Pr(\cB) \le |\cC| \cdot \exp\left(-n^2p/150\right) \le \exp\left(-n^2p/300\right)  \le \exp(-n/300) \le 1/2,
\]
provided that $n$ is sufficiently large.

Finally, we prove Lemma~\ref{lemma:containers-for-eps-nets} by combining our `packaged' hypergraph container lemma, Theorem~\ref{thm:main-packaged}, with the following supersaturation statement for the hypergraph $\HH$ of $\cL$-collinear $s$-tuples.

\begin{lemma}
  \label{lemma:eps-nets-supersaturation}
  If $Q \subseteq P$ has at least $n^2/3$ points, then $e\big(\HH[Q]\big) \ge |\cL|$.
\end{lemma}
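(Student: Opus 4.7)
The plan is to reduce the lemma to a per-$h$ estimate: I would prove that $\sum_{\ell \in \cL_h} \binom{|\ell \cap Q|}{s} \ge |\cL_h|$ for each $h \in \br{\hmax}$ separately, and then sum over $h$. This suffices because $\cL = \bigcup_h \cL_h$ is a disjoint union and, conversely, any $s\ge 2$ collinear points of $P$ lie on a unique line (in particular, of $\cL$), so that $e(\HH[Q]) = \sum_{\ell \in \cL} \binom{|\ell \cap Q|}{s}$.

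Fix $h \in \br{\hmax}$ and write $q_\ell = |\ell \cap Q|$. The lines in $\cL_h$ are parallel (all have slope $M/h$) and therefore pairwise disjoint, so together they cover precisely $m|\cL_h| = n^2 - (m-1)hn$ points of $P$. Since $(m-1)h \le m\hmax \le n/10$, this is at least $9n^2/10$, and hence
\[
  \sum_{\ell \in \cL_h} q_\ell \;=\; \Big|Q \cap \bigcup \cL_h\Big| \;\ge\; |Q| - \tfrac{n^2}{10} \;\ge\; \tfrac{7n^2}{30}.
\]
Combined with $|\cL_h| \le nM = n^2/m$, this gives that the average of $q_\ell$ over $\ell \in \cL_h$ is at least $7m/30 = 7s/3$, recalling $m = 10s$.

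The remaining step is to turn this average-degree bound into a lower bound on $\sum_\ell \binom{q_\ell}{s}$ via Jensen's inequality. The minor annoyance is that $\binom{x}{s}$ as a polynomial is not convex on $[0,\infty)$; I would sidestep this by combining the elementary inequality $\binom{q}{s} \ge (q/s)^s$ for integers $q \ge s$ (valid since each factor $(q-i)/(s-i) \ge q/s$ for $0 \le i \le s-1$) with the trivial observation that $(q/s)^s \le \binom{q}{s}+1$ for every integer $q \ge 0$ (the only case to check, $q < s$, reduces to $(q/s)^s < 1$). Convexity of $x \mapsto x^s$ on $[0,\infty)$ together with Jensen then yields
\[
  \sum_{\ell \in \cL_h} \binom{q_\ell}{s} \;\ge\; \sum_{\ell \in \cL_h} \left(\frac{q_\ell}{s}\right)^s - |\cL_h| \;\ge\; |\cL_h|\left(\left(\frac{7}{3}\right)^s - 1\right) \;\ge\; |\cL_h|,
\]
since $(7/3)^s \ge 7/3 > 2$ for every $s \ge 1$. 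Summing over $h \in \br{\hmax}$ completes the proof. I do not foresee any serious obstacle here: the argument is essentially an application of Jensen's inequality combined with the disjoint-lines structure of each $\cL_h$.
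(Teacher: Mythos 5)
Your proof is correct and follows essentially the same route as the paper's: both arguments use the fact that each parallel class $\cL_h$ covers all but at most $n^2/10$ points of $P$ to lower-bound the average of $|\ell\cap Q|$ by $\Omega(m)\gg s$, and then conclude by convexity. The only (cosmetic) differences are that the paper applies Jensen once over all of $\cL$ after replacing $\binom{x}{s}$ by a convex extension $b_s$, whereas you work within each $\cL_h$ separately and sidestep the non-convexity via the surrogate $(q/s)^s$ with a $\pm 1$ correction; both devices are valid.
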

\begin{proof}
  Define $b_s \colon \RR \to \RR$ by
  \[
    b_s(x) =
    \begin{cases}
      \binom{x}{s} & \text{if $x \ge s-1$,}\\
      0 & \text{if $x \le s-1$,}
    \end{cases}
  \]
  so that $b_s$ is convex and $b_s(a) = \binom{a}{s}$ whenever $a$ is a nonnegative integer. Jensen's inequality gives
  \[
    e\big(\HH[Q]\big) = \sum_{\ell \in \cL} \binom{|\ell \cap Q|}{s} = \sum_{\ell \in \cL} b_s\big(|\ell \cap Q|\big) \ge |\cL| \cdot b_s\left(\frac{1}{|\cL|} \cdot \sum_{\ell \in \cL} |\ell \cap Q|\right).
  \]
  Recall that, for every $h \in \br{\hmax}$, the lines in $\cL_h$ cover all but at most $n^2/10$ points of $P$. In particular, for every such $h$,
  \[
    \sum_{\ell \in \cL_h} |\ell \cap Q| \ge |Q| - \frac{n^2}{10} \ge \frac{n^2}{5}
  \]
  and thus
  \[
    \frac{1}{|\cL|} \cdot \sum_{\ell \in \cL} |\ell \cap Q| = \frac{1}{|\cL|} \cdot \sum_{h = 1}^{\hmax} \sum_{\ell \in \cL_h} |\ell \cap Q| \ge \frac{\hmax}{|\cL|} \cdot \frac{n^2}{5} \ge \frac{m}{5}.
  \]
  Consequently,
  \[
    e\big(\HH[Q]\big) \ge |\cL| \cdot b_s\big(m/5\big) = |\cL| \cdot \binom{m/5}{s} \ge |\cL|,
  \]
  as claimed.
\end{proof}

\begin{proof}[{Proof of Lemma~\ref{lemma:containers-for-eps-nets}}]
  Set
  \[
    q = \frac{p}{300 m^5\log n} \qquad \text{and} \qquad E = |\cL|.
  \]
  We now verify that we may apply Theorem~\ref{thm:main-packaged}, with $\alpha \leftarrow 1/2$ and $\beta \leftarrow 1/3$, to the hypergraph $\HH$. First, we have
  \[
    \alpha \beta q \cdot v(\HH) = \frac{pn^2}{1800m^5\log n} \ge 10^9 s^7 \qquad \text{and} \qquad 10^4s^5q = \frac{10^4s^5p}{300m^5\log n} \le \frac{1}{\log n} \le \beta,
  \]
  provided that $n$ is sufficiently large. Second, for every $t \in \{2, \dotsc, s\}$, by~\eqref{eq:cL-size} and~\eqref{eq:eps-net-p},
  \begin{multline*}
    \left(\frac{q}{10^6s^5}\right)^{t-1} \cdot \frac{E}{v(\HH)} = \left(\frac{p}{3 \cdot 10^8s^5m^5\log n}\right)^{t-1} \cdot \frac{|\cL|}{n^2} \ge \left(\frac{K \cdot m^5 \cdot n^{-1/(s-1)}}{3 \cdot 10^8 s^5}\right)^{s-1} \cdot \frac{n}{12m^2} \\
    = \left(\frac{K}{3 \cdot 10^3}\right)^{s-1} \cdot \frac{1}{12m^2} \ge 2^m \ge \binom{m-t}{s-t} = \Delta_t(\HH),
  \end{multline*}
  provided that $K$ is sufficiently large. The theorem supplies a collection $\cC$ of containers for the independent sets of $\HH$ such that
  \[
    \begin{split}
      |\cC| & \le \exp\left(10^4s^5\beta^{-1}\log(e/\alpha) \cdot q\log(e/q) \cdot v(\HH)\right) \\
      & \le \exp\left(m^5 \cdot q \log n \cdot n^2\right) \le \exp\left(pn^2/300\right),
    \end{split}
  \]
  where we used the inequality $q \ge e/n$, which holds if $K$ is sufficiently large, and, for every $C \in \cC$, either $|C| \le \alpha \cdot v(\HH) = n^2/2$ or there is a subset $W \subseteq C$ with $|W| \ge (1-\beta)|C| = 2|C|/3$ such that $e(\HH[W]) < E = |\cL|$. We claim that, in fact, $|C| \le n^2/2$ for every $C \in \cC$. Indeed, if $|C| > n^2/2$ and $W \subseteq C$ satisfies $|W| \ge 2|C|/3 > n^2/3$, then $e(\HH[W]) \ge |\cL|$, by Lemma~\ref{lemma:eps-nets-supersaturation}.  
\end{proof}

\section{Upper bounds on Ramsey numbers}
\label{sec:upper-bounds-Ramsey}

In this section, we derive the upper bounds on Folkman numbers and on induced Ramsey numbers stated in Theorems~\ref{thm:Folkman} and~\ref{thm:induced-Ramsey}. We shall do this by building containers for non-Ramsey colourings of subgraphs of a large complete graph $K_N$ and examining how a random subgraph of $K_N$, drawn with an appropriately chosen distribution for each of the two theorems, intersects these containers. This approach to studying Ramsey properties of random graphs was introduced in the work of Nenadov and Steger~\cite{NenSte16}. The only Ramsey-theoretic ingredient in our proof is the following supersaturated version of Ramsey's theorem, which is a refinement of~\cite[Corollary~2.2]{NenSte16}.

\begin{lemma}
  \label{lemma:supersaturation-Ramsey}
  Suppose that $n$ and $k$ are positive integers and let $R = R(n;k)$. If $N \ge R$, then every colouring $c \colon E(K_N) \to \br{k+1}$ either assigns the colour $k+1$ to at least $(1/2) \cdot (N/R)^2$ edges or it contains at least $(1/2) \cdot (N/R)^n$ monochromatic copies of $K_n$ in colours $1, \dotsc, k$.
\end{lemma}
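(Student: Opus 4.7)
The plan is to combine a straightforward double counting over all $R$-element vertex subsets of $K_N$ with Ramsey's theorem itself, which is used as a black box.

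Let $m$ be the number of edges of $K_N$ whose colour is $k+1$ and let $M$ be the number of monochromatic copies of $K_n$ in colours $1,\dotsc,k$. Consider a uniformly random subset $S \subseteq V(K_N)$ of size $R$. Because $|S| = R(n;k)$, Ramsey's theorem applied to the restriction of $c$ to $K_N[S]$ guarantees that $S$ either spans at least one edge of colour $k+1$ or contains the vertex set of at least one monochromatic $K_n$ in one of the first $k$ colours. Hence, by a union bound,
\[
1 \le \Pr\big(\text{$S$ contains a $(k+1)$-coloured edge}\big) + \Pr\big(\text{$S$ contains a monochromatic $K_n$ in colours $1,\dotsc,k$}\big).
\]

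The first step is to compute, or rather bound, the two probabilities on the right-hand side. For any fixed edge $e$, the probability that $e \subseteq S$ equals $\binom{N-2}{R-2}/\binom{N}{R} = R(R-1)/(N(N-1)) \le (R/N)^2$; for any fixed $n$-vertex clique $K$, the probability that $V(K) \subseteq S$ equals $\binom{N-n}{R-n}/\binom{N}{R} \le (R/N)^n$. Summing over the $m$ colour-$(k+1)$ edges and over the $M$ monochromatic $K_n$'s and combining with the preceding inequality yields the single key estimate
\[
1 \le m \cdot \left(\frac{R}{N}\right)^2 + M \cdot \left(\frac{R}{N}\right)^n.
\]

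From here the lemma is immediate: if $m < \tfrac{1}{2}(N/R)^2$ and $M < \tfrac{1}{2}(N/R)^n$ both held, the right-hand side would be strictly less than $1$, a contradiction. Therefore at least one of the two lower bounds claimed in the statement must be satisfied. There is no genuine obstacle here; the only minor point to be careful with is the elementary identity $\binom{N-t}{R-t}/\binom{N}{R} \le (R/N)^t$, which is trivial but worth recording cleanly for $t=2$ and $t=n$.
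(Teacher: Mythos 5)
Your proof is correct and is essentially the paper's argument: the paper double-counts $R$-element vertex subsets directly, obtaining $\binom{N}{R} \le m\binom{N-2}{R-2} + M\binom{N-n}{R-n}$ and then using $\binom{N}{R}\binom{N-\ell}{R-\ell}^{-1} = \binom{N}{\ell}\binom{R}{\ell}^{-1} \ge (N/R)^{\ell}$, which is just the deterministic phrasing of your union bound over a uniformly random $R$-set together with your estimate $\binom{N-t}{R-t}/\binom{N}{R} \le (R/N)^t$. The probabilistic packaging changes nothing of substance, and all your inequalities check out (using $R \le N$).
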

\begin{proof}
  The choice of $R$ guarantees that the edge-colouring induced by every subset of $R$ vertices of $K_N$ contains either an edge coloured $k+1$ or a monochromatic copy of $K_n$ in one of the remaining $k$ colours. On the other hand, each edge and each copy of $K_n$ are contained in, respectively, $\binom{N-2}{R-2}$ and $\binom{N-n}{R-n}$ such subsets. Denoting by $M$ the total number of monochromatic copies of $K_n$ in colours $1, \dotsc, k$, we thus have
  \begin{equation}
    \label{eq:supersaturation-Ramsey}
    \binom{N}{R} \le \left|c^{-1}(k+1)\right| \cdot \binom{N-2}{R-2} + M \cdot \binom{N-n}{R-n}.
  \end{equation}
  In particular, since, for every $\ell \in \{2, n\}$,
  \[
    \binom{N}{R} \cdot \binom{N-\ell}{R-\ell}^{-1} = \binom{N}{\ell} \cdot \binom{R}{\ell}^{-1} \ge \left(\frac{N}{R}\right)^\ell
  \]
  inequality~\eqref{eq:supersaturation-Ramsey} implies that either $|c^{-1}(k+1)| \ge (1/2) \cdot (N/R)^2$ or $M \ge (1/2) \cdot (N/R)^n$.
\end{proof}

\subsection{Folkman numbers (proof of Theorem~\ref{thm:Folkman})}
\label{sec:Folkman-proof}

Let $k$ and $n$ be positive integers, let $R = R(n;k)$ and suppose that an integer $N$ satisfies
\begin{equation}
  \label{eq:N-Folkman}
  N \ge (\Gamma knR)^{21n^2}
\end{equation}
for some large constant $\Gamma$. We shall give a randomised construction of a $K_{n+1}$-free subgraph of $K_N$ that satisfies $G \rightarrow (K_n)_k$, proving that $F(n;k) \le N$. We shall from now on assume that $k \ge 2$ and  $n \ge 3$, as otherwise the assertion of the theorem is trivial.

Suppose that $G \subseteq K_N$. We shall identify a $k$-colouring $c \colon E(G) \to \br{k}$ of the edges of~$G$ with the set
\[
  \big\{(e, c_e) : e \in E(G)\big\} \subseteq E(K_N) \times \br{k}.
\]
Let $\HH$ be the hypergraph with vertex set $E(K_N) \times \br{k}$ whose edges are all sets of the form
\[
  \varphi\big(E(K_n)\big) \times \{i\},
\]
where $\varphi \colon V(K_n) \to V(K_N)$ is an arbitrary injection and $i \in \br{k}$. If a graph $G \subseteq K_N$ admits a colouring $c \colon E(G) \to \br{k}$ with no monochromatic copy of $K_n$, then $c$, when viewed as a subset of $E(K_N) \times \br{k}$, is an independent set of $\HH$.

We shall say that a graph $G \subseteq E(K_N)$ is \emph{compatible} with a set $C \subseteq E(K_N) \times \br{k}$ if there exists a colouring $c \colon E(G) \to \br{k}$ that is contained in $C$. Equivalently, $G$ is compatible with $C$ if and only if $\big(\{e\} \times \br{k}\big) \cap C \neq \emptyset$ for every $e \in E(G)$. In other words, defining
\[
  X(C) = \big\{ e \in E(K_N) : \big(e \times \br{k}\big) \cap C = \emptyset\big\},
\]
$G$ is compatible with $C$ if and only if $X(C) \cap E(G) = \emptyset$.

Suppose that $p$ satisfies
\begin{equation}
  \label{eq:Folkman-p}
  D \cdot (knR)^{20}\cdot N^{-2/(n+1)} \log N \le p \le \frac{N^{-2/(n+2)}}{DR}
\end{equation}
for some large constant $D$; such a number does indeed exist as~\eqref{eq:N-Folkman} implies that
\[
  \frac{N^{2/(n+1)-2/(n+2)}}{\log N} \ge N^{1/n^2} \ge D^2 \cdot (knR)^{21},
\]
provided that $\Gamma$ is sufficiently large. The following lemma is key.

\begin{lemma}
  \label{lemma:containers-for-Folkman}
  There is a family $\cC$ of at most $\exp\left(\frac{pN^2}{256R^2}\right)$ containers for the independent sets of $\HH$ such that $|X(C)| \ge \left(\frac{N}{4R}\right)^2$ for every $C \in \cC$.
\end{lemma}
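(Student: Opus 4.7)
The plan is to apply the packaged container theorem, Theorem~\ref{thm:main-packaged}, to the hypergraph $\HH$, and then to interpret the dichotomy for each resulting container using the supersaturated Ramsey lemma, Lemma~\ref{lemma:supersaturation-Ramsey}. With $s = \binom{n}{2}$ the uniformity of $\HH$ and $v(\HH) = k\binom{N}{2}$, I would set $E = (N/R)^n/2$, $\alpha = 1/(4k)$, and $\beta = 1/(4kR^2)$, and choose $q$ to be a quantity of the form $c \cdot p/(k^2 R^4 n^{10} \log k \log N)$ for a small constant $c > 0$, so that the container bound
\[
  |\cC| \le \exp\!\left(10^4 s^5 \beta^{-1} \log(e/\alpha) \cdot q \log(e/q) \cdot v(\HH)\right)
\]
comes out below $\exp\!\left(pN^2/(256R^2)\right)$.

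The verifications $\alpha\beta q \cdot v(\HH) \ge 10^9 s^7$ and $10^4 s^5 q \le \beta$ will be routine given the generous lower bounds on $N$ and $p$ assumed in~\eqref{eq:N-Folkman} and~\eqref{eq:Folkman-p}. The only genuinely delicate hypothesis is the $t$-degree bound for $t \in \{2,\ldots,s\}$. To estimate $\Delta_t(\HH)$, observe that a $t$-subset of $E(K_N) \times \br{k}$ has positive $\HH$-degree only if all of its elements share a common colour, and that $t$ edges of a graph span at least $\ell(t)$ vertices, where $\ell(t)$ is the smallest integer with $\binom{\ell(t)}{2} \ge t$; hence $\Delta_t(\HH) \le \binom{N - \ell(t)}{n - \ell(t)} \le N^{n - \ell(t)}$. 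Since $E/v(\HH)$ is $N^{n-2}/(kR^n)$ up to constants, the required inequality reduces to $(10^6 s^5/q)^{t-1} \cdot kR^n \le N^{\ell(t) - 2}$, which is tightest at $t = 2$ and becomes $10^6 s^5 kR^n/q \le N$. The choice of $q$ above, combined with the ultra-generous bound $N \ge (\Gamma knR)^{21n^2}$ from~\eqref{eq:N-Folkman}, yields this comfortably.

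The heart of the argument will then be to analyse each $C \in \cC$ using the two alternatives offered by Theorem~\ref{thm:main-packaged}. If $|C| \le \alpha \cdot v(\HH) = \binom{N}{2}/4$, then at most $\binom{N}{2}/4$ edges $e \in E(K_N)$ can satisfy $(e,i) \in C$ for some $i$, and hence $|X(C)| \ge 3\binom{N}{2}/4 \ge (N/(4R))^2$. Otherwise, there is $W \subseteq C$ with $|W| \ge (1-\beta)|C|$ and $e(\HH[W]) < E$. I would define a colouring $c \colon E(K_N) \to \br{k+1}$ by setting $c(e)$ to be any $i \in \br{k}$ with $(e,i) \in W$ (if one exists) and $c(e) = k+1$ otherwise; every monochromatic copy of $K_n$ in colours $1,\ldots,k$ produced by $c$ corresponds to a distinct edge of $\HH$ contained in $W$, so $c$ yields fewer than $E = (N/R)^n/2$ such copies. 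Lemma~\ref{lemma:supersaturation-Ramsey} then forces $|X(W)| = |c^{-1}(k+1)| \ge (N/R)^2/2$. Finally, since each element of $X(W) \setminus X(C)$ corresponds to at least one element of $C \setminus W$, one obtains
\[
  |X(C)| \ge |X(W)| - |C \setminus W| \ge \frac{N^2}{2R^2} - \beta \cdot v(\HH) \ge \frac{N^2}{4R^2} \ge \left(\frac{N}{4R}\right)^2.
\]

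The principal obstacle lies in the tension between two requirements on $q$: it must be large enough to satisfy the $t=2$ degree bound (roughly $q \gtrsim kR^n/N$) yet small enough to keep the container count under $\exp(pN^2/(256R^2))$ (roughly $q \lesssim p/(k^2 R^4 n^{10}\log k \log N)$). The twin assumptions~\eqref{eq:N-Folkman} and~\eqref{eq:Folkman-p} provide just enough room to reconcile these constraints, with $\log N$ factors helping to absorb the dependence on $\log(e/q)$.
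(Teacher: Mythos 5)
Your overall strategy is exactly the paper's: apply Theorem~\ref{thm:main-packaged} to $\HH$ with $\alpha \approx 1/k$, $\beta \approx 1/(kR^2)$, $E \approx (N/R)^n/2$ and $q \approx p/\big(\mathrm{poly}(knR)\log N\big)$, then use Lemma~\ref{lemma:supersaturation-Ramsey} on the colouring induced by the dense subset $W$ of each large container, and absorb $|C \setminus W| \le \beta\, v(\HH)$ into the $(N/R)^2/2$ lower bound on $|X(W)|$. That part of your argument, including the container-count calculation and the passage from $X(W)$ to $X(C)$, is correct and matches the paper (which phrases the last step contrapositively but performs the same computation).

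There is, however, a genuine error in your verification of the codegree hypothesis. You claim the condition $\Delta_t(\HH) \le (q/(10^6 s^5))^{t-1} E/v(\HH)$ is ``tightest at $t=2$'' and reduces to $10^6 s^5 kR^n/q \le N$. This is false: for a fixed number $\ell$ of spanned vertices, $t$ ranges up to $\binom{\ell}{2}$, so the left-hand exponent $t-1$ can be as large as $\binom{\ell}{2}-1 \le \tfrac{(n+1)(\ell-2)}{2}$ while the available power of $N$ on the right is only $N^{\ell-2}$. Already at $t=3$ (still $\ell=3$) the condition is $(10^6 s^5/q)^2 \cdot kR^n \lesssim N$, strictly stronger than your $t=2$ condition, and in general one needs roughly $(1/q)^{(n+1)/2} \lesssim N$, i.e.\ $q \gtrsim \mathrm{poly}(knR)\, N^{-2/(n+1)}$. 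This is a far more demanding constraint than your stated $q \gtrsim kR^n/N$ (note $kR^n/N \le N^{-1+1/(20n)} \ll N^{-2/(n+1)}$ under~\eqref{eq:N-Folkman}), and it is precisely the reason the lower bound in~\eqref{eq:Folkman-p} is $D(knR)^{20}N^{-2/(n+1)}\log N$ rather than something near $1/N$. Your proposed $q$ does in fact satisfy the true constraint, because $p$ is assumed at least $D(knR)^{20}N^{-2/(n+1)}\log N$, so the lemma survives; but the ``tension'' you describe in your final paragraph is misdiagnosed, and an argument built on the constraint $q \gtrsim kR^n/N$ would wrongly suggest that a much smaller $p$ (hence a much smaller $N$) suffices. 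To repair the write-up, verify the hypothesis as the paper does: bound $t-1 \le \tfrac{(n+1)(\ell-2)}{2}$ and check that $\big(pN^{2/(n+1)}/((10knR)^{20}\log N)\big)^{(n+1)(\ell-2)/2} \ge 1$ using the lower bound in~\eqref{eq:Folkman-p}.
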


We first show how Lemma~\ref{lemma:containers-for-Folkman} implies the assertion of the theorem. To this end, suppose that $G \sim G_{N,p}$ and denote by $Z$ the number of copies of $K_{n+1}$ in $G$. The upper bound in~\eqref{eq:Folkman-p} implies that
\[
  \Ex[Z] \le p^{\binom{n+1}{2}} N^{n+1} = pN^2 \cdot \left(p^{(n+2)/2} N\right)^{n-1} \le \frac{pN^2}{(DR)^{(n+2)(n-1)/2}} \le \frac{pN^2}{64R^2},
\]
provided that $D$ is sufficiently large. Let $G'$ be the subgraph obtained from $G$ by deleting an arbitrary edge from every copy of $K_{n+1}$; observe that $K_{n+1} \nsubseteq G'$ and $e(G') \ge e(G) - Z$.

Suppose that $G' \not\rightarrow (K_n)_k$. This means that there is a colouring $c \colon E(G') \to \br{k}$ that is an independent set of $\HH$. Therefore, $G'$ must be compatible with some container from~$\cC$. In other words, there is some $C \in \cC$ such that $X(C) \cap E(G') = \emptyset$ and, consequently,
\[
  |X(C) \cap E(G)| \le e(G) - e(G') \le Z.
\]
We may conclude that
\begin{equation}
  \label{eq:Folkman-union-bound}
  \Pr\big(G' \not\rightarrow (K_n)_k\big) \le \Pr\big(Z > 2\Ex[Z]\big) + \sum_{C \in \cC} \Pr\big(|X(C) \cap E(G)| \le 2\Ex[Z] \big).
\end{equation}

Fix an arbitrary $C \in \cC$. Since $|X(C)| \ge \frac{N^2}{16R^2}$, standard estimates on the lower tails of binomial distributions (Lemma~\ref{lemma:binomial-tails}) yield
\[
  \Pr\big(|X(C) \cap E(G)| \le 2\Ex[Z] \big) \le \Pr\left(\Bin\left(\frac{N^2}{16R^2}, p\right) \le \frac{pN^2}{32R^2}\right) \le \exp\left(-\frac{pN^2}{128R^2}\right).
\]
Substituting this estimate and the inequality $\Pr(Z > 2\Ex[Z]) < 1/2$ into~\eqref{eq:Folkman-union-bound} yields
\[
  \Pr\big(G' \not\rightarrow (K_n)_k\big) \le \frac{1}{2} + \exp\left(-\frac{pN^2}{256R^2}\right) \le \frac{1}{2} + e^{-N} \le \frac{3}{4}.
\]
In particular, there is a graph $G' \subseteq K_N$ such that $G' \nsupseteq K_{n+1}$ and $G' \rightarrow (K_n)_k$, as claimed.

\begin{proof}[{Proof of Lemma~\ref{lemma:containers-for-Folkman}}]
  Set
  \[
    s = \binom{n}{2}, \qquad q = \frac{p}{(10knR)^{10} \log N}, \qquad \beta = \frac{1}{16kR^2},\qquad \text{and} \qquad E = \left(\frac{N}{2R}\right)^{n}.
  \]
  We now verify that we may apply Theorem~\ref{thm:main-packaged}, with $\alpha \leftarrow \frac{1}{2k}$, to the hypergraph $\HH$. First, as $s \le n^2$, we have, by~\eqref{eq:N-Folkman} and~\eqref{eq:Folkman-p},
  \[
    \alpha \beta q \cdot v(\HH) \ge \frac{D \cdot (knR)^{10} \cdot N^{-2/(n+1)}}{10^{10}} \cdot \binom{N}{2} \ge N \ge (\Gamma n)^{21} \ge 10^9 s^7
  \]
  and
  \[
    10^4s^5q \le 10^4s^5 \cdot N^{-2/(n+2)} \le \frac{10^4n^{10}}{(knR)^{21n}} \le \frac{1}{16kR^2} = \beta,
  \]
  provided that $n$ is sufficiently large. Second, suppose that $t \in \{2, \dotsc, s\}$ and let $\ell \in \{3, \dotsc, n\}$ be the unique integer satisfying $\binom{\ell-1}{2} < t \le \binom{\ell}{2}$, so that
  \[
    t-1 \le \binom{\ell}{2}-1 = \frac{(\ell+1)(\ell-2)}{2} \le \frac{(n+1)(\ell-2)}{2}.
  \]
  Since a graph with $t$ edges must have at least $\ell$ vertices, we have
  \[
    \Delta_t(\HH) = k \cdot \binom{N - \ell}{n-\ell} \le kN^{n-\ell}
  \]
  and, consequently,
  \begin{align*}
    \left(\frac{q}{10^6s^5}\right)^{t-1} \cdot \frac{E}{v(\HH)} \cdot \frac{1}{\Delta_t(\HH)}
    & \ge \left(\frac{q}{10^6s^5}\right)^{\frac{(n+1)(\ell-2)}{2}} \cdot \frac{\big(N/(2R)\big)^n}{kN^2} \cdot \frac{1}{kN^{n-\ell}} \\
    & \ge \left(\frac{p}{10^{16}n^{20}(kR)^{10}\log N}\right)^{\frac{(n+1)(\ell-2)}{2}} \cdot \frac{N^{\ell-2}}{k^2(2R)^n} \\
    & \ge \left(\frac{pN^{2/(n+1)}}{(10knR)^{20}\log N}\right)^{\frac{(n+1)(\ell-2)}{2}} \ge 1,
  \end{align*}
  where the last inequality follows from~\eqref{eq:Folkman-p}. The theorem supplies a collection $\cC$ of containers for the independent sets of $\HH$ satisfying
  \begin{equation}
    \label{eq:eps-nets-cC-size}
    |\cC| \le \exp\left(10^4s^5\beta^{-1}\log(e/\alpha) \cdot q\log(e/q) \cdot v(\HH)\right)
  \end{equation}
  such that, for every $C \in \cC$, either $|C| \le \alpha \cdot v(\HH)$ or there is a subset $W \subseteq C$ with $|W| \ge (1-\beta)|C| \ge |C| - \beta k\binom{N}{2}$ and $e(\HH[W]) < E$. We now turn to bounding the right-hand side of~\eqref{eq:eps-nets-cC-size} from above. To this end, observe first that $v(\HH) = k \binom{N}{2} \le kN^2$ and that $s^5 \beta^{-1} \log(e/\alpha) \le n^{10} R^2 k\log (2ek)$. Further, it follows from~\eqref{eq:Folkman-p} that, if $D$ is sufficiently large,
  \[
    \frac{e}{q} = \frac{e(10knR)^{10}\log N}{p} \le N^{2/(n+1)} \le N
  \]
  and thus $q\log(e/q) \le p/(10knR)^{10}$. Substituting these three estimates into~\eqref{eq:eps-nets-cC-size} yields
  \[
    |\cC| \le \exp\left(10^{4} \cdot n^{10}R^2k\log(2ek) \cdot \frac{p}{(10knR)^{10}} \cdot kN^2\right) \le \exp\left(\frac{pN^2}{256R^2}\right),
  \]
  as desired.
  
  It remains to show that  $|X(C)| \ge \left(\frac{N}{4R}\right)^2$ for every $C \in \cC$. Suppose that this were not true. If $|C| \le \alpha v(\HH) = \frac{k}{2k} \binom{N}{2} = \frac{1}{2} \binom{N}{2}$, then
  \[
    |X(C)| \ge \binom{N}{2} - |C| \ge \frac{1}{2}\binom{N}{2} \ge \left(\frac{N}{4R}\right)^2,
  \]
  so we may assume that $|C| > \alpha v(\HH)$. In particular, there must be a subset $W \subseteq C$ with $|C \setminus W| \le \beta k\binom{N}{2}$ such that $e(\HH[W]) < E$. Observe that
  \begin{equation}
    \label{eq:XW-upper-Folkman}
    |X(W)| \le |X(C)| + |C \setminus W| \le \left(\frac{N}{4R}\right)^2 + \beta k\binom{N}{2} \le \frac{N^2}{8R^2}
  \end{equation}
  and let $c \colon E(K_N) \to \br{k+1}$ be an arbitrary colouring such that $(e, c_e) \in W$ for every $e \notin X(W)$ and $c_e = k+1$ otherwise. By Lemma~\ref{lemma:supersaturation-Ramsey}, either $|X(W)| = |c^{-1}(k+1)| \ge (1/2) \cdot (N/R)^2$ or the colouring $c$ has at least $(1/2) \cdot (N/R)^n$ monochromatic copies of $K_n$ in colours $1, \dotsc, k$. However, the former inequality contradicts~\eqref{eq:XW-upper-Folkman} and thus the latter must hold. Finally, note that, if $K$ is an arbitrary copy of $K_n$ in $K_N$ that $c$ colours with some $i \in \br{k}$, then $E(K) \times \{i\} \subseteq W$. This implies that $e(\HH[W]) \ge (1/2) \cdot (N/R)^n \ge E$, contradicting our assumption.
\end{proof}

\subsection{Induced Ramsey numbers (proof of Theorem~\ref{thm:induced-Ramsey})}
\label{sec:upper-bounds-induced-Ramsey}

Let $k$ be a positive integer, let $H$ be an arbitrary $n$-vertex graph, let $R = R(n;k)$, and suppose that an integer $N$ satisfies
\begin{equation}
  \label{eq:N-induced-Ramsey}
  N \ge (10n^2kR)^{7n}.
\end{equation}
We shall prove that, with probability very close to one, the uniformly chosen random subgraph of $K_N$ is induced-Ramsey for $H$ in $k$ colours, proving that $\Rind(H;k) \le N$. We shall from now on assume that $k \ge 2$ and $n \ge 3$, as otherwise the assertion of the theorem is trivial.

Suppose that $G \subseteq K_N$. We shall identify a $k$-colouring $c \colon E(G) \to \br{k}$ of the edges of~$G$ with the set
\[
  \big\{(e, c_e) : e \in E(G)\big\} \cup \big\{(e, 0) : e \in E(K_N) \setminus E(G) \big\} \subseteq E(K_N) \times \{0, \dotsc, k\}.
\]
(That is, we extend $c$ to a colouring of $E(K_N)$ by colouring all edges of $K_N \setminus G$ zero.) Let $\HH$ be the hypergraph with vertex set $E(K_N) \times \{0, \dotsc, k\}$ whose edges are all sets of the form
\[
  \left(\varphi\big(E(H)\big) \times \{i\}\right) \cup \left(\varphi\big(E(K_n) \setminus E(H)\big) \times \{0\}\right),
\]
where $\varphi \colon V(H) \to V(K_N)$ is an arbitrary injection and $i \in \br{k}$. If a graph $G \subseteq K_N$ admits a colouring $c \colon E(G) \to \br{k}$ such that $c^{-1}(i)$ does not contain a copy of $H$ that is induced in $G$ for any $i \in \br{k}$, then $c$, when viewed as a subset of $E(K_N) \times \{0, \dotsc, k\}$, is an independent set of $\HH$.

We shall say that a graph $G \subseteq E(K_N)$ is \emph{compatible} with a set $C \subseteq E(K_N) \times \{0, \dotsc, k\}$ if there exists a colouring $c \colon E(G) \to \br{k}$ that is contained in $C$. Equivalently, $G$ is compatible with $C$ if and only if $(e,0) \in C$ for every $e \in E(K_N) \setminus E(G)$ and $\big(\{e\} \times \br{k}\big) \cap C \neq \emptyset$ for every $e \in E(G)$. In other words, defining
\begin{align*}
  X_1(C) & = \big\{ e \in E(K_N) : (e, 0) \notin C \big\} \quad \text{and} \\
  X_2(C) & = \big\{ e \in E(K_N) : \big(e \times \br{k}\big) \cap C = \emptyset\big\},
\end{align*}
$G$ is compatible with $C$ if and only if $X_1(C) \subseteq E(G)$ and $X_2(C) \cap E(G) = \emptyset$. The following lemma is key.

\begin{lemma}
  \label{lemma:containers-for-induced-Ramsey}
  There is a family $\cC$ of at most $\exp\left(\frac{N^2}{100R^2}\right)$ containers for the independent sets of $\HH$ such that $|X_1(C) \cup X_2(C)| \ge \left(\frac{N}{4R}\right)^2$ for every $C \in \cC$.
\end{lemma}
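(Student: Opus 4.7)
The plan is to apply Theorem~\ref{thm:main-packaged} to the $\binom{n}{2}$-uniform hypergraph $\HH$ defined above, with parameters $\alpha = 1/(2(k+1))$, $\beta = 1/(8(k+1)R^2)$, $E = (N/(2R))^n$, and $q$ of order $N^{-2/(n+1)}$ up to polynomial factors in $n$, $k$, $R$ and $\log N$; this mirrors the proof of Lemma~\ref{lemma:containers-for-Folkman} given above for Folkman numbers. The new twist compared to the Folkman set-up is that the $\binom{n}{2}$ elements of an edge of $\HH$ come with mixed labels: those corresponding to images of edges of $H$ carry a nonzero colour $i$, while those corresponding to images of non-edges of $H$ carry colour $0$.

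First I verify the hypotheses of Theorem~\ref{thm:main-packaged}. Since $v(\HH) = (k+1)\binom{N}{2}$, the inequalities $\alpha\beta q v(\HH) \ge 10^9 s^7$ and $10^4 s^5 q \le \beta$ are routine consequences of the hypothesis $N \ge (10n^2kR)^{7n}$. The main technical point is the codegree bound: for $t \in \{2, \dotsc, s\}$ with $\binom{\ell-1}{2} < t \le \binom{\ell}{2}$, the edges of $K_N$ underlying any $t$-set in $V(\HH)$ of positive codegree span at least $\ell$ vertices, whence $\Delta_t(\HH) \le n! \cdot N^{n-\ell}$. Matching this against $(q/10^6 s^5)^{t-1} \cdot E/v(\HH)$ is tightest when $\ell = n$ and $t = s$, which gives the ratio $(\ell-2)/(t-1) = 2/(n+1)$ and motivates the choice of $q$. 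With these choices the count $\exp\bigl(10^4 s^5 \beta^{-1} \log(e/\alpha) \cdot q \log(e/q) \cdot v(\HH)\bigr)$ comes out to be at most $\exp\bigl(\mathrm{poly}(n,k,R)\cdot N^{2-2/(n+1)}\log N\bigr)$, which is comfortably less than $\exp(N^2/(100R^2))$ under the hypothesis on $N$.

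The heart of the matter is the bound $|X_1(C) \cup X_2(C)| \ge (N/(4R))^2$. If $|C| \le \alpha v(\HH) = \binom{N}{2}/2$, then every edge $e \in E(K_N) \setminus (X_1(C) \cup X_2(C))$ has both $(e,0) \in C$ and some $(e,i) \in \{e\} \times \br{k}$ in $C$, and so contributes at least two elements to $C$. Thus at most $\binom{N}{2}/4$ such edges exist, and the bound follows. Otherwise $C$ contains a subset $W$ with $e(\HH[W]) < E$ and $|C\setminus W| \le \beta(k+1)\binom{N}{2}$. One checks directly, by mapping each $e \in X_j(W)\setminus X_j(C)$ to a suitable element of $C \setminus W$, that $|X_j(W)\setminus X_j(C)| \le |C\setminus W|$ for $j = 1,2$, and therefore that if $|X_1(C) \cup X_2(C)| < (N/(4R))^2$ then $|X_1(W)\cup X_2(W)| < (N/R)^2/2$ by the choice of $\beta$. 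The supersaturation argument is now as follows: define $c \colon E(K_N) \to \br{k+1}$ by setting $c(e) = k+1$ for $e \in X_1(W) \cup X_2(W)$ and, for every other $e$, choosing $c(e) = i \in \br{k}$ with $(e,i) \in W$ (which exists as $e \notin X_2(W)$). Applying Lemma~\ref{lemma:supersaturation-Ramsey} to $c$ produces at least $(N/R)^n/2$ monochromatic copies of $K_n$ in colours $1, \dotsc, k$. Now for any such copy on a vertex set $S$ in colour $i$, every edge $e$ of $S$ satisfies $c(e) = i \neq k+1$, hence $(e,0) \in W$ (since $e \notin X_1(W)$) and $(e,i) \in W$ (by the definition of $c$). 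Consequently, for \emph{every} injection $\varphi \colon V(H) \to S$, the set $\varphi(E(H)) \times \{i\} \cup \varphi(E(K_n) \setminus E(H)) \times \{0\}$ is an edge of $\HH$ contained in $W$. This yields $e(\HH[W]) \ge (N/R)^n/2 \ge (N/(2R))^n = E$, contradicting the choice of $W$.

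I expect the main obstacle in executing this plan to be the codegree bookkeeping: verifying that the single choice of $q$ simultaneously satisfies the required inequality for every $t \in \{2, \dotsc, s\}$, while leaving enough slack to absorb the $\mathrm{poly}(n, k, R)\cdot \log N$ prefactors in the container count. Once this is in place, the small-$|C|$ case and the supersaturation step are essentially automatic, and the rest is routine calculation.
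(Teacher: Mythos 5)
Your proposal is correct and follows essentially the same route as the paper: apply Theorem~\ref{thm:main-packaged} to the same hypergraph $\HH$ with $E = (N/(2R))^n$ and $\beta \approx (kR^2)^{-1}$, split on whether $|C| \le \alpha v(\HH)$, pass from $C$ to $W$ via an injective-witness count, and invoke Lemma~\ref{lemma:supersaturation-Ramsey} on the colouring $c$. The only deviation is cosmetic: since here the target container count is $\exp(N^2/(100R^2))$ rather than $\exp(pN^2/\mathrm{poly})$ with $p \approx N^{-2/(n+1)}$, the paper simply takes the constant $q = (kR)^{-5}$, which trivialises the codegree bookkeeping you flag as the main obstacle; your smaller $q \sim N^{-2/(n+1)}$ also satisfies all the hypotheses, it is just not needed.
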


Assuming that Lemma~\ref{lemma:containers-for-induced-Ramsey} is true, suppose that $G \sim G_{N,1/2}$. If $G \not\indarrow (H)_k$, then $G$ must be compatible with some container from $\cC$. It follows that
\begin{align*}
  \Pr\big(G \not\indarrow (H)_k\big)
  & \le \sum_{C \in \cC} \Pr\big(X_1(C) \subseteq E(G) \text{ and } X_2(C) \cap E(G) = \emptyset\big) \\
  & \le \sum_{C \in \cC} 2^{-|X_1(C) \cup X_2(C)|} \le |\cC| \cdot 2^{-(N/(4R))^2} \\
  & \le \exp\left(\frac{N^2}{100R^2} - \frac{N^2\log 2}{16R^2}\right) \le \exp\left(-N\right).
\end{align*}
\begin{proof}[{Proof of Lemma~\ref{lemma:containers-for-induced-Ramsey}}]
  Set
  \[
    s = \binom{n}{2}, \qquad q = \frac{1}{(kR)^5}, \qquad \beta = \frac{1}{16kR^2},\qquad \text{and} \qquad E = \left(\frac{N}{2R}\right)^{n}.
  \]
  We now verify that we may apply Theorem~\ref{thm:main-packaged}, with $\alpha \leftarrow \frac{1}{3k}$, to the hypergraph $\HH$. First, as $s \le n^2$, we have
  \[
    \alpha \beta q \cdot v(\HH) = \frac{1}{48(kR)^7} \cdot (k+1)\binom{N}{2} \ge N \ge 2^{n/2} \ge 10^9 s^7
  \]
  and
  \[
    10^4s^5q = \frac{10^4s^5}{(kR)^5} \le \frac{10^4n^{10}}{(kR)^5} \le \frac{10^4n^{10}}{(kR)^2 \cdot 2^n} \le \frac{1}{16kR^2} = \beta,
  \]
  provided that $n$ is sufficiently large. Second, suppose that $t \in \{2, \dotsc, s\}$ and let $\ell \in \{3, \dotsc, n\}$ be the unique integer satisfying $\binom{\ell-1}{2} < t \le \binom{\ell}{2}$, so that
  \[
    t-1 \le \binom{\ell}{2}-1 = \frac{(\ell+1)(\ell-2)}{2} \le n(\ell-2).
  \]
  Since a graph with $t$ edges must have at least $\ell$ vertices, we have
  \[
    \Delta_t(\HH) = k \cdot \binom{N - \ell}{n-\ell} \le kN^{n-\ell}
  \]
  and, consequently,
  \begin{align*}
    \left(\frac{q}{10^6s^5}\right)^{t-1} \cdot \frac{E}{v(\HH)} \cdot \frac{1}{\Delta_t(\HH)}
    & \ge \left(\frac{q}{10^6s^5}\right)^{n(\ell-2)} \cdot \frac{\big(N/(2R)\big)^n}{kN^2} \cdot \frac{1}{kN^{n-\ell}} \\
    & = \left(\frac{1}{10^6(skR)^5}\right)^{n(\ell-2)} \cdot \frac{N^{\ell-2}}{k^2(2R)^n} \\
    & \ge \left(\frac{N^{1/n}}{10^7(skR)^6}\right)^{n(\ell-2)} \ge 1,
  \end{align*}
  where the last inequality follows from~\eqref{eq:N-induced-Ramsey}. The theorem supplies a collection $\cC$ of containers for the independent sets of $\HH$ satisfying
  \[
    \begin{split}
      |\cC| & \le \exp\left(10^4s^5\beta^{-1}\log(e/\alpha) \cdot q\log(e/q) \cdot v(\HH)\right) \\
      & \le \exp\left(10^7n^{10} R^2k\log k \cdot \frac{5\log\left(ekR\right)}{(kR)^5} \cdot (k+1)\binom{N}{2}\right) \le \exp\left(\frac{N^2}{100R^2}\right),
    \end{split}
  \]
  provided that $n$ is sufficiently large, such that, for every $C \in \cC$, either $|C| \le \alpha \cdot v(\HH)$ or there is a subset $W \subseteq C$ with $|W| \ge (1-\beta)|C| \ge |C| - \beta(k+1)\binom{N}{2}$ and $e(\HH[W]) < E$.
  
  It remains to show that  $|X_1(C) \cup X_2(C)| \ge \left(\frac{N}{4R}\right)^2$ for every $C \in \cC$. Suppose that this were not true and set $X(C) = X_1(C) \cup X_2(C)$. If $|C| \le \alpha v(\HH) = \frac{k+1}{3k} \binom{N}{2} \le \frac{1}{2} \binom{N}{2}$, then
  \[
    |X(C)| \ge \binom{N}{2} - |C| \ge \frac{1}{2}\binom{N}{2} \ge \left(\frac{N}{4R}\right)^2,
  \]
  so we may assume that $|C| > \alpha v(\HH)$. In particular, there must be a $W \subseteq C$ with $|C \setminus W| \le \beta(k+1)\binom{N}{2}$ such that $e(\HH[W]) < E$. Observe that
  \begin{equation}
    \label{eq:XW-upper-induced-Ramsey}
    |X(W)| \le |X(C)| + |C \setminus W| \le \left(\frac{N}{4R}\right)^2 + \beta(k+1)\binom{N}{2} \le \frac{N^2}{8R^2}
  \end{equation}
  and let $c \colon E(K_N) \to \br{k+1}$ be an arbitrary colouring such that $(e, c_e) \in W$ for every $e \notin X(W)$ and $c_e = k+1$ otherwise. By Lemma~\ref{lemma:supersaturation-Ramsey}, either $|X(W)| = |c^{-1}(k+1)| \ge (1/2) \cdot (N/R)^2$ or the colouring $c$ has at least $(1/2) \cdot (N/R)^n$ monochromatic copies of $K_n$ in colours $1, \dotsc, k$. However, the former inequality contradicts~\eqref{eq:XW-upper-induced-Ramsey} and thus the latter must hold. Let $K$ be an arbitrary copy of $K_n$ in $K_N$ that $c$ colours with some $i \in \br{k}$. Since $E(K) \cap X_1(W) = \emptyset$, then $E(K) \times \{0, i\} \subseteq W$ and, as a result, any injection $\varphi \colon V(H) \to V(K)$ corresponds to an edge of $\HH[W]$. This implies that $e(\HH[W]) \ge (1/2) \cdot (N/R)^n \ge E$, contradicting our assumption.
\end{proof}


\bibliographystyle{amsplain}

\providecommand{\bysame}{\leavevmode\hbox to3em{\hrulefill}\thinspace}
\providecommand{\MR}{\relax\ifhmode\unskip\space\fi MR }
\providecommand{\MRhref}[2]{%
  \href{http://www.ams.org/mathscinet-getitem?mr=#1}{#2}
}
\providecommand{\href}[2]{#2}

\begin{dajauthors}
\begin{authorinfo}[jozsi]
  J\'ozsef Balogh\\
  Department of Mathematics\\
  University of Illinois at Urbana-Champaign\\
  Urbana, Illinois 61801, USA\\
  \&\\
  Moscow Institute of Physics and Technology\\
  Russian Federation\\
  jobal\imageat{}illinois\imagedot{}edu
\end{authorinfo}
\begin{authorinfo}[wojtek]
  Wojciech Samotij\\
  School of Mathematical Sciences\\
  Tel Aviv Univeristy\\
  Tel Aviv 6997801, Israel\\
  samotij\imageat{}tauex\imagedot{}tau\imagedot{}ac\imagedot{}il
\end{authorinfo}
\end{dajauthors}

\end{document}